\definecolor{darkblue}{HTML}{004C93} 
\definecolor{MainRed}{rgb}{.6, .1, .1}
\numberwithin{equation}{section}
\newcommand{\R}{\mathbb{R}}
\newcommand{\Sp}{{\rm Sp}}
\newcommand{\ve}{\varepsilon}
\newcommand{\vp}{\varphi}
\newcommand{\Om}{\Omega}
\newcommand{\bal}{\begin{aligned}}
\newcommand{\eal}{\end{aligned}}
\newcommand{\ben}{\begin{equation}}
\newcommand{\een}{\end{equation}}
\newcommand{\be}{\begin{equation*}}
\newcommand{\ee}{\end{equation*}}
\newcommand{\mkl}{\mu_{k}^{\lambda}}
\newcommand{\mpl}{\mu_{p}^{\lambda}}
\newcommand{\mil}{\mu_{i}^{\lambda}}
\newcommand{\mkle}{\mu_{k,\ve}}
\newcommand{\vkl}{\varphi_{k}^{\lambda}}
\newcommand{\vpl}{\varphi_{p}^{\lambda}}
\newcommand{\vil}{\varphi_{i}^{\lambda}}
\newcommand{\vjl}{\varphi_{j}^{\lambda}}
\newcommand{\Fke}{F_{k,\ve}}
\def \rr {\mathbb{R}}
\def \crit{2^{*}}
\def \bb{\hbox}
\def \xl{x_\lambda}
\def\la{\lambda}
\theoremstyle{plain}
\newtheorem{theo}{Theorem}[section]
\newtheorem{prop}{Proposition}[section]
\newtheorem{defi}{Definition}[section]
\newtheorem{lemme}{Lemma}[section]
\newtheorem{corol}{Corollary}[section]
\theoremstyle{remark}
\title[]{Least-energy solutions of the Br\'ezis-Nirenberg problem in the non-coercive case in dimension $3$}
\author{Hussein Cheikh Ali}
\author{Bruno Premoselli}
\address{Universit\'e Libre de Bruxelles, Service d'analyse, CP 218, Boulevard du Triomphe, B-1050 Bruxelles, Belgique.}
\email{bruno.premoselli@ulb.be}
\address{Laboratoire Paul Painlev\'e, Universit\'e de Lille, Cit\'e Scientifique - B\^atiment M2 59655 Villeneuve d'Ascq Cedex, France}
\email{frederic.cheikh-ali@univ-lille.fr}
\thanks{The second author was supported by the Fonds Th\'elam, an ARC Avanc\'e 2020 grant and an EoS FNRS grant.} 
\date{3 September, 2025}
\begin{document}

\maketitle

	\begin{abstract}
Let $\Om$ be a bounded, smooth domain of $\R^n, n \ge 3$ and $\lambda \ge 0$. We consider in this paper the celebrated Br\'ezis-Nirenberg problem:
\begin{equation}\label{eq:critlambda:abs} \tag{$*$}
	\left\{\begin{aligned}
		-\Delta u -\lambda u & =\left|u\right|^{\crit-2}u &\hbox{ in } \Omega, \\
		u & = 0 \quad \text{ in } \partial \Om, 
			\end{aligned}\right.
\end{equation}
where $2^* = \frac{2n}{n-2}$. When $n=3$ we investigate the existence of \emph{least-energy solutions} for \eqref{eq:critlambda:abs}, that we define as having the lowest $L^{2^*}(\Om)$ norm among all non-zero solutions of \eqref{eq:critlambda:abs}. We prove that least-energy solutions of \eqref{eq:critlambda:abs} exist when $\lambda$ belongs to a left neighbourhood of any eigenvalue of $-\Delta$ that we explicitly characterise by a positive mass assumption. We obtain in particular the first \emph{existence} result for \eqref{eq:critlambda:abs} on a general smooth bounded domain $\Om$ when $n=3$ and $\lambda \ge \Lambda_1$. In order to do this we introduce, for any $\lambda \ge  0$, a new variational problem inspired from spectral-theoretic considerations which is as follows: for any $u \in L^{2^*}(\Om), u>0$ a.e., we consider the principal eigenvalue of $- \Delta-\lambda$ on the weighted  space $L^2(\Om, u^{2^*-2} dx)$, whose value we then minimise over the set of normalised weights $\Vert u \Vert_{2^*} = 1$. When $\lambda \ge \Lambda_1$ this defines a new, non-smooth variational problem for which we develop a variational theory. We prove that its minimisers exist under the aforementioned positive mass assumption and that they yield \emph{least-energy} solutions of \eqref{eq:critlambda:abs}. With this framework we also obtain new results in the higher-dimensional case $n \ge 4$, where we show that the energy function of \eqref{eq:critlambda:abs} is discontinuous exactly at the eigenvalues of $- \Delta$.
\end{abstract}

\tableofcontents

\section{Introduction and statement of the results}

\subsection{Least-energy solutions of the Br\'ezis-Nirenberg problem.} Let $\Omega$ be a smooth bounded domain of  $\rr^n$,  $n\geq 3$. We denote by $H_0^1(\Om)$ the closure of $C_c^{\infty}(\Om)$ for the norm $\Vert u \Vert_{H^1_0}^2 =  \int_{\Om} |\nabla u|^2\, dx$. If $u \in H^1_0(\Om)$, we denote by $\Vert u \Vert_{2^*}$ its $L^{2^*}(\Om)$ norm. We let  $0< \lambda_1 < \lambda_2 \le \cdots$ be the eigenvalues of $-\Delta$ in $H^1_0(\Om)$ counted with multiplicity and $\Lambda_k$, for $k \ge 1$, be the eigenvalues counted without multiplicity: hence $\Lambda_1 < \Lambda_2< \cdots$ and $\Sp(-\Delta) = \{ \Lambda_k\}_{k \ge 1}$. We let $\Lambda_0 = 0$ by convention and we let $K_n$ be the optimal constant for Sobolev's inequality in $\R^n$: 
\begin{equation} \label{defK0}
 \frac{1}{K_n^2} = \inf_{\vp \in C^\infty_c(\R^n) \backslash \{0\}} \frac{\int_{\R^n}|\nabla \vp|^2 dx}{\big(\int_{\R^n} |\vp|^{\frac{2n}{n-2}}dx \big)^{\frac{n-2}{n}}}. 
 \end{equation}
Let $\lambda \ge 0$. We consider in this paper the celebrated Br\'ezis-Nirenberg problem: 
\begin{equation}\label{eq:critlambda}
	\left\{\begin{aligned}
		-\Delta u -\lambda u & =\left|u\right|^{\crit-2}u &\hbox{ in } \Omega, \\
		u & = 0 \quad \text{ in } \partial \Om
	\end{aligned}\right.
\end{equation}
where $\crit:=\frac{2n}{n-2}$ is the critical Sobolev exponent. The investigation of \eqref{eq:critlambda} was initiated in the seminal paper \cite{BN} in the coercive case $\lambda \in [0, \Lambda_1)$, has since then  attracted uninterrupted attention and has generated an abundant literature in the last forty years. In the coercive case existence and multiplicity results of positive and sign-changing solutions are in \cite{BN, CapozziFortunatoPalmieri, CeramiSoliminiStruwe, ChenZou, ClappWeth2, FortunatoJannelli, HebeyVaugonNodal, RoselliWillem, TavaresYouZou}. In the non-coercive cas $\lambda \ge \Lambda_1$ every solution of \eqref{eq:critlambda} changes sign. Non-trivial solutions exist when $\lambda >0$ and $n \ge 5$ \cite{CapozziFortunatoPalmieri} or when $n = 4$ and $\lambda \in \Sp(-\Delta)$ has multiplicity at most $n$ (which includes in particular the case $\lambda = \Lambda_1$), see \cite{ClappWeth2}. Further existence and multiplicity results for solutions in the non-coercive case are  in \cite{ABP2, CeramiFortunatoStruwe, ClappWeth2, DevillanovaSolimini, FortunatoJannelli, SchechterZou, Solimini} and the existence of infinitely many solutions of \eqref{eq:critlambda} was proven when $n \ge 7$ and $\lambda >0$ in \cite{SchechterZou} (previous results were in \cite{DevillanovaSolimini} and, when $\Om$ is the unit ball, in \cite{FortunatoJannelli, Solimini}).

\medskip

In this paper we investigate the existence and the qualitative behavior of \emph{least-energy solutions} of \eqref{eq:critlambda}. We define the energy function of \eqref{eq:critlambda} as follows: for $\lambda \ge 0$ fixed, we let
\begin{equation} \label{minimalenergy}
\mathcal{E}_\lambda(\Om) = \inf_{u \in H^1_0(\Om)\backslash \{0\} \text{ solves } \eqref{eq:critlambda}} \int_{\Om}|u|^{2^*}dx.  
 \end{equation}
If no solutions of \eqref{eq:critlambda} exist we let $\mathcal{E}_{\lambda}(\Om) = - \infty$. We say that $\mathcal{E}_\lambda(\Om)$ is attained if there exists a non-zero solution $u \in H^1_0(\Om)$ of \eqref{eq:critlambda}  such that $\int_\Om |u|^{2^*}dx = \mathcal{E}_\lambda(\Om)$, and any such  $u$ will be called \emph{a least-energy solution of \eqref{eq:critlambda}}\footnote{Let us insist on this point to avoid any possible ambiguity. The notion of \emph{least-energy solution} that we consider in this paper is fairly simple: it is a non-zero solution $u$ of \eqref{eq:critlambda} which has least energy among \emph{all} non-zero solutions of \eqref{eq:critlambda}: equivalently, any other non-zero solution $v$ of \eqref{eq:critlambda} satisfies $\int_{\Om} |u|^{2^*}\, dx \le  \int_{\Om} |v|^{2^*}\, dx$. When $ \lambda \in [0,\Lambda_1)$, for instance, the positive ground states constructed in \cite{BN} are least-energy solutions. When $\lambda \ge \Lambda_1$ and every solution of \eqref{eq:critlambda} changes sign, however, our notion of least-energy solution should not be confused with the notion of a variational minimal nodal solution considered e.g. in \cite{BartschWethWillem}. In particular least-energy solutions in our sense, as we shall see, may have more than two nodal domains.}.  We focus more precisely on least-energy solutions $u$ whose energy satisfies $\int_{\Om} |u|^{2^*} \, dx <K_n^{-n}$. In the coercive case $\lambda \in [0, \Lambda_1)$ the situation is very well-understood: the results of \cite{BN, Druetdim3} show that $\mathcal{E}_\lambda(\Om)$ is attained at a positive least-energy solution with $\mathcal{E}_\lambda(\Om) < K_n^{-n}$ if and only if $\lambda \in (\lambda_{0,*}, \Lambda_1)$, where $\lambda_{0,*} = 0$ if $n \ge 4$ and $\lambda_{0,*} >0$ is given by \eqref{deflambdastar} below if $n=3$ (see \cite{BN} when $n \ge 4$ and \cite{Druetdim3} when $n=3$). In the non-coercive case $\lambda \ge \Lambda_1$ the existence of a non-trivial least-energy solution of \eqref{eq:critlambda}  was proven in \cite{SzulkinWethWillem} when $n \ge 5$ and when [$n=4$ and $\lambda \not \in \Sp(-\Delta)$], and the solutions of \cite{SzulkinWethWillem} also satisfy $\mathcal{E}_\lambda(\Om) < K_n^{-n}$. To the best of our knowledge, when $n=3$ and $\lambda \ge \Lambda_1$, the existence of non-trivial solutions of \eqref{eq:critlambda} -- let alone of \emph{least-energy} solutions -- on a general smooth bounded domain $\Om$ is open. The only existence results in the literature when $n =3$ and $\lambda \ge \Lambda_1$ are for bifurcating solutions in a left neighbourhood of $\Sp(-\Delta)$ of fixed width \cite{CeramiFortunatoStruwe}, for sign-changing solutions when $\Om$ is a rectangle and $\lambda$ is large enough \cite{FortunatoJannelli, SunWeiYang2} and for large energy sign-changing solutions of \eqref{eq:critlambda} in the unit ball (see the very recent \cite{SunWeiYang} concerning Br\'ezis' conjecture). In this paper, among other things, we address the existence problem for \eqref{eq:critlambda} on a general $\Om$ when $n=3$ according to the value of $\lambda \ge \Lambda_1$. 

\medskip

Our main focus is in the three-dimensional case. We introduce some notations: if $n=3$ and  $\lambda \ge 0, \lambda \not \in \Sp(-\Delta)$, $-\Delta - \lambda$ is invertible and we let $G_\lambda$ be its Green's function with Dirichlet boundary conditions. At a fixed $x \in \Om$, $G_\lambda(x,\cdot)$ expands as 
\begin{equation*} 
	G_\lambda(x,y)=\frac{1}{4 \pi |x-y|}+ m_\lambda(x) + O(|x-y|) \quad \text{ as } y \to x.
\end{equation*}
We call $x \in \Om \mapsto m_\lambda(x)$ the \emph{mass function} of $G_\lambda(x,\cdot)$ at $x$. In the coercive case $ \lambda \in (0, \Lambda_1)$ it is well-known (see \cite{BN, Druetdim3}) that (positive) least energy solutions of \eqref{eq:critlambda} of energy less than $K_3^{-3}$ exist if and only if $\lambda \in (\lambda_{0,*}, \Lambda_1)$ for some $\lambda_{0,*} >0$. This threshold value $\lambda_{0,*}$ is determined by the following condition:
\begin{equation} \label{deflambdastar}
\max_{\Om} m_{\lambda} >0 \quad \iff \quad  \lambda \in (\lambda_{0,*}, \Lambda_1).
\end{equation}
By analogy with the coercive case it is natural to wonder, when $\lambda \ge \Lambda_1$, whether the mass function $m_\lambda$ still plays a role in the existence of solutions of \eqref{eq:critlambda} satisfying $\mathcal{E}_\lambda(\Om) < K_3^{-3}$ -- in particular through its sign. When $\lambda \ge \Lambda_1, \lambda \not \in \Sp(-\Delta)$ the mass function $m_\lambda$, although well-defined, has suprisingly never been studied and this question seems to never have been addressed before. We investigate it in detail in this paper. For $i \ge 0$ we define 
 \begin{equation} \label{deflambdaistar}
 \lambda_{i,*} =  \inf \big \{ \lambda \in (\Lambda_i, \Lambda_{i+1}), \, \, \max_{\Om} m_\lambda >0  \big \} 
 \end{equation}
and
\begin{equation} \label{def:lambdai:min}
\bar{\lambda}_i = \inf \big \{ \lambda \in (\Lambda_i, \Lambda_{i+1}),\, \,   -\infty < \mathcal{E}_\lambda(\Om) < K_3^{-2} \big \},
\end{equation}
with the convention that $\bar{\lambda}_i = - \infty$ if no solutions of \eqref{eq:critlambda} exist when $\lambda \in (\Lambda_i, \Lambda_{i+1})$. For $i \ge 0$ we prove in Appendix \ref{annexe:masse} below that $\lambda \mapsto \max_{\Om} m_\lambda$ is increasing in $(\Lambda_i, \Lambda_{i+1})$ and that $\lim_{\lambda \underset{<}{\to}  \Lambda_{i+1}} \max_{\Om} m_\lambda  = + \infty$: thus $ \lambda_{i,*} < \Lambda_{i+1}$ and $\max_{\Om} m_\lambda >0$ when $\lambda \in (\lambda_{i,*}, \Lambda_{i+1})$. When $i=0$, ie when $\lambda \in (0, \Lambda_1)$, it was shown in \cite{BN} that $\bar{\lambda}_0 >0$, and it was later shown in \cite{Druetdim3} that $\bar{\lambda}_0 = \lambda_{0,*}$. In other words, in the coercive case, positivity of the mass function somewhere is equivalent to the existence of least-energy solutions of \eqref{eq:critlambda} whose energy is less than $K_3^{-3}$. In the non-coercive cas $\lambda \ge \Lambda_1$ our main result shows that positivity of the mass still implies existence of least-energy solutions of low energy: 

\begin{theo} \label{theoreme:n:3}
Let $\Omega$ be a smooth bounded domain of $\R^3$ and let $i \ge 1$. Let $\lambda_{i,*}, \bar{\lambda}_i$ be defined as in \eqref{deflambdaistar}, \eqref{def:lambdai:min}.
\begin{enumerate}
\item We have 
$$\Lambda_i \le \bar{\lambda}_i \le \lambda_{i,*} < \Lambda_{i+1}.$$ 
\item If $\bar{\lambda}_i = \Lambda_i$ then, for any $\lambda \in (\Lambda_i, \Lambda_{i+1})$, $\mathcal{E}_\lambda(\Om)$ is attained at a least-energy solution of \eqref{eq:critlambda} whose energy is strictly less than $K_3^{-3}$. 
\item If $\bar{\lambda}_i > \Lambda_i$ we have $\bar{\lambda}_i = \lambda_{i,*}$, and 
$$- \infty < \mathcal{E}_{\lambda}(\Om) < K_3^{-3} \quad \iff \quad \lambda \in (\lambda_{*,i}, \Lambda_{i+1}). $$
In addition, for any $\lambda \in (\lambda_{i,*}, \Lambda_{i+1})$, $\mathcal{E}_\lambda(\Om)$ is attained at a least-energy solution of \eqref{eq:critlambda} whose energy is strictly less than $K_3^{-3}$. Furthermore, the function $\lambda \mapsto \mathcal{E}_\lambda(\Om)$ is Lipschitz continuous and decreasing in $(\lambda_{i,*}, \Lambda_{i+1})$ and satisfies $ \lim \limits_{\lambda \underset{> }{\to} \lambda_{i,*}} \mathcal{E}_\lambda(\Om)= K_3^{-3}$ and $\lim \limits_{\lambda \underset{< }{\to} \Lambda_{i+1}}  \mathcal{E}_\lambda(\Om) = 0$.
\item If $\bar{\lambda}_i > \Lambda_i$ and if there is $\lambda \in [\Lambda_i, \lambda_{i,*}]$ for which \eqref{eq:critlambda} has a non-trivial solution, then $\mathcal{E}_{\lambda}(\Om) \ge K_3^{-3}$ and the following alternative holds: either $\mathcal{E}_{\lambda}(\Om) > K_3^{-3}$ or $\mathcal{E}_{\lambda}(\Om) = K_3^{-3}$ is not attained and the set of solutions of \eqref{eq:critlambda} is not compact in $H^1_0(\Om)$. 
\end{enumerate} 
\end{theo}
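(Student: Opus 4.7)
The plan is to leverage the new non-smooth variational framework developed earlier in the paper. For $\lambda \ge 0$ and $u \in L^{2^*}(\Om)$ positive almost everywhere, let $\mu(u, \lambda)$ denote the principal eigenvalue of $-\Delta - \lambda$ on the weighted space $L^2(\Om, u^{2^*-2}\,dx)$ and set $\mu^\lambda = \inf\{\mu(u, \lambda) : \|u\|_{2^*} = 1\}$. Two facts established earlier will be central: (i) if $\mu^\lambda$ is attained by some weight $u^\star$, then its principal eigenfunction, after rescaling, yields a least-energy solution of \eqref{eq:critlambda} with energy strictly less than $K_3^{-3}$; and (ii) $\mu^\lambda$ is attained if and only if $\mathcal{E}_\lambda(\Om) < K_3^{-3}$, the only possible obstruction being an Aubin--Talenti concentration of minimising sequences.

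For Part 1, the bound $\Lambda_i \le \bar{\lambda}_i$ is tautological from the definition of $\bar{\lambda}_i$ as an infimum over $(\Lambda_i, \Lambda_{i+1})$. The bound $\lambda_{i,*} < \Lambda_{i+1}$ follows from the Appendix's result that $\max_\Om m_\lambda \to +\infty$ as $\lambda \nearrow \Lambda_{i+1}$, combined with monotonicity of $\lambda \mapsto \max_\Om m_\lambda$ on $(\Lambda_i, \Lambda_{i+1})$. The core inequality is $\bar{\lambda}_i \le \lambda_{i,*}$: given $\lambda \in (\lambda_{i,*}, \Lambda_{i+1})$ and $x_0 \in \Om$ with $m_\lambda(x_0) > 0$, I construct the test weight $\We$ as a truncated Aubin--Talenti bubble centred at $x_0$ corrected by $G_\lambda(x_0, \cdot)$ --- the three-dimensional analogue of Druet's test function from \cite{Druetdim3}, adapted to the non-coercive regime. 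A careful expansion of the Rayleigh quotient defining $\mu(\We, \lambda)$ in $\epsilon$ produces a dominant term $K_3^{-2}$ and a next-order correction proportional to $-m_\lambda(x_0)\epsilon$; positivity of the mass thus forces $\mu(\We, \lambda) < K_3^{-2}$ for small $\epsilon$, which yields attainment of $\mu^\lambda$ and, via (i), a least-energy solution of energy below $K_3^{-3}$.

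For Parts 2 and 3, I exploit monotonicity, continuity and limit behaviour of $\lambda \mapsto \mathcal{E}_\lambda(\Om)$ on $(\Lambda_i, \Lambda_{i+1})$, all inherited from the corresponding properties of $\mu^\lambda$. Strict monotonicity follows from the fact that $\mu(u, \lambda)$ is strictly decreasing in $\lambda$ for each fixed weight. Lipschitz continuity is obtained by cross-testing minimisers at $\lambda$ and $\lambda + \delta$. The limit $\mathcal{E}_\lambda(\Om) \to 0$ as $\lambda \nearrow \Lambda_{i+1}$ is obtained by testing against a $\Lambda_{i+1}$-eigenfunction of $-\Delta$ (suitably normalised), and $\mathcal{E}_\lambda(\Om) \to K_3^{-3}$ as $\lambda \searrow \lambda_{i,*}$ is a direct by-product of the test function expansion from Part 1 as $\max_\Om m_\lambda \to 0^+$. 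When $\bar{\lambda}_i = \Lambda_i$, existence on the entire interval $(\Lambda_i, \Lambda_{i+1})$ follows from the same machinery, establishing Part 2.

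Part 4 is handled by contradiction: if \eqref{eq:critlambda} admitted a solution at some $\lambda \in [\Lambda_i, \lambda_{i,*}]$ with energy strictly less than $K_3^{-3}$, then by (ii) the infimum $\mu^\lambda$ would be attained by some normalised weight. A reverse application of the test function analysis --- now exploiting $\max_\Om m_\lambda \le 0$ throughout $[\Lambda_i, \lambda_{i,*}]$ --- provides a sharp lower bound on $\mu^\lambda$ that is incompatible with attainment below the critical Sobolev value, yielding the contradiction. At the threshold $\mathcal{E}_\lambda(\Om) = K_3^{-3}$, non-attainment together with non-compactness of the solution set follows from Struwe's global compactness decomposition applied to a minimising sequence. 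The hardest step throughout is the sharp test function expansion in Part 1: in the non-coercive regime the principal eigenfunction on $L^2(\Om, u^{2^*-2}\,dx)$ has a subtle spectral dependence on both $u$ and $\lambda$, and isolating the contribution of the mass $m_\lambda(x_0)$ requires precise local analysis near the concentration point and careful bookkeeping of the correction profile arising from $G_\lambda(x_0, \cdot)$.
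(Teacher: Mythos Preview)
Your outline is correct in spirit and matches the paper for Parts~1 and~2 and for the regularity claims, but there is a genuine gap in the direction $\bar{\lambda}_i \ge \lambda_{i,*}$ (equivalently, in your Part~4 argument that $\mathcal{E}_\lambda(\Om) \ge K_3^{-3}$ whenever $\lambda \le \lambda_{i,*}$). Your ``reverse application of the test function analysis'' does not work: test-function computations produce only \emph{upper} bounds on $\mu^\lambda$. Knowing that a specific bubble-type weight fails to push $\mu^\lambda$ below $K_3^{-2}$ when $\max_\Om m_\lambda \le 0$ says nothing about what an \emph{arbitrary} minimising weight might achieve. Likewise, the limit $\mathcal{E}_\lambda(\Om) \to K_3^{-3}$ as $\lambda \searrow \lambda_{i,*}$ is not ``a direct by-product'' of the expansion: the expansion gives only the upper bound; the matching lower bound requires precisely the missing reverse inequality.

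The paper closes this gap by a blow-up argument rather than by test functions. Assuming $\bar{\lambda}_i > \Lambda_i$, continuity of $\lambda \mapsto \mu_k^\lambda(\Om)$ forces $\mu_k^{\bar{\lambda}_i}(\Om) = K_3^{-2}$, and the second-order stability analysis (the ``only if'' in your fact~(ii)) shows this value cannot be attained. Hence the least-energy solutions $u_\lambda$, for $\lambda \searrow \bar{\lambda}_i$, satisfy $\int_\Om u_\lambda^6\,dx \to K_3^{-3}$ and $\Vert u_\lambda \Vert_\infty \to +\infty$. A pointwise $C^0$ blow-up analysis (Struwe's decomposition plus sharp pointwise bounds and a Pohozaev identity, carried out in Appendix~B for sign-changing solutions) then produces a point $x_0 \in \Om$ with $m_{\bar{\lambda}_i}(x_0) = 0$; monotonicity of $\lambda \mapsto m_\lambda(x_0)$ gives $\lambda_{i,*} \le \bar{\lambda}_i$. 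You invoke Struwe only at the threshold in Part~4, but the decisive use is this asymptotic analysis identifying a zero-mass point, which you omit.
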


Theorem \ref{theoreme:n:3} provides in particular the first \emph{existence} result for \eqref{eq:critlambda} on any smooth bounded domain $\Om$ when $n = 3$ and $\lambda \ge \Lambda_1$. We state it as a Corollary: 

\begin{corol} \label{corol:n:3}
Let $\Omega$ be a smooth bounded domain of $\R^3$ and, for any $i \ge 0$, let $\lambda_{i,*}$ be as in \eqref{deflambdaistar}. If $ \lambda \in \bigcup_{i \ge 0} (\lambda_{i,*}, \Lambda_{i+1})$ equation \eqref{eq:critlambda} admits a non-trivial solution $u \in H^1_0(\Om)$ satisfying $\int_{\Om} u^6 dx < K_3^{-3}$. 
\end{corol}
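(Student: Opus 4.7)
The plan is to derive this corollary as an almost immediate consequence of Theorem \ref{theoreme:n:3}, after disposing of the coercive index $i=0$ separately. For every $i \ge 0$, existence of a non-trivial solution of energy $<K_3^{-3}$ on the interval $(\lambda_{i,*}, \Lambda_{i+1})$ is already asserted by the theorem itself, so essentially no new work is required.

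First I would handle the case $i=0$. On the interval $(\lambda_{0,*}, \Lambda_1)$ the operator $-\Delta - \lambda$ is coercive, and the classical results of \cite{BN, Druetdim3} recalled right after \eqref{deflambdastar} in the introduction already provide a positive least-energy solution $u$ of \eqref{eq:critlambda} with $\int_\Om u^6\,dx < K_3^{-3}$. This takes care of the $i=0$ piece of $\bigcup_{i\ge 0}(\lambda_{i,*}, \Lambda_{i+1})$.

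Next I would fix $i \ge 1$ and $\lambda \in (\lambda_{i,*}, \Lambda_{i+1})$ and appeal to Theorem \ref{theoreme:n:3}. By item $(1)$ of that theorem, $\Lambda_i \le \bar{\lambda}_i \le \lambda_{i,*} < \Lambda_{i+1}$, which in particular guarantees the inclusion $(\lambda_{i,*}, \Lambda_{i+1}) \subset (\Lambda_i, \Lambda_{i+1})$ that I will need. Two sub-cases then arise. If $\bar{\lambda}_i = \Lambda_i$, item $(2)$ directly furnishes a least-energy solution of \eqref{eq:critlambda} at $\lambda$ whose energy is strictly less than $K_3^{-3}$. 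If $\bar{\lambda}_i > \Lambda_i$, item $(3)$ applies verbatim on $(\lambda_{i,*}, \Lambda_{i+1}) = (\bar{\lambda}_i, \Lambda_{i+1})$ and again produces such a solution. Either way one obtains a non-trivial $u \in H^1_0(\Om)$ with $\int_\Om u^6\,dx < K_3^{-3}$, which is exactly the claim.

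Since every step amounts to reading off the appropriate clause of Theorem \ref{theoreme:n:3}, I do not anticipate any genuine obstacle. The only subtlety worth noting is the dichotomy $\bar{\lambda}_i = \Lambda_i$ versus $\bar{\lambda}_i > \Lambda_i$: if we were aiming for the \emph{sharp} existence range it would matter which branch we are in, but for the weaker existence statement of the corollary both branches give the same conclusion on $(\lambda_{i,*}, \Lambda_{i+1})$, so the case distinction is harmless.
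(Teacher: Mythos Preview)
Your proposal is correct and follows exactly the approach the paper intends: the corollary is stated immediately after Theorem~\ref{theoreme:n:3} as a direct consequence, with the coercive case $i=0$ handled by the classical results of \cite{BN, Druetdim3} and the non-coercive case $i \ge 1$ read off from items (1)--(3) of the theorem via the dichotomy on $\bar{\lambda}_i$. No additional argument is needed.
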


Theorem \ref{theoreme:n:3} shows that the mass function $m_\lambda$ still plays a crucial role in the existence theory of \eqref{eq:critlambda} when $n=3$ even in the non-coercive case $i \ge 1$. More precisely, it shows that for $\lambda \ge \Lambda_1$ the picture for least-energy sign-changing\footnote{Recall that when $i \ge 1$ and $\lambda \in [\Lambda_i, \Lambda_{i+1})$  every solution of \eqref{eq:critlambda} changes sign} solutions of \eqref{eq:critlambda} \emph{is at worst as good as in the coercive case.} Indeed, either $\bar{\lambda}_i = \Lambda_i$ and a non-zero least-energy solution of \eqref{eq:critlambda} exists for all $\lambda \in (\Lambda_i, \Lambda_{i+1})$, or $\bar{\lambda}_i > \Lambda_i$ and a non-zero least-energy solution of \eqref{eq:critlambda} exists exactly for  $\lambda \in (\lambda_{i,*}, \Lambda_{i+1})$,  that is when  $\max_\Om m_\lambda >0$, thus following the same behavior than in the coercive case. It is still unclear to us whether the situation $\bar{\lambda}_{i} = \Lambda_i$ may actually occur for some $i \ge 1$ and some smooth bounded domains $\Om$ and whether, in this case, the strict inequality $\bar{\lambda}_i = \Lambda_i <  \lambda_{i,*}$ may also occur. The quantities $\lambda_{i,*}$ and $\bar{\lambda}_i$ are quite difficult to estimate when $i \ge 1$ and are related to the geometry of nodal sets of eigenfunctions of $-\Delta$. We refer to the discussion following the proof of Theorem \ref{theoreme:n:3} in Section \ref{sec:proof:main:results} for more details on this question.  

\medskip

Our analysis also applies to the higher dimensional case. When $n \ge 4$ we obtain a precise description of the energy function, which states as follows:

\begin{theo} \label{theoreme:n:4plus}
Let $\Omega$ be a smooth bounded domain of $\R^n$, $n \ge 4$. Then
\begin{enumerate}
\item $\lambda \mapsto \mathcal{E}_{\lambda}(\Om)$ is positive, Lipschitz continuous and decreasing in every interval $(\Lambda_i, \Lambda_{i+1})$ for $i \ge 0$. It is attained for every $\lambda \in (\Lambda_i, \Lambda_{i+1})$ and we have  $\lim \limits_{\lambda \underset{< }{\to} \Lambda_{i+1}}  \mathcal{E}_\lambda(\Om) = 0$.
\item If $n \ge 5$ and $i \ge 1$, $\mathcal{E}_{\Lambda_i}(\Om)$ is attained, satisfies $\mathcal{E}_{\Lambda_i}(\Om) < K_n^{-n}$ and $\lambda \in [\Lambda_i, \Lambda_{i+1}) \mapsto \mathcal{E}_{\lambda}(\Om)$ is Lipschitz continuous in $[\Lambda_i, \Lambda_{i+1}]$.
\item If $n =4$ we have $ \lim \limits_{\lambda \underset{> }{\to} \Lambda_{i}} \mathcal{E}_\lambda(\Om) \in (0, K_4^{-4}]$.
\end{enumerate}
\end{theo}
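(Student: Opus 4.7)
The strategy is to transfer properties of $\mathcal{E}_\lambda(\Om)$ to the new variational problem $\mu_\lambda = \inf \{\mu_1^\lambda(u) : u \in L^{2^*}(\Om),\, u > 0 \text{ a.e.},\, \Vert u \Vert_{2^*} = 1\}$ introduced in the paper, where $\mu_1^\lambda(u)$ denotes the principal eigenvalue of $-\Delta - \lambda$ on $L^2(\Om, u^{2^*-2}\,dx)$. When $\lambda \in (\Lambda_i, \Lambda_{i+1})$ we have $\lambda \notin \Sp(-\Delta)$; combining the existence theory for $\mu_\lambda$ developed earlier in the paper (which applies away from $\Sp(-\Delta)$) with the classical linking arguments of \cite{CapozziFortunatoPalmieri, ClappWeth2, SzulkinWethWillem}, a minimizer $u_\lambda$ of $\mu_\lambda$ exists, and the Euler-Lagrange equation for its first eigenfunction $v_\lambda$ produces, after rescaling, a least-energy solution of \eqref{eq:critlambda}. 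Positivity of $\mathcal{E}_\lambda(\Om)$ then follows from applying the Sobolev inequality to the identity $\int_\Om |u|^{2^*}\,dx = \int_\Om |\nabla u|^2\,dx - \lambda \int_\Om u^2\,dx$, which gives a uniform positive lower bound on $\int_\Om |u|^{2^*}\,dx$ for any non-zero solution $u$.

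For monotonicity and Lipschitz continuity on each open interval $(\Lambda_i, \Lambda_{i+1})$, the key is that, for any fixed positive weight $u$, the map $\lambda \mapsto \mu_1^\lambda(u)$ is strictly decreasing: for $\lambda_1 < \lambda_2$, the Rayleigh quotient representation yields $\mu_1^{\lambda_2}(u) \le \mu_1^{\lambda_1}(u) - (\lambda_2 - \lambda_1)\,\int_\Om v^2\,dx / \int_\Om v^2 u^{2^*-2}\,dx$, with $v$ the first eigenfunction of $-\Delta - \lambda_1$ on $L^2(\Om,u^{2^*-2}dx)$. Applying this estimate with $u = u_{\lambda_1}$, and exploiting uniform bounds on $v$ and $u_{\lambda_1}$ valid on compact subsets of $(\Lambda_i, \Lambda_{i+1})$ (where $\mu_\lambda$ stays bounded away from $0$), yields two-sided estimates on $\mu_{\lambda_2} - \mu_{\lambda_1}$ which translate, via the explicit scaling correspondence between $\mu_\lambda$ and $\mathcal{E}_\lambda(\Om)$ encoded in the Euler-Lagrange equation, into the strict monotonicity and the local Lipschitz continuity of $\lambda \mapsto \mathcal{E}_\lambda(\Om)$.

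The limit $\mathcal{E}_\lambda(\Om) \to 0$ as $\lambda \to \Lambda_{i+1}^-$ is obtained by bifurcation from the eigenvalue $\Lambda_{i+1}$: writing $u = t\varphi + w$ with $\varphi$ a non-zero eigenfunction of $\Lambda_{i+1}$ and $w$ in the $L^2$-orthogonal complement, a standard Lyapunov-Schmidt reduction in $w$ for small $t > 0$ produces a branch of non-trivial solutions of \eqref{eq:critlambda} with $t$ and $\Vert w \Vert_{H^1_0}$ tending to zero as $\lambda \to \Lambda_{i+1}^-$, hence with $\int_\Om |u|^{2^*}\,dx \to 0$. For part (2), when $n \ge 5$, the Brezis-Nirenberg bubble test using a Talenti profile concentrated at an interior point gives the strict bound $\mathcal{E}_{\Lambda_i}(\Om) < K_n^{-n}$, and Palais-Smale compactness for the variational functional below the threshold $K_n^{-n}$ then yields a minimizer at $\lambda = \Lambda_i$; Lipschitz continuity extends to a full neighbourhood of $\Lambda_i$ thanks to uniform bounds on minimizers in this range. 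For part (3), when $n = 4$, the same bubble test produces only the non-strict upper bound $\lim_{\lambda \to \Lambda_i^+}\mathcal{E}_\lambda(\Om) \le K_4^{-4}$, reflecting the critical-dimension loss of the correction term, while the strict positivity of this limit is a direct consequence of the variational framework developed in the paper, which ensures that $\mu_\lambda$ stays uniformly bounded away from $0$ in a right-neighbourhood of $\Lambda_i$.

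The main obstacle, by far, is the existence of minimizers of the variational problem $\mu_\lambda$ and the identification of these minimizers with least-energy solutions of \eqref{eq:critlambda}: the underlying functional is non-smooth (the principal eigenvalue $\mu_1^\lambda(u)$ is only defined for strictly positive weights) and the non-coercivity of $-\Delta - \lambda$ when $\lambda \ge \Lambda_1$ makes standard Palais-Smale arguments inapplicable. This is the central technical content of the paper; once it is in place, Theorem \ref{theoreme:n:4plus} reduces to Rayleigh quotient calculus, the Brezis-Nirenberg bubble test, and implicit-function-theorem bifurcation, all of which are by now classical.
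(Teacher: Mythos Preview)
Your proposal has the right architecture---reduce $\mathcal{E}_\lambda(\Om)$ to the weighted eigenvalue problem and read off the conclusions from the latter---but there is a genuine conceptual gap in your identification of the eigenvalue. You write $\mu_1^\lambda(u)$ and speak of ``the first eigenfunction of $-\Delta-\lambda_1$''. When $\lambda \in [\Lambda_i,\Lambda_{i+1})$ with $i\ge 1$, Proposition~\ref{prop:signvp} shows that the first $N(i)$ generalised eigenvalues $\mu_1^\lambda(u),\dots,\mu_{N(i)}^\lambda(u)$ are \emph{non-positive} for every weight $u$, so the infimum of $\mu_1^\lambda(u)$ over positive weights is useless (indeed non-positive) and its minimisers have nothing to do with solutions of \eqref{eq:critlambda}. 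The object that actually encodes $\mathcal{E}_\lambda(\Om)$ is the \emph{first positive} generalised eigenvalue $\mu_k^\lambda(u)$ with $k=N(i)+1$, defined by a min--max over $k$-dimensional subspaces, not by a single Rayleigh quotient. Once you work with $\mu_k^\lambda$, your one-eigenfunction estimate for monotonicity and Lipschitz continuity has to be replaced by a min--max argument: this is exactly what Proposition~\ref{prop:C0:mkl} does, bounding $\max_{V}Q_u^{\lambda}$ in terms of $\max_V Q_u^{\lambda'}$ uniformly over $k$-dimensional $V$.

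On the remaining points your route diverges from the paper's in ways that are more or less harmless but worth noting. For the limit $\mathcal{E}_\lambda(\Om)\to 0$ as $\lambda\to\Lambda_{i+1}^-$ you invoke Lyapunov--Schmidt bifurcation; the paper instead takes the trivial weight $u\equiv 1$ and the span of the first $k$ Laplace eigenfunctions to obtain $\mu_k^\lambda(\Om)\le |\Om|^{2/n}(\Lambda_{i+1}-\lambda)$ directly (see \eqref{test:function:C0}). For parts (2) and (3) you propose a direct bubble test at $\lambda=\Lambda_i$; the paper does not do this in dimension $n\ge 4$, but rather imports the existence of a low-energy solution from \cite{SzulkinWethWillem} and feeds it into Proposition~\ref{prop:liens} to get $\mu_k^\lambda(\Om)<K_n^{-2}$ immediately (see the short proof of Theorem~\ref{prop:test:func} for $n\ge 4$). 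Note also that a naive bubble test at $\lambda=\Lambda_i$ is delicate precisely because $-\Delta-\Lambda_i$ has a kernel: the paper's $n=3$ computations in Section~\ref{fonctionstest} show what is actually required (a carefully chosen $k$-dimensional test space, not a single bubble). Finally, the positivity of $\lim_{\lambda\to\Lambda_i^+}\mathcal{E}_\lambda(\Om)$ in part (3) comes from Proposition~\ref{prop:signvp} (uniform positivity of $\mu_k^\lambda(\Om)$) combined with the monotonicity in Proposition~\ref{prop:C0:mkl}; your appeal to ``$\mu_\lambda$ stays uniformly bounded away from $0$'' is correct in spirit but again presupposes that you are tracking the $k$-th, not the first, eigenvalue.
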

The case $i=0$ in Theorem \ref{theoreme:n:4plus} was proven in \cite{BN}. A striking consequence of Theorem \ref{theoreme:n:4plus} is that, when $n \ge 4$, the energy function $\lambda \in (0, + \infty) \mapsto \mathcal{E}_{\lambda}(\Om)$ is discontinuous \emph{exactly} at $\Sp(-\Delta)$. As $\lambda \to \Sp(-\Delta)$ from the left, we prove that least-energy solutions that attain $\mathcal{E}_\lambda(\Om)$ bifurcate from the spectrum, see Proposition \ref{prop:bifurcation} below, and thus behave like the bifurcating families of \cite{CeramiFortunatoStruwe}. When $n=4$, and as $\lambda \to \Sp(-\Delta)$ from the right, we believe that $\lim \limits_{\lambda \underset{> }{\to} \Lambda_{i}} \mathcal{E}_\lambda(\Om) = K_4^{-4}$ and that least-energy solutions for \eqref{eq:critlambda} blow-up.  A schematic graph of $\lambda \mapsto \mathcal{E}_{\lambda}(\Om)$ when $n \ge 5$ is drawn in Figure \ref{figuretheoreme}.

\medskip

\begin{figure} \label{figuretheoreme}
\centering
\includegraphics[scale = 0.4]{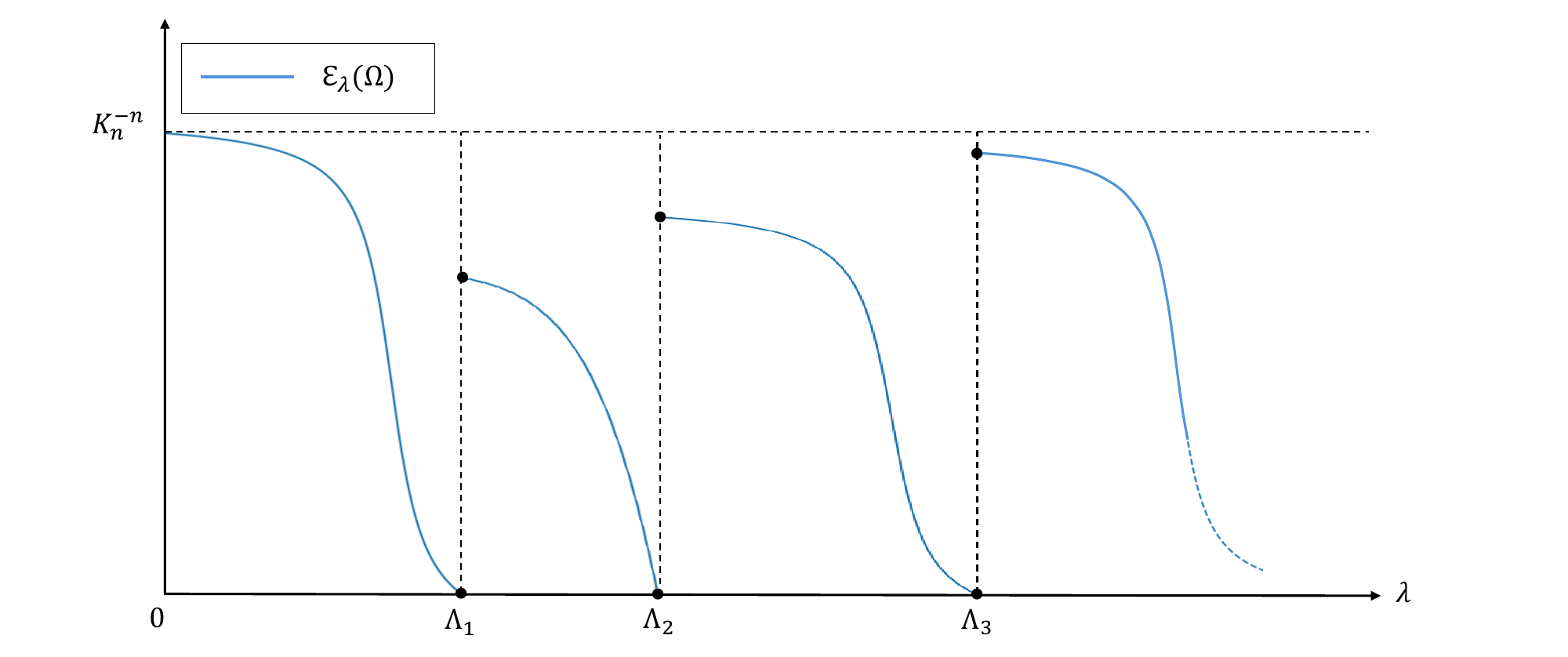}
\caption{Schematic representation of the graph of $\lambda \mapsto \mathcal{E}_{\lambda}(\Om)$ when $n \ge 5$.}
\end{figure}

\subsection{Strategy of proof of Theorems \ref{theoreme:n:3} and \ref{theoreme:n:4plus}.} \label{strategie_preuve}

We prove Theorems \ref{theoreme:n:3} and \ref{theoreme:n:4plus} by introducing a new variational point of view for \eqref{eq:critlambda}. More precisely, we introduce a family of new variational problems, inspired from conformal geometric and spectral-theoretic considerations, whose minimisers provide least-energy solutions of \eqref{eq:critlambda}. We describe these problems and explain the heuristic behind them. Let $\lambda \ge 0 $ and  $u \in L^{2^*}(\Om)$, $u >0$ a.e. in $\Om$. For $ p \ge 1$ we define the so-called $p$-th \emph{generalised eigenvalue of $-\Delta - \lambda$ with weight $u$} as
\begin{equation*}
\mpl(u) = \inf_{\text{dim} V = p} \max_{v \in V \backslash \{0\}} \frac{\int_{\Om}\left(  |\nabla v|^2 - \lambda v^2 \right) \, dx}{\int_{\Om} u^{2^*-2} v^2 \, dx},
\end{equation*} 
where the infimum is taken over all $p$-dimensional subsets $V$ of $H^1_0(\Om)$. Formally, $\mkl(u)$ is the $k$-th eigenvalue of $-\Delta-\lambda$ on the weighted Lebesgue space $L^2(\Om, u^{2^*-2}dx)$. Since $u>0$ a.e., $\mkl(u)$ is well-defined for every $p \ge 1$ and it possesses finitely many independent non-zero generalised eigenvectors: they are the only functions $\vp \in H^1_0(\Om)$, normalised by $\int_{\Om} u^{2^*-2} \vp^2 dx = 1$, that satisfy
\begin{equation} \label{eq:vec:propre:intro}
 - \Delta \vp - \lambda \vp = \mpl(u) u^{2^*-2} \vp \quad \text{ in } \Om. 
 \end{equation}
This is proven in Proposition \ref{prop:vp} below, where we show that $(\mpl(u))_{p \ge0}$ is non-decreasing.  The connection with sign-changing solutions of \eqref{eq:critlambda} arises through the weighted eigenvalue equation \eqref{eq:vec:propre:intro}. Let indeed $v \in H^1_0(\Om) \backslash \{0\}$ be a solution of \eqref{eq:critlambda}: simple arguments show that $\tilde v = \frac{v}{\Vert v \Vert_{2^*}}$ satisfies $\int_\Om |\tilde v|^{2^*}dx = 1$ and 
\begin{equation} \label{eq:vec:propre:intro:2}
 - \Delta \tilde v - \lambda \tilde v = \mu |\tilde v|^{2^*-2} \tilde v \quad \text{ in } \Om, 
 \end{equation}
where $\mu = \Vert v \Vert_{2^*}^{2^*-2}$. Using the uniqueness part in \eqref{eq:vec:propre:intro} and \eqref{eq:vec:propre:intro:2} we prove, in Proposition \ref{prop:structure:vp} below, that \emph{any} non-zero solution $v$ of \eqref{eq:critlambda} is a generalised eigenvector for some weight whose associated generalised eigenvalue is $\Vert v \Vert_{2^*}^{2^*-2}$. In view of this correspondence the strategy of proof that we adopt in this paper is as follows: in order to find solutions of \eqref{eq:critlambda} of least energy we look for \emph{functions $u \in L^{2^*}(\Om), u >0$, which minimise positive generalised eigenvalues over all weights}. This approach is made possible by a crucial property of the generalised eigenvalues, that we prove in Proposition \ref{prop:signvp} below, which is that their sign only depends on the position of $\lambda$ with respect to $\Sp(-\Delta)$. For any fixed $\lambda \ge 0$ and $u \in L^{2^*}(\Om), u >0$ a.e, the index of the smallest positive generalised eigenvalue is therefore independent of $u$ and provides us with a functional that we may minimise over all weight $u$. 

\medskip

Let us be more precise. Let $i \ge 0$ and $\lambda \in [\Lambda_i, \Lambda_{i+1})$. We will denote by $N(i)+1$ the index of the smallest eigenvalue of $-\Delta$, counted with multiplicity, which is strictly bigger than $\Lambda_i$. Equivalently, $N(i)$ is defined by the property that $\lambda_p \le \Lambda_i \iff p \le N(i)$. We let in the following: 
$$k = N(i)+1.$$
We prove in Proposition \ref{prop:signvp} below that, for any $u \in L^{2^*}(\Om)$ with $u >0$ a.e, $\mkl(u) >0$ while $\mpl(u) \le 0$ for $p \le k-1$. The integer $k$ is therefore the index of the smallest positive generalised eigenvalue regardless of $u$, and we will call it the principal generalised eigenvalue of $-\Delta - \lambda$. In view of \eqref{eq:vec:propre:intro} and \eqref{eq:vec:propre:intro:2} we introduce the following minimisation problem: 
\begin{equation*} 
\mkl(\Om) = \inf_{\underset{\Vert u \Vert_{2^*}=1}{u \in L^{2^*}(\Om), u>0 \text{ a.e.},} } \mkl(u), \quad \text{ where } k = N(i)+1.
\end{equation*}
For any $i \ge 0$ fixed this problem is well-defined for $\lambda \in [\Lambda_i, \Lambda_{i+1})$. The fundamental property of $\mkl(\Om)$ that we prove in this paper is as follows: for any $i \ge0$, $\lambda \in [\Lambda_i, \Lambda_{i+1})$, and if $k = N(i)+1$,
\begin{equation} \label{prop:minmuk:intro}
\mkl(\Om) \text{ is attained } \quad \iff \quad \mkl(\Om) < K_n^{-2},
\end{equation}
and if \eqref{prop:minmuk:intro} is satisfied minimisers of $\mkl(\Om)$ define non-zero least-energy solutions of \eqref{eq:critlambda}, that change sign if $i \ge 1$ (or, equivalently, $\lambda \ge \Lambda_1$). We then prove that the assumptions of Theorems \ref{theoreme:n:3} and \ref{theoreme:n:4plus} ensure that the strict inequality in \eqref{prop:minmuk:intro} is satisfied and that this implies that $\mathcal{E}_\lambda(\Om) = \mkl(\Om)^{\frac{n}{2}}$. The properties of $\lambda \mapsto \mathcal{E}_\lambda(\Om)$ then follow from investigating the function $\lambda \mapsto \mkl(\Om)$. 

\medskip

When $\lambda \in [\Lambda_i, \Lambda_{i+1})$ for some $i \ge 1$, least-energy solutions of \eqref{eq:critlambda} were first investigated in \cite{SzulkinWethWillem} (see also \cite{ChenShiojiZou}) and obtained as minimisers of a simple variational problem over $E_{\Lambda_i}(\Om)^{\perp}$. The problem $\mkl(\Om)$ that we propose in this paper, despite being more involved and requiring more preparation than the approach of \cite{SzulkinWethWillem}, has several important advantages. On the one side, the minimisation over all weights $u$ provides us with an additional degree of freedom  in our test-function computations and allows us to prove that $\mkl(\Om) < K_3^{-2}$, which yields the new existence results of Corollary \ref{corol:n:3} when $n=3$. On the other side, by proving the relation $\mathcal{E}_\lambda(\Om) = \mkl(\Om)^{\frac{n}{2}}$ in an appropriate range of $\lambda$ we reduce the analysis of $\lambda \mapsto \mathcal{E}_\lambda(\Om)$ to that of $\lambda \mapsto \mkl(\Om)$, which is an explicit function of $\lambda$, and obtain Theorem \ref{theoreme:n:4plus}. Most of the analytical difficulties that we face in this paper arise from the non-coercive assumption $\lambda \ge \Lambda_1$ and are related to the min-max definition of generalised eigenvalues and to their lack of differentiability in $u$. 

\medskip

Property \eqref{prop:minmuk:intro} is reminiscent of the celebrated minimisation approach of \cite{BN}. This is not surprising:  when $\lambda \in [0, \Lambda_1)$ we prove (Appendix \ref{app:m1} below) that
$$ \mu_1^{\lambda}(\Om) = \inf_{v \in H^1_0(\Om)\backslash \{0\}} \frac{\int_{\Om} \big( |\nabla v|^2 - \lambda v^2 \big)dx}{\big(\int_{\Om}|v|^{2^*}dx \big)^{\frac{n-2}{n}}},$$
so that $\mu_1^{\lambda}(\Om)$ is precisely the minimisation problem of \cite{BN}. When $\lambda \in [\Lambda_i, \Lambda_{i+1})$ for some $i \ge 1$, $\mkl(\Om)$ with $k = N(i)+1$ should thus be understood as the natural generalisation, in the higher-frequency setting $\lambda \in [\Lambda_i, \Lambda_{i+1})$, of the minimisation problem of \cite{BN}. The motivation for defining $\mkl(\Om)$ comes from conformal geometry: $\mkl(\Om)$ is the euclidean transposition of eigenvalue-optimisation problems in conformal geometry in dimension $3$ and higher that have attracted renewed attention in very recent years (see for instance \cite{AmmannHumbert, AmmannJammes, GurskyPerez, HumbertPetridesPremoselli, PetridesTewodrose, PremoselliVetois4}). The problem $\mkl(\Om)$ that we introduce in this paper therefore follows the original philosophy of \cite{BN}, which transposed in the euclidean setting ideas of Aubin \cite{AubinYamabe} for the Yamabe problem. The new variational framework that we develop in this paper is robust and may be applied to other nonlinear critical problems, such as polyharmonic equations, to produce least-energy solutions (we refer for this to \cite{HumbertPetridesPremoselli} where polyharmonic problems on manifolds are discussed). An adaptation of the arguments in this paper also gives an analogue of Theorems \ref{theoreme:n:3} and \ref{theoreme:n:4plus} for radial solutions of \eqref{eq:critlambda} on the unit ball, but we will not pursue this here.

\subsection{Structure of the paper}

The organisation of the paper is as follows. In Section \ref{eigenvalues} we introduce the definition of generalised eigenvalues, investigate their properties and prove the existence of generalised eigenfunctions. We prove in particular Proposition \ref{prop:structure:vp}, where we establish the connection between non-zero solutions of \eqref{eq:critlambda} and generalised eigenvectors. In Section \ref{vp:derivees} we prove that the sign of generalised eigenvalues only depends on the position of $\lambda$ with respect to $\Sp(-\Delta)$ (Proposition \ref{prop:signvp}) and we develop a variational theory for the generalised eigenvalues functionals $u \mapsto \mkl(u)$ (Proposition \ref{prop:dervp}) to overcome their lack of differentiability. Section \ref{vp:minimisation} contains the proof of the right implication in \eqref{prop:minmuk:intro} (Theorem \ref{prop:minmkl1}). We introduce a family of relaxed problems $(\mu_{k,\ve})_{\ve >0}$ that approximate $\mkl(\Om)$ as $\ve \to 0$ and that penalise the vanishing locus of $u$ (see \eqref{Fkeps} below). We prove that each relaxed problem admits a minimiser $u_\ve$ and that the family $(u_\ve)_{\ve >0}$ defines a minimising sequence for $\mkl(\Om)$ that converges in a strong enough sense to a minimiser of $\mkl(\Om)$. In the course of the proof we rely on the variational theory of Section \ref{vp:derivees}  and we prove in particular that if $u \in L^{2^*}(\Om), u>0$ a.e. attains $\mkl(\Om)$, $\mkl(u)$ is simple and any normalised generalised eigenvector $\vp$  satisfies 
 $$ u = |\vp| \quad \text{ in } \Om. $$
 The latter is the Euler-Lagrange equation associated to $\mkl(\Om)$ and together with \eqref{eq:vec:propre:intro} it implies that $\vp$ is a solution of \eqref{eq:critlambda} of energy $\int_\Om |\vp|^{2^*}\, dx = \mkl(\Om)^{\frac{n}{2}}$. That this $\vp$ is a least-energy solution then follows from \eqref{eq:vec:propre:intro:2}. In Section \ref{fonctionstest} we prove (Theorem \ref{prop:test:func}) by test-function computations that the strict inequality in \eqref{prop:minmuk:intro} is satisfied under the assumptions of Theorems \ref{theoreme:n:3} and \ref{theoreme:n:4plus}. We carefully estimate, for a well-chosen family of weights and of  $k$-dimensional test subspaces, their contribution to $\mkl(\Om)$. In the non-coercive case, the min-max nature of $\mkl(u)$ forces us to work with a higher degree of precision. In Section \ref{ordre:deux} we develop a second-order variational theory for $u \mapsto \mkl(u)$: we first expand $\mkl$ to second order along interior variations and use the latter to prove a stability inequality of sorts at minimisers (Proposition \ref{prop:DL:lambda:ordre:2}), from which we deduce the left implication in \eqref{prop:minmuk:intro}.
This is the first time, to our knowledge, that a second variation for generalised eigenvalue functionals is computed. Section \ref{sec:proof:main:results} contains the proof of Theorems \ref{theoreme:n:3} and \ref{theoreme:n:4plus}. Appendix \ref{app:m1} contains the proof that, in the case $i=0$, $\mkl(\Om)$ is exactly the celebrated minimisation problem of \cite{BN}. Appendix \ref{app:m2} proves a compactness result for sign-changing solutions of \eqref{eq:critlambda} that is used in the proof of Theorem \ref{theoreme:n:3}. And finally, Appendix \ref{annexe:masse} contains a detailed study of the mass function $x \in \Om \mapsto m_\lambda(x)$ in the non-coercive case $\lambda \ge \Lambda_1, \lambda \not \in \Sp(-\Delta)$.

\medskip

\textbf{Acknowledgements:} the second author wishes to thank J. Mederski, P.-D. Thizy and T. Weth for several useful discussions during the preparation of this work.

\section{Generalised eigenvalues and their properties} \label{eigenvalues}

We introduce in this section the notion of generalized eigenvalues that we investigate in this paper. Let $\Om \subseteq \R^n, n\ge3$ be any connected open set. In the rest of the paper if $p \in [1, + \infty]$ and $u \in L^p(\Om)$ we will denote its $L^p$ norm in $\Om$ by $\Vert u \Vert_p$. For $u \in L^{2^*}(\Om)\backslash \{0\}$,  $u\ge0$ a.e., where $2^* = \frac{2n}{n-2}$, we let 
$$K_u = \Big \{ v: \Om \to \R \text{ measurable such that }  u^{\frac{2}{n-2}} v = 0 \text{ a.e. in } \Om \Big \} $$
and
$$ \begin{aligned}
L^2(\Om, u^{2^*-2}dx) 
  &= \Big \{ v: \Om \to \R \text{ measurable such that }\int_{\Om} u^{2^*-2} v^2 dx < + \infty \Big \}. 
\end{aligned} $$
Recall that $2^*-2 = \frac{4}{n-2}$. We define $L^2_u(\Om)$ as
$$ \begin{aligned}
L^2_u(\Om) = L^2(\Om, u^{2^*-2}dx) / K_u,
 \end{aligned}  $$
and we endow it with the quotient norm. $L^2_u(\Om)$ endowed with the scalar product $(v,w)_{L^2_u} = \int_{\Om} u^{2^*-2} vw dx$ and the associated norm $ \Vert v \Vert_{L^2_u}^2 =  \int_{\Om} u^{2^*-2} v^2 dx$ is thus a Hilbert space. Denote by $D^{1,2}(\Om)$ the closure of $C^\infty_c(\Om)$ for the norm $\Vert \nabla \cdot \Vert_{2}$. If $\Om$ is bounded, $D^{1,2}(\Om) = H^1_0(\Om)$. H\"older and Sobolev inequalities imply that $D^{1,2}(\Om)\subset L^2_u(\Om)$ and $L^{2^*}(\Om) \subset L^2_u(\Om)$. 
\begin{defi} \label{defiL2u}
Let $u \in L^{2^*}(\Om)\backslash \{0\}$, $u \ge0$ a.e., $k\in \mathbb{N} \backslash \{0\}$, and let $V \subseteq D^{1,2}(\Om)$ be a linear subspace. We say that 
$$ \dim_u V = k $$
if there exist $v_1, \dots, v_k \in D^{1,2}(\Om)$ such that $V = \text{Span}(v_1, \dots, v_k)$ and such that the family $(v_1, \dots, v_k)$ is free in $L^2_u(\Om)$. 
\end{defi}
Equivalently, $\dim_u V = k$ if and only if there exist $v_1, \dots, v_k \in D^{1,2}(\Om)$ such that $V = \text{Span}(v_1, \dots, v_k)$ and $(u^{\frac{2}{n-2}}v_1, \dots, u^{\frac{2}{n-2}}v_k)$ is free in $L^2(\Om)$, and in this case $\dim_u V$ is also the real dimension of $V$ as a vector subspace of $L^2_u(\Om)$, which we denote by $\dim V$. We also define
\begin{equation} \label{defLu}
 \begin{aligned}
L^{2^*}_>(\Om) & = \big \{ u\in L^{2^*}(\Om) \backslash \{0\}, u > 0 \text{ a.e. in } \Omega \big \}.  \\
\end{aligned} 
\end{equation}
For $V \subset D^{1,2}(\Om)$ and $u \in L^{2^*}(\Omega) \backslash \{0\}$, $u \ge 0$ a.e.,  we have 
$$ \dim_u V \le \dim V $$
and it is easily seen that equality holds when $u \in L^{2^*}_{>}(\Om)$.

\medskip

From now on and until the end of this paper we will assume that $\Om$ is a smooth bounded domain of $\R^n$, $n \ge 3$. We recall that for $u \in H^1_0(\Om)$ we let $\Vert u \Vert_{H^1_0} = \Vert \nabla u \Vert_{2}$, that $0< \lambda_1 < \lambda_2 \le \cdots \le \lambda_k \le \cdots$ are the eigenvalues, counted with multiplicity,  of $-\Delta$ in $H^1_0(\Om)$ and that  $\Lambda_k$, for $k \ge 1$, are the eigenvalues counted without multiplicity. Recall the convention $\Lambda_0 = 0$, so that $ 0  =  \Lambda_0  < \Lambda_1  <  \cdots$. For $i \ge 1$ we denote by $E_{\Lambda_i}(\Om) \subset H^1_0(\Om)$ the eigenspace associated to $\Lambda_i$. We let $N$ be the multiplicity map: it is the increasing map $N: \mathbb{N} \to \mathbb{N}$ such that $N(0) = 0, N(1) = 1$ and, for any $k \ge 0$, 
\begin{equation} \label{defN} 
\lambda_{N(k) +1} = \cdots = \lambda_{N(k+1)} = \Lambda_{k+1},
\end{equation}
so that the number of eigenvalues equal to $\Lambda_{k+1}$ is $N(k+1) - N(k) \ge 1$. Let $\lambda \ge0$, $k \ge 1$ be an integer and $u \in L^{2^*}(\Om)\backslash \{0\}$, $u \ge 0$ a.e. We define:
\begin{equation} \label{muk}
\mkl(u) = \inf_{\text{dim}_u V = k} \max_{v \in V \backslash \{0\}} \frac{\int_{\Om}\left(  |\nabla v|^2 - \lambda v^2 \right) \, dx}{\int_{\Om} u^{2^*-2} v^2 \, dx},
\end{equation}
where the infimum is taken over all linear subspaces $V \subset H^1_0(\Om)$ satisfying $\dim_u V = k$ in the sense of Definition \ref{defiL2u}. We allow this infimum to be equal to $- \infty$. We also let, for  $v \in H^1_0(\Om)$, 
\begin{equation} \label{defQu}
Q_u^{\lambda}(v) = \frac{\int_{\Om}\left(  |\nabla v|^2 - \lambda v^2 \right) \, dx}{\int_{\Om} u^{2^*-2} v^2 \, dx}.
\end{equation}
This Section and the next one are devoted to investigating the properties of $\mkl(u)$. When $u \equiv 1$, \eqref{muk} is the classical min-max definition of the eigenvalues of $- \Delta- \lambda$ in $H^1_0(\Om)$. When $u \not \equiv 1$, $\mkl(u)$ generalises this definition over the measure space $L^{2}_u$. We call it a \emph{generalised eigenvalue of weight $u$}. In this section we prove that most of the well-known properties of eigenvalues of $-\Delta$ remain true for generalised eigenvalues, up to some small adaptations. We first prove that $\mu_1^{\lambda}(u)$ is always finite provided $u \in L^{2^*}_{>}(\Om)$, in which case $\mu_1^\lambda(u)$ is also simple:

\begin{prop} \label{prop:vp1}
Let $\Om \subset \R^n$, $n \ge 3$, be bounded and smooth. Let $\lambda \ge0$ and $u \in L^{2^*}_>(\Om)$, where $L^{2^*}_>(\Om)$ is as in \eqref{defLu}. Then:
\begin{enumerate}
\item There exists $\varphi_{1}^{\lambda}(u) \in H^1_0(\Om)\backslash \{0\}$ that attains $\mu_1^{\lambda}(u)$ and satisfies $\Vert \varphi_{1}^{\lambda}(u)\Vert_{L^2_u} = 1$. It satisfies, in a weak sense,
\begin{equation} \label{vp1}
\big( - \Delta - \lambda \big) \vp_{1}^{\lambda}(u) = \mu_1^{\lambda}(u) u^{2^*-2}    \vp_{1}^{\lambda}(u) \quad \text{ in } \Om. 
\end{equation}
\item Denote by $E_1^{\lambda}(u) \subseteq H^1_0(\Om)$ the vector space of solutions of \eqref{vp1}. Then $\dim  E_1^{\lambda}(u) = \dim_u  E_1^{\lambda}(u) = 1$ and if $\vp \in E_1^{\lambda}(u)\backslash \{0\}$ we either have $\vp > 0$ a.e. or $\vp < 0$ a.e in $\Om$. 
\end{enumerate}
\end{prop}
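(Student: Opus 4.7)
The plan is to use the direct method of the calculus of variations for part (1), and then exploit the sign-invariance of the Rayleigh quotient together with the strong maximum principle for part (2).

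\textbf{Finiteness of $\mu_1^\lambda(u)$ and a priori bounds.} Because $u > 0$ a.e.\ in $\Om$ and $u \in L^{2^*}(\Om)$, the sets $\{u < t\}$ have Lebesgue measure tending to $0$ as $t \to 0^+$. For any $v \in H^1_0(\Om)$ with $\Vert v \Vert_{L^2_u} = 1$, I would split
\[
\int_\Om v^2 \, dx \;=\; \int_{\{u \ge t\}} v^2 \, dx + \int_{\{u < t\}} v^2 \, dx \;\le\; t^{2-\crit} + |\{u < t\}|^{2/n} K_n^2 \Vert \nabla v \Vert_2^2,
\]
using $v^2 \le t^{2-\crit} u^{\crit - 2} v^2$ on $\{u \ge t\}$ for the first piece and H\"older together with Sobolev on the second. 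Choosing $t$ small enough so that $\lambda |\{u < t\}|^{2/n} K_n^2 \le 1/2$ yields
\[
\int_\Om \left( |\nabla v|^2 - \lambda v^2 \right) dx \;\ge\; \tfrac{1}{2} \Vert \nabla v \Vert_2^2 - \lambda t^{2-\crit},
\]
which simultaneously shows $\mu_1^\lambda(u) > -\infty$ and guarantees that every minimising sequence is bounded in $H^1_0(\Om)$.

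\textbf{Extracting a minimiser.} Along a minimising sequence $(v_n)$, up to a subsequence $v_n \rightharpoonup \vp$ weakly in $H^1_0(\Om)$ and strongly in $L^2(\Om)$ by Rellich--Kondrachov. The delicate point is strong convergence in $L^2_u(\Om)$, which I would obtain by splitting
\[
\int_\Om u^{\crit - 2} (v_n - \vp)^2 \, dx \;\le\; M^{\crit-2} \Vert v_n - \vp \Vert_2^2 + C \Big( \int_{\{u > M\}} u^{\crit} \, dx \Big)^{(\crit-2)/\crit},
\]
where the second factor comes from H\"older and the uniform $L^{\crit}$-bound on $(v_n - \vp)$. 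The second term can be made arbitrarily small by taking $M$ large (absolute continuity of $u^{\crit}$), and then the first vanishes as $n \to \infty$. Thus $\Vert \vp \Vert_{L^2_u} = 1$, and by weak lower semicontinuity of $\Vert \nabla \cdot \Vert_2^2$ together with strong $L^2$ convergence, $Q_u^\lambda(\vp) \le \liminf Q_u^\lambda(v_n) = \mu_1^\lambda(u)$, so $\vp$ attains the infimum. A standard Lagrange multiplier computation (the multiplier is $\mu_1^\lambda(u)$ by testing \eqref{vp1} against $\vp$ itself) then produces \eqref{vp1}.

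\textbf{Constant sign and simplicity.} Since $u > 0$ a.e.\ and $|\nabla |\vp|\,| = |\nabla \vp|$ a.e., the replacement $\vp \mapsto |\vp|$ preserves both $Q_u^\lambda$ and the normalisation $\Vert \vp \Vert_{L^2_u}$, so $|\vp|$ is also a minimiser and a nonnegative weak solution of $-\Delta |\vp| = V |\vp|$ with $V = \lambda + \mu_1^\lambda(u) u^{\crit-2} \in L^{n/2}(\Om)$. A Brezis--Kato/Moser bootstrap promotes $|\vp|$ to $L^p_{\loc}(\Om)$ for every $p < \infty$ and hence to continuity by standard elliptic regularity; the strong maximum principle (in the form valid for potentials of critical $L^{n/2}$ integrability) then forces $|\vp| > 0$ in $\Om$, so every nontrivial $\vp \in E_1^\lambda(u)$ has constant sign. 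For simplicity, suppose $\vp_1, \vp_2 \in E_1^\lambda(u)$ are linearly independent; by the above both are of constant sign, WLOG positive in $\Om$. Fix any $x_0 \in \Om$ and set $c = \vp_1(x_0)/\vp_2(x_0) > 0$: the eigenfunction $\vp_1 - c \vp_2 \in E_1^\lambda(u)$ is nontrivial but vanishes at $x_0$, contradicting the constant-sign property just established. Finally $\dim_u E_1^\lambda(u) = \dim E_1^\lambda(u) = 1$ since $u \in L^{\crit}_{>}(\Om)$.

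\textbf{Main obstacle.} The most delicate technical point will be the strong maximum principle applied to a potential at critical $L^{n/2}$ integrability: the classical Gilbarg--Trudinger Harnack theorem requires $L^p$ with $p > n/2$, so one must first upgrade $|\vp|$ to local boundedness via Brezis--Kato iteration before invoking Trudinger-type versions of Harnack. The weighted strong convergence argument in $L^2_u$ is the second-hardest point; everything else follows standard patterns for min-max eigenvalue problems.
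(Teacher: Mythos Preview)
Your Part (1) is correct and, if anything, cleaner than the paper's: you give a direct quantitative coercivity bound, whereas the paper proves both finiteness of $\mu_1^\lambda(u)$ and boundedness of minimising sequences by separate contradiction arguments. Your $L^2_u$ strong-convergence argument is essentially identical to the paper's.

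Part (2) has a genuine gap. Your claim that Brezis--Kato plus ``standard elliptic regularity'' yields continuity of $|\vp|$ is incorrect when the potential $V=\lambda+\mu_1^\lambda(u)u^{\crit-2}$ is only in $L^{n/2}$: the bootstrap gives $|\vp|\in L^p_{\loc}$ for all $p<\infty$, hence $V|\vp|\in L^q_{\loc}$ only for $q<n/2$, so $|\vp|\in W^{2,q}_{\loc}$ for $q<n/2$, which does \emph{not} embed into $C^0$. Without continuity, your simplicity argument collapses: the combination $\vp_1-c\vp_2$ vanishing at a single point $x_0$ does not contradict ``constant sign a.e.''. You correctly flag the $L^{n/2}$ Harnack issue as the main obstacle, but then sidestep it by invoking a continuity that is not available.

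The paper avoids both problems by working entirely at the measure-theoretic level. For strict positivity it tests \eqref{vp1} with $\eta^2\psi_\ve^{-1}$ where $\psi_\ve=\sqrt{\ve^2+\psi^2}$, obtains $\int_{B_r}|\nabla\psi|^2/\psi^2\le C r^{n-2}$ (this uses only $u\in L^{\crit}$), and then applies John--Nirenberg to $\ln\psi_\ve$ to get $\big(\int_{B_r}\psi^p\big)\big(\int_{B_r}\psi^{-p}\big)\le C$ for some $p>0$; since $\psi\not\equiv 0$ this forces $|\{\psi=0\}|=0$. For simplicity it normalises two solutions by $\int_\Om\psi_i\,dx=1$ and observes that $\psi_1-\psi_2\in E_1^\lambda(u)$ has zero mean, so cannot be strictly signed a.e.\ unless it vanishes identically. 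Your argument is easily repaired by replacing the pointwise normalisation with this integral one.
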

We call  $\vp_1^{\lambda}(u)$ the first generalised eigenfunction and $E_1^{\lambda}(u)$ the associated generalised eigenspace. 

\begin{proof}
A first important observation is that, by \eqref{muk}, $\mu_1^{\lambda}(u)$ rewrites as 
\ben \label{mu1eq}
 \mu_{1}^{\lambda}(u) = \inf_{v \in H^1_0(\Om)  \backslash \{0\}} \frac{\int_{\Om}\left(  |\nabla v|^2 - \lambda v^2 \right) \, dx}{\int_{\Om} u^{2^*-2} v^2 \, dx}. 
 \een
We first prove that when $u \in L^{2^*}_>(\Om)$, $\mu_1^{\lambda}(u) > - \infty$. We follow for this the arguments in \cite[Proposition 2.2]{ElSayed}. Assume by contradiction that $\mu_1^{\lambda}(u) = - \infty$ and let $(v_i)_{i \ge0}$ be a minimising sequence for $\mu_1^{\lambda}(u)$ normalised such that $\int_\Om u^{2^*-2} v_i^2 dx = 1$. By \eqref{mu1eq} we then have $\int_{\Om}\big( |\nabla v_i|^2 - \lambda v_i^2 \big) dx \to - \infty$ and thus $\Vert v_i \Vert_{H^1_0} \to + \infty$ as $i \to + \infty$. We let, for $i \ge 1$, $\tilde{v}_i = \frac{v_i}{\Vert v_i\Vert_{H^1_0}}$. Up to passing to a subsequence there exists $\tilde{v}_0 \in H^1_0(\Om)$ such that $\tilde{v}_i \rightharpoonup \tilde{v}_0$ in $H^1_0(\Om)$ and $\tilde{v}_i \to \tilde{v}_0$ in $L^2(\Om)$. By the normalisation of $v_i$ we have
$$ \int_{\Om} u^{2^*-2} \tilde{v}_i^2dx = \frac{1}{\Vert v_i \Vert_{H^1_0}^2} \to 0 $$
as $ i \to + \infty$, so that Fatou's lemma yields $\int_\Om u^{2^*-2} \tilde{v}_0^2dx \le 0$. Since $u>0$ a.e. in $\Omega$, this shows that $\tilde{v}_0 = 0$ p.p. in $\Om$. But for $i$ large enough we have $\int_{\Om}\big( |\nabla v_i|^2 - \lambda v_i^2 \big)dx  \le 0$, so dividing by $\Vert v_i \Vert_{H^1_0}^2$ and using that $\tilde{v}_0 = 0$ yields $\Vert \tilde{v}_i \Vert_{H^1_0}^2 \le o(1)$ as $i \to + \infty$, a contradiction with $\Vert \tilde{v}_i \Vert_{H^1_0} = 1$. Hence $\mu_1^{\lambda}(u) > - \infty$.

We now prove that $ \mu_{1}^{\lambda}(u)$ is attained. Let $(v_i)_{i\ge1}$ be a minimising sequence normalised such that $\int_\Om u^{2^*-2} v_i^2 dx = 1$. Without loss of generality we can assume that $v_i \ge0$ a.e. in $\Om$. We first claim that $(v_i)_{i\ge1}$ is bounded in $H^1_0(\Om)$. Assume by contradiction that $\Vert v_i \Vert_{H^1_0} \to + \infty$ as $i \to + \infty$ and let $\tilde{v}_i = \frac{v_i}{\Vert v_i\Vert_{H^1_0}}$. Then, up to a subsequence, $\tilde{v}_i \rightharpoonup \tilde{v}_0$ in $H^1_0(\Om)$ and $\tilde{v}_i \to \tilde{v}_0$ in $L^2(\Om)$. As before, 
$$ \int_{\Om} u^{2^*-2} \tilde{v}_i^2dx = \frac{1}{\Vert v_i \Vert_{H^1_0}^2} \to 0 $$
as $i \to + \infty$ so that, as before, $\tilde{v}_0 = 0$ a.e. in $\Om$. For $i$ large enough we then have 
$$ \int_{\Om} |\nabla \tilde{v}_i|^2 dx = \lambda \int_{\Om} \tilde{v}_i^2dx +  \frac{\mu_1^{\lambda}(u)+ o(1)}{\Vert v_i \Vert_{H^1_0}^2}= o(1)$$ 
as $i \to + \infty$, a contradiction with $\Vert \tilde{v}_i \Vert_{H^1_0} = 1$. Hence $(v_i)_{i\ge1}$ is bounded in $H^1_0(\Om)$. Denote by $\vp_1^\lambda(u)$ its weak limit in $H^1_0(\Om)$. We have $\vp_1^{\lambda}(u) \ge0$ a.e. in $\Om$ and we claim that, up to a subsequence, the following holds:
\begin{equation} \label{eq:cvforte}
\int_{\Om} u^{2^*-2}(v_i - \vp_1^\lambda(u))^2dx \to 0 \quad \text{ as } i \to + \infty. 
\end{equation}
We prove \eqref{eq:cvforte}. Let $R>0$. Then
$$ \begin{aligned}
\int_{\Om} u^{2^*-2}(v_i - \vp_1^{\lambda}(u))^2dx &= \int_{\{u \le R \}} u^{2^*-2}(v_i - \vp_1^{\lambda}(u))^2dx \\
&+ \int_{\{ u>R\}} u^{2^*-2}(v_i - \vp_1^\lambda(u))^2dx.
\end{aligned} $$
Up to passing to a subsequence $v_i \to \vp_1^\lambda(u)$ in $L^2(\Om)$, so the first integral in the right-hand side converges to $0$. Since the embedding $H^1_0(\Om) \subset L^{2^*}(\Om)$ is continuous and $\Vert v_i \Vert_{H^1_0} $ is bounded we obtain, by H\"older's inequality,
$$ \limsup_{i \to + \infty}\int_{\Om} u^{2^*-2}(v_i - \vp_1^\lambda(u))^2dx \le C \Big( \int_{\{u >R\}} u^{2^*}dx \Big)^{\frac{2^*-2}{2^*}} $$
for some $C>0$ independent of $R$. Letting $R \to + \infty$ proves \eqref{eq:cvforte}. The limiting function $\vp_1^\lambda(u)$ thus satisfies $\int_{\Om} u^{2^*-2} (\vp_1^{\lambda}(u))^2dx = 1$ and so $\vp_1^{\lambda}(u) \neq 0$. By weak lower semicontinuity of the $H^1_0$ norm,  $\vp_1^{\lambda}(u)$ attains $\mu_1^{\lambda}(u)$. That $\vp_1^{\lambda}(u)$ satisfies \eqref{vp1} follows form simple variational arguments. This proves point $(1)$ of Proposition \ref{prop:vp1}.

We now prove point (2), and we adapt here an argument from \cite[Proposition 2.4]{GurskyPerez}. We first prove that every solution of \eqref{vp1} has constant sign and vanishes on a set of measure zero. Let $\vp \in E_1^{\lambda}(u)$ be such that $\vp_+ = \max(\vp, 0) \neq 0$. We will prove that $|\{\vp_+ = 0 \}| = 0$, which will imply that $\vp \ge 0$ a.e. and $|\{\vp = 0\}| = 0$. By replacing $\vp$ by $-\vp$ if needed the same argument shows that if $\min(\vp,0) \neq 0$ then $\vp \le 0$ a.e. and $|\{\vp = 0\}| = 0$.  Let thus $\vp$ be a solution of \eqref{vp1} with $\vp_+ \neq 0$ and let $\psi = \vp_+$. Integrating \eqref{vp1} against $\psi$ shows that $\psi$ still attains $\mu_1^{\lambda}(u)$. Simple variational considerations thus show that $\psi$ still satisfies \eqref{vp1}. For $\ve >0$ we let $\psi_\ve = \sqrt{\ve^2 + \psi^2}$. Let $x_0 \in \Om$ and $r>0$ be such that $B(x_0, 2r) \subset \subset \Om$ and let $\eta \in C^\infty_c(B(x_0,2r))$ be such that $\eta \equiv 1$ in $B(x_0,r)$ and $\Vert \nabla \eta \Vert_{\infty} \le 2 r^{-1}$. We have 
\begin{equation} \label{vp1eps:1}
 \nabla \psi_\ve =\frac{\psi}{\psi_\ve} \nabla \psi \quad \text{ and } \quad  |\nabla \psi_\ve|^2 = \frac{\psi^2}{\psi_\ve^2} |\nabla \psi|^2 .
 \end{equation}
Integrating \eqref{vp1} against $\eta^2 \psi_\ve^{-1}$ and using Young's inequality shows that 
\ben \label{vp1eps:11}
\int_{\Om} \eta^2 \Bigg( \frac{|\nabla \psi|^2}{\psi_\ve^2} \frac{\psi}{\psi_\ve} - \frac12 \frac{|\nabla \psi|^2}{\psi_\ve^2}\Bigg) dx \le -\mu_1^{\lambda}(u) \int_{\Om} \eta^2 u^{2^*-2} \frac{\psi}{\psi_\ve}dx + 2 \int_{\Om} |\nabla \eta|^2dx. 
\een
Since $\psi \le \psi_\ve$ by definition, the right-hand side is easily estimated as 
$$ \Big|  -\mu_1^{\lambda}(u) \int_{\Om} \eta^2 u^{2^*-2} \frac{\psi}{\psi_\ve}dx + 2 \int_{\Om} |\nabla \eta|^2dx  \Big| \le C(1+ \Vert u \Vert_{2^*}^{2^*-2}) r^{n-2},$$
where $C>0$ is independent of $u$ and $r$. Since $\eta^2 |\nabla \psi|^2 \psi_\ve^{-2}$ and  $\eta^2 |\nabla \psi|^2 \psi_\ve^{-3}$ are integrable for every $\ve >0$ and non-increasing in $\ve$ the monotone convergence theorem applies separately to the two integrands in the left-hand side of \eqref{vp1eps:11}: passing \eqref{vp1eps:11} to the limit as $\ve \to 0$ then gives
$$\int_{B(x_0,r)} \frac{|\nabla \psi|^2}{\psi^2} dx  \le C(n,u) r^{n-2}. $$
By  \eqref{vp1eps:1} and since $\psi_\ve \ge  \psi$ we have $ |\nabla \ln \psi_\ve| = \frac{|\nabla \psi_\ve|}{\psi_\ve} \le  \frac{|\nabla \psi|}{\psi}$
 a.e. in $\Omega$. The latter inequalities then shows that for any $\ve $ small enough we have
\begin{equation*}
\int_{B(x_0,r)} |\nabla \ln \psi_\ve|^2 \le  C(n,u) r^{n-2},
\end{equation*}
where $C(n,u)$ is independent of $\ve$. With H\"older's inequality we then get 
$$\int_{B(x_0,s)} |\nabla \ln \psi_\ve|dx \le C s^{n-1} \quad \text{ for all } s \le r. $$
John-Nirenberg's inequality (see e.g. \cite[Theorem $7.21$]{GilTru}) shows the existence of some $p>0$ and $C$, independent of $\ve$ and depending only on $n,u,r$, such that 
$$ \int_{B(x_0,s)} e^{p|w_\ve|} dx \le C $$
for all $s \le r$, where we have let $w_\ve = \ln \psi_\ve - \frac{1}{|B(x_0,s)|} \int_{B(x_0,s)} \ln \psi_\ve dx$. Simple arguments then show that 
$$\Big( \int_{B(x_0,r)} \psi_\ve^p dx \Big) \Big(\int_{B(x_0,r)} \psi_\ve^{-p} dx\Big) \le C(n,u,r). $$
Letting $\ve \to 0$ by Fatou's lemma and monotone convergence then yields 
$$\Big( \int_{B(x_0,r)} \psi^p dx \Big) \Big(\int_{B(x_0,r)} \psi^{-p} dx\Big) \le C(n,u,r). $$
Since $\psi \neq 0$ by assumption this shows that $\int_{B(x_0,r)} \psi^{-p} dx < + \infty$. Hence $\{\psi = 0\} \cap B(x_0,r)$ has measure zero for every ball $B(x_0, 2r)$ compactly supported in $\Om$ and thus $\{\psi = 0\}$ has measure zero, which shows that $\vp >0$ a.e. in $\Omega$. 

We finally show that $E_1^{\lambda}(u)$ is one-dimensional. Let $\psi_1,\psi_2 \in H^1_0(\Om)$ be two solutions of \eqref{vp1} satisfying $\int_\Om \psi_1dx = \int_{\Om} \psi_2 dx = 1$. In particular $\psi_1 - \psi_2$ still solves \eqref{vp1}. Assume first that $(\psi_1 - \psi_2)_+ \neq 0$. The previous argument shows that $\psi_1 - \psi_2 >0$ a.e. which is impossible since $\int_{\Om} (\psi_1 - \psi_2)dx = 0$. Hence $\psi_1 - \psi_2 \le 0$. The same argument for $(\psi_1-\psi_2)_{-}$ shows that $\psi_1 = \psi_2$ a.e. in $\Om$. 
\end{proof}

We now consider generalised eigenvalues of higher order. We prove that for $u \in L^{2^*}_{>}(\Om)$ all the $\mkl(u)$, $k \ge 1$, are attained, we construct generalised eigenvectors and prove an equivalent characterisation of the $\mkl(u)$:

\begin{prop} \label{prop:vp}
Let $\Om \subset \R^n$, $n \ge 3$, be bounded and smooth. Let $\lambda \ge0$ and $u \in L^{2^*}_>(\Om)$. Then
\begin{enumerate}
\item For any $k \ge 1$, $\mkl(u)> - \infty$ and it is attained at some generalised eigenfunction $\vkl(u) \in H^1_0(\Om) \backslash \{0\}$ that satisfies weakly
\begin{equation} \label{vpk}
\big( - \Delta - \lambda \big) \vkl(u) = \mu_{k}^{\lambda}(u) u^{2^*-2}    \vkl(u) \quad \text{ in } \Om.
\end{equation}
The functions $(\vkl(u))_{k \ge 1}$ are normalised to satisfy
\begin{equation} \label{norma} 
\int_\Om u^{2^*-2} \vil(u) \vjl(u) dx =  \delta_{ij} \quad \text{ for } i \neq j,
\end{equation}
and for $k \ge 2$ each $\vkl(u)$ is sign-changing. 
\item The following equivalent characterisation of $\mkl(u)$ holds: for any $k \ge 1$,
\begin{equation} \label{muk2}
\mkl(u) = \inf_{v \in S_{k-1}} \frac{\int_{\Om}\left(  |\nabla v|^2 - \lambda v^2 \right) \, dx}{\int_{\Om} u^{2^*-2} v^2 \, dx} 
\end{equation}
where 
$$\begin{aligned} 
S_{k-1} & = \Big\{ v \in H^1_0(\Om)  \backslash \{0\} \text{ such that } \int_\Om u^{2^*-2} \vil(u) v dx = 0 \\
&\text{ for all } 1 \le i \le k-1 \Big\}.
\end{aligned} $$
\item For $k \ge 1$ we denote by $E_k^{\lambda}(u) \subset H^1_0(\Om)$ the vector space of solutions of \eqref{vpk}. Then $\dim E_k^{\lambda}(u) < + \infty$.

\item We have $\mkl(u) \le \mu_{k+1}^{\lambda}(u)$ for any $k \ge 1$ and there exists $\gamma = \gamma (u) >0$ such that $ \mu_1^{\lambda}(u) + \gamma \le \mu_{2}^{\lambda}(u)$.
\item We have $\mkl(u) \to + \infty$ as $k \to + \infty$. 
\item If there exist $\vp \in H^1_0(\Om) \backslash \{0\}$ and $\mu \in \R$ such that $\big( - \triangle - \lambda \big) \vp = \mu u^{2^*-2} \vp$ in  $\Om$ then $\mu =\mkl(u)$ for some $k \ge 1$ and $\vp \in E_k^{\lambda}(u)$. 

 \end{enumerate}
\end{prop}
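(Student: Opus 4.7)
The plan is to reduce the weighted eigenvalue problem to standard compact self-adjoint spectral theory on the Hilbert space $L^2_u(\Om)$. The first ingredient is a compactness lemma: when $u \in L^{2^*}_>(\Om)$, the embedding $H^1_0(\Om) \hookrightarrow L^2_u(\Om)$ is compact. I would prove this by a classical splitting argument: given $v_n \rightharpoonup 0$ in $H^1_0(\Om)$, write $\int_\Om u^{2^*-2} v_n^2 \, dx$ as a sum of integrals over $\{u \le R\}$ and $\{u > R\}$. The first piece tends to $0$ by the Rellich embedding $H^1_0 \hookrightarrow L^2$, while the second is controlled by H\"older's and Sobolev's inequalities through $\int_{\{u > R\}} u^{2^*} dx \to 0$ as $R \to +\infty$.

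I would then choose $M > -\mu_1^\lambda(u)$, which is admissible by Proposition \ref{prop:vp1}, and consider the bilinear form
\[
a_M(v,w) = \int_\Om (\nabla v \cdot \nabla w - \lambda vw) \, dx + M \int_\Om u^{2^*-2} vw \, dx
\]
on $H^1_0(\Om)$. Using \eqref{mu1eq} and the compactness lemma, a contradiction argument shows that $a_M$ is coercive on $H^1_0(\Om)$. Lax-Milgram then produces a bounded map $T_M : L^2_u(\Om) \to H^1_0(\Om)$ defined by $a_M(T_M f, w) = (f,w)_{L^2_u}$ for every $w \in H^1_0(\Om)$. Composed with the compact inclusion $H^1_0 \hookrightarrow L^2_u$, the operator $T_M$ is compact, self-adjoint and strictly positive on $L^2_u(\Om)$. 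The spectral theorem thus yields a non-increasing sequence of positive eigenvalues $\nu_k \to 0^+$ and an $L^2_u$-orthonormal Hilbert basis of eigenfunctions $(\vkl(u))_{k\ge 1} \subset H^1_0(\Om)$, each satisfying \eqref{vpk} with $\tilde\mu_k := \nu_k^{-1} - M$ in place of $\mkl(u)$. Items (1), (3), (5) and the normalisation \eqref{norma} are immediate from this construction, as is the inequality $\mkl(u) \le \mu_{k+1}^\lambda(u)$ in (4).

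The identification $\tilde\mu_k = \mkl(u)$ is then the min-max formula for $T_M^{-1}$: one has $(T_M^{-1} v, v)_{L^2_u} = a_M(v,v)$ on the range of $T_M$, so that
\[
\tilde\mu_k = \min_{\dim V = k} \max_{v \in V \setminus \{0\}} \qul(v),
\]
and since $u>0$ a.e. Definition \ref{defiL2u} gives $\dim_u V = \dim V$, matching \eqref{muk}. The alternative characterisation (2) follows as the dual min-max: the infimum over $S_{k-1}$ is attained at $\vkl(u)$ thanks to the spectral expansion, while any $k$-dimensional subspace meets $S_{k-1}$ non-trivially by a dimension count. For (6), if $(\vp,\mu)$ solves the weighted eigenvalue equation then $T_M \vp = (\mu+M)^{-1} \vp$, so $(\mu+M)^{-1}$ must appear in the spectrum of $T_M$, forcing $\mu = \mkl(u)$ for some $k$ with $\vp \in E_k^\lambda(u)$. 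The strict gap $\mu_1^\lambda(u) + \gamma \le \mu_2^\lambda(u)$ in (4) follows because $\mu_1^\lambda(u)$ is simple by Proposition \ref{prop:vp1}, and the sign-changing property of $\vkl(u)$ for $k \ge 2$ follows from its $L^2_u$-orthogonality to the strictly positive $\vp_1^\lambda(u)$.

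The main obstacle is verifying the compact embedding $H^1_0(\Om) \hookrightarrow L^2_u(\Om)$ and the ensuing coercivity of $a_M$, since the weight $u^{2^*-2}$ need not be bounded: both rely crucially on $u \in L^{2^*}(\Om)$ and on the a.e.~positivity of $u$ in $\Om$. Once these are secured, the rest of the proposition follows from the standard spectral machinery.
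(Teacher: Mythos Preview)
Your proposal is correct and takes a genuinely different route from the paper. The paper proceeds by a direct inductive variational argument: it establishes the equivalent characterisation \eqref{muk2} and the existence of $\vkl(u)$ simultaneously by induction on $k$, at each step taking a minimising sequence for the restricted problem on $S_{k-1}$, proving its $H^1_0$-boundedness by a contradiction argument, and extracting a limit using the $L^2_u$-convergence of \eqref{eq:cvforte}. Points (3)--(6) are then handled separately by ad hoc compactness and dimension arguments.

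Your approach packages the entire argument into the spectral theorem for a single compact self-adjoint operator. The pivotal step---the compactness of $H^1_0(\Om)\hookrightarrow L^2_u(\Om)$---is exactly the content of \eqref{eq:cvforte} in the paper, and your proof of it is the same splitting argument; the coercivity of $a_M$ is then the abstract form of the paper's repeated contradiction arguments showing that minimising sequences stay bounded in $H^1_0$. What you gain is economy: once $T_M$ is shown to be compact, self-adjoint and positive on $L^2_u(\Om)$, all six items drop out at once from standard spectral theory, the min-max identification being the Courant--Fischer formula. What the paper's approach buys is that it is entirely self-contained and avoids invoking the Hilbert-basis form of the spectral theorem, at the cost of reproving several of its consequences by hand. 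Both rely identically on $u>0$ a.e.\ (so that $\dim_u V=\dim V$ and $L^2_u$ is genuinely a Hilbert space) and on $u\in L^{2^*}(\Om)$ (for the tail estimate in the compactness lemma).
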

When we say that for $k \ge 2$ each $\vkl(u)$ is sign-changing we mean that 
$$|\{\vkl(u) >0 \}| \cdot |\{\vkl(u) <0 \}|  >0.$$ 

\begin{proof}
Points (1) and (2) for $k=1$ have been proven in Proposition \ref{prop:vp1}. We prove (1) and (2) by induction: assume that (1) and (2) are true for some integer $k-1 \ge 1$, i.e. that generalised eigenvectors $(\vp_{1}^\lambda(u), \dots, \varphi_{k-1}^{\lambda}(u))$ attaining $\mu_{1}^{\lambda}(u), \dots, \mu_{k-1}^{\lambda}(u)$, satisfying \eqref{vpk} and \eqref{norma} have been constructed, and that \eqref{muk2} holds for $\mu_{\ell}^{\lambda}(u)$ for all $\ell \le k-1$. We first prove that \eqref{muk2} holds for $\mkl(u)$. We denote by $\tilde{\mu}_{k}^{\lambda}(u)$ the right-hand side in \eqref{muk2}. Let $v_0 \in S_{k-1}$. Then, by \eqref{norma},
$$ V := \text{Span} \Big \{ \vp_{1}^{\lambda}(u), \dots, \varphi_{k-1}^{\lambda}(u), v_0 \Big \} \subset H^1_0(\Om)$$
 satisfies $\dim_u V = k$. Up to multiplying $v_0$ by a constant we can assume that $\int_\Om u^{2^*-2} v_0^2 dx = 1$. The induction property shows that $\tilde{\mu}_{i}^{\lambda}(u) = \mu_{i}^{\lambda}(u)$ for any $1 \le i \le k-1$, so that $ \mu_1^{\lambda}(u)  \le \cdots \le  \mu_{k-1}^{\lambda}(u)$ by \eqref{muk2}. Since $v_0 \in S_{k-1}$ and $\int_\Om u^{2^*-2} v_0^2 dx = 1$ we have, by definition of $\tilde{\mu}_{k}^{\lambda}(u)$,
\begin{equation} \label{ineq:tildemuk2} 
 \int_{\Om}\left(  |\nabla v_0|^2 - \lambda v_0^2 \right) \, dx \ge \tilde{\mu}_{k}^{\lambda}(u) \ge \mu_{k-1}^{\lambda}(u). 
 \end{equation}
 Since $\dim_u V = k$ it is admissible in the definition of $\mkl(u)$. Using \eqref{muk}, \eqref{vpk}, \eqref{norma}, the normalisation of $v_0$ and \eqref{ineq:tildemuk2} we get 
 $$ \mkl(u) \le \max_{v \in V \backslash \{0\}} \frac{\int_{\Om}\left(  |\nabla v|^2 - \lambda v^2 \right) \, dx}{\int_{\Om} u^{2^*-2} v^2 \, dx}  = \int_{\Om}\left(  |\nabla v_0|^2 - \lambda v_0^2 \right)dx . $$
 Since $v_0 \in S_{k-1}$ is arbitrary this proves $ \mu_{k}^{\lambda}(u)  \le \tilde{\mu}_{k}^{\lambda}(u)$. For the reverse inequality we let $V \subseteq H^1_0(\Om)$ be such that $\dim_u V = k$. Then 
 $$\text{Span}( \vp_{1}^{\lambda}(u), \dots, \varphi_{k-1}^{\lambda}(u))^{\perp_u} \cap V \neq \{ 0\} \quad \text{ in } L^2_u(\Om)$$
 by simple dimensional considerations, where $\perp_u$ denotes the orthogonal complement in $L^2_u(\Om)$ for $(\cdot, \cdot)_{L^2_u(\Om)}$. We can thus let $v_0 \in V \cap S_{k-1}$. We then have
$$\begin{aligned}  \max_{ v \in V \backslash \{0\}}  \frac{\int_{\Om}\left(  |\nabla v|^2 - \lambda v^2 \right) \, dx}{\int_{\Om} u^{2^*-2} v^2 \, dx}  & \ge \frac{\int_{\Om}\left(  |\nabla v_0|^2 - \lambda v_0^2 \right) \, dx}{\int_{\Om} u^{2^*-2} v_0^2 \, dx}  \ge \tilde{\mu}_{k}^{\lambda}(u), 
\end{aligned} $$
where the second inequality follows from the definition of $\tilde{\mu}_{k}^{\lambda}(u)$ in \eqref{muk2}. Taking the infimum over subspaces $V$ finally yields $\mu_{k}^{\lambda}(u) \ge \tilde{\mu}_{k}^{\lambda}(u)$ and proves \eqref{muk2}. 

Once \eqref{muk2} has been established the existence of $\vkl(u)$ follows again from a standard minimisation procedure. Assume indeed that $\vp_1^{\lambda}(u), \dots, \vp_{k-1}^{\lambda}(u)$ have been constructed. We use \eqref{muk2} and let $(v_i)_{i \ge1}$ be a minimising sequence for $\mu_k^{\lambda}(u)$ in $S_{k-1}$ normalised as $\int_{\Om} u^{2^*-2}v_i^2dx = 1$. We first observe that $(v_i)_{i\ge1}$ is bounded in $H^1_0(\Om)$, which follows by a straightforward adaptation of the proof of Proposition \ref{prop:vp1} in the $k=1$ case. We denote by $\vp_k^{\lambda}(u)$ the weak limit of $(v_i)_{i\ge1}$ in $H^1_0(\Om)$, up to a subsequence. Then $\int_{\Om} u^{2^*-2} (\vp_k^{\lambda}(u))^2dx = 1$ holds, and this is proven by mimicking the proof of \eqref{eq:cvforte}. Also, $\vp_k^{\lambda}(u) \in S_{k-1}$ by weak convergence, which yields \eqref{norma}. The lower semi-continuity of the $H^1_0(\Om)$ norm then shows that $\vp_k^{\lambda}(u)$ attains $\mu_k^{\lambda}(u)$. The proof of \eqref{vpk} again follows from standard variational arguments. That every $k \ge 2$ is sign-changing follows from \eqref{norma} and the fact that $|\vp_{1}^\lambda(u)| >0$ a.e. in $\Om$ by Proposition \ref{prop:vp1}. This proves points (1) and (2) of Proposition \ref{prop:vp} for any $k\ge1$.

\medskip

We now prove (3). In Proposition \ref{prop:vp1} we already proved that $\dim E_1^\lambda(u) = 1$. Let $k \ge 1$ and let $(\vp_m)_{m \ge0}$ be a sequence of solutions of \eqref{vpk} associated to $\mkl(u)$ satisfying $\Vert \vp_m \Vert_{H^1_0} = 1$. We can assume, up to passing to a subsequence, that $\vp_m \rightharpoonup \vp_0$ in $H^1_0(\Om)$ as $m \to + \infty$. Since $\vp_m \to \vp_0$ strongly in $L^{\frac{n}{n-2}}(\Om)$ it is easily seen that $\vp_0 \in E_k^{\lambda}(u)$. We now claim that
\begin{equation*} 
\int_{\Om} u^{2^*-2}(\vp_m - \vp_0)^2dx \to 0 \quad \text{ as } m \to + \infty. 
\end{equation*}
The proof is identical to the proof of \eqref{eq:cvforte} and we omit it here. Since $\vp_m-\vp_0$ still satisfies \eqref{vpk} integrating it against $\vp_m-\vp_0$ and using the latter yields
$$ \int_{\Om} |\nabla (\vp_m - \vp_0)|^2dx = o(1) \quad \text{ as } m \to + \infty, $$
so that $\vp_m \to \vp_0$ in $H^1_0(\Om)$, and hence the unit ball of $E_k^{\lambda}(u)$ for $\Vert \cdot \Vert_{H^1_0}$ is compact. This concludes the proof of (3).

The first part of point (4) easily follows from the equivalent characterisation \eqref{muk2}. That $ \mu_1^{\lambda}(u) < \mu_{2}^{\lambda}(u)$ for any $u \in L^{2^*}_{>}(\Om)$ follows from  point (2) of Proposition \ref{prop:vp1} and from \eqref{vpk} and \eqref{norma}. 

We now prove point (5). We proceed by contradiction and assume that the sequence $(\mkl(u))_{k \ge1}$ is bounded. We claim that, as a consequence, $E = \overline{\bigoplus_{k\ge 1} E_k^{\lambda}(u)}$ is finite-dimensional, where the closure is taken in $H^1_0(\Om)$ for the $\Vert \cdot \Vert_{H^1_0}$ norm. This will be an obvious contradiction since the family $(\vkl(u))_{k\ge 1}$ is free in $H^1_0(\Om)$ by \eqref{norma}. To prove that $E$ is finite-dimensional, we let $(\vp_m)_{m \ge0}$ be a sequence of functions in the unit ball of $E$. For any $m \ge0$ there exists in particular $\mu_m \in \R$ such that $\vp_m$ satisfies 
\begin{equation*}
\big( - \Delta - \lambda \big) \vp_m = \mu_m u^{2^*-2} \vp_m \quad \text{ in } \Om.
\end{equation*}
Since $\Vert \vp_m \Vert_{H^1_0} = 1$ and $(\mu_m)_{m \ge 0}$ is bounded, the same arguments as in the proof of point (3) show that $(\vp_m)_{m \ge0}$ has a strongly convergent subsequence as $m \to + \infty$, and hence that the unit ball of $E$ is compact, which is a contradiction. 

We finally prove point (6). Assume that there exists $\vp \in H^1_0(\Om) \backslash \{0\}$ and $\mu \in \R$ such that $\big( - \triangle - \lambda \big) \vp = \mu u^{2^*-2} \vp$ in  $\Om$ and assume that $\mu \not \in (\mkl(u))_{k \ge 1}$. Let $k \ge 1$ be such that $\mu_{k-1}^{\lambda}(u)<  \mu < \mkl(u)$ (where we have let $\mu_0^{\lambda}(u) = - \infty$ by convention) and let 
$$ V = \text{Span} \Big \{ \vp_1^{\lambda}(u), \dots, \vp_{k-1}^{\lambda}(u),\vp \Big \}. $$
By \eqref{vpk} we have $(\vp, \vil(u))_{L^2_u} = 0$ for all $1 \le i \le k-1$. As a consequence, the variational characterisation \eqref{muk} of $\mkl(u)$ shows that $\mkl(u) \le \mu$, which is a contradiction. Hence $\mu = \mkl(u)$ for some $k \ge 1$. This also proves that for all $k \ge 1$ we have 
$$ E_k^{\lambda}(u) = \text{Span} \Big \{  \vil(u) ; i \ge 1, \mil(u) = \mkl(u) \Big \}, $$
and concludes the proof of (6) and of Proposition \ref{prop:vp}. 
\end{proof}

We conclude this section by proving that every solution of \eqref{eq:critlambda} is a generalised eigenvector. This property is the main observation that will allow us to construct least-energy solutions of \eqref{eq:critlambda} via $\mkl(\Om)$ (see Theorem \ref{prop:minmkl1} below). 

\begin{prop} \label{prop:structure:vp}
Let $\lambda \ge 0$. Let $u \in H^1_0(\Om) \backslash \{0\}$ and let $\tilde{u} = \frac{u}{\Vert u \Vert_{2^*}}$. Then $u$ is a solution of \eqref{eq:critlambda} if and only if there exists $k \ge 1$ such that $\mkl(|\tilde{u}|)  = \Vert u \Vert_{2^*}^{2^*-2}$ and $\tilde u \in E_k^{\lambda}(|\tilde{u}|)$. 
\end{prop}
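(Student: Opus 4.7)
The plan is to reduce Proposition \ref{prop:structure:vp} to a direct application of point (6) of Proposition \ref{prop:vp}. I would first record the elementary algebraic fact that, setting $\mu = \Vert u \Vert_{2^*}^{2^*-2}$ and using the identities $u = \Vert u \Vert_{2^*}\,\tilde u$ and $|u|^{2^*-2} = \Vert u \Vert_{2^*}^{2^*-2}|\tilde u|^{2^*-2}$, the function $u$ solves \eqref{eq:critlambda} if and only if $\tilde u$ weakly satisfies
\begin{equation*}
(-\Delta - \lambda)\tilde u = \mu\, |\tilde u|^{2^*-2}\tilde u \quad \text{ in } \Om.
\end{equation*}
This displayed equation has exactly the form of the generalised eigenvector equation \eqref{vpk} with weight $|\tilde u|$, so point (6) of Proposition \ref{prop:vp} will give the forward direction \emph{provided} I can justify that $|\tilde u|$ belongs to $L^{2^*}_>(\Om)$, i.e.\ that $|\tilde u| > 0$ a.e.\ in $\Om$.

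Establishing this positivity is the only genuine analytic input in the proof. I would argue as follows: if $u$ is a non-trivial solution of \eqref{eq:critlambda}, then a standard Br\'ezis--Kato / Moser bootstrap yields $u \in L^{\infty}(\Om)$, so the potential $V := \lambda + |u|^{2^*-2}$ is bounded on $\Om$, and $u$ is a non-trivial weak solution of the linear elliptic equation $-\Delta u = V u$. Aronszajn's unique continuation theorem then implies that the nodal set $\{u = 0\}$ has Lebesgue measure zero, so $|\tilde u| > 0$ a.e.\ in $\Om$ and $|\tilde u| \in L^{2^*}_>(\Om)$. Proposition \ref{prop:vp}(6), applied with weight $|\tilde u|$ and eigenfunction $\vp = \tilde u$, then produces an integer $k \ge 1$ with $\mu = \mkl(|\tilde u|)$ and $\tilde u \in E_k^{\lambda}(|\tilde u|)$, which is the forward direction.

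The reverse direction is then essentially tautological and requires no further analytic input. If $\mkl(|\tilde u|) = \Vert u \Vert_{2^*}^{2^*-2}$ and $\tilde u \in E_k^{\lambda}(|\tilde u|)$, then by definition of $E_k^{\lambda}(|\tilde u|)$ the function $\tilde u$ weakly satisfies $(-\Delta - \lambda)\tilde u = \Vert u \Vert_{2^*}^{2^*-2}|\tilde u|^{2^*-2}\tilde u$ in $\Om$, and multiplying through by $\Vert u \Vert_{2^*}$ and undoing the identities from the first paragraph recovers $-\Delta u - \lambda u = |u|^{2^*-2}u$, so that $u$ solves \eqref{eq:critlambda}. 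The hard part of the argument is thus concentrated in the unique continuation step: without it the weight $|\tilde u|$ could vanish on a set of positive measure and the structural theory of Proposition \ref{prop:vp}, which is stated for weights in $L^{2^*}_>(\Om)$, would not be directly applicable.
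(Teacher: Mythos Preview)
Your proof is correct and follows the same approach as the paper's: rescale to $\tilde u$, observe that the equation becomes the generalised eigenvector equation with weight $|\tilde u|$, and invoke point (6) of Proposition~\ref{prop:vp}. The paper's proof is terser and does not spell out the unique continuation argument ensuring $|\tilde u|\in L^{2^*}_>(\Om)$; your explicit Br\'ezis--Kato plus Aronszajn step fills a point the paper leaves implicit (the same argument appears later in the paper, e.g.\ in the proof of Theorem~\ref{prop:minmkl1}).
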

In other words, up to a scaling factor, $u$ solves \eqref{eq:critlambda} if and only if it is a generalised eigenvector, with positive generalised eigenvalue, associated to the weight $|u|$. 

\begin{proof}
Assume first that $u$ satisfies \eqref{eq:critlambda}. It is then easily seen that $\tilde u$ satisfies 
$$-\triangle \tilde{u} - \lambda \tilde{u} = \mu |\tilde{u}|^{2^*-2} \tilde{u} \quad \text{ in } \Om, $$
where $\mu= \Vert u \Vert_{2^*}^{2^*-2}$. Point (6) of Proposition \ref{prop:vp} applies and shows that there exists $k \ge 1$ such that $\mkl(|\tilde{u}|) =  \Vert u \Vert_{2^*}^{2^*-2}$ and such that $\tilde{u} \in E_k^{\lambda}(|\tilde{u}|)$. Assume now that there exists $k \ge 1$ such that $\mkl(|\tilde{u}|)  = \Vert u \Vert_{2^*}^{2^*-2}$ and $\tilde{u} \in E_k^{\lambda}(|\tilde{u}|)$. By \eqref{vpk} $\tilde{u}$ then satisfies 
$$ \big(-\triangle - \lambda \big) \tilde{u} = \Vert u \Vert_{2^*}^{2^*-2} |\tilde{u}|^{2^*-2} \tilde{u} \quad \text{ in } \Om, $$
which is equivalent to saying that $u$ satisfies \eqref{eq:critlambda}. 
\end{proof}

\section{Variational theory for generalised eigenvalues} \label{vp:derivees}

In this section we develop a variational theory for generalised eigenvalues. We first investigate their sign:

 \begin{prop}  \label{prop:signvp}
  Let $\Om \subset \R^n$, $n \ge 3$, be bounded and smooth and let  $\lambda \ge 0$. Let $i \ge0$ be such that $\lambda \in [\Lambda_i, \Lambda_{i+1})$ and let $k = N(i)+1 \ge 1$. We have  
\begin{equation} \label{infmukpos}
 \inf_{u \in L^{2^*}_{>}(\Om)} \mu_k^{\lambda}(u)  \Vert u \Vert_{2^*}^{2^*-2}  > 0
 \end{equation}
and, for any $1 \le p \le k-1$,
\begin{equation} \label{supmupneg}
\sup_{u \in L^{2^*}_{>}(\Om)} \mpl(u)  \Vert u \Vert_{2^*}^{2^*-2} \le 0 .
\end{equation}
\end{prop}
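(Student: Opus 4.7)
\emph{Proof plan.} A useful preliminary remark is that the quantity $\mu_k^{\lambda}(u) \|u\|_{2^*}^{2^*-2}$ is invariant under the scaling $u \mapsto tu$, $t>0$, since the Rayleigh quotient in \eqref{muk} scales like $t^{-(2^*-2)}$. Both inequalities \eqref{infmukpos} and \eqref{supmupneg} will be obtained by testing the min-max characterisation against well-chosen subspaces built from an $L^2$-orthonormal basis $(e_j)_{j \ge 1} \subset H^1_0(\Omega)$ of Dirichlet eigenfunctions of $-\Delta$ on $\Om$. By definition of $N$ and of $k = N(i)+1$, the eigenvalues satisfy $\lambda_j \le \Lambda_i \le \lambda$ for $j \le k-1$ and $\lambda_k = \Lambda_{i+1} > \lambda$.

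For \eqref{supmupneg}, fix $1 \le p \le k-1$ and take $V := \mathrm{Span}(e_1, \dots, e_p) \subset H^1_0(\Omega)$. Since $u > 0$ a.e., $\dim_u V = \dim V = p$, so $V$ is admissible in \eqref{muk}. For any $v = \sum_{j=1}^p c_j e_j \in V \setminus \{0\}$, the $L^2$-orthogonality of the $e_j$'s gives
$$\int_\Omega \bigl(|\nabla v|^2 - \lambda v^2\bigr)\, dx = \sum_{j=1}^p c_j^2 (\lambda_j - \lambda) \le 0,$$
hence $Q_u^{\lambda}(v) \le 0$ and thus $\mu_p^{\lambda}(u) \le 0$. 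Multiplying by $\|u\|_{2^*}^{2^*-2} \ge 0$ yields \eqref{supmupneg}.

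For \eqref{infmukpos}, let $V \subset H^1_0(\Omega)$ be admissible, i.e.\ $\dim_u V = k$. Since $u > 0$ a.e.\ one has also $\dim V = k$, so that a dimension count shows that the linear map $V \to \R^{k-1}$, $v \mapsto \bigl(\int_\Omega v\, e_j\, dx\bigr)_{1 \le j \le k-1}$, has a nontrivial kernel. Pick $v \in V \setminus \{0\}$ orthogonal in $L^2$ to $e_1, \dots, e_{k-1}$. By the usual Courant-Fischer min-max formula, $\|\nabla v\|_2^2 \ge \lambda_k \|v\|_2^2 = \Lambda_{i+1} \|v\|_2^2$, which gives
$$\int_\Omega \bigl(|\nabla v|^2 - \lambda v^2\bigr)\, dx \ge \Bigl(1 - \frac{\lambda}{\Lambda_{i+1}}\Bigr)\|\nabla v\|_2^2.$$
For the denominator, H\"older and the Sobolev inequality \eqref{defK0} yield
$$\int_\Omega u^{2^*-2} v^2\, dx \le \|u\|_{2^*}^{2^*-2}\|v\|_{2^*}^2 \le K_n^2 \|u\|_{2^*}^{2^*-2}\|\nabla v\|_2^2.$$
Combining, one obtains $Q_u^{\lambda}(v) \ge K_n^{-2}\bigl(1 - \lambda/\Lambda_{i+1}\bigr)\|u\|_{2^*}^{-(2^*-2)}$ for this particular $v \in V$. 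Since $\max_{V} Q_u^\lambda \ge Q_u^\lambda(v)$ and the resulting bound does not depend on $V$, taking the infimum over admissible $V$ in \eqref{muk} gives
$$\mu_k^{\lambda}(u)\|u\|_{2^*}^{2^*-2} \ge \frac{1}{K_n^2}\Bigl(1 - \frac{\lambda}{\Lambda_{i+1}}\Bigr) > 0,$$
uniformly in $u \in L^{2^*}_>(\Om)$, which proves \eqref{infmukpos}.

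\emph{Main obstacle.} There is no substantial analytic difficulty: the argument reduces to Courant-Fischer together with one application of H\"older and Sobolev, both scale-compatible with the weight $\|u\|_{2^*}^{2^*-2}$. The only slightly delicate point is linear-algebraic: one must ensure that $\dim_u V = k$ actually forces $\dim V = k$, so that the extraction of a vector $v \in V$ orthogonal in $L^2$ to $e_1, \dots, e_{k-1}$ is legitimate. This is exactly where the assumption $u \in L^{2^*}_>(\Om)$ (rather than merely $u \ge 0$) is needed, and where the uniformity of the lower bound in \eqref{infmukpos} would break down if one tried to extend the statement to nonnegative weights that vanish on a set of positive measure.
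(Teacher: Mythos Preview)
Your proof is correct and takes a genuinely different, more elementary route than the paper.

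For \eqref{supmupneg} you and the paper argue identically: test with the span of the first $p$ Dirichlet eigenfunctions. For \eqref{infmukpos}, however, the paper proceeds in three stages: first it shows $\mkl(u)\ge 0$ by a comparison argument with the weight $u\equiv 1$, then upgrades to strict positivity by contradiction using the eigenfunction equation \eqref{vpk}, and finally obtains the \emph{uniform} positivity in $u$ again by contradiction, extracting a sequence $u_m$ with $\mkl(u_m)\to 0$, invoking the equation satisfied by the associated eigenvectors, and treating the cases $\lambda\notin\Sp(-\Delta)$ and $\lambda=\Lambda_i$ separately via a spectral decomposition on $E_{\Lambda_i}(\Om)$. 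Your argument bypasses all of this: from any admissible $V$ you extract, by a dimension count, a vector $v$ that is $L^2$-orthogonal to the first $k-1$ Dirichlet eigenfunctions, and then a single application of Courant--Fischer together with H\"older and Sobolev yields the explicit lower bound
\[
\mkl(u)\,\Vert u\Vert_{2^*}^{2^*-2}\ \ge\ K_n^{-2}\Big(1-\frac{\lambda}{\Lambda_{i+1}}\Big),
\]
uniform in $u$ and valid for all $\lambda\in[\Lambda_i,\Lambda_{i+1})$ without case distinction. This is shorter, avoids the existence of generalised eigenfunctions (Proposition~\ref{prop:vp}) altogether, and as a bonus makes the degeneration of the bound as $\lambda\to\Lambda_{i+1}^-$ quantitatively explicit. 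The paper's approach, on the other hand, illustrates a technique (passing through the Euler--Lagrange equation of the weighted eigenvalue problem) that is reused later in the analysis, so it is not without didactic value in context.
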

We recall that $L^{2^*}_{>}(\Om)$ is defined in \eqref{defLu} and the multiplicity function $N$ is defined in \eqref{defN}. The normalisation factor $\Vert u \Vert_{2^*}^{2^*-2}$ is required so that the quantities appearing in \eqref{infmukpos} and \eqref{supmupneg} are scale-invariant, as is easily seen by \eqref{muk}. Proposition \ref{prop:signvp} shows in particular that if $\lambda \in [\Lambda_i, \Lambda_{i+1})$ and $k = N(i)+1$, for any $u \in L^{2^*}_{>}(\Om)$ we have
$$ \mpl(u) \le0 \text{ for all } p\le k-1 \quad \text{ and } \quad \mpl(u) >0 \text{ for all } p \ge k.$$
When $\lambda$ belongs to a fixed set $[\Lambda_i, \Lambda_{i+1})$ between two consecutive eigenvalues the generalised eigenvalue $\mkl(u)$, for $k = N(i)+1$, is thus the least positive one, regardless of $u$. We will call it the \emph{principal} generalised eigenvalue in this paper.

\begin{proof}
Let $u \in L^{2^*}_{>}(\Om)$ and let $(\vpl(u))_{p \ge 1}$ be the eigenvectors given by Proposition \ref{prop:vp}. Let $i \ge0$ be such that $\lambda \in [\Lambda_i, \Lambda_{i+1})$. We first prove that $\mpl(u) \ge 0$ for all $p \ge k$, where $k = N(i)+1$. Consider first the case $u \equiv 1$: by \eqref{muk},
$$\mu_{k}^{\lambda}(1) =  \lambda_{k} - \lambda  = \Lambda_{i+1} - \lambda >0 $$
by definition of $N$ in \eqref{defN}. Let $V\subseteq H^1_0(\Om)$ be such that $\dim_u V = \dim V =  k$. It is also admissible in the definition \eqref{muk} for $\mu_{k}^{\lambda}(1)$: since $\mu_{k}^{\lambda}(1) >0$ there thus exists $v_0 \in V \backslash \{0\}$ such that $\int_{\Om} \big( |\nabla v_0|^2 - \lambda v_0^2\big)dx >0$. Since $\int_\Om u^{2^*-2} v_0^2 dx >0$ we then have 
$$ \max_{v \in V \backslash \{0\}} \frac{\int_{\Om}\left(  |\nabla v|^2 - \lambda v^2 \right) \, dx}{\int_{\Om} u^{2^*-2} v^2 \, dx}  \ge  \frac{\int_{\Om}\left(  |\nabla v_0|^2 - \lambda v_0^2 \right) \, dx}{\int_{\Om} u^{2^*-2} v_0^2 \, dx} > 0.$$
This holds true for any $V$ with $\dim_u V = k$ and thus $\mu_{k}^{\lambda}(u) \ge 0$ by \eqref{muk}. Similar arguments show that $\mu_p^{\la}(u) \le 0$ for every $p \le k-1$, which proves \eqref{supmupneg}. 

We now claim that $\mkl(u) >0$ for all $u \in L^{2^*}_{>}(\Om)$. Assume by contradiction that there exists $u \in L^{2^*}_{>}(\Om)$ such that  $\mkl(u) = 0$ and let $\vkl(u)$ be the $k$-th eigenfunction associated to $\mkl(u)$ by Proposition \ref{prop:vp}. It satisfies  $- \Delta \vkl(u) - \lambda \vkl(u) = 0$ in $\Om$. If $\lambda \in (\Lambda_i, \Lambda_{i+1})$, so that $\lambda \not \in \Sp(- \Delta)$, this is an obvious contradiction. If $\lambda = \Lambda_i$, $\vkl(u) \in E_{\Lambda_i}(\Om)$, where we recall that $E_{\Lambda_i}(\Om)$ is the $i$-th eigenspace for $-\Delta$ in $H^1_0(\Om)$, and thus satisfies $- \Delta \vkl(u)  - \Lambda_i \vkl(u)  = 0$ in $\Om$. Let $\vp_1, \dots, \vp_{k-1}$ be the eigenvectors of $-\Delta$ associated to $\lambda_1, \dots, \lambda_{k-1}$. Integrating the latter against $\vp_\ell$ shows that $\int_{\Om} \vp_\ell \vkl(u) dx = 0$ for all $1 \le \ell \le k-1$, but this would then imply that 
$$\dim \Big( \bigoplus_{p=1..i} E_{\Lambda_p}(\Om) \Big) \ge k =  N(i)+1,$$
which contradicts the definition of $N$ in \eqref{defN}. Hence $\mkl(u) > 0$. 

\medskip

We now prove \eqref{infmukpos}. We proceed by contradiction and assume that there is a sequence $(u_m)_{m \ge1}$ in $L^{2^*}_{>}(\Om)$, $\Vert u_m \Vert_{2^*} = 1$, such that $\mu_{k,m} = \mkl(u_m) \to 0$ as $m \to + \infty$. Let $\vp_{k,m}$ be a generalized eigenvector associated to $\mu_{k,m}$ by Proposition \ref{prop:vp} and normalised by $\int_{\Om} u_m^{2^*-2} \vp_{k,m}^2dx = 1$. It satisfies:
\begin{equation} \label{eq:eqvp:00}
  - \Delta \vp_{k,m} - \lambda \vp_{k,m} = \mu_{k,m} u_m^{2^*-2} \vp_{k,m} \quad \text{ in } \Om. 
  \end{equation}
Assume first that $\lambda \in (\Lambda_i, \Lambda_{i+1})$, so that $\lambda \not \in \Sp(- \Delta)$. Then $- \Delta - \lambda$ is invertible and thus there exists $C = C_{\lambda}$ such that 
$$ \Vert \vp_{k,m} \Vert_{H^1_0} \le C \mu_{k,m} \Vert u_m^{2^*-2} \vp_{k,m} \Vert_{\frac{2n}{n+2}}.$$
H\"older's inequality shows that 
\begin{equation} \label{holder:spectre}
 \Vert u_m^{2^*-2} \vp_{k,m} \Vert_{\frac{2n}{n+2}}^{\frac{2n}{n+2}} \le \Big(\int_{\Om} u_m^{2^*}dx\Big)^{\frac{2}{n+2}} \Big( \int_{\Om} u_m^{2^*-2} \vp_{k,m}^2 dx\Big)^{\frac{n}{n+2}} = 1, 
 \end{equation}
and since $\mu_{k,m} \to 0$ we thus obtain that $ \Vert \vp_{k,m} \Vert_{H^1_0} = o(1)$ as $m \to + \infty$. Since $\Vert u_m \Vert_{2^*} = 1$ this contradicts the normalisation condition $\int_{\Om} u_m^{2^*-2} \vp_{k,m}^2dx = 1$. Assume now that $\lambda = \Lambda_i$. We split $\vp_{k,m}$ as 
$$ \vp_{k,m} = v_m + z_m, $$
where $z_m \in E_{\Lambda_i}(\Om)$ and $v_m$ is $H^1_0(\Om)$-orthogonal to $E_{\Lambda_i}(\Om)$. Since $-\Delta z_m - \Lambda_i z_m = 0$, and by \eqref{eq:eqvp:00}, $v_m$ satisfies 
\begin{equation} \label{eq:intermediaire}
 - \Delta v_m - \Lambda_i v_m = \mu_{k,m} u_m^{2^*-2} \vp_{k,m} \quad \text{ in } \Om. 
 \end{equation}
By definition $- \Delta-\Lambda_i$ is invertible in $E_{\Lambda_i}(\Om)^{\perp}$ and the same argument as above shows that $\Vert v_m \Vert_{H^1_0} = o(1)$ as $m \to + \infty$. Thus $\int_{\Om} u_m^{2^*-2} v_m^2 dx = o(1)$ and, by Cauchy-Schwarz's inequality, $\int_{\Om} u_m^{2^*-2} \vp_{k,m} v_m dx = o(1)$ as $m \to + \infty$. Integrating \eqref{eq:intermediaire} against $z_m$ thus yields, since $\mu_{k,m} >0$ for all $m \ge 0$,
$$ 0 = \int_{\Om} u_m^{2^*-2} \vp_{k,m} z_m = \int_{\Om} u_m^{2^*-2} \vp_{k,m}^2 dx-  \int_{\Om} u_m^{2^*-2} \vp_{k,m} v_m dx = 1 + o(1), $$
a contradiction. This proves \eqref{infmukpos}. 
 \end{proof}

One of the main difficulties that we face is that the functionals $u \in L^{2^*}_>(\Om) \mapsto \mpl(u)$ are not even differentiable in general. In the next Proposition we prove however that we can compute directional derivatives of the principal eigenvalue $u \mapsto \mkl(u)$ along interior variations of sorts at a given weight $u$:

\begin{prop} \label{prop:dervp}
Let $\Om \subset \R^n$, $n \ge 3$, be bounded and smooth and let  $\lambda \ge \Lambda_1$. Let $i \ge1$ be such that $\lambda \in [\Lambda_i, \Lambda_{i+1})$ and let $k = N(i)+1$. Let $u \in L^{2^*}_>(\Om)$, $h \in L^\infty(\Om)$ and, for $t \in \R$, let 
$$ u_t = (1+th)u. $$
Then the left and right derivatives of $t \mapsto \mkl ( u_t )$ at $0$ exist and are given by
\begin{equation} \label{eq:derleft}
\frac{d}{dt}_{|t=0_+}  \mkl ( u_t) = \inf_{\vp \in E_k^{\lambda}(u)\backslash \{0\}} \Bigg( - (2^*-2) \mkl(u) \frac{\int_\Om u^{2^*-2}h \vp^2dx}{\int_\Om u^{2^*-2} \vp^2 dx}\Bigg)
\end{equation}
and
\begin{equation} \label{eq:derright}
\frac{d}{dt}_{|t=0_-}  \mkl ( u_t) = \sup_{\vp \in E_k^{\lambda}(u)\backslash \{0\}} \Bigg( - (2^*-2) \mkl(u) \frac{\int_\Om u^{2^*-2}h \vp^2dx}{\int_\Om u^{2^*-2} \vp^2 dx}\Bigg).
\end{equation}
\end{prop}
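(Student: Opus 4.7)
The strategy is to derive matching upper and lower bounds on $(\mkl(u_t) - \mkl(u))/t$ as $t \to 0^\pm$, combining the min-max characterisation \eqref{muk} with a compactness argument for generalised eigenfunctions. For $|t| \le (2\Vert h\Vert_\infty)^{-1}$, $u_t \in L^{2^*}_>(\Om)$ and $\mkl(u_t)$ is well-defined by Proposition \ref{prop:vp}. The key pointwise expansion
$$u_t^{2^*-2} = u^{2^*-2}\bigl(1 + (2^*-2)th + r_t\bigr), \quad \Vert r_t \Vert_\infty \le Ct^2,$$
will be used systematically. I first establish the upper bound for the right derivative: fix $\vp \in E_k^\lambda(u)\setminus\{0\}$ with $\int_\Om u^{2^*-2}\vp^2\,dx = 1$ and consider the $k$-dimensional subspace $V_\vp$ spanned by $\vp_1^\lambda(u), \ldots, \vp_{k-1}^\lambda(u)$ and $\vp$; note that $\vp$ is $L^2_u$-orthogonal to each $\vil(u)$ for $i < k$ since generalised eigenvectors for distinct eigenvalues are $L^2_u$-orthogonal. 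The equation \eqref{vpk} and the orthonormality \eqref{norma} put $Q_{u_t}^{\lambda}$ in nearly diagonal form on $V_\vp$; the strict gap $\mu_{k-1}^\lambda(u) \le 0 < \mkl(u)$ from Proposition \ref{prop:signvp} makes the maximum non-degenerate at $t=0$, and a direct computation yields
$$\max_{v \in V_\vp\setminus\{0\}}Q_{u_t}^{\lambda}(v) = \mkl(u) - (2^*-2)t\mkl(u)\int_\Om u^{2^*-2}h\vp^2\,dx + O(t^2),$$
with the $O(t^2)$ uniform in $\vp$. Applying \eqref{muk} and minimising over $\vp$ gives that $\limsup_{t\to 0^+}(\mkl(u_t) - \mkl(u))/t$ is bounded above by the right-hand side of \eqref{eq:derleft}.

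For the matching lower bound, let $\vp_t := \vkl(u_t)$ with $\int u_t^{2^*-2}\vp_t^2\,dx = 1$. Despite the non-coercive regime, $(\vp_t)$ is bounded in $H^1_0(\Om)$: decomposing $\vp_t = \chi_t + \psi_t$ with $\chi_t \in \bigoplus_{\Lambda_j \le \lambda}E_{\Lambda_j}(\Om)$ and $\psi_t$ its $H^1_0$-orthogonal complement, the gap $\Lambda_{i+1} - \lambda > 0$ yields $\Vert \psi_t\Vert_{H^1_0}^2 \le C(1 + \Vert \chi_t\Vert_2^2)$, and a contradiction argument based on the $L^2_u$-normalisation rules out $\Vert \chi_t \Vert_2 \to +\infty$. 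Up to a subsequence, $\vp_t \rightharpoonup \vp^\star$ in $H^1_0$ and strongly in $L^2_u$ (adapting \eqref{eq:cvforte}), and $\vp^\star$ is a generalised eigenvector of weight $u$ with eigenvalue $\mu^\star := \lim \mkl(u_t)$. Applying the same compactness argument simultaneously to $\vp_1^\lambda(u_t), \ldots, \vp_k^\lambda(u_t)$ produces $k$ $L^2_u$-orthonormal limits spanning a $k$-dimensional subspace on which $\qul$ reaches $\mu^\star$; combined with \eqref{muk} and the upper bound this forces $\mu^\star = \mkl(u)$ and $\vp^\star \in E_k^\lambda(u)$.

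I now write $a_p := \int_\Om u^{2^*-2}\vp_t\vpl(u)\,dx$. Testing the equation for $\vp_t$ against $\vpl(u)$ and exploiting the uniform spectral gap $|\mpl(u) - \mkl(u_t)| \ge \delta > 0$ whenever $\mpl(u) \ne \mkl(u)$ (valid since $\mpl(u) \to +\infty$ by Proposition \ref{prop:vp}(5) and only finitely many $p < k$ occur) yields $|a_p| = O(t)$ for every such $p$. The identities $\sum_p \mpl(u) a_p^2 = \mkl(u_t)$ and $\sum_p a_p^2 = 1 - (2^*-2)t\int u^{2^*-2}h\vp_t^2\,dx + O(t^2)$ then rearrange into
$$\mkl(u_t) - \mkl(u) = \sum_p\bigl(\mpl(u) - \mkl(u)\bigr)a_p^2 - (2^*-2)t\mkl(u)\int_\Om u^{2^*-2}h\vp_t^2\,dx + O(t^2).$$
The right-hand sum is non-negative up to $O(t^2)$: terms with $\mpl(u) < \mkl(u)$ have $a_p^2 = O(t^2)$ and are finite in number, terms with $\mpl(u) = \mkl(u)$ vanish, and terms with $\mpl(u) > \mkl(u)$ have non-negative sign. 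Passing to the limit $t \to 0^+$ along any subsequence with $\vp_t \to \vp^\star \in E_k^\lambda(u)$ therefore gives $\liminf_{t\to 0^+}(\mkl(u_t)-\mkl(u))/t \ge -(2^*-2)\mkl(u)\int_\Om u^{2^*-2}h(\vp^\star)^2\,dx$, which is itself bounded below by the right-hand side of \eqref{eq:derleft}. Combined with the upper bound, this establishes \eqref{eq:derleft}.

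The formula \eqref{eq:derright} for the left derivative is obtained by the same chain of arguments applied for $t < 0$: the sign reversal exchanges the roles of infimum and supremum over $\vp \in E_k^\lambda(u)$ in the final step. The delicate point is the sharp control of the residual sum $\sum_p(\mpl(u) - \mkl(u))a_p^2$ in the non-coercive regime: the positivity $\mpl(u) > \mkl(u)$ for $p > N(i+1)$ from Proposition \ref{prop:signvp} is crucial to discard the infinite tail as non-negative, reducing the effective analysis to finitely many indices $p < k$ on which the spectral-gap bound $|a_p| = O(t)$ can be applied directly.
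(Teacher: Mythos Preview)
Your argument is correct, and the lower-bound step takes a genuinely different route from the paper. For the upper bound both you and the paper plug a test object into the min--max: you use the fixed subspace $V_\vp=\mathrm{Span}\bigl(\vp_1^\lambda(u),\ldots,\vp_{k-1}^\lambda(u),\vp\bigr)$ in \eqref{muk} together with a finite-dimensional perturbation of the maximum, while the paper projects $\phi\in E_k^\lambda(u)$ onto the $L^2_{u_t}$-complement of $\bigoplus_{p<k}E_p^\lambda(u_t)$ and applies \eqref{muk2} directly; both yield the same expansion. For the lower bound the paper renormalises $\vp_{k,t}-\Pi_{k,0}(\vp_{k,t})$ by the quantity $\Vert\vp_{k,t}-\Pi_{k,0}(\vp_{k,t})\Vert_{H^1_0}+t+|\mu_{k,t}-\mkl(u)|$, identifies the equation satisfied by the strong $H^1_0$-limit of this rescaled function, and reads off the derivative by integrating that equation against the limiting eigenvector---a blow-up argument that never looks beyond the eigenspaces of index $\le k$. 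You instead expand $\vp_t$ in the \emph{full} generalised eigenbasis $(\vpl(u))_{p\ge1}$ and exploit the sign structure of the resulting sum. This is more classical and arguably more transparent, but it rests on the Parseval identities $\sum_pa_p^2=\Vert\vp_t\Vert_{L^2_u}^2$ and $\sum_p\mpl(u)a_p^2=\int_\Om(|\nabla\vp_t|^2-\lambda\vp_t^2)\,dx$, and completeness of $(\vpl(u))_{p\ge1}$ is not established in Proposition~\ref{prop:vp}. It does follow from point~(5) there via the usual argument, and in fact your inequality only needs $\sum_p\mpl(u)a_p^2\le\int_\Om(|\nabla\vp_t|^2-\lambda\vp_t^2)\,dx$, which holds without completeness since the remainder $\vp_t-\sum_{p\le N}a_p\vpl(u)$ lies in $S_N$ and hence has non-negative Rayleigh quotient once $N\ge k-1$. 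The paper's route avoids this spectral-theoretic input entirely, at the price of a less explicit mechanism.
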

We point out that, when $\lambda \in [\Lambda_i, \Lambda_{i+1})$ for some $i \ge 1$, Proposition \ref{prop:dervp} only holds for the specific value $k = N(i)+1$, that is for the principal generalised eigenvalue $\mkl(u)$. The reason for this is as follows: left and right differentiability of $t \mapsto \mkl(u_t)$ art $0$ relies on a strict spectral gap between $\mkl(u_t)$ and all the $\mpl(u_t)$, $p \le k-1$, that needs to be uniform in $t$ for $t$ small enough (see \eqref{eq:der1} below). When $k = N(i)+1$ this gap easily follows from Proposition \ref{prop:signvp}. Note that we do \emph{not} assume a priori the simplicity of $\mkl(u)$ in Proposition \ref{prop:dervp}. For a generalised eigenvalue $\mu_p^{\lambda}(u)$ with $p \ge k$ it is possible to compute directional derivatives, but the expressions are in general more complicated than \eqref{eq:derleft} and \eqref{eq:derright}. See for instance \cite{HumbertPetridesPremoselli} for a recent unified treatment of eigenvalue functions of conformally covariant operators in a geometric setting where similar computations are carried on. See also \cite{PetridesTewodrose} for a general framework to study critical points of eigenvalue-type functionals. 

\begin{proof}
We prove \eqref{eq:derleft}. Throughout the proof we let $k = N(i)+1$ where $N$ is the multiplicity function defined in \eqref{defN}. Let $u\in L^{2^*}_{>}(\Om)$, $h \in L^\infty(\Om)$ and $t \in \R$. For $|t|< \Vert h \Vert_{\infty}^{-1}$, $u_t = (1+th)u \in L^{2^*}_{>}(\Om)$ so that $\mu_p^{\lambda}(u_t)$ is well-defined for any $p \ge 1$. By Proposition \ref{prop:signvp} we also have $\mkl(u_t)>0$. Since $\lambda$ is fixed we will adopt for simplicity the following notations in this proof: $\mu_{p,t} = \mpl(u_t)$ and $E_{p,t} = E_p^{\lambda}(u_t)$ for any $p \ge1$. In particular, $E_{p,0} = E_p^{\lambda}(u)$. We will denote by $\Pi_{p,t}$ the orthogonal projection over $E_{p,t}$ in $L^2_{u_t}(\Om)$. The notation $O(t)$ will denote a quantity that can be uniformly bounded from above in absolute value by $C|t|$, where $C$ is a positive constant that does not depend on $t$. In the rest of the proof $Q_u^{\lambda}$ is as in \eqref{defQu}. We use in this proof arguments from  Gursky-Perez \cite{GurskyPerez} and Petrides \cite{Petrides2}. 

\medskip

\textbf{Step $1$:} We first prove that, for any $p \ge 1$ fixed,
\begin{equation} \label{eq:der2}
\mu_{p,t} = \mu_{p}^{\lambda}(u) + O(t) \quad \text{ as } t \to 0. 
\end{equation}
Let first $p \le  k-1$, so that  $\mu_{p}^{\lambda}(u) \le 0$ by Proposition \ref{prop:signvp}. Assume first that $\mpl(u)<0$. Let $V \subset H^1_0(\Om)$ with $\dim_u V= p$ and such that $ \max_{v \in V \backslash \{0\}} Q_u^{\lambda}(v) \le0$. For $t$ small enough we also have $\dim_{u_t} V = p$ and thus 
$$ \frac{1}{\big(1 - t \Vert h \Vert_{\infty}\big)^{2^*-2}}  \max_{v \in V \backslash \{0\}} Q_u^{\lambda}(v) \le  \max_{v \in V \backslash \{0\}} Q_{u_t}^{\lambda}(v) \le \frac{1}{\big(1 + t \Vert h \Vert_{\infty}\big)^{2^*-2}}  \max_{v \in V \backslash \{0\}} Q_u^{\lambda}(v). $$
Taking the infimum over $V$ yields, by \eqref{muk},
$$  \frac{1}{\big(1 - t \Vert h \Vert_{\infty}\big)^{2^*-2}}  \mu_{p}^{\lambda}(u) \le \mu_{p}^{\lambda}(u_t) \le  \frac{1}{\big(1 + t \Vert h \Vert_{\infty}\big)^{2^*-2}}    \mu_{p}^{\lambda}(u),$$
which proves \eqref{eq:der2}. Assume now that $\mpl(u) = 0$ and let $V \subset H^1_0(\Om)$ with $\dim_u V= p$. By \eqref{muk} we have $ \max_{v \in V \backslash \{0\}} Q_u^{\lambda}(v) \ge 0$. Then
$$ \frac{1}{\big(1 + t \Vert h \Vert_{\infty}\big)^{2^*-2}}  \max_{v \in V \backslash \{0\}} Q_u^{\lambda}(v) \le  \max_{v \in V \backslash \{0\}} Q_{u_t}^{\lambda}(v) \le \frac{1}{\big(1 - t \Vert h \Vert_{\infty}\big)^{2^*-2}}  \max_{v \in V \backslash \{0\}} Q_u^{\lambda}(v), $$
and taking the infimum over $V$ yields $\mpl(u_t) =0$, which again proves \eqref{eq:der2} for $p \le k-1$. The proof for $p \ge k$ follows the same lines: since $\mpl(u) >0$ by Proposition \ref{prop:signvp}, the only difference is that $ \max_{v \in V \backslash \{0\}} Q_u(v) >0$ for any $V \subset H^1_0(\Om)$ with $\dim_u V= k$. With Proposition \ref{prop:signvp} a consequence of \eqref{eq:der2} is that
\begin{equation} \label{eq:der1}
\liminf_{t \to 0} \big( \mu_{k,t} - \mu_{p,t} \big) >0 \quad \text{ holds for any } p \le k-1.
\end{equation}

\medskip

\textbf{Step $2$:} Let $p \le k$ and let $\vp_{t} \in E_{p,t}$ be such that $\int_\Om u_t^{2^*-2} \vp_{t}^2dx = 1$. We claim that $(\vp_{t})_{t>0}$ is bounded in $H^1_0(\Om)$ for $t$ small enough. We proceed by contradiction and assume there is a subsequence $(t_m)_{m \ge0}$, $t_m \to 0$ as $m \to + \infty$ such that $\Vert \vp_{m} \Vert_{H^1_0} \to + \infty$, where we have let $\vp_{m}=\vp_{t_m}$. For $m \ge 1$ we let $\tilde{\vp}_{m} =  \frac{\vp_{m}}{\Vert \vp_{m}\Vert_{H^1_0}}$. Up to passing to a subsequence there exists $\tilde{\vp}_0 \in H^1_0(\Om)$ such that $\tilde{\vp}_{m} \rightharpoonup \tilde{\vp}_0$ in $H^1_0(\Om)$ and $\tilde{\vp}_{m} \to \tilde{\vp}_0$ in $L^2(\Om)$. Since $\int_{\Om} u_{t_m}^{2^*-2} \vp_{m}^2dx = 1$ we have
$$ \int_{\Om} u_{t_m}^{2^*-2} \tilde{\vp}_{m}^2dx \to 0 $$
as $ m \to + \infty$, so that Fatou's lemma yields $\int_\Om u^{2^*-2} \tilde{\vp}_0^2dx \le 0$. Since $u \in L^{2^*}_{>}(\Om)$ this implies that $\tilde{\vp}_0 = 0$ p.p. in $\Om$. Integrating \eqref{vpk} against $\vp_{m}$ we have 
$$\int_{\Om}\big( |\nabla \tilde{\vp}_{m}|^2 - \lambda \tilde{\vp}_{m}^2 \big) dx  = \frac{\mu_{p,t_m}}{\Vert \vp_{m}\Vert_{H^1_0}^2} \to 0 $$
as $m \to + \infty$ by \eqref{eq:der2}. Since  $\tilde{\vp}_0 = 0$ this yields $\Vert \tilde{\vp}_{m} \Vert_{H^1_0}^2 = o(1)$, a contradiction with $\Vert \tilde{\vp}_{m} \Vert_{H^1_0} = 1$. Hence $(\vp_{t})_{t>0}$ is bounded in $H^1_0(\Om)$ for $t$ small enough.

\medskip

\textbf{Step $3$:} Let $p \le k$. We claim that, for $t$ small enough,
\begin{equation} \label{eq:der2bis}
\dim_{u_t} E_{p,t} \le \dim_u E_{p,0}.
\end{equation}
Let $p \le k$ and let $\vp_t \in E_{p,t}$ be such that $\int_\Om u_t^{2^*-2} \vp_t^2dx = 1$ for $t$ small enough. By Step $2$ the family $(\vp_t)_{t >0}$ is bounded in $H^1_0(\Om)$ as $t \to 0$ and weakly converges in $H^1_0(\Om)$ towards some $\vp_0$ in $H^1_0(\Om)$. Since $u_t \to u$ strongly in $L^{2^*}(\Om)$, using \eqref{eq:der2} we see that $\vp_0 \in E_{p,0}$. Up to passing to a subsequence in $t$ we can assume that $\Vert \vp_t - \vp_0\Vert_{2} \to 0$ as $t \to 0$. We then have, integrating \eqref{vpk} by parts: 
\begin{equation} \label{eq:der2ter}
 \begin{aligned}
\int_{\Om}& |\nabla(\vp_t - \vp_0)|^2dx = o(1) + \mu_{p}^{\lambda}(u)\int_{\Om} u^{2^*-2} (\vp_t - \vp_0)^2 dx \\
&+ (\mu_{p,t} - \mu_{p}^{\lambda}(u)  )\int_{\Om}u^{2^*-2} \vp_t(\vp_t - \vp_0) dx \\
&+ \mu_{p,t} \int_{\Om}(u_t^{2^*-2} - u^{2^*-2} ) \vp_t(\vp_t - \vp_0) dx.
\end{aligned} \end{equation}
By \eqref{eq:der2} and since 
\begin{equation} \label{eq:DLu}
\big| u_t^{2^*-2} - u^{2^*-2} \big| \le C |t| \Vert h \Vert_{\infty} u^{2^*-2} \quad \text{ a.e. in } \Om 
\end{equation}
we have 
$$ \begin{aligned}
& (\mu_{p}^{\lambda}(u) - \mu_{p,t} )\int_{\Om}u^{2^*-2} \vp_t(\vp_t - \vp_0) dx \\
&+ \mu_{p,t} \int_{\Om}(u_t^{2^*-2} - u^{2^*-2} ) \vp_t(\vp_t - \vp_0) dx = o(1) 
\end{aligned}
$$
as $t \to 0$. Since $(\vp_t)_{t>0}$ is bounded in $L^{2^*}(\Om)$, mimicking the proof of \eqref{eq:cvforte} it is also easily seen that
$$ \int_{\Om} u^{2^*-2} (\vp_t - \vp_0)^2 dx  = o(1)$$
as $t \to 0$. With \eqref{eq:der2ter} we have thus shown that $\vp_t \to \vp_0$ in $H^1_0(\Om)$ up to a subsequence as $t \to 0$, and $\vp_0$ thus satisfies $\int_\Om u^{2^*-2} \vp_0^2dx = 1$. Hence any finite family $(\vp_{1,t}, \dots, \vp_{\ell,t})$ of $E_{p,t}$ that is orthonormal for the $L^{2}_{u_t}(\Om)$ scalar product strongly converges, up to a subsequence as $t \to 0$, to an orthonormal family $(\vp_1, \dots, \vp_\ell)$ of $E_{p,0}$ for the $L^2_u(\Om)$ scalar product, which proves \eqref{eq:der2bis}.

\medskip

\textbf{Step $4$:} We claim that, for any $\phi \in E_{k,0}$,
\begin{equation} \label{eq:der3}
\Vert \Pi_{p,t}(\phi) \Vert_{L^2_{u_t}} = O(t) \quad \text{ for any } p \le k-1
\end{equation}
holds. Let indeed $\phi \in E_{k,0}$ and, for any $t \neq 0$ small enough, $\vp_t \in E_{p,t}$ be such that $\int_\Om u_t^{2^*-2} \vp_t^2dx = 1$. Integrating \eqref{vpk} for $\phi$ against $\vp_t$ yields
\begin{equation} \label{eq:der4}
\begin{aligned}
 \mkl(u) \int_{\Om} u^{2^*-2}& \phi \vp_t dx  = \mu_{p,t} \int_{\Om} u_t^{2^*-2}\phi  \vp_t  dx \\
 & =  \mu_{p,t}  \int_{\Om} u^{2^*-2} \phi \vp_t dx + \mu_{p,t} \int_{\Om} \big( u_t^{2^*-2} - u^{2^*-2}\big)\phi  \vp_t  dx.
\end{aligned}
\end{equation}
By Step $2$ $(\vp_t)_{t>0}$ is bounded in $H^1_0(\Om)$ as $t \to 0$. With \eqref{eq:DLu} we thus have 
$$ \Big| \int_{\Om} \big( u_t^{2^*-2} - u^{2^*-2}\big)\phi  \vp_t  dx \Big| = O(t) $$
as $ t \to 0$. Coming back to \eqref{eq:der4} with the latter and using \eqref{eq:der1} shows that
$$\int_{\Om} u^{2^*-2} \phi \vp_t dx = O(t) \quad \text{ as } t \to 0.$$
Since, by \eqref{eq:der2bis}, the dimension of $E_{p,t}$ as a subspace of $L^2_{u_t}(\Om)$ is bounded from above as $t \to 0$ this proves \eqref{eq:der3}.
 
 \medskip
 
 \textbf{Step $5$:} We claim that
 \begin{equation} \label{eq:der5}
\limsup_{t \to 0_+} \frac{\mu_{k,t} - \mkl(u)}{t}\le \min_{\phi \in E_{k,0}\backslash \{0\}} \Bigg( - (2^*-2) \mkl(u) \frac{\int_\Om u^{2^*-2}h \phi^2dx}{\int_\Om u^{2^*-2} \phi^2 dx}\Bigg).
\end{equation}
For $t$ small enough we denote by $\Pi_t$ the orthogonal projection over $\oplus_{i=1}^{k-1} E_{i,t}$ in $L^2_{u_t}(\Om)$. Let $\phi \in E_{k,0}\backslash \{0\}$. By \eqref{muk2} we have
\begin{equation} \label{eq:der6}
\mu_{k,t} \int_{\Om} u_t^{2^*-2}\big( \phi - \Pi_t(\phi) \big)^2dx \le \int_{\Om} \Big(\big| \nabla \big( \phi - \Pi_t(\phi)\big)\big|^2 - \lambda  \big( \phi - \Pi_t(\phi)\big)^2 \Big)dx.
\end{equation}
First, direct computations with \eqref{vpk} and \eqref{eq:der3} show that 
$$ \begin{aligned}
 &\int_{\Om} \Big(\big| \nabla \big( \phi - \Pi_t(\phi)\big)\big|^2 - \lambda  \big( \phi - \Pi_t(\phi)\big)^2 \Big)dx \\
 & = \mkl(u) \int_{\Om} u^{2^*-2}\phi^2 dx - 2 \mkl(u) \int_{\Om} u^{2^*-2} \phi \Pi_t(\phi)dx + O(t^2) .
\end{aligned} $$
Using \eqref{eq:DLu} and \eqref{eq:der3} we also have, by $L^2_{u_t}$-orthogonality,
$$ \begin{aligned}
 \int_{\Om} u^{2^*-2} \phi \Pi_t(\phi)dx & = \int_{\Om} u_t^{2^*-2} \big(\phi - \Pi_t(\phi) \big) \Pi_t(\phi) dx + O(t^2)  = O(t^2),
\end{aligned} $$
so that 
\begin{equation} \label{eq:der7}
 \begin{aligned}
 &\int_{\Om}\big| \nabla \big( \phi - \Pi_t(\phi)\big)\big|^2 - \lambda  \big( \phi - \Pi_t(\phi)\big)^2dx =\mkl(u) \int_{\Om} u^{2^*-2}\phi^2 dx + O(t^2) .
\end{aligned} 
\end{equation}
Independently, and again by \eqref{eq:der3} and $L^2_{u_t}$-orthogonality,
\begin{equation} \label{eq:der8}
 \begin{aligned}
 \int_{\Om} u_t^{2^*-2}\big( \phi - \Pi_t(\phi) \big)^2dx & =  \int_{\Om} u_t^{2^*-2} \phi^2dx - \int_{\Om} u_t^{2^*-2} \Pi_t(\phi)^2dx \\
 & =  \int_{\Om} u_t^{2^*-2} \phi^2dx  + O(t^2). 
 \end{aligned} 
\end{equation}
Plugging \eqref{eq:der7} and \eqref{eq:der8} in \eqref{eq:der6}, expanding $u_t^{2^*-2}$ with \eqref{eq:DLu} and using \eqref{eq:der2} we obtain, for $t>0$ small enough, that 
$$ \frac{\mu_{k,t} - \mkl(u)}{t}\le  - (2^*-2) \mkl(u) \frac{\int_\Om u^{2^*-2}h \phi^2dx}{\int_\Om u^{2^*-2} \phi^2 dx} + O(t).$$ 
By Proposition \ref{prop:signvp} we have $\mkl(u) >0$, so that taking the $\limsup$ as $t \to 0_+$ and then taking the minimum over $\phi \in E_{k,0}$ yields \eqref{eq:der5}. 

\medskip

 \textbf{Step $6$:} We claim that
 \begin{equation} \label{eq:der9}
\lim_{t \to 0_+} \frac{\mu_{k,t} - \mkl(u)}{t}= \min_{\phi \in E_{k,0}\backslash \{0\}} \Bigg( - (2^*-2) \mkl(u) \frac{\int_\Om u^{2^*-2}h \phi^2dx}{\int_\Om u^{2^*-2} \phi^2 dx}\Bigg).
\end{equation}
Let $(t_m)_{m \ge1}, t_m >0, t_m \to 0$ as $m \to + \infty $ be a subsequence along which 
$$ \lim_{m \to + \infty} \frac{\mu_{k,t_m} - \mkl(u)}{t_m}   = \liminf_{ t \to 0_+} \frac{\mu_{k,t} - \mkl(u)}{t}.$$
 Let, for any $m \ge 1$, $\vp_{m} \in E_{k,t_m}$ be such that $\int_\Om u_{t_m}^{2^*-2} \vp_{m}^2dx = 1$. Step $2$ shows that $(\vp_m)_{m\ge0}$ is bounded in $H^1_0(\Om)$. We denote by $\vp_0$ its weak limit up to a subsequence in $H^1_0(\Om)$. By \eqref{eq:der2} and \eqref{eq:DLu} we have $\vp_0 \in E_{k,0}$. For $m \ge 1$ we let 
$$ \alpha_m = \Vert \vp_{m} - \Pi_{k,0}(\vp_{m}) \Vert_{H^1_0} + t_m +  |\mu_{k,t_m} - \mkl(u)| $$
and 
$$ \Psi_{t_m} = \frac{\vp_{m} - \Pi_{k,0}(\vp_{m})}{\alpha_m}. $$
By \eqref{vpk} the function $\Psi_{t_m}$ satisfies
$$ \begin{aligned}
\big( - \Delta - \lambda \big) \Psi_{t_m}&  = \mkl(u) u^{2^*-2} \Psi_{t_m} + \frac{\mu_{k,t_m} - \mkl(u)}{\alpha_m} u^{2^*-2} \vp_{m} \\
&+ \mu_{k, t_m} \frac{u_{t_m}^{2^*-2} - u^{2^*-2}}{\alpha_m}\vp_{m}.
\end{aligned}$$ 
By definition of $\alpha_m$ the sequence $(\Psi_{t_m})_{m\ge1}$ is bounded in $H^1_0(\Om)$ and hence, up to a subsequence $\Psi_{t_m} \rightharpoonup \Psi_0$ as $m \to + \infty$ in $H^1_0(\Om)$, where $\Psi_0$ solves weakly in $\Om$
\begin{equation} \label{eq:der10}
 \begin{aligned}
 \big( - \Delta - \lambda \big) \Psi_0 -  \mkl(u) u^{2^*-2} \Psi_{0} & = \beta_0 u^{2^*-2} \vp_0 \\
 &+ (2^*-2) \mkl(u) \beta_1 u^{2^*-2} h \vp_0 ,
 \end{aligned} 
 \end{equation}
where we have let, up to a subsequence, 
 $$\beta_0 = \lim_{m \to +\infty} \frac{\mu_{k,t_m} - \mkl(u)}{\alpha_m} \quad \text{ and } \quad \beta_1 = \lim_{m \to +\infty} \frac{t_m}{\alpha_m}. $$
 We have proven in Step $3$ that $(\vp_m)_{m\ge0}$ strongly converges to $\vp_0$ in $H^1_0(\Om)$, and so $\Vert \vp_0 \Vert_{L^2_u} = 1$. A straightforward adaptation of the argument in Step $3$ applied to $\Psi_{t_m}$ similarly shows that $\Psi_{t_m} \to \Psi_0$ strongly in $H^1_0(\Om)$ as $m \to + \infty$. By construction $\Psi_{t_m} \in E_{k,0}^{\perp_{u}}$, so $\Psi_0 \in E_{k,0}^{\perp_u}$.  Integrating \eqref{eq:der10} against $\vp_0 \in E_{k,0}\backslash \{0\}$ then yields
 $$\beta_0 \int_{\Om} u^{2^*-2} \vp_0^2dx = - (2^*-2) \beta_1 \mkl(u) \int_{\Om} u^{2^*-2} h \vp_0^2dx. $$
 Assume that $\beta_1 = 0$. The latter then shows that $\beta_0 = 0$ and hence that $\Vert \Psi_0\Vert_{H^1_0} = 1$ by definition of $\alpha_m$. But \eqref{eq:der10} then shows that $\Psi_0 \in E_{k,0}$ and thus $\Psi_0 = 0$, a contradiction. Thus $\beta_1 >0$ and
 $$ \begin{aligned}
\lim_{m \to + \infty} \frac{\mu_{k,t_m} - \mkl(u)}{t_m} 
 = \frac{\beta_0}{\beta_1}  & = -(2^*-2)  \mkl(u) \frac{\int_\Om u^{2^*-2}h \vp_0^2dx}{\int_\Om u^{2^*-2} \vp_0^2 dx} \\
 & \ge \inf_{\phi \in E_{k,0}\backslash \{0\}} \Bigg( - (2^*-2) \mkl(u) \frac{\int_\Om u^{2^*-2}h \phi^2dx}{\int_\Om u^{2^*-2} \phi^2 dx}\Bigg).
 \end{aligned} $$
Together with \eqref{eq:der5} this proves \eqref{eq:der9}. This concludes the proof of \eqref{eq:derleft}. The proof of \eqref{eq:derright} is identical when $t \to 0, t<0$ and we omit it here.
\end{proof}

\section{Minimising the principal generalised eigenvalue} \label{vp:minimisation}

\subsection{Minimisation of $\mu_{k,\lambda}(u)$ under a strict inequality}

Let $\lambda \ge 0$ and let $i \ge 0$ such that  $\lambda \in [\Lambda_i, \Lambda_{i+1})$. Throughout this section and until the end of this paper we will always let 
$$ k = N(i)+1 $$
where $N$ is the multiplicity function for $\Sp(-\Delta)$ defined in \eqref{defN}. As we proved in Proposition \ref{prop:signvp} $\mkl(u)$ is, regardless of $u \in  L^{2^*}_{>}(\Om)$, the least positive generalised eigenvalue associated to $u$. We investigate in this section the following variational problem which consists in minimising $\mkl(u)$ over all weights $u$:
\begin{equation} \label{eq:minmuk}
\mkl(\Om) =  \inf_{u \in L^{2^*}_{>}(\Om)} \mkl(u) \Vert u \Vert_{2^*}^{2^*-2}.
\end{equation}
The quantity in the right-hand side of \eqref{eq:minmuk} is invariant if $u$ is scaled by a constant, and it is easy to see that we also have 
\begin{equation*} 
\mkl(\Om) = \inf_{u \in L^{2^*}_{>}(\Om), \Vert u \Vert_{2^*} = 1} \mkl(u). 
\end{equation*}
When $\lambda \in [0, \Lambda_1)$, i.e. when $i=0$, we have $k=1$ and simple arguments show that 
$$ \mu_1^{\lambda}(\Om) = \inf_{v \in H^1_0(\Om)\backslash \{0\}} \frac{\int_{\Om} \big( |\nabla v|^2 - \lambda v^2 \big)dx}{\big(\int_{\Om}|v|^{2^*}dx \big)^{\frac{n-2}{n}}}, 
$$ 
and that both these quantities are simultaneously attained (see Lemma \ref{lemme:muI} below). By the celebrated results in \cite{BN}, minimising $\mu_1^{\lambda}(\Om)$ when $0  \le \lambda \le \Lambda_1$ (when possible) yields a least-energy positive solution of  \eqref{eq:critlambda}. We will prove in this section that, for any $i \ge 1$, minimising $\mkl(\Om)$ with $k = N(i)+1$ similarly yields a least-energy sign-changing solution of \eqref{eq:critlambda}. 

\medskip 

A first property of $\mkl(\Om)$ is the following:
\begin{prop} \label{prop:ineq:large:00}
Let $\Om \subset \R^n$, $n \ge 3$, be bounded and smooth, let $i \ge0$ and let $\lambda \in [\Lambda_i, \Lambda_{i+1})$. Then 
\begin{equation} \label{ineq:mkl:large}
0 < \mkl(\Om) \le K_n^{-2},
\end{equation}
where $K_n$ is as in \eqref{defK0}.
\end{prop}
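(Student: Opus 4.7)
The lower bound $\mkl(\Om) > 0$ is an immediate consequence of Proposition~\ref{prop:signvp}: the estimate \eqref{infmukpos} is precisely the statement that the infimum defining $\mkl(\Om)$ in \eqref{eq:minmuk} is bounded below by a positive constant. The substantive content is therefore the upper bound $\mkl(\Om) \le K_n^{-2}$, which I will establish by testing $\mkl(\Om)$ against a concentrating Aubin--Talenti bubble.

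Fix $x_0 \in \Om$ and, for $\mu > 0$, consider the standard Aubin--Talenti bubble
\[
U_{\mu,x_0}(x) = c_n \, \mu^{-(n-2)/2}\bigl(1 + \mu^{-2}|x-x_0|^2\bigr)^{-(n-2)/2},
\]
normalised so that $\int_{\R^n}|\nabla U_{\mu,x_0}|^2 dx = \int_{\R^n} U_{\mu,x_0}^{2^*}dx = K_n^{-n}$. I take $u_\mu := U_{\mu,x_0}\vert_\Om$; since $U_{\mu,x_0}$ is strictly positive on $\R^n$, $u_\mu \in L^{2^*}_>(\Om)$, and classical bubble estimates give $\Vert u_\mu \Vert_{2^*}^{2^*-2} = K_n^{-2} + o(1)$ as $\mu \to 0$. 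Let $\eta \in C^\infty_c(\Om)$ be a cutoff with $\eta \equiv 1$ in a neighbourhood of $x_0$, set $B_\mu := \eta U_{\mu,x_0} \in H^1_0(\Om)$, and let $\vp_1,\dots,\vp_{k-1}$ be an $L^2$-orthonormal basis of $\bigoplus_{j=1}^{i} E_{\Lambda_j}(\Om)$ made of eigenfunctions of $-\Delta$ with eigenvalues $\lambda_1 \le \cdots \le \lambda_{k-1} \le \Lambda_i \le \lambda$. My test subspace is
\[
V_\mu := \text{Span}(\vp_1,\dots,\vp_{k-1},B_\mu) \subset H^1_0(\Om).
\]
Since $u_\mu > 0$ a.e., the family $(\vp_1,\dots,\vp_{k-1},B_\mu)$ is free in $L^2_{u_\mu}(\Om)$ for $\mu$ small, so $\dim_{u_\mu} V_\mu = k$ and $V_\mu$ is admissible in the min-max definition \eqref{muk} of $\mkl(u_\mu)$.

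The heart of the argument is then to estimate $\max_{v \in V_\mu} Q_{u_\mu}^\lambda(v)$, which by \eqref{muk} upper-bounds $\mkl(u_\mu)$. In the basis $(\vp_1,\dots,\vp_{k-1},B_\mu)$, standard bubble asymptotics yield the following picture. The numerator matrix $N_{ab} = \int_\Om (\nabla v_a \cdot \nabla v_b - \lambda v_a v_b) dx$ is nearly diagonal: by $L^2$-orthogonality its $(k-1)\times(k-1)$ eigenfunction block is $\text{diag}(\lambda_1-\lambda,\dots,\lambda_{k-1}-\lambda)$ with non-positive entries; the bubble entry satisfies $N_{kk} = K_n^{-n} + o(1)$; and the cross terms $N_{ik} = (\lambda_i-\lambda) \int \vp_i B_\mu\, dx$ are of order $\mu^{(n-2)/2}$. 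The denominator matrix $D_{ab} = \int_\Om u_\mu^{2^*-2} v_a v_b\, dx$ has all eigenfunction entries $D_{ij}$ vanishing as $\mu \to 0$ (because $u_\mu^{2^*-2}$ concentrates as a measure at $x_0$ with $\int_\Om u_\mu^{2^*-2}dx \to 0$ in every dimension $n \ge 3$), bubble entry $D_{kk} = K_n^{-n} + o(1)$, and vanishing cross terms $D_{ik}$. Solving the generalised eigenvalue problem $N c = \nu D c$, the $k-1$ eigenfunction directions contribute eigenvalues $\sim (\lambda_i - \lambda)/D_{ii} \to -\infty$, while the bubble direction gives $N_{kk}/D_{kk} \to 1$. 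A Schur complement analysis (equivalently, a direct asymptotic expansion of the characteristic polynomial) then shows that the largest eigenvalue of the pencil $(N, D)$ tends to $1$ as $\mu \to 0$. Hence $\mkl(u_\mu) \le 1 + o(1)$, and combining this with the $L^{2^*}$-norm estimate gives $\mkl(u_\mu)\Vert u_\mu\Vert_{2^*}^{2^*-2} \le K_n^{-2} + o(1)$. Letting $\mu \to 0$ yields $\mkl(\Om) \le K_n^{-2}$.

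The delicate point is that $(\cdot,\cdot)_{L^2_{u_\mu}}$ degenerates on the eigenfunction subspace $\bigoplus_{j=1}^i E_{\Lambda_j}(\Om)$ as $\mu \to 0$, so the generalised eigenvalue pencil becomes singular in the limit. The technical core of the argument is therefore tracking precisely the rates at which the cross-terms $N_{ik}$ and $D_{ik}$ vanish (and comparing them with the rate at which $D_{ii} \to 0$), so as to guarantee that the bubble direction decouples from the eigenfunction directions up to a vanishing error, and that the largest eigenvalue of the pencil does indeed converge to $1$ rather than to some larger value produced by the coupling.
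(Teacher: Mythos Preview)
Your strategy—test $\mkl(\Om)$ against a concentrating bubble together with the span of the first $k-1$ Dirichlet eigenfunctions—matches the paper's, but your execution has a genuine gap. You correctly flag the degeneracy of the $L^2_{u_\mu}$-form on the eigenfunction block as ``the delicate point'', yet you then simply \emph{assert} that a Schur complement gives the largest pencil eigenvalue $\to 1$. At the endpoint $\lambda=\Lambda_i$, $i\ge 1$, $n=3$, this assertion is false. For each $\ell$ with $\lambda_\ell=\Lambda_i$ one has $N_{\ell\ell}=N_{\ell k}=0$, so these directions do not enter the numerator at all; maximising $Q_{u_\mu}^\lambda$ over the plane $\mathrm{span}(\vp_\ell,B_\mu)$ then reduces to minimising the denominator in the $\vp_\ell$-coefficient, and the maximum equals $N_{kk}\big/\bigl(D_{kk}-D_{\ell k}^2/D_{\ell\ell}\bigr)$. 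In dimension $3$ one computes $D_{\ell k}\sim c\,\vp_\ell(x_0)\,\mu^{1/2}$ and $D_{\ell\ell}\sim c'\,\vp_\ell(x_0)^2\,\mu$, so $D_{\ell k}^2/(D_{\ell\ell}D_{kk})$ converges to the \emph{positive} constant $I_{5/2}^2/(I_2 I_3)=64/(9\pi^2)$ (with $I_p=\int_{\R^3}(1+|y|^2)^{-p}\,dy$), and the largest pencil eigenvalue tends to $\bigl(1-64/(9\pi^2)\bigr)^{-1}>1$. Your test thus only yields $\mkl(\Om)\le C\,K_3^{-2}$ with $C>1$. (For $n\ge 4$ the ratio $D_{\ell k}^2/D_{\ell\ell}\to 0$ and the issue disappears; for $\lambda>\Lambda_i$ the strictly negative entries $N_{\ell\ell}<0$ combined with a signature count on $N$ do rescue the conclusion, but you do not supply that argument either.)

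The paper sidesteps the entire difficulty with a one-line regularisation: it tests not with the weight $u_\ve$ but with $u_\ve+\eta$ for a fixed $\eta>0$. This forces $\int_\Om(u_\ve+\eta)^{2^*-2}v^2\,dx\ge\eta^{2^*-2}\int_\Om v^2\,dx$, so the denominator never degenerates on the eigenfunction directions, and a short dichotomy (either the bubble coefficient $\alpha_{k,\ve}\to 0$, giving numerator $\le 0$, or $\alpha_{k,\ve}\not\to 0$, giving $Q\le K_n^{-2}+o(1)$) handles every $\lambda\in[\Lambda_i,\Lambda_{i+1})$ and every $n\ge 3$ uniformly. The price is an extra factor $\Vert u_\ve+\eta\Vert_{2^*}^{2^*-2}\le\bigl(1+\eta|\Om|^{1/2^*}\bigr)^{2^*-2}$, which is removed by letting $\eta\to 0$ after $\ve\to 0$.
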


\begin{proof}
That $\mkl(\Om) >0$  follows from Proposition \ref{prop:signvp}, in particular from \eqref{infmukpos}. We thus only have to prove that $\mkl(\Om) \le K_n^{-2}$. We define 
\begin{equation} \label{defU0}
U_0(x)=\left(1+\frac{|x|^2}{n(n-2)} \right)^{-\frac{n-2}{2}} \quad x \in \R^n.
\end{equation}
It is well-known (see \cite{Aub, Talenti}) that $U_0$ satisfies 
\begin{equation}\label{critUp}
	-\Delta U_0 = \, U_0^{2^*-1} \hbox{ in } \rr^n
\end{equation}
and
\begin{equation} \label{energieU0}
\int_{\R^n} |\nabla U_0|^2dx = \int_{\R^n} U_0^{2^*} dx = K_n^{-n}.
\end{equation}
 In particular $U_0$ attains the infimum in \eqref{defK0}. We let $\vp_1, \dots, \vp_{k-1}$ be eigenfunctions of $-\Delta$ in $H^1_0(\Omega)$, normalised by $\Vert \vp_i \Vert_2 = 1$, respectively associated to the eigenvalues $\lambda_1, \dots, \lambda_{k-1}$. For $1 \le \ell,p,q \le k-1$ we thus have 
 \begin{equation*} 
 \begin{aligned}   
 - \Delta \vp_\ell & =  \lambda_\ell \vp_\ell \quad \text{ in } \Om, \\
  	\int_{\Om} \langle \nabla \vp_p, \nabla \vp_q \rangle dx & = \lambda_p \delta_{pq} \text{ and }  \int_\Om \varphi_{p} \varphi_{q} dx =  \delta_{pq}.
 \end{aligned} 
 \end{equation*}
Let $x_0 \in \Om$ be fixed. For $\ve >0$ we define, for $x \in \Omega$, $B_{\ve}(x)	=  \ve^{- \frac{n-2}{2}} U_0\left( \frac{x-x_0}{\ve}\right) $ and we let $\tilde{u}_\ve \in H^1_0(\Om)$ be the unique solution of 
$$ \left \{ \begin{aligned}
- \Delta \tilde{u}_\ve & = B_{\ve}^{2^*-1} \quad \text{ in } \Om, \\
 \tilde{u}_\ve & = 0 \quad \text{ in } \partial \Om. 
\end{aligned} \right.$$
It is easily seen by the maximum principle that $\tilde{u}_\ve >0$ in $\Om$ and, since $x_0 \in \Om$ is fixed, that $\Vert \tilde{u}_\ve - B_\ve \Vert_{H^1_0} \to 0$ as $\ve \to 0$. We let in what follows
\begin{equation*}
\begin{aligned}
u_\ve = \frac{\tilde{u}_{\ve}}{\Vert \tilde{u}_\ve \Vert_{2^*}},
	\end{aligned} 
	\end{equation*}
so that $u_\ve >0 $ in $\Om$. Straightforward computations show that
\begin{equation} \label{calculs:ineg:large}
\begin{aligned}
& \int_{\Om} |\nabla u_\ve|^2 dx  = K_n^{-n} + o(1),  \quad  \int_{\Om} u_\ve^2 dx =o(1), \\
& \int_{\Om} u_{\ve}^{2^*-1} \varphi_{\ell}\, dx  = o(1),\quad \text{ and } \quad  
 \int_{\Om} \vp_\ell u_\ve dx =   o(1) 
 \end{aligned} 
\end{equation}
as $\ve \to 0$ (see for instance \cite{BN}). We let 
$$ V_\ve = \text{Span} \big( \vp_{1}, \dots, \vp_{k-1}, u_\ve\big), $$
which is a subset of $H^1_0(\Om)$. We first observe that $\dim V_\ve = k$: this follows from \eqref{calculs:ineg:large} and the fact that the Gram determinant of $( \vp_{1}, \dots, \vp_{k-1}, u_\ve)$ for the $H^1_0(\Om)$ scalar product converges to $ \lambda_1 \cdots \lambda_{k-1} K_n^{-2} >0 $ as $\ve \to 0$. As a consequence, $V_\ve$ is an admissible subspace to compute $\mu_k^{\lambda}(u)$ for every $u \in L^{2^*}_{>}(\Om)$. Let $\eta >0$ be fixed. For any $\ve >0$ we let $v_\ve \in V_\ve \backslash \{0\}$ be such that 
$$ Q_{u_\ve + \eta}^{\lambda}(v_\ve) = \max_{v \in V_\ve \backslash \{0\}} Q_{u_\ve + \eta}^{\lambda}(v), $$
where $Q_{u_\ve + \eta}^{\lambda}$ is as in \eqref{defQu}. We let $\alpha_\ve = (\alpha_{1,\ve}, \cdots, \alpha_{k, \ve}) \in \mathbb{S}^{k-1}$ be such that $v_\ve = \sum_{\ell=1}^{k-1} \alpha_{\ell,\ve} \vp_\ell + \alpha_{k,\ve} u_\ve$. Straightforward computations using \eqref{calculs:ineg:large} show that 
\begin{equation} \label{test:est100}
\begin{aligned}
  \int_{\Om} \left( |\nabla v_\ve|^2-\lambda v_\ve^2\right)\, dx & = \sum_{\ell=1}^{k-1} (\lambda_\ell - \lambda) \alpha_{\ell,\ve}^2+ \alpha_{k,\ve}^2 \int_{\Om} |\nabla u_\ve|^2 dx + o(1) \\
\end{aligned} 
\end{equation}
and that 
\begin{equation} \label{test:est2:200}
\begin{aligned}
	\int_{\Om} \big(u_{\ve} + \eta \big)^{2^*-2}v_\ve^2\, dx & \ge \alpha_{k,\ve}^2 
	 + \eta^{2^*-2}  \sum_{\ell=1}^{k-1}\alpha_{\ell,\ve}^2 + o(1)
	\end{aligned} 
\end{equation}
as $\ve \to 0$. Assume first that, up to a subsequence, $\alpha_{k, \ve} \to 0$ as $\ve \to 0$, so that $ \sum_{\ell=1}^{k-1}\alpha_{\ell,\ve}^2 \to 1$. Since $\lambda \ge \lambda_\ell$ for all $1 \le \ell \le k-1$, \eqref{test:est100} and  \eqref{test:est2:200} show that $\limsup_{\ve \to 0} Q_{u_\ve + \eta}^{\lambda}(v_\ve) \le 0$, and hence that $\mkl(\Om)  \le 0$. We may thus assume that, up to a subsequence, $\alpha_{k,\ve} \not \to 0$ as $\ve \to 0$. Equation  \eqref{test:est2:200}  then shows that 
$$\int_{\Om} \big(u_{\ve} + \eta \big)^{2^*-2}v_\ve^2\, dx  \ge (1+o(1)) \alpha_{k,\ve}^2,$$ 
so that combining the latter with \eqref{test:est100}, using again \eqref{calculs:ineg:large} and since $\lambda \ge \lambda_\ell$ for all $1 \le \ell \le k-1$, we have 
 $$ Q_{u_\ve + \eta}^{\lambda}(v_\ve)  \le K_n^{-2} + o(1) $$
as $\ve \to 0$. Since $\Vert u_\ve + \eta \Vert_{2^*} \le  1 + \eta |\Om|^{\frac{n-2}{2n}}$ we obtain in the end that 
$$\mkl(\Om) \le  \mkl(u_\ve + \eta) \Vert u_\ve + \eta \Vert_{2^*}^{2^*-2} \le K_n^{-2}\big(1 + \eta |\Om|^{\frac{n-2}{2n}}\big)^{\frac{4}{n-2}} + o(1) $$
as $\ve \to 0$. Letting first $\ve \to 0$ then $\eta \to 0$ concludes the proof of Proposition \ref{prop:ineq:large:00}.
\end{proof}
We now turn to the main result of this section. It states that \eqref{eq:minmuk} is achieved provided the second inequality in \eqref{ineq:mkl:large} is also strict: 

\begin{theo} \label{prop:minmkl1}
Let $\Om \subset \R^n$, $n \ge 3$, be bounded and smooth. Let $i \ge 0$, $\lambda \in [\Lambda_i, \Lambda_{i+1})$ and let $k = N(i)+1 $. Assume that 
 \begin{equation} \label{eq:test1}
\mkl(\Om) < K_n^{-2} 
\end{equation}
holds. Then $\mkl(\Om)$ is attained, ie there exists $u \in L^{2^*}_{>}(\Om) $ with  $\Vert u \Vert_{2^*} = 1$ such that $\mkl(\Om) = \mkl(u) >0$. Furthermore:
\begin{enumerate}
\item $\mkl(u)$ is simple, i.e. $\dim E_{k}^{\lambda}(u)= 1$. In particular there is a unique generalised eigenvector $\vp = \vp_k^{\lambda}(u)$ associated to $u$ by Proposition \ref{prop:vp} and normalised by $\int_{\Om} u^{2^*-2} \vp^2dx = 1$, which changes sign if $i \ge 1$.
\item  We have $u = |\vp|$. As a consequence, $\psi:= {\mu_{k}^{\lambda}(\Om)}^{\frac{n-2}{4}} \vp$ satisfies $\int_{\Om} |\psi|^{2^*}dx = \mu_{k}^{\lambda}(\Om)^{\frac{n}{2}}$ and is a non-zero solution of \eqref{eq:critlambda}:
$$ - \Delta \psi - \lambda \psi =  |\psi|^{2^*-2}\psi   \quad \text{ in } \Om . $$
Also, $\psi \in  C^{3,\alpha}(\overline{\Om})$ for some $0< \alpha < 1$, and if $i \ge 1$ $\psi$ changes sign.
\item The solution $\psi$ constructed in (2) is a least-energy solution of \eqref{eq:critlambda}, in the sense that 
$$ \int_{\Om} |\psi|^{2^*} dx = \inf_{w \in \mathcal{S}_\lambda\backslash \{0\}}  \int_{\Om} |w|^{2^*} dx $$
where $\mathcal{S}_{\lambda}$ is the set of all $H^1_0(\Om)$ solutions of \eqref{eq:critlambda}. 
\item $\psi$ has at most $k = N(i)+1$ nodal domains. If $i=1$, $\psi$ has exactly two nodal domains and if $i=0$ it can be assumed to be positive in $\Om$. 
\end{enumerate}
\end{theo}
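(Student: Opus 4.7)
The plan is a two-step approximation-and-compactness argument to produce a minimiser of $\mkl(\Om)$, followed by an Euler--Lagrange analysis built on Proposition \ref{prop:dervp}, from which every assertion of the theorem follows in turn.

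\textbf{Step 1 (relaxed problems).} The functional $u\mapsto \mkl(u)\|u\|_{2^*}^{2^*-2}$ is delicate on $L^{2^*}_{>}(\Om)$ since the constraint $u>0$ a.e.\ is not preserved under weak $L^{2^*}$ convergence and Proposition \ref{prop:vp} only produces generalised eigenfunctions for strictly positive weights. For each $\ve>0$ I would therefore introduce a relaxed functional $F_{k,\ve}$ that coincides with $\mkl(u)\|u\|_{2^*}^{2^*-2}$ on weights bounded below by $\ve$ and penalises the vanishing locus of $u$. The direct method together with Proposition \ref{prop:vp} yields a minimiser $u_\ve\in L^{2^*}_{>}(\Om)$, $\|u_\ve\|_{2^*}=1$, together with a generalised eigenfunction $\vp_\ve$ normalised by $\int_\Om u_\ve^{2^*-2}\vp_\ve^2\,dx=1$, and by construction $\mu_{k,\ve}(\Om)\to\mkl(\Om)$ as $\ve\to 0$.

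\textbf{Step 2 (compactness as $\ve\to 0$).} Extract a weak limit $u_\ve\rightharpoonup u$ in $L^{2^*}(\Om)$. Proposition \ref{prop:signvp} furnishes a uniform positive lower bound on $\mkl(u_\ve)$, and together with $\mkl(u_\ve)\le \mkl(\Om)+o(1)<K_n^{-2}$ this yields uniform $H^1_0$ bounds on $\vp_\ve$. The main expected obstacle is to rule out concentration of $|\vp_\ve|^{2^*-2}\,dx$ at a point as $\ve\to 0$, which would otherwise break strong convergence of both the weight and the eigenfunction. This is exactly where \eqref{eq:test1} enters, in the spirit of Brezis--Nirenberg: a standard concentration-compactness argument combined with the definition \eqref{defK0} of $K_n$ shows that any such concentration would force $\mkl(u_\ve)\ge K_n^{-2}+o(1)$, contradicting \eqref{eq:test1}. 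Consequently $u_\ve\to u$ strongly in $L^{2^*}$ with $u>0$ a.e., $\|u\|_{2^*}=1$, $\vp_\ve\to\vp$ strongly in $H^1_0(\Om)$, and passing to the limit in \eqref{vpk} gives $\mkl(u)=\mkl(\Om)$.

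\textbf{Step 3 (Euler--Lagrange, simplicity, and $u=|\vp|$).} For any $h\in L^\infty(\Om)$, apply Proposition \ref{prop:dervp} to $u_t=(1+th)u$. Combined with the smooth expansion $\|u_t\|_{2^*}^{2^*-2}=1+(2^*-2)t\int_\Om u^{2^*}h\,dx+o(t)$, the one-sided derivatives of $t\mapsto \mkl(u_t)\|u_t\|_{2^*}^{2^*-2}$ at $t=0_{\pm}$ read, up to the positive factor $(2^*-2)\mkl(u)$,
\[ \int_\Om u^{2^*}h\,dx - \sup_{\vp\in E_k^\lambda(u)\setminus\{0\}}\frac{\int_\Om u^{2^*-2}h\vp^2\,dx}{\int_\Om u^{2^*-2}\vp^2\,dx} \quad\text{and}\quad \int_\Om u^{2^*}h\,dx - \inf_{\vp\in E_k^\lambda(u)\setminus\{0\}}\frac{\int_\Om u^{2^*-2}h\vp^2\,dx}{\int_\Om u^{2^*-2}\vp^2\,dx}. \]
Minimality of $u$ forces the first to be $\ge 0$ and the second to be $\le 0$, hence the infimum and the supremum over $\vp$ must coincide and both equal $\int_\Om u^{2^*}h\,dx$ for every $h\in L^\infty(\Om)$. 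For a fixed normalised $\vp\in E_k^\lambda(u)$ this pointwise duality gives $u^{2^*-2}\vp^2=u^{2^*}$ a.e., i.e.\ $\vp^2=u^2$ a.e. Every normalised generalised eigenvector is therefore $\pm u$, which simultaneously yields the simplicity $\dim E_k^\lambda(u)=1$ and the identity $u=|\vp|$.

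\textbf{Step 4 (consequences).} Substituting $u=|\vp|$ into \eqref{vpk} and rescaling $\psi=\mkl(\Om)^{(n-2)/4}\vp$ gives $-\Delta\psi-\lambda\psi=|\psi|^{2^*-2}\psi$ in $\Om$ together with $\int_\Om|\psi|^{2^*}\,dx=\mkl(\Om)^{n/2}$; a Brezis--Kato bootstrap and standard elliptic regularity then yield $\psi\in C^{3,\alpha}(\overline{\Om})$. For the least-energy property, any non-zero solution $w\in H^1_0(\Om)$ of \eqref{eq:critlambda} yields, by Proposition \ref{prop:structure:vp}, some index $k'\ge 1$ with $\mu_{k'}^\lambda(|w|/\|w\|_{2^*})=\|w\|_{2^*}^{2^*-2}>0$; Proposition \ref{prop:signvp} forces $k'\ge k$, hence $\|w\|_{2^*}^{2^*-2}\ge \mkl(|w|/\|w\|_{2^*})\ge \mkl(\Om)$ and so $\int_\Om|w|^{2^*}\ge \int_\Om|\psi|^{2^*}$. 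Finally, if $\psi$ had $m$ nodal domains, restricting $\psi$ to each one would provide $V\subset H^1_0(\Om)$ with $\dim_u V=m$ on which $Q_u^\lambda\equiv \mkl(u)$; the variational characterisation and monotonicity of Proposition \ref{prop:vp} combined with the simplicity just established force $m\le k$, specialising to $m\le 1$ when $i=0$ and, using that every solution changes sign when $\lambda\ge \Lambda_1$, to $m=2$ when $i=1$.
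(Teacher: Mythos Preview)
Your overall architecture matches the paper's (relaxed problems, pass to the limit, Euler--Lagrange analysis), but you have reversed a crucial order of operations, and Step~2 has a genuine gap. At that stage the weight $u_\ve$ and the eigenfunction $\vp_\ve$ are \emph{independent} objects: a concentration-compactness argument on $\vp_\ve$ says nothing about strong $L^{2^*}$ convergence of $u_\ve$, and nothing about whether the weak limit of $u_\ve$ is strictly positive a.e. (which is precisely what you need to apply Proposition~\ref{prop:dervp} in Step~3). Even the uniform $H^1_0$ bound on $\vp_\ve$ is not immediate from $\mkl(u_\ve)\le K_n^{-2}$ alone, since the normalisation $\int u_\ve^{2^*-2}\vp_\ve^2=1$ does not by itself control $\|\vp_\ve\|_2$.

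The paper fixes both issues by performing the Euler--Lagrange analysis at the $\ve$-level \emph{before} passing to the limit. With the explicit penalty $\ve\int_\Om u^{-\ve}dx$, applying Proposition~\ref{prop:dervp} to the minimiser $u_\ve$ of $F_{k,\ve}$ yields simplicity of $E_k^\lambda(u_\ve)$ and the pointwise relation $u_\ve^2-\frac{\gamma_{2,\ve}}{\gamma_{1,\ve}}u_\ve^{-(2^*-2+\ve)}=\frac{1}{\gamma_{1,\ve}}\vp_{k,\ve}^2$ for explicit constants $\gamma_{j,\ve}\to\delta_{j1}$. This immediately gives $|\vp_{k,\ve}|\le 2u_\ve$ (hence the $H^1_0$ bound on $\vp_\ve$) and, inverting, $u_\ve=H_\ve(\vp_{k,\ve}^2)$ for an explicit function $H_\ve$ converging to the identity. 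Strong $H^1_0$ convergence of $\vp_{k,\ve}$ (which does use \eqref{eq:test1}) then \emph{pulls along} strong $L^{2^*}$ convergence of $u_\ve$, and the limit weight is $u_0=|\vp_{k,0}|$, positive a.e.\ by unique continuation for the limiting equation \eqref{eq:convu0}. Only after that is your Step~3 argument (correct as written, and matching the paper's Proposition~\ref{prop:extremales}) run at the limit. Your Step~4 is fine; your nodal-domain count via simplicity is a legitimate variant of the paper's Courant/unique-continuation argument.
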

Theorem \ref{prop:minmkl1} shows that the minimisation problem \eqref{eq:minmuk} is the direct counterpart, in the case $\lambda \ge \Lambda_1$, of the celebrated Br\'ezis-Nirenberg \cite{BN} minimisation problem  in the coercive case $\lambda \in [0, \Lambda_1$). When $i \ge 1$ Theorem \ref{prop:minmkl1} shows that achieving $\mkl(\Omega)$ yields a non-zero solution of \eqref{eq:critlambda} of least energy which changes sign. A crucial feature of Theorem \ref{prop:minmkl1} is that if $u \in L^{2^*}_{>}(\Om)$ attains $\mkl(\Omega)$ then $E_{k}^{\lambda}(u)$ is one-dimensional. We prove this in the course of the proof and we do not assume it \emph{a priori}. We also prove that $\dim E_k^{\lambda}(u) = 1$ implies the fundamental relation 
\begin{equation} \label{eq:EL}
 u = |\vp|, 
 \end{equation}
where $\vp$ is, up to $\pm1$, the unique normalised eigenvector in $E_{k}^{\lambda}(u)$. Relation \eqref{eq:EL} has to be understood as the Euler-Lagrange equation associated to the non-differentiable minimisation problem \eqref{eq:minmuk}. That $\dim E_k^{\lambda}(u) = 1$ follows from the uniform spectral gap of Proposition \ref{prop:signvp}. For this reason Theorem \ref{prop:minmkl1} only applies to the \emph{principal} generalised eigenvalue $\mkl(u)$, with $k = N(i)+1$. The problem of minimising $u \mapsto \mu_p^{\lambda}(u) \Vert u \Vert_{2^*}^{2^*-2}$ for some $p > k$ can also be considered, and Euler-Lagrange equations in this setting are in general more complicated: see \cite{HumbertPetridesPremoselli}. 

\medskip

We briefly explain the strategy of proof of Theorem \ref{prop:minmkl1}. Unlike in the coercive case $\lambda \in [0, \Lambda_1)$, and due to the variational characterisation of generalised eigenvalues, a control on $\mkl(\Omega)$ does not immediately ensure a strong $L^{2^*}(\Om)$ control on a sequence of minimising weights. To prove Theorem \ref{prop:minmkl1} we therefore introduce a family of relaxations of \eqref{eq:minmuk}: we construct minimising weights for each perturbed variational problem and prove that these weights, together with their associated families of generalised eigenfunctions, converge strongly enough. The relaxation is as follows: for any $\ve >0$ we let 
$$ X_\ve = \Big \{ u \in L^{2^*}_{>}(\Om) \text{ such that } \int_{\Om} u^{-\ve}dx < + \infty \Big \}. $$
For $u \in X_\ve$ we introduce the following functional:
\begin{equation} \label{Fkeps}
\Fke(u) = \mkl(u) \Vert u \Vert_{2^*}^{2^*-2} + \ve  \Big(\int_{\Om} u^{-\ve}dx \Big)\Vert u \Vert_{2^*}^{2^*-2}
\end{equation}
and consider the associated minimisation problem:
\begin{equation} \label{mukeps}
\mkle = \inf_{u \in X_\ve, \Vert u \Vert_{2^*}=1} \Fke(u). 
\end{equation}
In Lemma \ref{lemme2} below we first prove that, under the assumptions of Theorem  \ref{prop:minmkl1}, $\mkle$ is attained, i.e. there is $u_\ve \in X_\ve$ with $\Vert u_\ve\Vert_{2^*} = 1$ such that $F_{k,\ve}(u_\ve) = \mkle$. The minimality of $u_\ve$ then allows us to show, in Lemma \ref{lemme4} below, that the eigenspace $E_k^{\lambda}(u_\ve)$ associated to $\mkl(u_\ve)$ is one-dimensional, which we then use in Lemma \ref{lemme4bis} to obtain an Euler-Lagrange equation for $u_\ve$. This equation relates $u_\ve$ and its normalised eigenvector  $\vp_{k,\ve}\in E_k^{\lambda}(u_\ve)$ (see \eqref{eq:egalpp} below) and although it is more involved than \eqref{eq:EL} it allows us to prove that $u_\ve$ converges towards a minimiser of $\mkl(\Om)$ as $\ve \to 0$. Assumption \eqref{eq:test1} is used at every step in the proof, both to construct $u_\ve$ and to prove the compactness of families of eigenfunctions. The functional $\Fke$ in \eqref{Fkeps} is an adaptation of the relaxation introduced in \cite{GurskyPerez}. The correction term $\ve \int_{\Om} u^{-\ve}dx$ has two main advantages: it breaks the scale invariance of problem \eqref{eq:minmuk} and ensures that minimisers of $F_{k,\ve}$ are in $L^{2^*}_{>}(\Om)$ (which is not closed under strong convergence in $L^{2^*}(\Om)$). Knowing that $u_\ve \in L^{2^*}_{>}(\Om)$ is crucial to apply Proposition \ref{prop:dervp} and to obtain the Euler-Lagrange equation \eqref{eq:egalpp} for $u_\ve$. Our analysis here is more involved than the one in  \cite{GurskyPerez} since we consider positive eigenvalues and our eigenvalues equations are thus energy-critical with focusing sign. Observe that as soon as $k \ge 2$ (ie $i \ge 1$) it is highly unlikely that \emph{every} minimising sequence for eigenvalue-optimisation problems like $\mkl(\Omega)$ converges, and this is why we have to define a relaxation of \eqref{eq:minmuk}. For problem \eqref{eq:minmuk} this explains why the non-coercive case $\lambda \ge \Lambda_1$ is considerably more difficult than the coercive case. Let us finally mention that other relaxations of $\mkl(\Om)$ may be considered: we refer for instance to \cite{HumbertPetridesPremoselli} where energy-subcritical relaxations are considered.

\subsection{Minimisers of the relaxed problem} \label{minrelaxed}

We first prove that \eqref{eq:test1} remains true for $\Fke$ for $\ve$ small enough:

\begin{lemme} \label{lemme1}
We have:
$$ \lim_{\ve \to 0} \mkle = \mkl(\Om). $$
As a consequence, for $\ve >0$ small enough, we have 
\begin{equation} \label{eq:test1e}
\mkle < K_n^{-2}.
\end{equation}
\end{lemme}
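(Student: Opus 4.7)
The plan is to prove the two bounds separately. The inequality $\mkle \geq \mkl(\Om)$ is cheap: for any $u \in X_\ve \subset L^{2^*}_>(\Om)$ with $\Vert u\Vert_{2^*}=1$, the penalty term in \eqref{Fkeps} is non-negative, so $\Fke(u) \geq \mkl(u) \geq \mkl(\Om)$ by definition of $\mkl(\Om)$. Taking the infimum yields $\mkle \geq \mkl(\Om)$ for every $\ve > 0$.

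The bulk of the work is the reverse estimate $\limsup_{\ve \to 0} \mkle \leq \mkl(\Om)$. Fix $\eta>0$ and pick $u \in L^{2^*}_>(\Om)$ with $\Vert u\Vert_{2^*}=1$ and $\mkl(u) \leq \mkl(\Om)+\eta$. To land inside $X_\ve$ I would regularise by setting $u_\delta = u + \delta$ for $\delta >0$: then $u_\delta \geq \delta$ a.e., so $\int_\Om u_\delta^{-\ve} dx \leq |\Om| \delta^{-\ve} < +\infty$ and $u_\delta \in X_\ve$. Writing $\tilde u_\delta = u_\delta/\Vert u_\delta\Vert_{2^*}$, one has $\Vert \tilde u_\delta\Vert_{2^*}=1$, and using the scaling invariance of $u \mapsto \mkl(u)\Vert u\Vert_{2^*}^{2^*-2}$,
\be
\Fke(\tilde u_\delta) = \mkl(u_\delta)\Vert u_\delta\Vert_{2^*}^{2^*-2} + \ve\Vert u_\delta\Vert_{2^*}^{\ve} \int_\Om (u+\delta)^{-\ve} dx.
\ee

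The plan from here is to send $\delta \to 0$ in the first term and $\ve \to 0$ (for $\delta$ fixed) in the second. Since $u + \delta \to u$ in $L^{2^*}(\Om)$ by dominated convergence, $\Vert u_\delta\Vert_{2^*}^{2^*-2} \to 1$. The key point is $\limsup_{\delta \to 0} \mkl(u_\delta) \leq \mkl(u)$: pick a $k$-dimensional $V_0 \subset H^1_0(\Om)$ realising $\max_{v \in V_0\setminus\{0\}} Q_u^\lambda(v) \leq \mkl(u)+\eta$ (admissible for both $u$ and $u_\delta$ since $u > 0$ a.e.); because $V_0$ is finite-dimensional, its $H^1_0$-unit sphere is compact in $L^{2^*}(\Om)$, the denominators of $Q_u^\lambda$ are uniformly bounded below on this sphere (as $u > 0$ a.e.), and $u_\delta^{2^*-2} \to u^{2^*-2}$ in $L^{n/2}(\Om)$ by dominated convergence. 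This yields uniform convergence $Q_{u_\delta}^\lambda \to Q_u^\lambda$ on that sphere, hence $\mkl(u_\delta) \leq \max_{V_0} Q_{u_\delta}^\lambda(v) \to \max_{V_0} Q_u^\lambda(v) \leq \mkl(u)+\eta$, and $\eta$ being arbitrary closes the argument. For the second term, the crude estimate $\int_\Om (u+\delta)^{-\ve}dx \leq |\Om|\delta^{-\ve}$ combined with $\Vert u_\delta\Vert_{2^*} \leq 1+\delta|\Om|^{1/2^*}$ gives a bound $\ve (1+\delta|\Om|^{1/2^*})^{\ve} |\Om| \delta^{-\ve}$, which tends to $0$ as $\ve \to 0$ for each fixed $\delta$.

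Combining the two limits by a standard diagonal argument — first choose $\delta > 0$ so small that the first term is at most $\mkl(\Om) + 2\eta$, then take $\ve$ small enough that the penalty is at most $\eta$ — gives $\mkle \leq \Fke(\tilde u_\delta) \leq \mkl(\Om)+3\eta$, whence $\limsup_{\ve \to 0}\mkle \leq \mkl(\Om)$ since $\eta$ is arbitrary. The second assertion \eqref{eq:test1e} then follows at once from the assumption \eqref{eq:test1} and the convergence just established. The main obstacle is the upper semi-continuity $\limsup_{\delta \to 0}\mkl(u_\delta)\leq \mkl(u)$: the min-max definition in \eqref{muk} is not a priori friendly to perturbation, but by testing against a single finite-dimensional subspace $V_0$ one reduces to a uniform convergence problem on the compact $H^1_0$-unit sphere of $V_0$, which is handled by dominated convergence for $u^{2^*-2}$ in $L^{n/2}(\Om)$.
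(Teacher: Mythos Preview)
Your proof is correct and, in a sense, more self-contained than the paper's. The paper takes the shortcut of directly picking $u_0 \in C^\infty(\overline{\Om})$ with $u_0 > 0$ on $\overline{\Om}$, $\Vert u_0 \Vert_{2^*}=1$ and $\mkl(u_0) \le \mkl(\Om)+\eta$; once such a $u_0$ is in hand, $\int_\Om u_0^{-\ve}dx \to |\Om|$ by dominated convergence and the upper bound follows in one line. But the existence of such a smooth strictly positive near-minimiser is not established anywhere in the paper and is not entirely for free: justifying it amounts precisely to the upper semicontinuity statement $\limsup_{\delta \to 0} \mkl(u_\delta) \le \mkl(u)$ under perturbation of the weight, which is exactly what you prove via the $u_\delta = u+\delta$ regularisation and the compactness argument on a fixed $k$-dimensional test subspace $V_0$. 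Your argument that $Q_{u_\delta}^\lambda \to Q_u^\lambda$ uniformly on the $H^1_0$-unit sphere of $V_0$ (using $u_\delta^{2^*-2} \to u^{2^*-2}$ in $L^{n/2}(\Om)$ by dominated convergence, the finite-dimensionality of $V_0$, and the positive lower bound on the denominators) is clean and correct.

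The trade-off: the paper's route is short but leaves the density step implicit, while your route is longer but makes the perturbation argument explicit. Either way the lower bound $\mkle \ge \mkl(\Om)$ is immediate from the non-negativity of the penalty, as you note, and the conclusion \eqref{eq:test1e} follows at once from \eqref{eq:test1}.
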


\begin{proof}
Let $\eta >0$ and $u_0\in C^\infty(\overline{\Om}), u_0 >0$ in $\overline{\Om}$, with $\Vert u_0 \Vert_{2^*} = 1$ and $\mkl(u_0) \le \mkl(\Om) + \eta$. By dominated convergence we have $\int_{\Om} u_0^{-\ve}dx \to |\Om|$ as $\ve \to 0$. Thus 
$$\mkle \le  \Fke(u_0) \le  \mkl(\Om) + \eta + o(1) $$
as $\ve \to 0$, which then gives $ \limsup_{\ve \to 0} \mkle \le \mkl(\Om) + \eta$. Letting $\eta \to 0$ proves that $ \limsup_{\ve \to 0} \mkle \le \mkl(\Om) $. For the reverse inequality, remark that, by \eqref{Fkeps}, $\Fke(u) \ge \mkl(\Om)$ for any $u  \in X_\ve$. Taking the infimum over $u \in X_\ve$ and then the $\liminf$ as $\ve \to 0$ yields $\liminf_{\ve \to 0} \mkle \ge \mkl(\Om)$, which proves the first part of the Lemma. Estimate \eqref{eq:test1e} then follows from \eqref{eq:test1}.
\end{proof}

We then prove that $\mkle$ is attained:

\begin{lemme} \label{lemme2}
Let $\ve_0 >0$ be such that, for every $0 < \ve \le \ve_0$, \eqref{eq:test1e} is satisfied. Then, for all $0 < \ve \le \ve_0$, $\mkle$ is attained at some function $u_\ve \in X_\ve $ with $\Vert u_\ve \Vert_{2^*} = 1$.   
\end{lemme}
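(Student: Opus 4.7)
The plan is a direct variational argument. Let $(u_m)_{m \ge 1} \subset X_\ve$ be a minimising sequence for $\mkle$ with $\Vert u_m \Vert_{2^*} = 1$ and $F_{k,\ve}(u_m) \to \mkle$. Since $\mkl(u_m) > 0$ by Proposition \ref{prop:signvp}, the definition \eqref{Fkeps} gives the uniform bound
$$\ve \int_\Om u_m^{-\ve}dx \le F_{k,\ve}(u_m) \le \mkle + 1$$
for $m$ large. Combined with the $L^{2^*}$ normalisation, I would extract a subsequence with $u_m \rightharpoonup u_\ve$ weakly in $L^{2^*}(\Om)$, with $u_\ve \ge 0$ a.e. Convexity of $t \in (0,\infty) \mapsto t^{-\ve}$ together with weak lower semi-continuity then yields $\int_\Om u_\ve^{-\ve}dx \le \liminf \int_\Om u_m^{-\ve}dx < +\infty$, so in particular $u_\ve > 0$ a.e. and $u_\ve \in X_\ve \cap L^{2^*}_>(\Om)$.

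The critical step is to upgrade this weak convergence to strong $L^{2^*}$-convergence, which is needed to pass to the limit in the nonlinear weight $u^{2^*-2}$ appearing in $\mkl(u)$. I would use a concentration--compactness analysis. The penalty bound together with Chebyshev gives the no-vanishing estimate $|\{u_m \le \delta\}| \le C\delta^{\ve}$ for all $\delta > 0$, from which I would extract a further subsequence converging a.e.\ to $u_\ve$ in $\Om$. Br\'ezis--Lieb then gives the mass decomposition
$$\Vert u_m\Vert_{2^*}^{2^*} = \Vert u_\ve\Vert_{2^*}^{2^*} + \Vert u_m - u_\ve\Vert_{2^*}^{2^*} + o(1).$$
If $\Vert u_\ve\Vert_{2^*} < 1$ the residual $u_m - u_\ve$ would carry fixed positive $L^{2^*}$-mass concentrating at points; a bubble test-function computation analogous to the proof of Proposition \ref{prop:ineq:large:00}, combined with the min-max characterisation \eqref{muk}, would then force $\liminf \mkl(u_m) \ge K_n^{-2}$, contradicting \eqref{eq:test1e}. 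Hence $\Vert u_\ve\Vert_{2^*} = 1$ and $u_m \to u_\ve$ strongly in $L^{2^*}(\Om)$.

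Once strong $L^{2^*}$-convergence is in hand, $u_m^{2^*-2} \to u_\ve^{2^*-2}$ in $L^{n/2}(\Om)$, and the min-max characterisation \eqref{muk} gives continuity $\mkl(u_m) \to \mkl(u_\ve)$. For the $\le$ direction, I would test the min-max for $u_m$ against any admissible subspace for $u_\ve$ (which remains admissible for $m$ large by convergence of the associated Gram matrices). For the converse inequality, I would use the first $k$ eigenfunctions $\vp_{1,m}, \dots, \vp_{k,m}$ of $u_m$ given by Proposition \ref{prop:vp}, whose $H^1_0(\Om)$-boundedness follows from the uniform spectral gap of Proposition \ref{prop:signvp} and from \eqref{eq:test1e}, passing to the limit to produce an admissible $k$-dimensional test subspace for $u_\ve$. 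Combined with weak lower semi-continuity of the penalty, this yields $F_{k,\ve}(u_\ve) \le \mkle$, and the minimality of $u_\ve$ follows.

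The main obstacle is the compactness in the second step: ruling out both vanishing and concentration for a minimising sequence of weights in the genuinely critical problem \eqref{mukeps}. The two ingredients of the relaxation play complementary roles here: the penalty $\ve \int u^{-\ve}dx$ breaks the scale-invariance of $\mkl(u)\Vert u\Vert_{2^*}^{2^*-2}$ and prevents $u_m$ from vanishing on positive-measure sets, while the strict threshold \eqref{eq:test1e} prevents Sobolev-critical concentration at points, exactly in the spirit of the classical Br\'ezis--Nirenberg analysis.
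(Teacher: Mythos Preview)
Your approach has a genuine gap at the compactness step. The weights $u_m$ live only in $L^{2^*}(\Om)$, with no Sobolev control whatsoever, so weak $L^{2^*}$ convergence together with the penalty bound $\int_\Om u_m^{-\ve}dx \le C$ does \emph{not} yield a.e.\ convergence along any subsequence. The Chebyshev estimate $|\{u_m \le \delta\}| \le C\delta^\ve$ is correct but irrelevant here: it rules out vanishing on positive-measure sets, not oscillation. A sequence like $u_m(x) = 1 + \tfrac12\sin(mx_1)$ satisfies all your hypotheses (uniformly bounded in $L^{2^*}$ and in $L^{-\ve}$) yet converges nowhere. Without a.e.\ convergence, Br\'ezis--Lieb is unavailable and the whole concentration--compactness scheme for the weights collapses. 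Your subsequent claim, that concentration of $u_m - u_\ve$ would force $\liminf \mkl(u_m) \ge K_n^{-2}$, is also not a standard statement: $\mkl(u_m)$ is a min-max over $k$-dimensional subspaces depending nonlocally on the weight, and it is unclear how a concentrating bubble in the weight controls it from below.

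The paper sidesteps all of this by never attempting strong convergence of the weights. Instead it works at the level of the \emph{eigenfunctions} $\vp_{k,m} = \vkl(u_m)$, which do live in $H^1_0(\Om)$. It takes the weak $L^{n/2}$ limit $u_\ve^{2^*-2}$ of $u_m^{2^*-2}$ (so that the eigenvalue equation passes to the limit), proves $\Vert \vp_{k,m}\Vert_{H^1_0}$ is bounded via the equation and the structure of $-\Delta-\lambda$ (this step uses $u_\ve \in L^{2^*}_>(\Om)$, which is where the penalty enters), and then uses the strict inequality \eqref{eq:test1e} at the eigenfunction level to show $\vp_{k,\ve} \neq 0$: if $\vp_{k,m} \rightharpoonup 0$ then integrating the equation gives $\int_\Om |\nabla\vp_{k,m}|^2 \le \mu_{k,m} K_n^2 \int_\Om |\nabla\vp_{k,m}|^2 + o(1)$, a contradiction. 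This yields $\mkl(u_\ve) \le \hat\mu_{k,\ve} := \lim \mu_{k,m}$ directly. Finally, a short scaling argument exploiting the factor $\Vert u\Vert_{2^*}^\ve$ in the renormalised penalty forces $\Vert u_\ve\Vert_{2^*} = 1$ and minimality simultaneously, without ever establishing $u_m \to u_\ve$ strongly. The moral is that the $H^1_0$ compactness lives in the eigenfunctions, not the weights; you should move the analysis there.
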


\begin{proof}
We assume that $\ve>0$ is fixed small enough so that \eqref{eq:test1e} is satisfied. We let $(u_m)_{m\ge 1}$ be a minimising sequence for $\mkle$: $u_m \in X_\ve$, $\Vert u_m \Vert_{2^*} =1$ and 
\begin{equation} \label{eq:cve1}
\Fke(u_m) = \mkl(u_m)  + \ve \int_{\Om} u_m^{-\ve}dx  \to \mkle
\end{equation}
 as $m \to + \infty$. For every $m \ge 1$, $u_m \in L^{2^*}_>(\Om)$, so by Proposition \ref{prop:vp} a generalised eigenfunction $\vkl(u_m)$ exists. We will let for simplicity in what follows $\vp_{k,m} = \vkl(u_m)$ and $ \mu_{k,m} = \mkl(u_m)$. This function satisfies $\int_{\Om} u_m^{2^*-2} \vp_{k,m}^2dx = 1$  and 
\begin{equation} \label{eq:cve2}
 - \Delta \vp_{k,m} - \lambda \vp_{k,m} = \mu_{k,m} u_m^{2^*-2} \vp_{k,m} \quad \text{ in } \Om. 
 \end{equation}
Let in what follows $w_m = u_m^{2^*-2}$. Then $(w_m)_{m \ge1}$ is uniformly bounded in $L^{\frac{n}{2}}(\Om)$ and there exists $w_\ve \in L^{\frac{n}{2}}(\Om)$, $w_\ve \ge 0$ a.e., such that $w_m \rightharpoonup w_\ve$ weakly in $L^{\frac{n}{2}}(\Om)$, up to a subsequence, as $m \to + \infty$. Let $u_\ve = w_\ve^{\frac{1}{2^*-2}} \ge0$ a.e. By lower semi-continuity of the $L^{\frac{n}{2}}(\Om)$ norm we have $ \int_{\Om} u_\ve^{2^*}dx \le 1$. Since $t \mapsto t^{-\frac{\ve}{2^*-2}}$ is convex in $(0, + \infty)$, $u \mapsto \int_{\Om} u^{-\frac{\ve}{2^*-2}} dx$ is weakly lower semi-continuous over $\{ w \in L^{\frac{n}{2}}(\Om), w \ge 0 \text{ a.e.} \}$. As a consequence of the weak convergence and of \eqref{eq:cve1} we get
\begin{equation} \label{eq:cve3}
\begin{aligned}
 \int_{\Om} u_\ve^{-\ve}dx = \int_{\Om} {w_\ve}^{- \frac{\ve}{2^*-2}} &\le \liminf_{m \to + \infty}  \int_{\Om} w_m^{-\frac{\ve}{2^*-2}}dx \\
 &=  \liminf_{m \to + \infty}  \int_{\Om} u_m^{-\ve}dx  \le \frac{1}{\ve} \mkle < + \infty. 
\end{aligned} 
 \end{equation}
This shows that $u_\ve \in X_\ve$ and hence that $u_\ve \neq 0$. By \eqref{eq:cve1} the sequence $(\mu_{k,m})_{m \ge1}$ is bounded. We claim that there exists $C >0$ such that 
\begin{equation} \label{eq:bornevpkm}
\Vert \vp_{k,m} \Vert_{H^1_0} \le C
\end{equation}
for all $m \ge 1$. First, if $\lambda \in (\Lambda_i, \Lambda_{i+1})$, that is if $\lambda \not \in \Sp(-\Delta)$, $-\Delta- \lambda$ is invertible and \eqref{eq:bornevpkm} follows from \eqref{eq:cve2} since the right-hand side of \eqref{eq:cve2} is uniformly bounded in $L^{\frac{2n}{n+2}}(\Om)$ (this is the same argument that we used in \eqref{holder:spectre}). We may thus assume that $\lambda = \Lambda_i$. We proceed as in the proof of Proposition \ref{prop:signvp}: We split $\vp_{k,m}$ as 
$$ \vp_{k,m} = v_m + z_m, $$
where $z_m \in E_{\Lambda_i}(\Om)$ and $v_m$ is $H^1_0(\Om)$-orthogonal to $E_{\Lambda_i}(\Om)$. Since $-\Delta z_m - \Lambda_i z_m = 0$, by \eqref{eq:cve2} $v_m$ satisfies 
\begin{equation*} 
 - \Delta v_m - \Lambda_i v_m = \mu_{k,m} u_m^{2^*-2} \vp_{k,m} \quad \text{ in } \Om. 
 \end{equation*}
By definition $- \Delta-\Lambda_i$ is invertible in $E_{\Lambda_i}(\Om)^{\perp}$ and the same argument as above shows that $\Vert v_m \Vert_{H^1_0} \le C $ for some $C>0$ and for all $m \ge 0$. To prove \eqref{eq:bornevpkm} we thus proceed by contradiction and assume that $\Vert z_m \Vert_{H^1_0} \to + \infty$ as $m \to + \infty$, and we let $\tilde{\vp}_{k,m} = \frac{\vp_{k,m}}{\Vert z_m\Vert_{H^1_0}}$. Since $(v_m)_{m \ge0}$ is bounded in $H^1_0(\Om)$ we have 
$$ \tilde{\vp}_{k,m} = \frac{z_m}{\Vert z_m\Vert_{H^1_0}} + \frac{v_m}{\Vert z_m\Vert_{H^1_0}} = \frac{z_m}{\Vert z_m\Vert_{H^1_0}}  + o(1) \quad \text{ in } H^1_0(\Om). $$
The function  $ \tilde{z}_m:=\frac{z_m}{\Vert z_m\Vert_{H^1_0}}$ is bounded in $H^1_0(\Om)$ and thus weakly converges to some $Z \in H^1_0(\Om)$. Since $\tilde{z}_m \in E_{\Lambda_i}(\Om)$ for every $m \ge 0$ and since $E_{\Lambda_i}(\Om)$ is finite-dimensional, $\tilde{z}_m$ strongly converges to $Z$ in $H^1_0(\Om)$, and we have $\Vert Z \Vert_{H^1_0} =1$ and $Z \in E_{\Lambda_i}(\Om)$. Thus $\tilde{\vp}_{k,m}$ strongly converges to $Z$ in $H^1_0(\Om)$ as $m \to + \infty$. The normalisation condition $\int_{\Om} u_m^{2^*-2} \vp_{k,m}^2dx = 1$ now becomes:
\begin{equation} \label{eq:bornevpkm:2}
 \frac{1}{\Vert z_m \Vert_{H^1_0}^2} =  \int_{\Om} u_m^{2^*-2} \tilde{\vp}_{k,m}^2dx = \int_{\Om} u_m^{2^*-2} Z^2dx + o(1). 
 \end{equation}
Since $u_m^{2^*-2} \rightharpoonup u_{\ve}^{2^*-2}$ in $L^{\frac{n}{2}}(\Om)$ we can pass \eqref{eq:bornevpkm:2} to the limit  and we obtain 
$$ \int_{\Om} u_\ve^{2^*-2} Z^2dx = 0. $$
Since $u_\ve \in X_\ve$ we have in particular $u_\ve \in L^{2^*}_{>}(\Om)$, which implies that $Z = 0$ a.e. in $\Om$. But since $Z \in E_{\Lambda_i}(\Om)$ standard elliptic theory shows that it is smooth, and hence $Z = 0$, which contradict $\Vert Z \Vert_{H^1_0} = 1$. This proves \eqref{eq:bornevpkm}.

We denote by $\vp_{k,\ve}$ the weak limit in $H^1_0(\Om)$, up to a subsequence, of $( \vp_{k,m} )_{m \ge 1}$, and we let $\hat{\mu}_{k,\ve} = \lim_{m \to +\infty} \mu_{k,m}$. Since $\vp_{k,m}$ strongly converges to $\vp_{k,\ve}$ in $L^\frac{n}{n-2}(\Om)$ by Sobolev's embedding and since $w_m$ weakly converges to $w_\ve = u_\ve^{2^*-2}$ in $L^{\frac{n}{2}}(\Om)$ we get that $\vp_{k,\ve}$ weakly satisfies
 \begin{equation} \label{eq:cve4}
  - \Delta \vp_{k,\ve} - \lambda \vp_{k,\ve} = \hat{\mu}_{k,\ve}  u_\ve^{2^*-2}\vp_{k,\ve} \quad \text{ in } \Om. 
 \end{equation}
We claim that $\vp_{k,\ve} \neq 0$. Assume otherwise: then $\vp_{k,m} \to 0$ in $L^2(\Om)$ up to a subsequence as $m \to + \infty$. Integrating \eqref{eq:cve2} against $\vp_{k,m}$ and using H\"older and Sobolev inequalities yields, since $\Vert u_m \Vert_{2^*} = 1$,
$$ \begin{aligned} \int_{\Om} |\nabla \vp_{k,m}|^2dx + o(1) & \le \mu_{k,m} \big( \int_{\Om} |\vp_{k,m}|^{2^*}dx \big)^{\frac{n-2}{n}}  \le \mu_{k,m}K_n^2  \int_{\Om} |\nabla \vp_{k,m}|^2dx, \end{aligned} $$
where $K_n$ is the optimal constant in Sobolev's inequality given by \eqref{defK0}. By \eqref{eq:test1e} we have $\limsup_{m \to  + \infty} \mu_{k,m} < K_n^{-2}$, so the latter yields $\int_{\Om} |\nabla \vp_{k,m}|^2 dx = o(1)$. But this is a contradiction with $\int_{\Om} u_m^{2^*-2} \vp_{k,m}^2dx = 1$, hence $\vp_{k,\ve} \neq 0$.

We now remark that $\hat{\mu}_{k,\ve} >0$. Indeed, we have $\hat{\mu}_{k,\ve} =  \lim_{m \to + \infty} \mu_{k,m}$ (up to a subsequence), and $\mu_{k,m} = \mkl(u_m) \ge \mkl(\Om) >0$, where we used Proposition \ref{prop:signvp}. By Proposition \ref{prop:signvp}, and since $\lambda \in [\Lambda_i, \Lambda_{i+1})$, the $N(i)$ first generalised eigenvalues $\mu_1^{\lambda}(u_\ve), \dots, \mu_{N(i)}^{\lambda}(u_\ve)$ are nonpositive. Since $\hat{\mu}_{k,\ve} >0$, \eqref{eq:cve4} implies that 
$$ \int_{\Om} u_\ve^{2^*-2} \vp_{k,\ve} \vp_p^{\lambda}(u_\ve)dx = 0 \quad \text{ for all } 1 \le p \le k-1. $$
Thus $V =  \text{Span} \big( \vp_1^{\lambda}(u_\ve), \dots, \vp_{k-1}^\lambda(u_\ve), \vp_{k,\ve}\big) $ satisfies $\dim_{u_\ve} V = k$ and is an admissible subspace in the definition of $\mkl(u_\ve)$: \eqref{muk} and \eqref{eq:cve4} then show that 
\begin{equation} \label{eq:cve5}
\mkl(u_\ve) \le \hat{\mu}_{k,\ve}.
\end{equation}
 It remains to let $ \tilde{u}_\ve = \frac{u_\ve}{\Vert u_\ve \Vert_{2^*}}$, so that $\Vert \tilde{u}_\ve \Vert_{2^*} = 1$. Since $\Vert u_\ve \Vert_{2^*} \le 1$ we have, using \eqref{eq:cve1},  \eqref{eq:cve3}, \eqref{eq:cve5} for $\ve$ small enough:
$$ \begin{aligned}
\Fke(\tilde{u}_\ve) & =  \mkl(\tilde{u}_\ve) + \ve  \Big(\int_{\Om} \tilde{u}_\ve^{-\ve}dx \Big)\\
& = \mkl(u_\ve) \Vert u_\ve \Vert_{2^*}^{2^*-2} + \ve \Vert u_\ve \Vert_{2^*}^{\ve} \int_{\Om}u_\ve^{-\ve}dx \\
& \le \Vert u_\ve \Vert_{2^*}^{\ve}\Big( \hat{\mu}_{k,\ve} + \ve \int_{\Om} u_\ve^{-\ve}dx \Big) \\
& \le \Vert u_\ve \Vert_{2^*}^{\ve}\liminf_{m \to + \infty} \Big(  \mkl(u_m) + \ve \int_{\Om} u_m^{-\ve}dx \Big)  = \Vert u_\ve \Vert_{2^*}^{\ve} \mkle.
\end{aligned} $$
By \eqref{mukeps} this shows at once that $\Vert u_\ve \Vert_{2^*} = 1$ and that $u_\ve$ attains $\mkle$. 
\end{proof}

\begin{lemme} \label{lemme4}
We have $\dim_{u_\ve} E_k^\lambda(u_\ve) = 1$. 
\end{lemme}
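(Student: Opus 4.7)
The plan is to combine the minimality of $u_\ve$ for $F_{k,\ve}$ with the one-sided derivative formulae of Proposition \ref{prop:dervp} to force the left and right derivatives of $t \mapsto \mkl\big((1+th)u_\ve\big)$ at $t=0$ to agree for \emph{every} $h \in L^\infty(\Om)$; a short linear-algebra argument on the eigenspace $E_k^\lambda(u_\ve)$ will then rule out multiplicity. Throughout, I use that $\dim_{u_\ve} E_k^\lambda(u_\ve) = \dim E_k^\lambda(u_\ve)$, since $u_\ve \in L^{2^*}_>(\Om)$.

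First, for $h \in L^\infty(\Om)$ and $t$ small, I set $u_t = (1+th)u_\ve$ and work with the renormalised variation $\tilde u_t = u_t /\Vert u_t\Vert_{2^*}$, which lies in $X_\ve$ with unit $L^{2^*}$-norm for $t$ small (because $u_\ve \in X_\ve$ and $1+th$ stays bounded away from $0$). Since the first summand of $F_{k,\ve}$ is scale-invariant and $\Vert \tilde u_t\Vert_{2^*} = 1$, I rewrite
\begin{equation*}
f(t) := F_{k,\ve}(\tilde u_t) = \Vert u_t\Vert_{2^*}^{2^*-2}\, \mkl(u_t) + \ve\, \Vert u_t\Vert_{2^*}^{\ve} \int_\Om u_t^{-\ve}\, dx.
\end{equation*}
The factors $\Vert u_t\Vert_{2^*}^{2^*-2}$, $\Vert u_t\Vert_{2^*}^{\ve}$ and $\int u_t^{-\ve} dx$ are smooth at $t=0$ (the last by dominated convergence), while Proposition \ref{prop:dervp} provides one-sided derivatives $D^\pm(h)$ for $t \mapsto \mkl(u_t)$ at $t=0$, given by the $\inf$/$\sup$ over $E_k^\lambda(u_\ve) \setminus \{0\}$ of the Rayleigh-type ratio in \eqref{eq:derleft}-\eqref{eq:derright}. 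A direct computation then yields $f'(0_\pm) = D^\pm(h) + L(h)$ for a linear functional $L(h)$ of $h$ that does \emph{not} depend on the sign $\pm$. The minimality of $u_\ve$ gives $f'(0_+) \ge 0 \ge f'(0_-)$, and combined with the trivial inequality $D^+(h) \le D^-(h)$ this forces
\begin{equation*}
D^+(h) = D^-(h) = -L(h) \qquad \text{for every } h \in L^\infty(\Om).
\end{equation*}

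Since $\mkl(u_\ve) > 0$ by Proposition \ref{prop:signvp}, the equality $D^+(h) = D^-(h)$ means that the ratio $\vp \mapsto \int_\Om u_\ve^{2^*-2} h \vp^2 dx \big/ \int_\Om u_\ve^{2^*-2} \vp^2 dx$ is constant on $E_k^\lambda(u_\ve) \setminus \{0\}$ for every $h$. Picking an $L^2_{u_\ve}$-orthonormal basis $(\vp_1, \dots, \vp_d)$ of $E_k^\lambda(u_\ve)$ and evaluating this ratio on $\vp = \sum_i a_i \vp_i$, I deduce that the symmetric matrix $T_{ij}(h) := \int_\Om u_\ve^{2^*-2} h \vp_i \vp_j dx$ is a scalar multiple of the identity for every $h \in L^\infty(\Om)$. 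Testing against arbitrary $h$ gives $u_\ve^{2^*-2} \vp_i \vp_j = 0$ a.e.\ for $i \ne j$ and $u_\ve^{2^*-2}(\vp_i^2 - \vp_j^2) = 0$ a.e., so (using $u_\ve > 0$ a.e.) $\vp_i \vp_j = 0$ and $\vp_i^2 = \vp_j^2$ a.e.; at any point where both hold one has $\vp_i = \vp_j = 0$, contradicting the orthonormality unless $d = 1$.

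The main technical subtlety lies in the derivative computation: the scaling of $\mkl$ has to be combined carefully with the normalisation constraint $\Vert u\Vert_{2^*}=1$ so that the non-smooth contribution $D^\pm(h)$ is cleanly isolated from the smooth linear correction $L(h)$. Once this is arranged, the forcing $D^+(h) = D^-(h)$ and the closing linear-algebra step are both essentially immediate.
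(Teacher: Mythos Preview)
Your proof is correct and follows essentially the same route as the paper: you use the minimality of $u_\ve$ together with the one-sided derivatives from Proposition~\ref{prop:dervp} to force $D^+(h)=D^-(h)$ for every $h\in L^\infty(\Om)$, and then a short linear-algebra argument on the eigenspace (your matrix $T_{ij}(h)$ is exactly the paper's tests $\vp=\psi_1,\psi_2,\psi_1+\psi_2$) yields the contradiction if $d\ge 2$. The only cosmetic difference is that you package the smooth part of the derivative as a single linear functional $L(h)$ rather than writing it out term by term as in \eqref{eq:derlefte}--\eqref{eq:derrighte}.
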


\begin{proof}
Let $h \in L^\infty(\Om)$. For $|t| < \Vert h \Vert_{\infty}^{-1}$ we let $u_{\ve,t} = (1+th)u_\ve$ and 
$$ \tilde{u}_{\ve,t} = \frac{(1+th)u_\ve}{\Vert (1+th)u_\ve\Vert_{2^*}}.$$
First, direct computations show that 
$$ \frac{d}{dt}_{|t = 0} \Vert u_{\ve,t} \Vert_{2^*} = \int_{\Om} u_\ve^{2^*}h dx \quad \text{ and } \quad  \frac{d}{dt}_{|t = 0} \int_{\Om} u_{\ve,t}^{-\ve}dx =- \ve \int_{\Om}u_\ve^{-\ve}h dx. $$
We now apply Proposition \ref{prop:dervp}: with the latter we obtain that left and right derivatives of $t \mapsto \Fke(\tilde{u}_{\ve,t})$ at $t=0$ exist and are given by 
\begin{equation} \label{eq:derlefte}
\begin{aligned}
& \frac{d}{dt}_{|t=0_+} \Fke(\tilde{u}_{\ve,t}) = \inf_{\vp \in E_k^{\lambda}(u_\ve)\backslash \{0\}} \Bigg( - (2^*-2) \mkl(u_\ve) \frac{\int_\Om u_\ve^{2^*-2}h \vp^2dx}{\int_\Om u_\ve^{2^*-2} \vp^2 dx}\Bigg) \\
& + \ve^2 \Big[ \Big(\int_\Om u_\ve^{-\ve}dx\Big)\Big( \int_{\Om} u_\ve^{2^*}h dx\Big) - \int_{\Om} u_\ve^{-\ve}h dx\Big] + (2^*-2)\mkl(u_\ve)\int_{\Om} u_\ve^{2^*}h dx
\end{aligned}
\end{equation}
and
\begin{equation} \label{eq:derrighte}
\begin{aligned}
& \frac{d}{dt}_{|t=0_-} \Fke(\tilde{u}_{\ve,t}) = \sup_{\vp \in E_k^{\lambda}(u_\ve)\backslash \{0\}} \Bigg( - (2^*-2) \mkl(u_\ve) \frac{\int_\Om u_\ve^{2^*-2}h \vp^2dx}{\int_\Om u_\ve^{2^*-2} \vp^2 dx}\Bigg) \\
& + \ve^2 \Big[ \Big(\int_\Om u_\ve^{-\ve}dx\Big) \Big(\int_{\Om} u_\ve^{2^*}h dx\Big) - \int_{\Om} u_\ve^{-\ve}h dx\Big] + (2^*-2)\mkl(u_\ve)\int_{\Om} u_\ve^{2^*}h dx
\end{aligned}
\end{equation}
Since $u_\ve$ minimizes $F_{k,\ve}$ we have $ \Fke(\tilde{u}_{\ve,t}) \ge \Fke(u_\ve)$ for all $t$ small enough and all $h \in L^\infty(\Omega)$, which implies that
\begin{equation} \label{eq:extremale}
  \frac{d}{dt}_{|t=0_+} \Fke(\tilde{u}_{\ve,t})  \ge 0 \ge  \frac{d}{dt}_{|t=0_-} \Fke(\tilde{u}_{\ve,t}). 
  \end{equation}
Using \eqref{eq:derlefte} and \eqref{eq:derrighte} we then obtain that 
\begin{equation} \label{eq:dim1}
\begin{aligned}
&  \sup_{\vp \in E_k^{\lambda}(u_\ve)\backslash \{0\}} \Bigg( - (2^*-2) \mkl(u_\ve) \frac{\int_\Om u_\ve^{2^*-2}h \vp^2dx}{\int_\Om u_\ve^{2^*-2} \vp^2 dx}\Bigg) \\
  & \quad\le  \inf_{\vp \in E_k^{\lambda}(u_\ve)\backslash \{0\}} \Bigg( - (2^*-2) \mkl(u_\ve) \frac{\int_\Om u_\ve^{2^*-2}h \vp^2dx}{\int_\Om u_\ve^{2^*-2} \vp^2 dx}\Bigg) 
  \end{aligned} 
  \end{equation}
for any $h \in L^\infty(\Om)$. We claim that \eqref{eq:dim1} shows that $\dim_{u_\ve} E_k^{\lambda}(u_\ve) = 1$. By Proposition \ref{prop:vp} a non-zero generalised eigenvector $\psi_1$ satisfying $\int_\Om u_\ve^{2^*-2} \psi_1^2dx = 1$ exists. Assume that there exists $\psi_2 \in E_k^{\lambda}(u_\ve)$ with $\int_\Om u_\ve^{2^*-2} \psi_2^2dx = 1$ and $\int_{\Om} u_\ve^{2^*-2} \psi_1 \psi_2 dx = 0$. From  \eqref{eq:dim1} we get that for any $h \in L^\infty(\Om)$ there exists $\theta_h \in \R$ such that for any $\vp \in E_k^{\lambda}(u_\ve)\backslash \{0\}$, 
$$\frac{\int_\Om u_\ve^{2^*-2}h \vp^2dx}{\int_\Om u_\ve^{2^*-2} \vp^2 dx} = \theta_h \quad \text{ holds}.$$
Choosing first $\vp = \psi_1$ and $\vp = \psi_2$ gives that $\int_{\Om} u_\ve^{2^*-2} h \psi_i^2dx = \theta_h$ for $i=1,2$, and 
$$ \int_{\Om} \big( u_\ve^{2^*-2} \psi_1^2 -u_\ve^{2^*-2} \psi_2^2 \big)hdx = 0 \quad \text{ for all } h \in L^\infty(\Om),$$
and hence  $|\psi_1| = |\psi_2|$ a.e. in $\Om$ since $u_\ve \in X_\ve$. Choosing $\vp = \psi_1 + \psi_2$ shows that 
$$ \int_{\Om} u_\ve^{2^*-2} \psi_1 \psi_2 h dx = 0 \quad \text{ for all } h \in L^\infty(\Om), $$ 
and hence $ \psi_1 \psi_2 = 0$ a.e. in $\Om$. Thus $\psi_1 = \psi_2 = 0$ a.e., a contradiction. Hence $\dim_{u_\ve} E_k^{\lambda}(u_\ve) = 1$, which implies that $\dim E_k^{\lambda}(u_\ve) = 1$ since $u_\ve \in L^{2^*}_{>}(\Om)$. 
\end{proof}

\subsection{Convergence to a minimiser of \eqref{eq:minmuk} and proof of Theorem \ref{prop:minmkl1}} \label{minconvergence}

We let, for $\ve >0$ small enough,
\ben  \label{eq:gammas} \gamma_{1,\ve}=1 + \frac{\ve^2}{(2^*-2) \mkl(u_\ve)} \int_{\Om} u_\ve^{-\ve}dx \quad \text{ and } \gamma_{2,\ve} = \frac{\ve^2}{(2^*-2) \mkl(u_\ve)}. 
\een
By \eqref{Fkeps} and \eqref{eq:test1e}, and since $u_\ve$ attains $\mkle$, we have $ \ve \int_\Om u_\ve^{-\ve} dx \le K_n^{-2}$. Independently, by definition of $\mkl(\Om)$ in \eqref{eq:minmuk} we have $\mkl(u_\ve) \ge \mkl(\Om)$, so that by \eqref{infmukpos} we have $\liminf_{\ve\to0} \mkl(u_\ve) >0$. This shows that 
\begin{equation*}
\gamma_{1,\ve} = 1 + O(\ve) \quad \text{ and } \quad \gamma_{2,\ve} = O(\ve^2) 
\end{equation*}
 as $\ve \to 0$. As in the proof of Lemma \ref{lemme2} we denote by $\vp_{k,\ve}$ a non-zero generator of $E_k^{\lambda}(u_\ve)$, normalised by $ \int_{\Om}u_\ve^{2^*-2} \vp_{k,\ve}^2dx = 1$. It satisfies, for $\ve>0$ small enough :
\begin{equation} \label{eq:vpke}
 - \Delta \vp_{k,\ve} - \lambda \vp_{k,\ve} = \mkl(u_\ve) u_\ve^{2^*-2} \vp_{k,\ve} \quad \text{ in } \Om. 
 \end{equation}
Since $\Vert u_\ve \Vert_{2^*} = 1$ and $u_\ve$ attains $\mkle$, there exists $C>0$ such that 
\begin{equation} \label{controle:muke}
\mkl(u_\ve) \le C 
\end{equation}
for all $\ve >0$ small enough, by  \eqref{Fkeps} and \eqref{eq:test1e}. We first obtain an Euler-Lagrange equation relating $u_\ve$ and $\vp_{k,\ve}$:

\begin{lemme} \label{lemme4bis}
We have 
 \begin{equation} \label{eq:egalpp}
  u_\ve^2 - \frac{\gamma_{2,\ve}}{\gamma_{1,\ve}} \frac{1}{u_\ve^{2^*-2+\ve}} = \frac{1}{\gamma_{1,\ve}} \vp_{k,\ve}^2 
  \end{equation}
a.e. in $\Om$, where $\gamma_{1,\ve}$ and $\gamma_{2,\ve}$ are given by \eqref{eq:gammas}. 
\end{lemme}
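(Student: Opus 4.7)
The proof should proceed by directly exploiting the formulas for the one-sided derivatives of $F_{k,\ve}$ computed in Lemma \ref{lemme4}, once the simplicity result $\dim_{u_\ve} E_k^\lambda(u_\ve) = 1$ is used to collapse the $\inf$ and $\sup$ in \eqref{eq:derlefte} and \eqref{eq:derrighte} to a single quantity.

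First, I would observe that since $u_\ve$ realises $\mkle$ and $\tilde{u}_{\ve,0} = u_\ve$, the minimality combined with \eqref{eq:extremale} forces $\tfrac{d}{dt}_{|t=0_+}F_{k,\ve}(\tilde{u}_{\ve,t}) \ge 0 \ge \tfrac{d}{dt}_{|t=0_-}F_{k,\ve}(\tilde{u}_{\ve,t})$. Now Lemma \ref{lemme4} asserts that $\dim E_k^\lambda(u_\ve)=1$, so every $\vp \in E_k^\lambda(u_\ve)\backslash\{0\}$ is proportional to $\vp_{k,\ve}$, and the ratio $\int_\Om u_\ve^{2^*-2} h \vp^2\,dx / \int_\Om u_\ve^{2^*-2}\vp^2\,dx$ appearing in \eqref{eq:derlefte}-\eqref{eq:derrighte} is independent of the choice of $\vp$. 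Consequently the $\inf$ and $\sup$ coincide, the two one-sided derivatives are equal, and both must vanish. Using the normalisation $\int_\Om u_\ve^{2^*-2}\vp_{k,\ve}^2\,dx = 1$, this common vanishing identity reads, for every $h \in L^\infty(\Om)$,
\[
-(2^*-2)\mkl(u_\ve)\int_\Om u_\ve^{2^*-2} h\,\vp_{k,\ve}^2\,dx + (2^*-2)\mkl(u_\ve)\int_\Om u_\ve^{2^*} h\,dx
\]
\[
+ \ve^2\Bigl(\int_\Om u_\ve^{-\ve}\,dx\Bigr)\int_\Om u_\ve^{2^*} h\,dx - \ve^2 \int_\Om u_\ve^{-\ve} h\,dx = 0.
\]

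Next, I would divide this identity by $(2^*-2)\mkl(u_\ve) > 0$ (which is positive by \eqref{infmukpos}) and reorganise the terms according to the definitions \eqref{eq:gammas} of $\gamma_{1,\ve}$ and $\gamma_{2,\ve}$. This produces
\[
\int_\Om \Bigl(\gamma_{1,\ve}\, u_\ve^{2^*} - u_\ve^{2^*-2}\vp_{k,\ve}^2 - \gamma_{2,\ve}\, u_\ve^{-\ve}\Bigr) h\,dx = 0 \qquad \text{for every } h \in L^\infty(\Om).
\]
By the fundamental lemma of the calculus of variations (the integrand is in $L^1(\Om)$ since $u_\ve \in X_\ve$), the integrand vanishes a.e. in $\Om$, giving $\gamma_{1,\ve}\, u_\ve^{2^*} - u_\ve^{2^*-2}\vp_{k,\ve}^2 - \gamma_{2,\ve}\, u_\ve^{-\ve} = 0$ a.e. Dividing pointwise by $u_\ve^{2^*-2}$, which is legitimate since $u_\ve > 0$ a.e. in $\Om$ (recall $u_\ve \in X_\ve \subset L^{2^*}_>(\Om)$), and then dividing by $\gamma_{1,\ve} > 0$, yields exactly \eqref{eq:egalpp}.

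There is essentially no conceptual obstacle beyond those already resolved: the simplicity $\dim E_k^\lambda(u_\ve)=1$ from Lemma \ref{lemme4} is what makes the one-sided derivatives of the non-smooth functional $F_{k,\ve}$ coincide, upgrading the one-sided extremality into a genuine Euler-Lagrange equation. The only minor point to verify is that the constant in $\gamma_{1,\ve}, \gamma_{2,\ve}$ matches the algebra after division by $(2^*-2)\mkl(u_\ve)$, and that $u_\ve^{-\ve} \in L^1(\Om)$, which is guaranteed by $u_\ve \in X_\ve$ and allows the pointwise identification from the integral identity.
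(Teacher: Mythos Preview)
Your proof is correct and follows essentially the same approach as the paper's: both use the simplicity result from Lemma~\ref{lemme4} to collapse the $\inf$/$\sup$ in \eqref{eq:derlefte}--\eqref{eq:derrighte}, conclude that the one-sided derivatives vanish, and then apply the fundamental lemma of the calculus of variations before dividing pointwise by $u_\ve^{2^*-2}$. Your write-up is in fact slightly more explicit about the integrability of the integrand and the positivity of $\mkl(u_\ve)$ and $\gamma_{1,\ve}$, but the argument is otherwise identical.
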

\begin{proof} 
Since $E_k^{\lambda}(u_\ve)$ is one-dimensional by Lemma \ref{lemme4}, \eqref{eq:dim1} is actually an equality. Therefore, by \eqref{eq:derlefte} and \eqref{eq:derrighte}, the minimality condition \eqref{eq:extremale} becomes
$$   \frac{d}{dt}_{|t=0_+} \Fke(\tilde{u}_{\ve,t})  = \frac{d}{dt}_{|t=0_-} \Fke(\tilde{u}_{\ve,t}) = 0$$
for any $h \in L^\infty(\Om)$, which simplifies into 
$$ \begin{aligned}
\int_{\Om} h \Bigg[ &\Big( (2^*-2) \mkl(u_\ve) + \ve^2 \int_{\Om} u_\ve^{-\ve} dx \Big) u_\ve^{2^*} \\
 &- \Big(\ve^2 u_\ve^{-\ve} + (2^*-2) \mkl(u_\ve) u_\ve^{2^*-2} \vp_{k,\ve}^2 \Big)\Bigg]dx = 0
 \end{aligned} $$
 for any $h \in L^\infty(\Om)$. This implies, with \eqref{eq:gammas}, that
 $$u_\ve^{2^*} - \frac{\gamma_{2,\ve}}{\gamma_{1,\ve}} u_\ve^{-\ve} = \frac{1}{\gamma_{1,\ve}} u_\ve^{2^*-2} \vp_{k,\ve}^2 $$
 a.e. in $\Om$. Since $u_\ve \in X_\ve$, we can divide by $u_\ve^{2^*-2}$ a.e. and we get \eqref{eq:egalpp}. 
\end{proof}
 
Since $\gamma_{i,\ve} >0$ for $i=1,2$, a first consequence of \eqref{eq:egalpp} is that $|\vp_{k,\ve}| \le \gamma_{1,\ve}^{\frac12} u_\ve$, and hence $|\vp_{k,\ve}| \le 2 u_\ve$ a.e. in $\Om$ for $\ve$ small enough. As a consequence, $\Vert \vp_{k,\ve} \Vert_{2^*} \le 2$. Using \eqref{controle:muke}, integrating \eqref{eq:vpke} against $\vp_{k,\ve}$ and using H\"older's inequality then shows that 
\begin{equation} \label{eq:bornitude:vpke}
\Vert \vp_{k,\ve} \Vert_{H^1_0} \le C
\end{equation}
for some fixed $C >0$, for all $\ve$ small enough. For $u >0$ we now define
 $$G_\ve(u) = u - \frac{\gamma_{2,\ve}}{\gamma_{1,\ve}}  u^{- \frac{2^*-2+\ve}{2}}. $$
 Since $\gamma_{i,\ve} >0$ for $i=1,2$ it is easily seen that $G_\ve: (0, + \infty) \to \R$ is smooth and invertible. We let $H_\ve = G_\ve^{-1}$ and we let $v_{\ve, *}$ be the unique solution of $G_\ve(v) = 0$ in $(0, + \infty)$. Simple considerations using \eqref{eq:gammas} show that 
 \begin{equation} \label{eq:vestar}
 v_{\ve,*} = \left( \frac{\gamma_{2,\ve}}{\gamma_{1,\ve}} \right)^{\frac{2}{2^*+\ve}} = \frac{1}{\big((2^*-2) \mkl(u_\ve)\big)^{\frac{n-2}{n}}} \ve^{\frac{2(n-2)}{n}} (1+ o(1)) 
 \end{equation}
  as $\ve \to 0$, that $v \mapsto H_\ve(v)$ converges uniformly to the identity map in compact subsets of $[0, + \infty)$ and that there exists $C>0$ depending only on $n$ such that
 \begin{equation} 
  \label{eq:propG}  
  H_\ve(v) \ge v_{\ve, *} \quad \text{ and } \quad v \le H_\ve(v) \le v +C v_{\ve, *} \quad \text{ for all } v \ge 0.
 \end{equation}
 Equality \eqref{eq:egalpp} shows that
  \begin{equation}  \label{eq:egalG}
   u_\ve = H_\ve \Big( \frac{1}{\gamma_{1,\ve}} \vp_{k,\ve}^2 \Big)^{\frac12} 
  \end{equation} 
a.e. in $\Om$. As a consequence, $u_\ve \ge v_{\ve,*}^{\frac12}$ a.e. in $\Om$, and with \eqref{eq:vestar} we obtain that 
\begin{equation} \label{eq:nodalu}
\int_{\Om} u_\ve^{-\ve}dx = O(1) \quad \text{ as } \ve \to 0.
\end{equation} Coming back to the definition of $\Fke$ in \eqref{Fkeps}, using Lemma \ref{lemme1} and since $u_\ve$ attains $\mkle$ we obtain with \eqref{eq:nodalu} that
\begin{equation} \label{eq:convmkl}
\mkl(u_\ve) \to \mkl(\Om) \quad \text{ as } \ve \to 0. 
 \end{equation}
Using \eqref{eq:propG} we can apply the regularity theory of \cite{BrezisKato} to equation \eqref{eq:vpke}, which shows that for any $\ve >0$, $\vp_{k,\ve} \in C^{3}(\overline{\Om})$. As a consequence of \eqref{eq:egalG}, $u_\ve \in C^0(\overline{\Om})$ and \eqref{eq:egalpp} holds true everywhere in $\overline{\Om}$. By \eqref{eq:bornitude:vpke} we can let $\vp_{k,0}$ be the weak limit of $\vp_{k,\ve}$, up to a subsequence as $\ve \to 0$. Sobolev's embeddings show that $\vp_{k,\ve} \to \vp_{k,0}$ strongly in $L^q(\Om)$ for any $1 \le q < 2^*$, still up to a subsequence as $\ve \to 0$, and  \eqref{eq:propG} and  \eqref{eq:egalG} then show that 
\begin{equation} \label{eq:convue} 
u_\ve \to u_0 =  |\vp_{k,0}|
\end{equation}  strongly in $L^q(\Om)$ for any $1 \le q < 2^*$, up to a subsequence as $\ve \to 0$. We now prove the strong convergence of $\vp_{k,\ve}$ towards $\vp_{k,0}$:
 
\begin{lemme}
Up to a subsequence in $\ve$, $(\vp_{k,\ve})_{\ve >0}$ strongly converges to $\vp_{k,0}$ in $H^1_0(\Om)$ as $\ve \to 0$.
\end{lemme}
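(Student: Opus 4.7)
First, I would identify the equation satisfied by the weak limit $\vp_{k,0}$. Passing \eqref{eq:vpke} to the limit in the distributional sense as $\ve \to 0$ against test functions in $C^\infty_c(\Om)$, and using (i) that $u_\ve^{2^*-2} \to |\vp_{k,0}|^{2^*-2}$ strongly in $L^p(\Om)$ for every $p < n/2$ (which follows from \eqref{eq:convue} by raising to the power $2^*-2$), (ii) that $\vp_{k,\ve} \to \vp_{k,0}$ strongly in $L^r(\Om)$ for every $r<2^*$ by Rellich--Kondrachov, and (iii) that $\mkl(u_\ve) \to \mkl(\Om)$ by \eqref{eq:convmkl}, I would obtain that $\vp_{k,0}$ solves
\begin{equation*}
-\Delta \vp_{k,0} - \lambda \vp_{k,0} = \mkl(\Om) |\vp_{k,0}|^{2^*-2} \vp_{k,0} \quad \text{in } \Om.
\end{equation*}

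Second, I would set $w_\ve = \vp_{k,\ve} - \vp_{k,0}$, which satisfies $w_\ve \rightharpoonup 0$ in $H^1_0(\Om)$ and $w_\ve \to 0$ strongly in every $L^r(\Om)$, $r<2^*$. Integrating \eqref{eq:vpke} against $\vp_{k,\ve}$ and the limit equation above against $\vp_{k,0}$, subtracting, handling the cross term $\int_{\Om} \nabla w_\ve \cdot \nabla \vp_{k,0}\, dx = o(1)$ by weak $H^1_0$ convergence and the $L^2$ strong convergence of $\vp_{k,\ve}$, I would arrive at
\begin{equation*}
\int_{\Om} |\nabla w_\ve|^2 \, dx = \mkl(\Om) \Big(1 - \int_{\Om} |\vp_{k,0}|^{2^*} dx\Big) + o(1).
\end{equation*}

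Third, the key quantitative input is that $\int_\Om |\vp_{k,\ve}|^{2^*}\, dx \to 1$. This is not automatic from weak convergence and must be extracted from the Euler--Lagrange structure. Combining \eqref{eq:egalG} with the bounds \eqref{eq:propG} gives the pointwise a.e. control $0 \le u_\ve - \gamma_{1,\ve}^{-1/2}|\vp_{k,\ve}| \le C v_{\ve,*}$. A mean-value expansion then yields $\big|u_\ve^{2^*} - \gamma_{1,\ve}^{-2^*/2}|\vp_{k,\ve}|^{2^*}\big| \le C\big(v_{\ve,*}|\vp_{k,\ve}|^{2^*-1} + v_{\ve,*}^{2^*}\big)$; integrating, using $v_{\ve,*} \to 0$, the normalisation $\int_\Om u_\ve^{2^*} dx = 1$, and that $\int_\Om |\vp_{k,\ve}|^{2^*-1} dx$ is uniformly bounded (by \eqref{eq:bornitude:vpke} and Sobolev), I would obtain $\int_\Om |\vp_{k,\ve}|^{2^*} dx = \gamma_{1,\ve}^{2^*/2} + O(v_{\ve,*}) = 1 + o(1)$. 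The Br\'ezis--Lieb lemma then gives
\begin{equation*}
\int_\Om |w_\ve|^{2^*}\, dx = 1 - \int_\Om |\vp_{k,0}|^{2^*}\, dx + o(1),
\end{equation*}
and combined with Step 2 this yields $\int_\Om |\nabla w_\ve|^2 dx = \mkl(\Om) \int_\Om |w_\ve|^{2^*} dx + o(1)$.

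Fourth, I would invoke the Sobolev inequality $\int_{\Om} |\nabla w_\ve|^2 dx \ge K_n^{-2} \big(\int_{\Om} |w_\ve|^{2^*} dx\big)^{2/2^*}$. Setting $a_\ve = \int_\Om |w_\ve|^{2^*} dx \in [0,1+o(1)]$, either $a_\ve \to 0$ up to a subsequence, in which case $\int_{\Om} |\nabla w_\ve|^2 dx \to 0$ and strong convergence in $H^1_0(\Om)$ follows; or $a_\ve \to a^* > 0$ along a subsequence, in which case $\mkl(\Om) a^* \ge K_n^{-2} (a^*)^{2/2^*}$ would force $(a^*)^{1-2/2^*} \ge K_n^{-2}/\mkl(\Om) > 1$ by the strict inequality \eqref{eq:test1e}, contradicting $a^* \le 1$. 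The main obstacle is Step 3, where the sharp asymptotic $\int_\Om |\vp_{k,\ve}|^{2^*}\, dx \to 1$ must be read off from the non-standard Euler--Lagrange relation \eqref{eq:egalpp}; once this is in hand, the argument is a standard concentration-compactness alternative of Br\'ezis--Nirenberg type driven by the strict inequality $\mkl(\Om) < K_n^{-2}$.
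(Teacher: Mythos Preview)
Your argument is correct, but it takes a different route from the paper's. The paper subtracts \eqref{eq:vpke} from the limit equation, integrates the resulting equation for $w_\ve := \vp_{k,\ve}-\vp_{k,0}$ directly against $w_\ve$, and uses the $L^\infty$ regularity of $\vp_{k,0}$ (via Br\'ezis--Kato) together with $\Vert u_\ve\Vert_{2^*}=1$ to absorb: one obtains
\[
\int_{\Om}|\nabla w_\ve|^2\,dx \le o(1)+\mkl(\Om)\int_{\Om}u_\ve^{2^*-2}w_\ve^2\,dx \le o(1)+\mkl(\Om)K_n^2\int_{\Om}|\nabla w_\ve|^2\,dx,
\]
and $\mkl(\Om)K_n^2<1$ forces $\Vert w_\ve\Vert_{H^1_0}\to 0$. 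This bypasses your Step~3 and Br\'ezis--Lieb entirely; the paper never needs to know that $\int_\Om|\vp_{k,\ve}|^{2^*}dx\to 1$ \emph{before} proving strong convergence (in fact it deduces this only afterwards). Your approach is the standard Palais--Smale dichotomy and is more self-contained in that it does not invoke the $L^\infty$ regularity of the weak limit, at the cost of having to extract the energy normalisation from the Euler--Lagrange relation \eqref{eq:egalG}.

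Two minor imprecisions in your write-up: in Step~3 the pointwise bound coming from \eqref{eq:propG} and \eqref{eq:egalG} is $0\le u_\ve-\gamma_{1,\ve}^{-1/2}|\vp_{k,\ve}|\le C v_{\ve,*}^{1/2}$ (take square roots near zero), not $Cv_{\ve,*}$; this is harmless since $v_{\ve,*}^{1/2}\to 0$ as well, and working with $u_\ve^2$ rather than $u_\ve$ gives a cleaner estimate. In Step~5 the relevant strict inequality is \eqref{eq:test1}, not \eqref{eq:test1e}.
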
 
 
\begin{proof}
First, using \eqref{eq:convmkl} and \eqref{eq:convue} together with \eqref{eq:vpke} shows that $\vp_{k,0}$ satisfies weakly 
 \begin{equation} \label{eq:convu0}
   - \Delta \vp_{k,0} - \lambda \vp_{k,0} = \mkl(\Om) |\vp_{k,0}|^{2^*-2} \vp_{k,0} \quad \text{ in } \Om. 
   \end{equation}
The regularity theory  of   \cite{BrezisKato} then shows that $\vp_{k,0} \in C^{3,\alpha}(\overline{\Om})$ for some $\alpha >0$. Substracting \eqref{eq:convu0} to \eqref{eq:vpke} shows that $\vp_{k,\ve}-\vp_{k,0}$ satisfies
 $$ \begin{aligned}
   - \Delta (\vp_{k,\ve} - \vp_{k,0}) &- \lambda (\vp_{k,\ve}-\vp_{k,0})  = \mkl(u_\ve) u_\ve^{2^*-2} (\vp_{k,\ve} - \vp_{k,0})  \\
   & + (\mkl(u_\ve) - \mkl(\Om)) u_\ve^{2^*-2} \vp_{k,0} + \mkl(\Om) (u_\ve^{2^*-2} - u^{2^*-2}) \vp_{k,0}. 
   \end{aligned}$$
   Integrating against $\vp_{k,\ve}-\vp_{k,0}$ and using \eqref{eq:convmkl} shows that 
   $$\begin{aligned}
    \int_{\Om} |\nabla (\vp_{k,\ve} - \vp_{k,0})|^2 dx &= o(1) +  \mkl(\Om) \int_\Om u_\ve^{2^*-2} (\vp_{k,\ve} - \vp_{k,0})^2 dx \\
    &+ \int_{\Om}\mkl(\Om) (u_\ve^{2^*-2} - u^{2^*-2}) \vp_{k,0} (\vp_{k,\ve} - \vp_{k,0})dx. 
    \end{aligned} $$
  Since $\vp_{k,0} \in C^3(\overline{\Om})$, $(u_\ve^{2^*-2} - u^{2^*-2})_{\ve >0}$ is bounded in $L^\frac{n}{2}(\Om)$ and $\vp_{k,\ve} \to \vp_{k,0}$ strongly in $L^{\frac{n}{n-2}}(\Om)$, the last integral in the right-hand side converges to $0$ up to a subsequence as $\ve \to 0$. We then have, by Sobolev's inequality and since $\Vert u_\ve \Vert_{2^*} = 1$,
    $$
\bal    \int_{\Om} |\nabla (\vp_{k,\ve} - \vp_{k,0})|^2 dx & = o(1) +  \mkl(\Om) \int_\Om u_\ve^{2^*-2} (\vp_{k,\ve} - \vp_{k,0})^2 dx \\
& \le o(1) + \mkl(\Om) K_n^2 \int_{\Om} |\nabla (\vp_{k,\ve} - \vp_{k,0})|^2 dx, 
\eal$$
which shows with \eqref{eq:test1} that $\Vert \vp_{k,\ve} - \vp_{k,0}\Vert_{H^1_0} \to 0$ as $\ve \to 0$, up to a subsequence. 
\end{proof}

We are now in position to conclude the proof of Theorem \ref{prop:minmkl1}.

 \begin{proof}[End of the proof of Theorem \ref{prop:minmkl1}]
Since $\vp_{k,\ve} \to \vp_{k,0}$ strongly in $H^1_0(\Om)$, \eqref{eq:propG} and \eqref{eq:egalG} now show that $u_\ve \to u_0 = |\vp_{k,0}|$ strongly in $L^{2^*}(\Om)$. Since $\vp_{k,0}$ solves \eqref{eq:convu0}, unique continuation results (see \cite{Aronszajn} or \cite{HardtSimon}) show that $\vp_{k,0}$ vanishes on a set of measure zero in $\Om$. In particular, $u_0 = |\vp_{k,0}| \in L^{2^*}_{>}(\Om)$. The normalisation condition  $\int_{\Om} u_\ve^{2^*-2}\vp_{k,\ve}^2dx = 1$ also passes to the limit and shows that 
 $$ \int_{\Om} u_0^{2^*}dx = \int_{\Om} u_0^{2^*-2} \vp_{k,0}^2dx = \int_{\Om} |\vp_{k,0}|^{2^*}dx = 1, $$
 and thus that $\vp_{k,0} \neq 0$. Proposition \ref{prop:signvp} shows that $\mu_1^{\lambda}(u_0), \dots, \mu_{N(i)}^{\lambda}(u_0)$ are nonpositive.  Since $\mkl(\Omega)>0$, \eqref{eq:convue} and \eqref{eq:convu0} imply that 
$$ \int_{\Om} u_0^{2^*-2} \vp_{k,0} \vp_p^{\lambda}(u_0)dx = 0 \quad \text{ for all } 1 \le p \le k-1. $$
Thus $V =  \text{Span} \big( \vp_1^{\lambda}(u_0), \dots, \vp_{k-1}^\lambda(u_0), \vp_{k,0}\big) $ satisfies $\dim_{u_0} V = k$ and is an admissible subspace in the definition of $\mkl(u_0)$: \eqref{muk} and \eqref{eq:convu0} then show that $\mkl(u_0) \le \mkl(\Om)$. By definition of $\mkl(\Om)$ we thus have $\mkl(u_0) = \mkl(\Om)$, and thus $u_0$ attains $\mkl(\Om)$ and $\vp_{k,0} \in E_k^{\lambda}(u_0)$. A straightforward adaptation of the proof  of Lemma \ref{lemme4} again shows that $\dim_{u_0} E_k^{\lambda}(u_0) = 1$. Hence $\vp_{k,0} = \vkl(u_0)$ and if $k \ge 2$, that is if $i \ge 1$, it is sign-changing. This proves point (1) of Theorem  \ref{prop:minmkl1}. 
 
 \medskip
 We now prove point (2). That $\psi_0 = {\mu_{k}^{\lambda}(\Om)}^{\frac{n-2}{4}} \vp_{k,0}$ solves \eqref{eq:critlambda} follows from a simple scaling argument, and we see that 
 $$ \int_{\Om} |\psi_0|^{2^*}dx = \mkl(\Om)^{\frac{n}{2}}. $$
 The regularity of $\psi_0$ is again a consequence of the regularity theory of \cite{BrezisKato} (see also \cite{Trudinger}) and of standard elliptic theory. 

 \medskip
 
 We prove point (3) of Theorem \ref{prop:minmkl1}. Let $v \in H^1_0(\Om)$ be a non-zero sign-changing solution of \eqref{eq:critlambda}. By Proposition \ref{prop:structure:vp}, there exists $p \ge 1$ such that $\Vert v \Vert_{2^*}^{2^*-2} = \mu_{p}^{\lambda}(|\tilde{v}|)$ and $\mu_{p}^{\lambda}(|\tilde{v}|) >0$, where we have let $\tilde v = \frac{v}{\Vert v \Vert_{2^*}}$. Proposition \ref{prop:signvp} then shows that $p \ge k$, so that $\mpl(|\tilde{v}|) \ge \mkl(|\tilde{v}|)$. Since $\psi_0 := {\mu_{k}^{\lambda}(\Om)}^{\frac{n-2}{4}} \vp_{k,0}$, a simple computation shows that $\Vert \psi_0 \Vert_{2^*}^{2^*-2} = \mkl(\Om)$. We then have 
 $$ \Vert v \Vert_{2^*}^{2^*-2} = \mu_{p}^{\lambda}(|\tilde{v}|) \ge \mkl(|\tilde{v}|) \ge \mkl(\Om) = \Vert \psi_0 \Vert_{2^*}^{2^*-2}, $$
which shows that $\psi_0$ is a solution of \eqref{eq:critlambda} of least energy. Clearly, if $i \ge 1$, it changes sign. 

\medskip

We finally prove point (4) of Theorem \ref{prop:minmkl1}. The proof closely follows the lines of Courant's nodal domain theorem. Assume indeed that $\psi$ has at least $k+1 = N(i)+2$ nodal domains in $\Om$ denoted $\Omega_1, \dots, \Omega_{k+1}$. Let $u_0 = |\psi|$ and denote by $\vp_{\ell,0} = \vp_\ell^{\lambda}(u_0)$, for $1 \le \ell \le k-1$, the $k-1$ first generalised eigenfunctions associated to $u_0$ and given by Proposition \ref{prop:vp}. By \eqref{supmupneg} we have $\mu_\ell^{\lambda}(u_0) \le0$ for $1 \le \ell \le k-1$. Let $a_1, \dots, a_{k}$ be real numbers chosen such that 
 $$ \theta:= \sum_{\ell=1}^{k-1} a_\ell \vp_{\ell,0} \mathds{1}_{\Omega_\ell} + a_{k} \psi \mathds{1}_{\Omega_{k}}  $$
 satisfies $\int_{\Om} u_0^{2^*-2} \vp_{\ell,0}\theta dx = 0$ for all $1 \le \ell \le k-1$ and $\int_{\Om} u_0^{2^*-2} \theta^2 dx = 1$. Then $V = \text{Span} \big( \vp_{1,0}, \dots, \vp_{k-1,0}, \theta)$ is an admissible subspace in the definition of $\mu_{k}^{\lambda}(u_0)$ and we easily see that 
 $$ \sup_{v \in V \backslash \{0\}} Q_{u_0}^{\lambda}(v) = Q_{u_0}^{\lambda}(\theta) = \mu_{k}^{\lambda}(\Om), $$
 where $Q_{u_0}^{\lambda}$ is as in \eqref{defQu}. We then have $\mu_{k}^{\lambda}(u_0)  = \mu_{k}^{\lambda}(\Om)$ and $\theta$ attains $\mu_{k}^{\lambda}(u_0)$: $\theta$ is therefore a generalised eigenfunction associated to $u_0$ and satisfies 
 $$ - \Delta \theta - \lambda \theta = \mu_{k}^{\lambda}(\Om) u_0^{2^*-2} \theta \quad \text{ in } \Om. $$
 However, $\theta$ vanishes on the non-empty open set $\Om_{k+1}$ by construction. Since $u_0$ is continuous, standard unique continuation results (see for instance \cite{Aronszajn}) show that $\theta \equiv 0$ in $\Om$, a contradiction. Hence $\psi$ has at most $k$ nodal domains. If $i=1$, that is if $\lambda \in [\Lambda_1, \Lambda_{2})$, $\psi$ changes sign and it hence has exactly two nodal domains. If $i=0$, that is $\lambda \in [0, \Lambda_1)$, $\psi$ is positive up to sign. This proves Theorem \ref{prop:minmkl1}. 
  \end{proof}
  
The arguments in the proof of Theorem \ref{prop:minmkl1} more generally prove the following result, where we do not assume that the strict inequality $\mkl(\Om) < K_n^{-2}$ holds: 
  \begin{prop} \label{prop:extremales}
Let $\lambda \in [\Lambda_i, \Lambda_{i+1})$ for some $i \ge 0$ and assume that $\mkl(\Om)$ is attained at some function $u \in L^{2^*}_{>}(\Om) $ with  $\Vert u \Vert_{2^*} = 1$. Then $\mkl(u)$ is simple and $u = |\vp|$, where $\vp = \vp_k^{\lambda}(u)$ is the unique generalised eigenvector associated to $u$ by Proposition \ref{prop:vp} and normalised by $\int_{\Om} u^{2^*-2} \vp^2dx = 1$. In addition, for any fixed $h \in L^\infty(\Om)$, the mapping $t \mapsto \mkl \big((1+th)u \big)$ is differentiable at $0$ and 
$$ \frac{d}{dt}_{|t=0}  \mkl \big((1+th)u \big) = - (2^*-2) \mkl(u) \int_{\Om} u^{2^*} h dx.$$
  \end{prop}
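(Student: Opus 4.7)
I would follow the strategy of the proof of Lemma \ref{lemme4}, but applied directly to $u$ and $\mkl(\Om)$ instead of to the relaxed minimiser $u_\ve$ and $\Fke$. The key point is that the scale invariance $\mkl(\alpha u)\Vert \alpha u\Vert_{2^*}^{2^*-2} = \mkl(u)\Vert u\Vert_{2^*}^{2^*-2}$ for $\alpha>0$ makes the minimality of $u$ equivalent to the fact that
\begin{equation*}
F(t) := \mkl\big((1+th)u\big)\cdot \Vert (1+th)u\Vert_{2^*}^{2^*-2}
\end{equation*}
attains a minimum at $t=0$ for every fixed $h\in L^\infty(\Om)$. I would then apply Proposition \ref{prop:dervp} to $t\mapsto\mkl((1+th)u)$, together with the smooth derivative of $t\mapsto\Vert(1+th)u\Vert_{2^*}^{2^*-2}$ (whose value at $0$ is $(2^*-2)\int_\Om u^{2^*}h\,dx$ since $\Vert u\Vert_{2^*}=1$), to write the one-sided derivatives of $F$ at $0$. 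Setting
\begin{equation*}
B_h(\vp) = -(2^*-2)\mkl(u)\,\frac{\int_\Om u^{2^*-2}h\vp^2\,dx}{\int_\Om u^{2^*-2}\vp^2\,dx},
\end{equation*}
the minimality condition $F'(0_+)\ge 0\ge F'(0_-)$ then collapses into the sandwich
\begin{equation*}
\sup_{\vp\in E_{k}^{\lambda}(u)\setminus\{0\}} B_h(\vp)\ \le\ -(2^*-2)\mkl(u)\int_\Om u^{2^*}h\,dx\ \le\ \inf_{\vp\in E_{k}^{\lambda}(u)\setminus\{0\}} B_h(\vp),
\end{equation*}
forcing $\vp\mapsto B_h(\vp)$ to be constant on $E_{k}^{\lambda}(u)\setminus\{0\}$ for every $h\in L^\infty(\Om)$.

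The argument appearing in the last paragraph of the proof of Lemma \ref{lemme4} then applies verbatim: assuming for contradiction the existence of an $L^2_u$-orthonormal pair $\psi_1,\psi_2\in E_{k}^{\lambda}(u)$, one feeds $\vp=\psi_1$, $\vp=\psi_2$, and $\vp=\psi_1+\psi_2$ into the constancy of $B_h$ for arbitrary $h\in L^\infty(\Om)$, which forces $|\psi_1|=|\psi_2|$ and $\psi_1\psi_2=0$ a.e.\ on $\Om$, a contradiction. Hence $\dim E_{k}^{\lambda}(u)=1$, which gives the simplicity claim. With this in hand, and for the unique normalised generator $\vp$ satisfying $\int_\Om u^{2^*-2}\vp^2dx=1$, the sandwich above degenerates into the single equality
\begin{equation*}
-(2^*-2)\mkl(u)\int_\Om u^{2^*-2}h\vp^2\,dx+(2^*-2)\mkl(u)\int_\Om u^{2^*}h\,dx=0
\end{equation*}
for every $h\in L^\infty(\Om)$. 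Since $\mkl(u)>0$ by Proposition \ref{prop:signvp}, this is equivalent to $u^{2^*-2}\vp^2=u^{2^*}$ a.e., and because $u>0$ a.e.\ we deduce $u^2=\vp^2$, that is, $u=|\vp|$.

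For the differentiability assertion, simplicity makes $\sup$ and $\inf$ in the formulas of Proposition \ref{prop:dervp} coincide, so $t\mapsto\mkl((1+th)u)$ is genuinely differentiable at $0$ with derivative $-(2^*-2)\mkl(u)\int_\Om u^{2^*-2}h\vp^2\,dx$, and the Euler-Lagrange relation $u^{2^*-2}\vp^2=u^{2^*}$ rewrites this as $-(2^*-2)\mkl(u)\int_\Om u^{2^*}h\,dx$, which is the claim. The only mild subtlety is that Proposition \ref{prop:dervp} is stated under $\lambda\ge\Lambda_1$; in the coercive case $i=0$, however, $\mu_1^{\lambda}(u)$ is a genuine infimum attained at a unique eigenvector by Proposition \ref{prop:vp1}, so classical arguments yield smooth directional differentiability along the variations $(1+th)u$ and the same reasoning applies. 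The main obstacle I anticipate is precisely the non-smoothness of $u\mapsto\mkl(u)$ when $i\ge1$, which is however completely bypassed by the one-sided derivatives of Proposition \ref{prop:dervp} combined with the sandwich argument above.
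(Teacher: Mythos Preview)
Your proposal is correct and follows essentially the same route as the paper's own proof: both apply the one-sided derivative formulas of Proposition \ref{prop:dervp} to the scale-invariant functional $v\mapsto \mkl(v)\Vert v\Vert_{2^*}^{2^*-2}$, extract the sandwich inequality from minimality, reproduce the argument of Lemma \ref{lemme4} to force $\dim E_k^\lambda(u)=1$, and then read off $u=|\vp|$ and the derivative formula from the resulting Euler--Lagrange identity. Your remark on the $i=0$ case is a reasonable addendum; the paper does not single it out, but the proof of Proposition \ref{prop:dervp} goes through verbatim for $k=1$ since the spectral-gap condition \eqref{eq:der1} is vacuous there.
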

  
  \begin{proof}
For $v \in L^{2^*}_{>}(\Om)$ we define 
\begin{equation*} 
F(v) = \mkl(v) \Vert v \Vert_{2^*}^{2^*-2}.
\end{equation*}
Let $u\in L^{2^*}_{>}(\Om) $,  $\Vert u \Vert_{2^*} = 1$ attain $\mkl(\Om)$. Let $h \in L^\infty(\Om)$ and let, for $|t| < \Vert h \Vert_{\infty}^{-1}$, $u_{t} = (1+th)u$. Since $u$ attains $\mkl(\Om)$ we have $ F(u_{t}) \ge F(u)$ for all $t$ small enough, which implies that
\begin{equation*}
  \frac{d}{dt}_{|t=0_+} F(u_{t})  \ge 0 \ge  \frac{d}{dt}_{|t=0_-} F(u_{t}). 
  \end{equation*}
Using \eqref{eq:derlefte} and \eqref{eq:derrighte}, which are formally true for $\ve = 0$, we obtain that 
\begin{equation*}
\begin{aligned}
&  \sup_{\vp \in E_k^{\lambda}(u)\backslash \{0\}} \Bigg( - (2^*-2) \mkl(\Om) \frac{\int_\Om u^{2^*-2}h \vp^2dx}{\int_\Om u^{2^*-2} \vp^2 dx}\Bigg) \\
  & \quad\le  \inf_{\vp \in E_k^{\lambda}(u)\backslash \{0\}} \Bigg( - (2^*-2) \mkl(\Om) \frac{\int_\Om u_\ve^{2^*-2}h \vp^2dx}{\int_\Om u^{2^*-2} \vp^2 dx}\Bigg) 
  \end{aligned} 
  \end{equation*}
for any $h \in L^\infty(\Om)$. By mimicking the proof of Lemma \ref{lemme4} this shows that $\dim_{u} E_k^{\lambda}(u) = 1$. We let $\vp \in E_k^{\lambda}(u)$ satisfy $\int_{\Om} u^{2^*-2} \vp^2 dx = 1$. The arguments in Lemma \ref{lemme4bis}, together with Proposition \ref{prop:dervp}, then show that $t \mapsto \mkl \big((1+th)u \big)$ is differentiable at $0$, that $u = |\vp|$ a.e. and that 
$$ \frac{d}{dt}_{|t=0}  \mkl \big((1+th)u \big) = - (2^*-2) \mkl(u) \int_{\Om} u^{2^*} h dx.$$
  \end{proof}

\subsection{Additional properties of $\mkl(\Om)$.}

We conclude this section by proving several properties of $\mkl(\Om)$ that will be useful in the proof of our main results. We first investigate the function $\lambda \mapsto \mkl(\Om)$:

\begin{prop} \label{prop:C0:mkl}
Let $n \ge 3$ and $\Om$ be a smooth bounded domain of $\R^n$. Let $\lambda \in [\Lambda_i, \Lambda_{i+1})$ for some $i \ge 0$. Let $k = N(i)+1$, where $N$ is given by \eqref{defN}, and consider $\mkl(\Om)$ given by \eqref{eq:minmuk}. Then 
\begin{itemize}
\item $\lambda \mapsto \mkl(\Om)$ is Lipschitz, positive and non-increasing in $[\Lambda_i, \Lambda_{i+1})$
\item We have $   \lim \limits_{\lambda \underset{< }{\to} \Lambda_{i+1}} \mkl(\Om)= 0$.
\item If $\mkl(\Om)$ is attained at some $\lambda \in  [\Lambda_i, \Lambda_{i+1})$ then $\mu_k^{\lambda'}(\Om) < \mkl(\Om)$ for every $\lambda' \in  [\Lambda_i, \Lambda_{i+1}), \lambda' > \lambda$. 
\end{itemize}
\end{prop}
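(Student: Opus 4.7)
Positivity $\mkl(\Om) > 0$ is already contained in Proposition~\ref{prop:ineq:large:00}. For monotonicity in part~(1), the basic identity
\begin{equation*}
 Q_u^\lambda(v) - Q_u^{\lambda'}(v) = (\lambda'-\lambda) \frac{\int_\Om v^2 \, dx}{\int_\Om u^{2^*-2} v^2 \, dx},
\end{equation*}
valid for every $u \in L^{2^*}_>(\Om)$ and every $v \in H^1_0(\Om) \setminus \{0\}$, shows that $\lambda \mapsto Q_u^\lambda(v)$ is strictly decreasing. Passing to the min-max characterisation \eqref{muk} gives $\lambda \mapsto \mkl(u)$ strictly decreasing for fixed $u$, and taking the infimum over $u$ yields that $\lambda \mapsto \mkl(\Om)$ is non-increasing. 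For part~(2), choose the normalised constant weight $u_0 := |\Om|^{-1/2^*}$. Since $u_0^{2^*-2}$ is constant, \eqref{muk} for $\mkl(u_0)$ reduces to the classical min-max for the $k$-th eigenvalue of $-\Delta - \lambda$ on $H^1_0(\Om)$, rescaled by $u_0^{-(2^*-2)}$; and $k = N(i)+1$ gives $\lambda_k = \Lambda_{i+1}$. Hence
\begin{equation*}
 \mkl(\Om) \,\le\, \mkl(u_0)\,\|u_0\|_{2^*}^{2^*-2} \,=\, (\Lambda_{i+1} - \lambda)\,|\Om|^{(2^*-2)/2^*},
\end{equation*}
which vanishes as $\lambda \to \Lambda_{i+1}^-$.

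For part~(3), assume $u_0 \in L^{2^*}_>(\Om)$ with $\|u_0\|_{2^*} = 1$ attains $\mkl(\Om)$ at the given $\lambda$. By Proposition~\ref{prop:vp}, the subspace $V^* := \text{Span}(\varphi_1^\lambda(u_0), \dots, \varphi_k^\lambda(u_0))$ realises the infimum in \eqref{muk} for $\mkl(u_0)$, i.e.\ $\max_{V^*} Q_{u_0}^\lambda = \mkl(u_0) = \mkl(\Om)$. Fix $\lambda' > \lambda$ in $[\Lambda_i, \Lambda_{i+1})$: the identity above gives $Q_{u_0}^{\lambda'}(v) < Q_{u_0}^\lambda(v) \le \mkl(\Om)$ strictly for every $v \in V^* \setminus \{0\}$, and since the maximum over the finite-dimensional space $V^*$ is attained we deduce $\max_{V^*} Q_{u_0}^{\lambda'} < \mkl(\Om)$. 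As $V^*$ is still admissible in the min-max characterisation of $\mu_k^{\lambda'}(u_0)$, we obtain $\mu_k^{\lambda'}(\Om) \le \mu_k^{\lambda'}(u_0) \le \max_{V^*} Q_{u_0}^{\lambda'} < \mkl(\Om)$, which is the conclusion.

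The Lipschitz property in part~(1) is the most delicate step. For fixed $u$ the identity above shows that $\lambda \mapsto \mkl(u)$ is locally Lipschitz, the local constant near $\lambda$ being controlled by $\inf_V \sup_{v \in V} \int_\Om v^2 \, dx / \int_\Om u^{2^*-2} v^2 \, dx$ taken over subspaces $V$ near-optimal in \eqref{muk}. To obtain a Lipschitz constant uniform on compact subintervals of $[\Lambda_i, \Lambda_{i+1})$, the plan is to estimate this quantity along a minimising sequence $u_n$ for $\mkl(\Om)$, using the eigenvalue equation $-\Delta \varphi_k^\lambda(u_n) - \lambda \varphi_k^\lambda(u_n) = \mkl(u_n) u_n^{2^*-2} \varphi_k^\lambda(u_n)$ together with H\"older's and Sobolev's inequalities applied to $\varphi_k^\lambda(u_n)$ under the normalisation $\int_\Om u_n^{2^*-2} \varphi_k^\lambda(u_n)^2 \, dx = 1$. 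The main obstacle is that since $\lambda \ge \Lambda_1$ the form $\|\nabla v\|_2^2 - \lambda \|v\|_2^2$ is non-coercive on $H^1_0(\Om)$, so the bound cannot come from a direct Poincar\'e-type estimate; one must instead exploit the min-max orthogonality of $\varphi_k^\lambda(u_n)$ to the lower generalised eigenfunctions, together with the spectral gap provided by Proposition~\ref{prop:signvp}, to control the low-eigenmode projection of $\varphi_k^\lambda(u_n)$ onto $\oplus_{p \le k-1} E_{\Lambda_p}(\Om)$.
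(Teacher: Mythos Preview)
Your treatment of positivity (via Proposition~\ref{prop:ineq:large:00}), monotonicity in $\lambda$, the limit in part~(2), and the strict inequality in part~(3) is correct and matches the paper's proof essentially line by line: the paper also tests with the constant weight for part~(2) and with the optimal subspace $V = \mathrm{Span}\big(\varphi_1^\lambda(u_\lambda),\dots,\varphi_k^\lambda(u_\lambda)\big)$ at a minimiser $u_\lambda$ for part~(3).

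The Lipschitz estimate, however, is left as an unexecuted plan, and your diagnosis that ``the bound cannot come from a direct Poincar\'e-type estimate'' is mistaken. The paper's argument \emph{is} a one-line Poincar\'e bound: since $\int_\Om v^2\,dx \le \Lambda_1^{-1}\int_\Om |\nabla v|^2\,dx$ for every $v \in H^1_0(\Om)$, one has, for $\lambda \le \lambda'$,
\[
Q_u^\lambda(v) \;=\; Q_u^{\lambda'}(v) + (\lambda'-\lambda)\,\frac{\int_\Om v^2\,dx}{\int_\Om u^{2^*-2} v^2\,dx}
\;\le\; Q_u^{\lambda'}(v) + \frac{\lambda'-\lambda}{\Lambda_1}\, Q_u^0(v).
\]
Taking the supremum over $v\in V$, then the infimum over $V$ and over $u$, produces the bound
\[
\mu_k^{\lambda'}(\Om)\;\le\;\mkl(\Om)\;\le\;\mu_k^{\lambda'}(\Om)+\frac{\lambda'-\lambda}{\Lambda_1}\,\mu_k^0(\Om),
\]
which is a global Lipschitz estimate with constant $\Lambda_1^{-1}\mu_k^0(\Om)\le \Lambda_1^{-1}K_n^{-2}$ by Proposition~\ref{prop:ineq:large:00}. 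The non-coercivity of $-\Delta-\lambda$ when $\lambda\ge\Lambda_1$ plays no role here: Poincar\'e involves only the first Dirichlet eigenvalue $\Lambda_1$, and the comparison term $Q_u^0$ is always nonnegative. Your proposed detour through minimising sequences, the eigenvalue equation for $\varphi_k^\lambda(u_n)$, and spectral projections onto $\bigoplus_{p\le k-1}E_{\Lambda_p}(\Om)$ is unnecessary.
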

A consequence of the last bullet is that if $I \subseteq [\Lambda_i, \Lambda_{i+1})$ is an interval such that $\mkl(\Om)$ is attained for every $\lambda \in I$, then $\lambda \in I \mapsto \mkl(\Om)$ is decreasing.

\begin{proof}
 It is easily seen by \eqref{muk} and \eqref{eq:minmuk} that $\lambda \in [\Lambda_i, \Lambda_{i+1}) \mapsto \mkl(\Om)$ is non-increasing. By \eqref{infmukpos} it is positive in  $[\Lambda_i, \Lambda_{i+1})$. We now prove that $\lambda \in [\Lambda_i, \Lambda_{i+1}) \mapsto \mkl(\Om)$ is Lipschitz. Let for this $\lambda, \lambda' \in   [\Lambda_i, \Lambda_{i+1})$, $\lambda \le \lambda'$. Let $u \in L^{2^*}_{>}(\Om)$ with $\Vert u \Vert_{2^*} =1$ and $V\subset H^1_0(\Om)$, $\dim_u V = k$. For $v \in V \backslash \{0\}$ we have, by Poincar\'e's inequality:
$$  \begin{aligned}
 Q_u^{\lambda}(v)  & = Q_u^{\lambda'}(v)  + (\lambda'-\lambda) \frac{\int_{\Om} v^2 dx}{\int_{\Om} u^{2^*-2}v^2dx} \\
 & \le Q_u^{\lambda'}(v)  + \frac{\lambda'-\lambda}{\Lambda_1} \frac{\int_{\Om} |\nabla v|^2 dx}{\int_{\Om} u^{2^*-2}v^2dx}
\end{aligned} $$
where $Q_u^{\lambda}$ is as in \eqref{defQu}. Taking the supremum over $v \in V \backslash \{0\}$ then the infimum over all subspaces $V$ with $\dim_u V = k$ yields 
$$ \mkl(u) \le \mu_k^{\lambda'}(u) +  \frac{\lambda'-\lambda}{\Lambda_1} \mu_k^{0}(u), $$
where $\mu_k^{0}(u)$ is as in \eqref{muk}. Taking the infimum over $u \in L^{2^*}_{>}(\Om), \Vert u \Vert_{2^*} = 1$ and since $\mkl(\Om)$ is nonincreasing in $\lambda$ gives in the end 
\begin{equation} \label{croissance:muk}
  \mu_k^{\lambda'}(\Om) \le \mkl(\Om) \le \mu_k^{\lambda'}(\Om) +  \frac{\lambda'-\lambda}{\Lambda_1} \mu_k^{0}(\Om), 
  \end{equation}
which proves that $\mkl(\Om)$ is a Lipschitz function of $\lambda$. Assume now that $ \lambda \in [\Lambda_i, \Lambda_{i+1})$ is such that  $\mkl(\Om)$ is attained. Let $\lambda' \in [\Lambda_i, \Lambda_{i+1})$, with $\lambda' > \lambda$. Let $u_\lambda \in L^{2^*}_{>}(\Om)$ attain $\mkl(\Om)$ and let $\vp_1^{\lambda}(u_\lambda), \dots, \vkl(u_{\lambda})$ be the $k$ first generalised eigenvectors for $u_\lambda$. Let $V = \text{Span}\{\vp_1^{\lambda}(u_\lambda), \dots, \vkl(u_{\lambda})\}$. Since $\lambda' > \lambda$ and $u_\lambda \in L^{2^*}_{>}(\Om)$ we have 
$$ \max_{v \in V \backslash \{0\}} Q_{u_\lambda}^{\lambda'}(v) < \max_{v \in V \backslash \{0\}} Q_{u_\lambda}^{\lambda}(v) = \mkl(\Om) .$$
Taking the infimum over $k$-dimensional subspaces $V$ and then over functions $u \in L^{2^*}_{>}(\Om)$  gives $\mu_k^{\lambda'}(\Om) < \mkl(\Om)$.

We now compute the limit of $\mkl(\Om)$ as $\lambda$ converges to $\Lambda_{i+1}$ from the left. Let $u\equiv 1$ and let $V_0 = \text{Span}\big( \vp_1, \dots, \vp_{k} \big)$, where $ \vp_1, \dots, \vp_{k}$ are the first $k$-eigenfunctions of $- \Delta$ in $H^1_0(\Om)$, normalised so that $\int_{\Om} \vp_\ell^2 dx = 1$ for $1 \le \ell \le k$. It satisfies $\dim_1 V_0 = \dim V_0 = k$. By construction $\vp_{k}$ is associated to the eigenvalue $\Lambda_{i+1}$. This choice of $u$ and $V$, together with the definition \eqref{eq:minmuk} of $\mkl(\Om)$, show that 
\begin{equation} \label{test:function:C0}
\begin{aligned}
0 & < \mkl(\Om) \le \mkl(1)|\Om|^{\frac{2}{n}}  \\
& \le |\Om|^{\frac{2}{n}} \sup_{v \in V_0 \backslash \{0\}} \frac{\int_{\Om} \big( |\nabla v|^2 - \lambda v^2 \big) dx}{\int_{\Om} v^2 dx } = |\Om|^{\frac{2}{n}} (\Lambda_{i+1} - \lambda),
\end{aligned}
\end{equation}
from which the result follows. 
\end{proof}

The next result relates $\mkl(\Om)$ to the energy function $\mathcal{E}_{\lambda}(\Om)$ of \eqref{eq:critlambda} given by \eqref{minimalenergy}  provided low-energy solutions of \eqref{eq:critlambda} exist: 

\begin{prop} \label{prop:liens}
Let $\lambda \in [\Lambda_i, \Lambda_{i+1})$ for some $i \ge 0$ and let $k = N(i)+1$ where $N$ is given by \eqref{defN}. Assume that there exists a solution $v$ of \eqref{eq:critlambda} satisfying $\int_{\Om} |v|^{2^*} dx< K_n^{-n}$, where $K_n$ is given by \eqref{defK0}. Then $\mkl(\Om)$ is attained and we have $\mkl(\Om)^{\frac{n}{2}} = \mathcal{E}_{\lambda}(\Om)$. 
\end{prop}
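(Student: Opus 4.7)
The plan is to leverage the correspondence between solutions of \eqref{eq:critlambda} and generalised eigenvectors established in Proposition \ref{prop:structure:vp}, combined with the spectral-sign information of Proposition \ref{prop:signvp} and the attainment criterion Theorem \ref{prop:minmkl1}. The key algebraic identity to keep in mind throughout is $\frac{2^*}{2^*-2} = \frac{n}{2}$, so that raising a bound of the form $\Vert v \Vert_{2^*}^{2^*-2} \ge \mkl(\Om)$ to the power $\frac{2^*}{2^*-2}$ converts it into an energy bound $\int_\Om |v|^{2^*}dx \ge \mkl(\Om)^{n/2}$.

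First I would establish the universal lower bound $\mathcal{E}_\lambda(\Om) \ge \mkl(\Om)^{n/2}$. Let $v \in H^1_0(\Om) \setminus \{0\}$ be any solution of \eqref{eq:critlambda} and set $\tilde v = v/\Vert v \Vert_{2^*}$. Proposition \ref{prop:structure:vp} provides an integer $p \ge 1$ such that $\mu_p^{\lambda}(|\tilde v|) = \Vert v \Vert_{2^*}^{2^*-2} > 0$. Since $\lambda \in [\Lambda_i, \Lambda_{i+1})$ and $k = N(i)+1$, Proposition \ref{prop:signvp} (specifically \eqref{supmupneg}) forces $p \ge k$. Using the non-decreasing character of the generalised eigenvalue sequence (Proposition \ref{prop:vp}, point $(4)$) and the definition of $\mkl(\Om)$ in \eqref{eq:minmuk}, one gets
\[
\Vert v \Vert_{2^*}^{2^*-2} = \mu_p^{\lambda}(|\tilde v|) \ge \mkl(|\tilde v|) \ge \mkl(\Om),
\]
since $\Vert \tilde v \Vert_{2^*} = 1$. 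Raising to the power $\frac{n}{2}$ and passing to the infimum over non-zero solutions yields $\mathcal{E}_\lambda(\Om) \ge \mkl(\Om)^{n/2}$.

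Next I would use the assumption on $v$. Applying the previous bound to the assumed low-energy solution gives $\mkl(\Om)^{n/2} \le \int_\Om |v|^{2^*}dx < K_n^{-n}$, so that $\mkl(\Om) < K_n^{-2}$. This is precisely the strict inequality \eqref{eq:test1} required in Theorem \ref{prop:minmkl1}, which therefore applies and produces a minimiser $u \in L^{2^*}_>(\Om)$ with $\Vert u \Vert_{2^*}=1$ of $\mkl(\Om)$, together with a genuine least-energy solution $\psi = \mkl(\Om)^{\frac{n-2}{4}} \vp$ of \eqref{eq:critlambda} satisfying $\int_\Om |\psi|^{2^*}\,dx = \mkl(\Om)^{n/2}$. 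Then $\psi$ is a competitor in the definition \eqref{minimalenergy} of $\mathcal{E}_\lambda(\Om)$, yielding the reverse inequality $\mathcal{E}_\lambda(\Om) \le \mkl(\Om)^{n/2}$. Combining with the lower bound of the previous paragraph finishes the proof.

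There is essentially no analytical obstacle here: the proposition is a direct consequence of the variational correspondence developed in Sections \ref{eigenvalues}--\ref{vp:minimisation}, and the only delicate point is ensuring that the positivity of $\mu_p^{\lambda}(|\tilde v|)$ forces $p$ to lie at or above the index of the principal generalised eigenvalue, which is exactly the content of Proposition \ref{prop:signvp}. The bookkeeping with the exponent $\frac{2^*}{2^*-2} = \frac{n}{2}$ must be carried out carefully but presents no real difficulty.
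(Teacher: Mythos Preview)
Your proof is correct and follows essentially the same route as the paper's: use Proposition~\ref{prop:structure:vp} and Proposition~\ref{prop:signvp} to force $p\ge k$ and deduce $\mkl(\Om)<K_n^{-2}$, then invoke Theorem~\ref{prop:minmkl1} to produce a least-energy solution of energy $\mkl(\Om)^{n/2}$. The only cosmetic difference is that you separate out the universal lower bound $\mathcal{E}_\lambda(\Om)\ge\mkl(\Om)^{n/2}$ explicitly, whereas the paper simply cites point~(3) of Theorem~\ref{prop:minmkl1} (whose proof contains the same inequality chain).
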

Under the assumption of Proposition \ref{prop:liens} we have in particular $\mathcal{E}_\lambda(\Om) < K_n^{-n}$.

\begin{proof}
Let $v$ be a solution of \eqref{eq:critlambda} such that $E(v) < K_n^{-n}$. Let $\tilde v = \frac{v}{\Vert v \Vert_{2^*}}$. Proposition \ref{prop:structure:vp} shows that there exists $p \ge 1$ such that $\mpl(\tilde v) = \Vert v \Vert_{2^*}^{2^*-2} < K_n^{-2}$. Proposition \ref{prop:signvp} then shows that $p \ge k$, where $k = N(i)+1$. Since the sequence $(\mpl(\tilde v))_{p \ge1}$ is nonincreasing, we then have $\mkl(\Om) \le \mkl(\tilde v) < K_n^{-n}$. Theorem \ref{prop:minmkl1} then shows that a least-energy solution of \eqref{eq:critlambda} exists and that it has energy $\mkl(\Om)^{\frac{n}{2}}$. Thus, $\mkl(\Om)^{\frac{n}{2}} = \mathcal{E}_{\lambda}(\Om)$. 
\end{proof}

\section{Proving the strict inequality $\mkl(\Om)<K_n^{-2}$} \label{fonctionstest}

In this section we prove inequality \eqref{eq:test1}. We first recall some well-known facts about the Green's function. Assume for the moment that $n=3$ and let $\lambda \ge0$, $\lambda \not \in \Sp(-\Delta)$, so that $-\Delta - \lambda$ is invertible in $H^1_0(\Om)$. It is well-known (see e.g.  \cite{RobDirichlet}) that $- \Delta - \lambda$ admits a Green's function  $G_\lambda: \overline{\Om} \times \overline{\Om} \backslash \{x=y\}$ which, for every $x \in \Om$, is the unique solution of 
\begin{equation} \label{green:coercif}
  - \Delta G_\lambda(x, \cdot) - \lambda G_\lambda(x, \cdot) = \delta_x \quad \text{ in } \Om, \quad G_\lambda(x, \cdot) = 0 \text{ in } \partial \Om.
 \end{equation}
Standard properties of Green's functions (see again \cite{RobDirichlet}) show that, for any $x \in \Om$, there is a real number $m_\lambda(x) \in \R$ such that the following holds
\begin{equation}\label{eq:asymG}
	G_\lambda(x,y)=\frac{1}{4 \pi |x-y|}+ m_\lambda(x) + O(|x-y|) \quad \text{ as } y \to x.
\end{equation}
Also, there exists $C>0$ depending on $\lambda$ and $\Om$ such that, for any $x \neq y$, 
\begin{equation}\label{eq:estimateGp}
	|G_\lambda(x,y)| \le C|x-y|^{-1} \quad \bb{ and } \quad |\nabla G_\lambda(x,y)|\le C|x-y|^{-2}.
\end{equation}
Note that \eqref{eq:estimateGp} remains true uniformly in $\lambda$ provided $\lambda$ belongs to a compact subset $K \subset \R \backslash \Sp(-\Delta)$, in which case $C$ depends on $K, \Om$. As mentioned in the introduction, $m_\lambda(x)$ in \eqref{eq:asymG} is called \emph{the mass} of $G_{\lambda}$ at $x$. Note that the mass function is the opposite of the usual Robin's function, defined e.g. in  \cite[Appendix A]{DelPinoDolbeaultMusso}. The properties of $m_\lambda$ have been thoroughly investigated in the coercive case $\lambda \in (0, \Lambda_1)$ (see e.g. \cite{DelPinoDolbeaultMusso}), but less so in the non-coercive case $\lambda \ge \Lambda_1$ where the maximum principle does not hold anymore. We refer to Appendix \ref{annexe:masse} where we prove several properties of $m_\lambda$ for any $\lambda \not \in \Sp(-\Delta)$: in particular that it is increasing in $\lambda$ as $\lambda$ varies between consecutive eigenvalues of $- \Delta$ (see \eqref{der:mass:lambda} below) and that for a fixed $\lambda \not \in \Sp(-\Delta)$, $m_\lambda(x) \to - \infty$ as $x \to \partial \Om$ (see \eqref{masse:moins:infini} below). 

\medskip

The main result of this section is as follows:

 \begin{theo}\label{prop:test:func}
 Let $\Om \subset \R^n$, $n \ge 3$, be bounded and smooth and let  $\lambda >0$. Suppose that one of the following assumptions is satisfied: 
\begin{itemize}
\item $n\ge5$ 
\item $n=4$ and $\lambda \not \in \Sp(-\Delta)$
\item  $n=3$, $\lambda \not \in \Sp(-\Delta)$ and there is $x \in \Om$ such that $m_\lambda(x) >0$, where $m_{\lambda}$ is as in \eqref{eq:asymG}. 
\end{itemize}
Let $i \ge0$ be such that $\lambda \in [\Lambda_i, \Lambda_{i+1})$ and let $k = N(i)+1$. Then
 	\begin{equation} \label{eq:test12}
 		\mkl(\Om) < K_n^{-2},
 	\end{equation}
where $K_n$ is as in \eqref{defK0} and $\mkl(\Om)$ is as in \eqref{eq:minmuk}.  
 \end{theo}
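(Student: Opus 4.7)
The plan is to construct, for each small $\varepsilon>0$, a positive test weight $u_\varepsilon\in L^{2^*}_>(\Om)$ with $\Vert u_\varepsilon\Vert_{2^*}=1$ and a $k$-dimensional test subspace $V_\varepsilon\subset H^1_0(\Om)$ with $\dim_{u_\varepsilon}V_\varepsilon=k$, such that
\begin{equation*}
\max_{v\in V_\varepsilon\setminus\{0\}} Q_{u_\varepsilon}^{\lambda}(v) < K_n^{-2}
\end{equation*}
for $\varepsilon$ small enough. In view of \eqref{muk} and \eqref{eq:minmuk} this gives $\mkl(\Om)\le\mkl(u_\varepsilon)<K_n^{-2}$. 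The strategy combines the first $k-1$ eigenfunctions of $-\Delta$ (whose eigenvalues $\lambda_1,\dots,\lambda_{k-1}$ are all $\le\lambda$, so that they contribute non-positively to the Rayleigh quotient) with a bubble-like function concentrating at a well-chosen point $x_0\in\Om$, which saturates the Sobolev constant $K_n^{-2}$ modulo a \emph{strict} negative correction produced either by the Br\'ezis--Nirenberg lower-order term $-\lambda u^2$ when $n\ge 4$, or by the positive mass $m_\lambda(x_0)$ when $n=3$.

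\textbf{Construction.} Fix $x_0\in\Om$, chosen so that $m_\lambda(x_0)>0$ in the case $n=3$. Let $B_{\varepsilon,x_0}(x)=\varepsilon^{-(n-2)/2}U_0((x-x_0)/\varepsilon)$ with $U_0$ as in \eqref{defU0}. When $n\ge 4$ define $\tilde u_\varepsilon\in H^1_0(\Om)$ as the solution of $-\Delta\tilde u_\varepsilon=B_{\varepsilon,x_0}^{2^*-1}$, and when $n=3$ as the solution of $(-\Delta-\lambda)\tilde u_\varepsilon=B_{\varepsilon,x_0}^{2^*-1}$, which is well-defined since $\lambda\notin\Sp(-\Delta)$ and whose asymptotics involve the Green's function $G_\lambda$ via \eqref{eq:asymG}--\eqref{eq:estimateGp}. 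Normalise $u_\varepsilon=\tilde u_\varepsilon/\Vert\tilde u_\varepsilon\Vert_{2^*}$. For $\varepsilon$ small enough $u_\varepsilon>0$ in $\Om$: by the maximum principle if $n\ge 4$, and in dim $3$ because the concentrated positive bubble dominates the Green's-function correction (alternatively one replaces $u_\varepsilon$ by $u_\varepsilon+\eta$ and lets $\eta\to 0$ at the end, as in Proposition \ref{prop:ineq:large:00}). Let $\varphi_1,\dots,\varphi_{k-1}$ be an $L^2$-orthonormal family of eigenfunctions of $-\Delta$ associated to $\lambda_1,\dots,\lambda_{k-1}$, and set
\begin{equation*}
V_\varepsilon=\text{Span}\big(\varphi_1,\dots,\varphi_{k-1},u_\varepsilon\big).
\end{equation*}
A Gram-determinant computation as in Proposition \ref{prop:ineq:large:00} shows $\dim V_\varepsilon=k$ for $\varepsilon$ small.

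\textbf{Main estimate.} Classical Br\'ezis--Nirenberg-type asymptotics (see \cite{BN} for $n\ge 4$ and \cite{Druetdim3} for $n=3$, adapted to the $(-\Delta-\lambda)$-projected bubble in the non-coercive case) yield
\begin{equation*}
\int_\Om\big(|\nabla u_\varepsilon|^2-\lambda u_\varepsilon^2\big)\,dx=K_n^{-2}-c_n\,\gamma_\lambda(x_0)\,\varepsilon^{\sigma_n}+o(\varepsilon^{\sigma_n}),
\end{equation*}
where $c_n>0$, $\sigma_n=2$ for $n\ge 5$, $\sigma_n=2$ up to a $|\log\varepsilon|$ factor for $n=4$, $\sigma_n=1$ for $n=3$, and $\gamma_\lambda(x_0)>0$ is a positive multiple of $\lambda$ in dim $\ge 4$ and of $m_\lambda(x_0)$ in dim $3$. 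Together with the concentration estimates generalising \eqref{calculs:ineg:large}, namely $\int u_\varepsilon\varphi_\ell\,dx=o(1)$, $\int u_\varepsilon^{2^*-1}\varphi_\ell\,dx=o(1)$ and $\int u_\varepsilon^{2^*-2}\varphi_\ell\varphi_m\,dx=o(1)$ (which follow from the concentration of $u_\varepsilon$ near $x_0$), we expand $Q_{u_\varepsilon}^{\lambda}(v)$ for $v=\sum_{\ell=1}^{k-1}\alpha_\ell\varphi_\ell+\alpha_ku_\varepsilon$. Since $\lambda_\ell-\lambda\le 0$ for $\ell\le k-1$, the $\alpha_\ell$-directions only make the numerator smaller while the denominator stays bounded below by $\alpha_k^2+o(1)$; a direct maximisation shows that the supremum of $Q_{u_\varepsilon}^{\lambda}$ over $V_\varepsilon\setminus\{0\}$ is, asymptotically, attained at a direction close to $u_\varepsilon$ and is bounded above by $K_n^{-2}-\tfrac{c_n}{2}\gamma_\lambda(x_0)\,\varepsilon^{\sigma_n}<K_n^{-2}$ for $\varepsilon$ small enough, which yields \eqref{eq:test12}.

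\textbf{Main obstacle.} Unlike the original Br\'ezis--Nirenberg setting, which minimises a single Rayleigh quotient, here $\mkl(u_\varepsilon)$ is itself defined by a max-min, so the delicate point is to ensure that \emph{every} direction in $V_\varepsilon$, including the cross directions mixing $u_\varepsilon$ with the eigenfunctions $\varphi_\ell$, stays below $K_n^{-2}$. The key ingredients making this work are (i) the strict sign $\lambda_\ell-\lambda\le 0$ and the smallness of the cross products $\int u_\varepsilon^{2^*-2}\varphi_\ell u_\varepsilon\,dx=o(1)$, which together force the maximising direction to align asymptotically with $u_\varepsilon$ and allow one to control the off-diagonal perturbations; and (ii) in dim $3$, the necessity to work with the $(-\Delta-\lambda)$-projected bubble rather than with the $(-\Delta)$-projected one, in order to absorb the $\lambda u^2$ term into the Sobolev constant at leading order and to recover the mass correction $m_\lambda(x_0)$. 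The behaviour of $m_\lambda$ in the non-coercive regime $\lambda\ge\Lambda_1, \lambda\notin\Sp(-\Delta)$ needed to justify the existence of such an $x_0$ and to derive the relevant Green's-function asymptotics is provided by Appendix \ref{annexe:masse}.
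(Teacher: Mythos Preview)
For $n\ge 4$ your approach differs substantially from the paper's. The paper does not carry out a test-function computation at all in this range: it simply observes that by \cite{SzulkinWethWillem} a least-energy solution $u_0$ of \eqref{eq:critlambda} with $\Vert u_0\Vert_{2^*}^{2^*}<K_n^{-n}$ already exists, takes $|u_0|$ as the weight and $V=\text{Span}\{\varphi_1^\lambda(u_0),\dots,\varphi_{k-1}^\lambda(u_0),u_0\}$ built from the \emph{generalised} eigenfunctions associated to $u_0$, and reads off $\mkl(u_0)\le Q_{u_0}^\lambda(u_0)=1$ directly from the equation. This gives $\mkl(\Om)\le\Vert u_0\Vert_{2^*}^{2^*-2}<K_n^{-2}$ in three lines. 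Your bubble-based route for $n\ge 4$ is viable in principle but considerably more laborious, and you would still have to track cross terms at order $\varepsilon^2$ (resp.\ $\varepsilon^2|\log\varepsilon|$) rather than just $o(1)$.

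For $n=3$ your construction is close in spirit to the paper's, but your positivity claim is incorrect. When $\lambda>\Lambda_1$ the Green's function $G_\lambda(x_0,\cdot)$ changes sign, and the $(-\Delta-\lambda)$-projected bubble $\tilde u_\varepsilon$ behaves like $c\,\varepsilon^{1/2}G_\lambda(x_0,\cdot)$ away from $x_0$: the bubble does \emph{not} dominate there and $\tilde u_\varepsilon$ is genuinely sign-changing for every small $\varepsilon$. Your fallback $u_\varepsilon+\eta$ then fails too, since no fixed $\eta>0$ makes a sign-changing function positive, and letting $\eta\to 0$ after $\varepsilon\to 0$ (as in Proposition~\ref{prop:ineq:large:00}) only recovers the large inequality. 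The paper fixes this by working with an explicit modulated bubble $B_\varepsilon(x)=4\pi|x_0-x|\,G_\lambda(x_0,x)\,\varepsilon^{-1/2}U_0((x-x_0)/\varepsilon)$ and decoupling the two roles: the \emph{weight} is $u_\varepsilon=|B_\varepsilon|/\Vert B_\varepsilon\Vert_6\in L^6_>(\Om)$ (positive a.e.\ by unique continuation for $G_\lambda$), while the \emph{signed} function $\tilde u_\varepsilon=B_\varepsilon/\Vert B_\varepsilon\Vert_6$ serves as the $k$-th test direction in $V_\varepsilon$.

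Finally, your ``direct maximisation'' in the main estimate hides exactly the obstacle you name. The paper makes this rigorous by writing the maximiser as $v_\varepsilon=\sum_{\ell\le k-1}\beta_{\ell,\varepsilon}\varphi_\ell+\beta_{k,\varepsilon}\tilde u_\varepsilon$ with $\sum\beta_{\ell,\varepsilon}^2=1$, passing to a subsequence in the limits of the $\beta_{\ell,\varepsilon}$, and handling separately the cases $\beta_{k,0}=0$, $0<\beta_{k,0}^2<1$, and $\beta_{k,0}^2=1$. In the last case one must still distinguish, for each $\ell\le k-1$, whether $|\beta_{\ell,\varepsilon}|/\varepsilon^{1/2}$ stays bounded or diverges; Young's inequality then absorbs the $O(\varepsilon^{1/2}|\beta_{\ell,\varepsilon}|)$ cross terms into $\sum(\lambda_\ell-\lambda)\beta_{\ell,\varepsilon}^2+o(\varepsilon)$, and the strict inequality $\lambda_\ell<\lambda$ (available since $\lambda\notin\Sp(-\Delta)$ when $n=3$) finally yields $F(u_\varepsilon,v_\varepsilon)\le J_\lambda(\tilde u_\varepsilon)+o(\varepsilon)=K_3^{-2}-C_3 m_\lambda(x_0)\varepsilon+o(\varepsilon)$.
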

When $\lambda \in(0, \Lambda_1)$, i.e. when $i=0$, Theorem  \ref{prop:test:func} follows from Lemma \ref{lemme:muI} below and the celebrated results in \cite{BN, Druetdim3}. We may thus restrict to the case $i \ge 1$ in the following. For  $\lambda \ge 0$ we define, for $u \in H_0^1(\Om)$,
\begin{equation}\label{eq:Jlambdap}
	J_\lambda(u)= \frac{\int_\Om \left( |\nabla u|^2-\lambda u^2\right)\, dx}{\left( \int_\Om |u|^{\crit}\, dx\right) ^{\frac{2}{\crit}}}.
\end{equation}
We first consider the $n\ge 4$ case, whose proof is very simple:

\begin{proof}[Proof of Theorem \ref{prop:test:func} when $n \ge 5$ or ($n=4$ and $\lambda \not \in \Sp(-\Delta)$)] 
We assume first that $n \ge 5$ or that $n = 4$ and $\lambda \not \in \Sp(-\Delta)$. Let $i \ge1$ be such that $\lambda \in [\Lambda_i, \Lambda_{i+1})$ and let $k = N(i)+1 \ge 2$. By the results in  \cite{SzulkinWethWillem} there exists a least-energy sign-changing solution $u_0$ of \eqref{eq:critlambda} that satisfies $\Vert u_0 \Vert_{2^*}^{2^*}  <K_n^{-n}$. We let $\vp_1^{\lambda}(u_0), \dots, \vp_{k-1}^{\lambda}(u_0)$ be generalised eigenvalues associated to $u_0$ as in Proposition \ref{prop:vp} and we let
$$ V = \text{Span} \Big \{\vp_1^{\lambda}(u_0), \dots, \vp_{k-1}^{\lambda}(u_0), u_0 \Big\}. $$
By \eqref{supmupneg} and since $u_0$ solves \eqref{eq:critlambda} it is easily seen that $(u_0, \vp_\ell^{\lambda}(u_0) )_{L^2_{u_0}} = 0$ for all $1 \le \ell \le k-1$, so that $\dim_{u_0} V = k$. By \eqref{supmupneg} we then have 
$$ \mkl(u_0) \le \max_{v \in V \backslash \{0\}} Q_{u_0}^{\lambda}(v) = Q_{u_0}^{\lambda}(u_0)  = 1 $$
since $u_0$ solves \eqref{eq:critlambda}. This implies that 
$$\mkl(\Om) \le  \mkl(u_0) \Vert u_0 \Vert_{2^*}^{2^*-2} \le \Vert u_0 \Vert_{2^*}^{2^*-2} <  K_n^{-2}.$$
\end{proof}
The rest of this section is devoted to the proof of Theorem \ref{prop:test:func} when $n=3$. The proof follows from test-function computations as in the coercive case. The main novelty, however, is that when $i \ge 1$ we have to find a suitable test weight and test $k$-dimensional subspace, and that due to the min-max characterisation of $\mkl(u)$ our test-function computations are only semi-explicit. Estimating $\mkl(u)$ forces us to be very precise in our expansions and to analyse all the interactions between the weight $u$ and elements of $V$. 
\medskip

As before we let $i \ge1$ be such that $\lambda \in (\Lambda_i, \Lambda_{i+1})$ and let $k = N(i)+1 \ge 2$. We let $U_0$ be as in \eqref{defU0} and we let $\vp_1, \dots, \vp_{k-1}$ be eigenfunctions of $-\Delta$ in $H^1_0(\Omega)$, normalised by $\Vert \vp_i \Vert_2 = 1$, respectively associated to the eigenvalues $\lambda_1, \dots, \lambda_{k-1}$. For $1 \le \ell,p,q \le k-1$ we thus have 
 \begin{equation} \label{norma1} 
 \begin{aligned}   
 - \Delta \vp_\ell & =  \lambda_\ell \vp_\ell \quad \text{ in } \Om, \\
  	\int_{\Om} \langle \nabla \vp_p, \nabla \vp_q \rangle dx & = \lambda_p \delta_{pq} \text{ and }  \int_\Om \varphi_{p} \varphi_{q} dx =  \delta_{pq}.
 \end{aligned} 
 \end{equation}
Let $x_0 \in \Omega$ satisfy $m_\lambda(x_0) >0$. For $\ve >0$ we define, for $x \in \Omega$,
\begin{equation*} 
\begin{aligned}
	B_{\ve}(x)	= 4 \pi |x_0-x|  G_\lambda(x_0, x) \ve^{- \frac12} U_0\left( \frac{x-x_0}{\ve}\right).
	\end{aligned} 
	\end{equation*}
We also define, for any $\ve >0$,
  \begin{equation} \label{def:ueps}
 u_\ve = \frac{|B_{\ve}|}{\Vert B_{\ve} \Vert_{6}} \quad \text{ and } \quad  \tilde{u}_\ve = \frac{B_{\ve}}{\Vert B_{\ve} \Vert_{6}} ,
   \end{equation} 
so that $\tilde{u}_\ve \in H^1_0(\Om)$. Since $G_\lambda(x_0, \cdot)$ satisfies \eqref{green:coercif}, the results in \cite{HardtSimon} ensure that $G_\lambda(x_0, \cdot)$ vanishes on a set of measure zero in $\Omega$. Since $U_0 >0$ we thus have $u_\ve >0$ a.e. in $\Om$, and thus $u_{\ve}\in L^{6}_>(\Om)$. Straightforward computations using \eqref{green:coercif}, \eqref{eq:asymG} and \eqref{energieU0} show that
\begin{equation} \label{DLnormeB}
\int_{\Om} u_{\ve}^{6}dx = K_3^{-3} + O(\ve^3)
\end{equation} 
and that 
\begin{equation}\label{eq:Jlambdauepx03}
\begin{aligned}
		J_{\lambda}(\tilde{u}_{\ve})& =K_3^{-2} - C_3 m_\lambda(x_0) \ve  + o(\ve) 
\end{aligned} 
	\end{equation}
as $\ve \to 0$, where $m_\lambda(x_0)$ is as in \eqref{eq:asymG} and where $C_3$ is a positive constant (see for instance \cite{SchoenYamabe} or \cite{EspositoPistoiaVetois}). Similarly, if $0 \le \ell,s \le k-1$ are integers, we have the following estimates as $\ve \to 0$: 
\begin{equation} \label{label:unique}
\begin{aligned}
\int_{\Om} u_\ve^{4} \vp_\ell \vp_s dx & = O(\ve), \\
	\int_{\Om} u_{\ve}^{4}\tilde{u}_\ve \varphi_{\ell}\, dx& = 4 \pi \sqrt{3} K_3^{\frac{5}{2}}\vp_\ell(x_0) \ve^{\frac{1}{2}} + o(\ve), \\
\int_{\Om} \vp_\ell \tilde{u}_\ve dx &  = 
\frac{ 4 \pi \sqrt{3}}{\lambda_\ell-\lambda}K_3^{\frac{1}{2}}  \vp_{\ell}(x_0)  \ve^{\frac{1}{2}}+ o( \ve^{\frac{1}{2}}) \quad \text{ and } \\
\int_{\Om} \langle \nabla \vp_\ell, \nabla \tilde{u}_\ve \rangle dx & =   \frac{ \lambda_\ell}{\lambda_\ell-\lambda}4 \pi \sqrt{3} K_3^{\frac{1}{2}}  \vp_{\ell}(x_0)  \ve^{\frac{1}{2}}+ o( \ve^{\frac{1}{2}}). 
\end{aligned}
\end{equation}
These computations can again be found in \cite{EspositoPistoiaVetois, SchoenYamabe}. We now prove Theorem \ref{prop:test:func}:

\begin{proof}[Proof of Theorem \ref{prop:test:func} when $n=3$]

 If $u \in L^{6}_{>}(\Om)$ and $v \in H^1_0(\Om)\backslash \{0\}$ we let 
 \begin{equation}\label{Fuepvep0}
 	F(u,v)=\frac{\int_{\Om}\left( |\nabla v|^2 - \lambda v^2 \right) \, dx}{\int_{\Om} u^{4} v^2 \, dx} \Vert u \Vert_{6}^{4}.
 \end{equation} 
 Let 
$$ V_\ve = \text{Span} \big( \vp_{1}, \dots, \vp_{k-1}, \tilde{u}_\ve\big), $$
where $\tilde{u}_\ve$ is given by \eqref{def:ueps}. We first observe that $V_\ve$ is a $k$-dimensional subset of $H^1_0(\Om)$. Indeed, since $\Vert u_\ve \Vert_{6} = 1$ we have, by \eqref{eq:Jlambdauepx03} and the definition of $\tilde{u}_\ve$, that
\begin{equation*} 
 \int_{\Om} |\nabla \tilde{u} _\ve|^2 dx = K_3^{-2} + O(\ve) 
 \end{equation*}
as $\ve \to 0$. Together with \eqref{norma1} and \eqref{label:unique} this shows that the Gram determinant of $( \vp_{1}, \dots, \vp_{k-1}, \tilde{u}_\ve)$ for the $H^1_0(\Om)$ scalar product converges to $ \lambda_1 \cdots \lambda_{k-1} K_3^{-2} >0 $ as $\ve \to 0$, hence $\dim V_\ve = k$. Since $u_\ve \in L^{6}_{>}(\Om)$ we also have $\dim_{u_\ve} V_\ve = k$, so that $V_\ve$ is an admissible subspace to compute $\mu_k^{\lambda}(u_\ve)$. We are going to show that, under the assumptions of Theorem \ref{prop:test:func}, there is a sequence $(\ve_m)_{m \ge 0}$ with $\ve_m \to 0$ as $m \to + \infty$ such that 
\begin{equation} \label{supve}
 \max_{ v \in V_{\ve_m} \backslash \{0\} } F(u_{\ve_m},v) < K_3^{-2}
  \end{equation}
for $m$ large enough, where $F$ is as in \eqref{Fuepvep0}. In view of \eqref{muk} and \eqref{eq:minmuk} this will imply \eqref{eq:test12}. For any $\ve>0$, we let $(\beta_{1,\ve},...,\beta_{k,\ve})\in \rr^k\backslash\{0\}$ be such that
 \begin{equation}\label{defve}
 	v_\ve:=\sum_{\ell=1}^{k-1}\beta_{\ell,\ve} \varphi_{\ell}+\beta_{k,\ve}\tilde{u}_{\ve}
 \end{equation}
attains the left-hand side of \eqref{supve}, that is satisfies 
 \begin{equation*} 
 	F(u_{\ve},v_{\ve})=\max_{\beta\in \rr^{k}\backslash\{0\}}F\left( u_{\ve},\sum_{\ell=1}^{k-1}\beta_{\ell} \varphi_{\ell}+\beta_{k}\tilde{u}_{\ve}\right).
 \end{equation*}
 Without loss of generality we can assume that
\begin{equation} \label{norma2}
\sum_{\ell=1}^{k}\beta_{\ell,\ve}^2=1.
\end{equation}
First, by \eqref{eq:Jlambdap} and \eqref{def:ueps}  we have 
$$  \int_{\Omega}\big( |\nabla \tilde{u}_\ve|^2 - \lambda \tilde{u}_\ve^2 \big) dx = J_\lambda (\tilde{u}_\ve). $$
With  \eqref{norma1} straightforward computations then show that
\begin{equation} \label{test:est1}
\begin{aligned}
  \int_{\Om} \left( |\nabla v_\ve|^2-\lambda v_\ve^2\right)\, dx & = \sum_{\ell=1}^{k-1} (\lambda_\ell - \lambda) \beta_{\ell,\ve}^2+ \beta_{k, \ve}^2 J_{\lambda}(\tilde{u}_\ve) \\
& + 2 \beta_{k,\ve}\sum_{\ell=1}^{k-1}  \beta_{\ell,\ve}(\lambda_\ell - \lambda) \int_{\Om} \vp_\ell \tilde{u}_\ve dx .
\end{aligned} 
\end{equation}
Straightforward computations with \eqref{def:ueps} also show that 
\begin{equation} \label{test:est2:2}
\begin{aligned}
	\int_{\Om} u_{\ve}^{4}v_{\ve}^2\, dx =  \beta_{k, \ve}^2 + 2\beta_{k,\ve}\sum_{\ell=1}^{k-1}\beta_{\ell,\ve} \int_{\Om} u_{\ve}^{4} \tilde{u}_\ve \varphi_{\ell}\, dx +  \int_{\Om} u_\ve^{4}\Big( \sum_{\ell=1}^{k-1} \beta_{\ell,\ve} \vp_\ell \Big)^2dx. \\
	\end{aligned} 
\end{equation}
By \eqref{norma2} there exists a subsequence $(\ve_m)_{m\ge 0}$, with $\ve_m \to 0$ as $m \to + \infty$, such that $\beta_{\ell,\ve_m}$ converges towards $\beta_{\ell,0}$ for any $1 \le \ell \le k$, that still satisfy $\sum_{\ell=1}^k \beta_{\ell,0}^2 = 1$. In the following, for the sake of clarity, we will omit the subscript $m$, and will keep denoting the sequences $\beta_{\ell,\ve_m}, u_{\ve_m}, v_{\ve_m}, \dots$ by $\beta_{\ell,\ve}, u_\ve, v_\ve, \dots$. But from now on all these quantities will be computed with respect to this subsequence. 

\medskip

Assume first that $\beta_{k,0} = 0$, so that $\sum_{\ell=1}^{k-1} \beta_{\ell,0}^2 = 1$. Since $\lambda \in (\Lambda_i, \Lambda_{i+1})$, by \eqref{label:unique} and \eqref{test:est1} we have 
$$  \int_{\Om} \left( |\nabla v_\ve|^2-\lambda v_\ve^2\right)\, dx \to \sum_{\ell=1}^{k-1} (\lambda_\ell - \lambda) \beta_{\ell,0}^2 \le  (\Lambda_i - \lambda) < 0 $$
as $\ve \to 0$. The numerator in $F(u_\ve, v)$ is thus nonpositive for $\ve$ small enough and since $\int_{\Om} u_{\ve}^{4}v_{\ve}^2\, dx >0$ we obtain $ \max_{ v \in V_{\ve} \backslash \{0\} } F(u_\ve,v) \le 0$ as $\ve \to 0$, which proves \eqref{supve} in this case. Assume now that $0 <\beta_{k,0}^2 < 1$, so that $\sum_{\ell=0}^{k-1} \beta_{\ell,0}^2 >0$. Since $\Vert u_\ve \Vert_{6}= 1$,  with  \eqref{eq:Jlambdauepx03}, \eqref{label:unique},  \eqref{test:est1} and \eqref{test:est2:2} we obtain that 
$$ F(u_\ve, v_\ve) \to \frac{\sum_{\ell=1}^{k-1} (\lambda_\ell - \lambda) \beta_{\ell,0}^2 + \beta_{k,0}^2 K_3^{-2}}{\beta_{k,0}^2} < K_3^{-2}  $$
as $\ve \to 0$, since $\lambda_\ell < \lambda$ for any $0 \le \ell \le k-1$. This proves \eqref{supve} in this case. We may therefore assume in the rest of this subsection that $\beta_{k,0}^2 = 1$ and $\beta_{\ell,0} = 0$ for all $1 \le \ell \le k-1$. A consequence of \eqref{label:unique} is then that
$$ \int_{\Om} u_\ve^{4}\big( \sum_{\ell=1}^{k-1} \beta_{\ell,\ve} \vp_\ell \big)^2dx = o(\ve) \quad \text{ as } \ve \to 0.$$ 
Plugging the latter in \eqref{test:est2:2} and using \eqref{label:unique}  now gives, as $\ve \to 0$,
\begin{equation} \label{test:est2}
\begin{aligned}
	\int_{\Om}& u_{\ve}^{4}v_{\ve}^2\, dx  =  \beta_{k, \ve}^2 + 8 \pi \sqrt{3} K_3^{\frac{5}{2}} \ve^{\frac{1}{2}}\beta_{k,\ve}\sum_{\ell=1}^{k-1}\beta_{\ell,\ve}\vp_\ell(x_0) + o(\ve) . 
\end{aligned} 
\end{equation}
We now use \eqref{label:unique} to rewrite \eqref{test:est1} as 
\begin{equation} \label{calculs:31}
\begin{aligned}
  \int_{\Om} \Big( |\nabla v_\ve|^2& -\lambda v_\ve^2\Big)\, dx  = \sum_{\ell=1}^{k-1} (\lambda_\ell - \lambda) \beta_{\ell,\ve}^2+ \beta_{k, \ve}^2 J_{\lambda}(\tilde{u}_\ve) \\
& + 8 \pi \sqrt{3}K_3^{\frac12}  \ve^{\frac12} \beta_{k,\ve}\sum_{\ell=1}^{k-1}  \beta_{\ell,\ve} \vp_\ell(x_0) + o\Big(  \ve^{\frac12} \sum_{\ell=1}^{k-1} |\beta_{\ell,\ve}| \Big).
\end{aligned}
\end{equation}
Combining \eqref{test:est2} and \eqref{calculs:31}, and since $\Vert u_\ve \Vert_{6} = 1$ and $|\beta_{k, \ve}| \to 1$, we obtain with \eqref{eq:Jlambdauepx03} that as $\ve \to 0$
\begin{equation} \label{new:eq:3}
\begin{aligned}
F(u_\ve, v_\ve) & =   \sum_{\ell=1}^{k-1} \big(\lambda_\ell - \lambda + o(1) \big) \beta_{\ell,\ve}^2 + J_{\lambda}(\tilde{u}_\ve) + o\Big(  \ve^{\frac12} \sum_{\ell=1}^{k-1} |\beta_{\ell,\ve}| \Big) + o(\ve). \\
\end{aligned} 
\end{equation}
For each $1 \le \ell \le k-1$ fixed, two situations may occur along the subsequence we are considering as $\ve \to 0$: 
\begin{equation*} 
 \text{ either } \quad \frac{|\beta_{\ell,\ve}| }{\ve^{\frac{1}{2}}} \to + \infty \quad \text{ or } \quad  |\beta_{\ell,\ve}| = O(\ve^{\frac{1}{2}}).
 \end{equation*}
We let $0 \le L \le k-1$ be the number of indices $\ell \in \{1, \dots, k-1\}$ satisfying the first case, which can be assumed to be constant up to passing to a subsequence. Up to relabeling the indices we may assume that $\beta_{1, \ve}, \dots, \beta_{L,\ve}$ satisfy the first case and we may thus write that 
\begin{equation*}
 \ve^{-\frac{1}{2}}  \sum_{\ell=1}^L |\beta_{\ell,\ve}| \to + \infty  \quad \text{ and } \quad   \sum_{\ell = L+1}^{k-1} |\beta_{\ell,\ve}| = O (\ve^{\frac{1}{2}})
 \end{equation*}
Young's inequality then shows that 
$$ \ve^{\frac12}  \sum_{\ell=1}^{k-1} | \beta_{\ell,\ve}| =  o \Big( \sum_{i=1}^{L} \beta_{\ell,\ve}^2 \Big) +  O(\ve),  $$
so that \eqref{new:eq:3} becomes 
\begin{equation*}
\begin{aligned}
F(u_\ve, v_\ve) & =   \sum_{\ell=1}^{k-1} \big(\lambda_\ell - \lambda + o(1) \big) \beta_{\ell,\ve}^2 + J_{\lambda}(\tilde{u}_\ve) + o(\ve)  \le J_{\lambda}(\tilde{u}_\ve) + o(\ve) ,
\end{aligned} 
\end{equation*}
where we again used that $\lambda > \Lambda_i$.  Using \eqref{eq:Jlambdauepx03} we finally obtain that
\begin{equation*}
\begin{aligned}
F(u_\ve, v_\ve) & \le  K_3^{-2}  - C_3 m_\lambda(x_0) \ve  + o(\ve) \\
\end{aligned} 
\end{equation*}
as $\ve \to 0$. Since $m_\lambda(x_0) >0$ by assumption this proves \eqref{supve} up to choosing $\ve$ small enough and concludes the proof of Theorem \ref{prop:test:func}.
\end{proof}

\section{Second-order expansion of $\mkl(u)$ at a global minimum point} \label{ordre:deux}

In this section we perform a second-order expansion of the generalised principal eigenvalue at a global minimum point. We obtain a stability inequality of sorts that we use to prove that $\mkl(\Om)$ cannot be attained while being equal to $K_n^{-2}$. We recall that if $u\in L^{2^*}_{>}(\Om)$ attains $\mkl(\Om)$, Proposition \ref{prop:extremales} shows that $E_k^{\lambda}(u)$ is one-dimensional and spanned by a nonzero function $\vp_k$ that satisfies $u = |\vp_k|$ and $\int_\Om u^{2^*-2} \vp_k^2 \, dx = \int_{\Om} u^{2^*} \, dx = 1$. We let in the following $E_{k,0} = E_k^{\lambda}(u)$ and, for any $h \in L^\infty(\Om)$ fixed,  we let $\tilde{\Psi}  \in E_{k,0}^{\perp_{u}}$, where $\perp_u$ denotes the orthogonal complement in $L^2_u(\Om)$, be the unique solution in $H^1_0(\Om)$ of 
\begin{equation} \label{DL:inf:mkl:22}
\begin{aligned}
\big( - \Delta - \lambda - \mkl(\Om) u^{2^*-2} \big)\tilde{\Psi} & =- (2^*-2) \mkl(\Om) \Big( \int_{\Om} u^{2^*}h \, dx\Big) u^{2^*-2} \vp_k  \\
& + (2^*-2) \mkl(\Om) u^{2^*-2}h \vp_k \quad \text{ in } \Om. 
\end{aligned} 
\end{equation}
$\tilde{\Psi}$ is well-defined since the r.h.s of \eqref{DL:inf:mkl:22} integrates to $0$ against $\vp_k$. For simplicity we omit the dependence of $\tilde{\Psi}$ in $h$ in what follows and simply write $\tilde{\Psi}$. The first result that we prove is the following improvement of Proposition \ref{prop:dervp}:

\begin{prop} \label{prop:derseconde}
Let $\Om$ be a smooth bounded domain of $\R^n$, $i \ge 0$, $\lambda \in [\Lambda_i, \Lambda_{i+1})$ and let $k = N(i)+1$ where $N$ is given by \eqref{defN}. Consider $\mkl(\Om)$ defined by \eqref{eq:minmuk} and assume that $\mkl(\Om)$ is attained at some $u \in L^{2^*}_{>}(\Om)$ with $\Vert u \Vert_{2^*} = 1$. Let $h \in L^\infty(\Om)$ and, for $t \ge 0$ small, let $u_t = (1+th) u$ and $\mu_{k,t} = \mkl(u_t)$. Then, as $t \to 0$, we have 
\begin{equation} \label{DL:mkl:2}
\begin{aligned}
\frac{\mu_{k,t}}{\mkl(\Om)} & = 1 - (2^*-2) \Big(\int_{\Om} u^{2^*}hdx \Big) t\\
&  + (2^*-2) \Bigg[ (2^*-2) \Big( \int_{\Om} u^{2^*}h dx\Big)^2 - \frac{2^*-3}{2} \int_{\Om} u^{2^*} h^2 dx  \\
& - \int_{\Om} u^{2^*-2} \tilde{\Psi} \vp_k h \, dx\Bigg] t^2 + o(t^2),
\end{aligned}  
\end{equation}
where $\tilde{\Psi}$ is given by \eqref{DL:inf:mkl:22}.
\end{prop}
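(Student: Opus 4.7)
The strategy is to construct $C^2$ Taylor expansions of $\mu_{k,t}$ and of a conveniently normalised branch $t \mapsto \vp_{k,t}$ of eigenvectors at $t = 0$, then to read off the second-order coefficient from a solvability (Fredholm) condition. The key input is Proposition \ref{prop:extremales}, which guarantees that $\mkl(u)$ is simple with one-dimensional eigenspace $E_{k,0} = \R \vp_k$ (and that $u = |\vp_k|$, so $u^{2^*-2}\vp_k^2 = u^{2^*}$). Combined with the uniform spectral gap from Proposition \ref{prop:signvp}, this implies that for $|t|$ small enough, $\mu_{k,t}$ remains simple and isolated from the other generalised eigenvalues of $u_t$, so that a well-defined branch of eigenpairs exists.

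The main technical step is to upgrade the one-sided differentiability from Proposition \ref{prop:dervp} to genuine $C^2$-smoothness. The plan is to refine the perturbation arguments in Steps $1$--$6$ of the proof of Proposition \ref{prop:dervp} to second order; equivalently, one may represent an eigenpair near $(\mkl(\Om), \vp_k)$ as $(\mu, \vp_k + w)$ with $w \in H^1_0(\Om) \cap E_{k,0}^{\perp_u}$ and apply the implicit function theorem to the smooth map
\be
(t, w, \mu) \mapsto \Bigl( (-\Delta - \lambda - \mu u_t^{2^*-2})(\vp_k + w) \Bigr)
\ee
between appropriate Banach spaces, whose linearisation at $(0, 0, \mkl(\Om))$ is invertible on $E_{k,0}^{\perp_u} \times \R$ thanks to simplicity of $\mkl(\Om)$ and the Fredholm property. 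Since $u_t^{2^*-2}$ depends analytically on $t$, this produces smooth branches $w(t)$ and $\mu(t) = \mu_{k,t}$.

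Once smoothness is established, I would write $\mu_{k,t} = \mkl(\Om)\bigl(1 + a t + b t^2\bigr) + o(t^2)$ and $\vp_{k,t} = \vp_k + t w_1 + t^2 w_2 + o(t^2)$ in $H^1_0(\Om)$ with $w_1, w_2 \in E_{k,0}^{\perp_u}$, expand $u_t^{2^*-2} = u^{2^*-2}\bigl(1 + (2^*-2) t h + \tfrac{(2^*-2)(2^*-3)}{2} t^2 h^2\bigr) + o(t^2)$, and match orders of $t$ in the eigenvalue equation $(-\Delta - \lambda)\vp_{k,t} = \mu_{k,t} u_t^{2^*-2}\vp_{k,t}$. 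At order $t$, the equation for $w_1$ reads $L w_1 = \mkl(\Om)[a + (2^*-2) h]\, u^{2^*-2}\vp_k$ with $L := -\Delta - \lambda - \mkl(\Om) u^{2^*-2}$; integrating against $\vp_k$ (the generator of $\ker L$) and using $\int_\Om u^{2^*}\, dx = 1$ and $u^{2^*-2}\vp_k^2 = u^{2^*}$ yields $a = -(2^*-2)\int_\Om u^{2^*} h\, dx$, and $w_1$ then solves exactly \eqref{DL:inf:mkl:22} in $E_{k,0}^{\perp_u}$, so $w_1 = \tilde\Psi$.

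At order $t^2$, matching gives
\be
L w_2 = \mkl(\Om) u^{2^*-2} \Bigl[ b \vp_k + a w_1 + a(2^*-2) h \vp_k + (2^*-2) h w_1 + \tfrac{(2^*-2)(2^*-3)}{2} h^2 \vp_k \Bigr],
\ee
whose solvability condition, obtained by pairing with $\vp_k$ and using $\int_\Om u^{2^*-2}\vp_k w_1\, dx = 0$, determines
\be
b = -(2^*-2) \int_\Om u^{2^*-2} h \tilde\Psi \vp_k\, dx - a (2^*-2) \int_\Om u^{2^*} h\, dx - \tfrac{(2^*-2)(2^*-3)}{2} \int_\Om u^{2^*} h^2\, dx.
\ee
Substituting the value of $a$ and simplifying reproduces exactly the coefficient of $t^2$ in \eqref{DL:mkl:2}. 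The main obstacle is the $C^2$-smoothness step: the one-sided formulas in Proposition \ref{prop:dervp} were the output of a delicate limiting argument, and extending them to a genuine second-order Taylor expansion requires the simplicity of $\mkl(u)$ at the minimiser together with the analytic dependence on $t$ of the perturbation $u_t^{2^*-2}$.
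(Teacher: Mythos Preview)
Your approach is correct, and the formal computations of $a$ and $b$ via solvability conditions are accurate. However, it is genuinely different from the paper's proof, so a comparison is in order.

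The paper does \emph{not} invoke the implicit function theorem. Instead it extends the direct sequential argument of Proposition~\ref{prop:dervp} one order further: along any sequence $t_m \to 0$ it takes a normalised eigenvector $\vp_{k,m} \in E_{k,t_m}$, writes $\vp_{k,m} = \Pi_{k,0}(\vp_{k,m}) + \Psi_{1,m}$, and shows directly (using the invertibility estimate \eqref{DL:inf:mkl:210} for $L$ on $E_{k,0}^{\perp_u}$) that $\Psi_{1,m}/t_m \to \tilde{\Psi}$ strongly in $H^1_0$. It then introduces a second rescaled remainder $\tilde{\Psi}_{2,m} = t_m(\tilde{\Psi}_{1,m} - \tilde{\Psi})/\alpha_m$, where $\alpha_m$ packages all the second-order deviations, derives its limiting equation, and reads off the coefficient $\Gamma_1/\Gamma_2$ by integrating against $\vp_k$; a short contradiction argument rules out $\Gamma_2 = 0$. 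This is precisely your order-$t^2$ solvability computation, but obtained by compactness rather than by a priori smoothness.

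What each approach buys: your IFT route is cleaner and gives analyticity of $t \mapsto \mu_{k,t}$ for free, making the Taylor matching rigorous in one stroke. The paper's route is more self-contained, reuses the machinery already built for Proposition~\ref{prop:dervp}, and avoids having to set up the functional-analytic framework for IFT (choice of spaces, Fredholm property, identification of the IFT branch $\mu(t)$ with the min--max quantity $\mu_{k,t}$). On this last point, your sketch glosses over the identification $\mu(t) = \mu_{k,t}$: it follows from the continuity of $t \mapsto \mu_p^\lambda(u_t)$ (Step~1 of Proposition~\ref{prop:dervp}) together with the strict inequalities $\mu_{k-1}^\lambda(u) \le 0 < \mu_k^\lambda(u) < \mu_{k+1}^\lambda(u)$ at $t=0$, but it deserves an explicit sentence.
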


Proposition \ref{prop:derseconde} requires $u$ to be a global minimum of $\mkl$, since it heavily relies on the property that $E_{k,0}$ is a line.

\begin{proof}
The proof of \eqref{DL:mkl:2} follows from the same arguments in the proof of \eqref{eq:der9}, except that we push the expansions one order further. We use the notations in the proof of Proposition \ref{prop:dervp}. Let $(t_m)_{m \ge1}, t_m >0$ be a sequence such that $t_m \to 0$ as $m \to + \infty $. For $m \ge 1$ we will let $\mu_{k,t_m} = \mu_k^{\lambda}(u_{t_m})$. Let, for any $m \ge 1$, $\vp_{k,m} \in E_{k,t_m}$ be such that $\int_\Om u_{t_m}^{2^*-2} \vp_{k,m}^2dx = 1$. It satisfies 
$$  - \Delta \vp_{k,m} - \lambda \vp_{k,m} = \mu_{k,t_m} u_{t_m}^{2^*-2} \vp_{k,m} \quad \text{ in } \Om. $$
Step $2$ of Proposition \ref{prop:dervp} shows that $(\vp_{k,m})_{m\ge0}$ is bounded in $H^1_0(\Om)$. Using  \eqref{eq:der2} and since $u_{t_m} \to u$ strongly in $L^{\infty}(\Om)$ and $E_{k,0}$ is spanned by $\vp_k$ we may assume that $\vp_{k,m} \to \vp_k$ in $H^1_0(\Om)$, up to a subsequence and up to sign. For $m \ge 1$ we let 
$$\Psi_{1,m} = \vp_{k,m} - \Pi_{k,0}(\vp_{k,m}).$$
 Using Proposition \ref{prop:extremales} it is easily seen that $\Psi_{1,m}$ satisfies
\begin{equation} \label{DL:inf:mkl:21}
\begin{aligned}
\big( - \Delta &- \lambda - \mkl(\Om) u^{2^*-2} \big)\Psi_{1,m} \\
&  =- (2^*-2) \mkl(\Om) \Big( \int_{\Om} u^{2^*}h \, dx\Big) u^{2^*-2} \vp_k  \cdot t_m \\
& + (2^*-2) \mkl(\Om) u^{2^*-2}h \vp_k \cdot t_m + t_m \eta_m \quad \text{ in } \Om,
\end{aligned} 
\end{equation}
where $\Vert \eta_m \Vert_{\infty} = o(1)$ as $m \to + \infty$. Since $u \in L^\infty(\Om)$, standard elliptic theory shows that there exists $C>0$ such that for any function $f \in E_{k,0}^{\perp_u}$ we have
\begin{equation} \label{DL:inf:mkl:210} 
\Vert f \Vert_{H^1_0} \le C \Vert \big( - \Delta - \lambda - \mkl(\Om) u^{2^*-2} \big)f  \Vert_{H^{-1}}. 
\end{equation}
Since $\Psi_{1,m} \in E_{k,0}^{\perp_{u}}$ for all $m \ge 1$ and by \eqref{DL:inf:mkl:21} we thus obtain first that $\Vert \Psi_{1,m} \Vert_{H^1_0} = O(t_m) $ and then that 
$\tilde{\Psi}_{1,m}: = \frac{\Psi_{1,m}}{t_m}$ weakly converges in $H^1_0(\Om)$, up to a subsequence, to the unique solution $\tilde{\Psi}$ of \eqref{DL:inf:mkl:22}. By \eqref{DL:inf:mkl:22} and  \eqref{DL:inf:mkl:21}, $\tilde{\Psi}_{1,m} - \tilde{\Psi}$ satisfies 
$$\big( - \Delta - \lambda - \mkl(\Om) u^{2^*-2} \big)(\tilde{\Psi}_{1,m} - \tilde{\Psi}) = \eta_m \quad \text{ in } \Om ,$$
so that using again \eqref{DL:inf:mkl:210} we also obtain that 
\begin{equation} \label{DL:inf:mkl:23}
\tilde{\Psi}_{1,m} \to \tilde{\Psi} \quad \text{ in } H^1_0(\Om) \quad \text{ as } m \to + \infty.
\end{equation}
 Independently, since $E_{k,0}$ is one-dimensional by Proposition \ref{prop:extremales} and since $\vp_{k,m} \to \vp_k$ in $H^1_0(\Om)$, there exists a sequence $(\ve_m)_{m \ge1}$, with $\lim_{m \to + \infty} \ve_m = 0$, such that 
$$\Pi_{k,0}(\vp_{k,m}) - \vp_k = \ve_m \vp_k.$$
 We claim that 
\begin{equation} \label{DL:inf:mkl:24}
\begin{aligned}
\ve_m = O(t_m) \quad \text{ as } m \to + \infty. 
\end{aligned} 
\end{equation}
Indeed, using the definition of $\tilde{\Psi}_{1,m}$ and $\ve_m$, we may write 
\begin{equation} \label{splitting_vpk}\begin{aligned}
 \vp_{k,m} & = \vp_{k,m} - \Pi_{k,0}(\vp_{k,m}) +  \Pi_{k,0}(\vp_{k,m}) - \vp_k + \vp_k \\
 &= t_m \tilde{\Psi}_{1,m} + (1+\ve_m) \vp_k .
 \end{aligned}
 \end{equation}
Since $\Vert u_{t_m} - u \Vert_{\infty} = O(t_m)$ and by \eqref{splitting_vpk} we may expand the condition $\int_{\Om} u_{t_m}^{2^*-2} \vp_{k,m}^2 \, dx = 1$ which gives $1  = (1+\ve_m)^2 + O(t_m)$, from which \eqref{DL:inf:mkl:24} follows. For $m \ge 1$ we  let 
$$ \begin{aligned}
\alpha_m &= t_m \big \Vert \tilde{\Psi}_{1,m} - \tilde{\Psi} \big \Vert_{H^1_0} + t_m^2 + t_m |\ve_m| \\
& + \Big| \mu_{k,t_m} - \mkl(\Om) + (2^*-2)\mkl(\Om) \Big( \int_{\Om} u^{2^*}h \, dx\Big) \cdot t_m\Big|
\end{aligned} $$
and 
$$ \tilde{\Psi}_{2,m} = \frac{t_m}{\alpha_m} \Big( \tilde{\Psi}_{1,m} - \tilde{\Psi} \Big) \in E_{k,0}^{\perp_u}. $$
Using the equation satisfied by $\vp_{k,m}$, \eqref{DL:inf:mkl:22} and \eqref{splitting_vpk} shows that $\tilde{\Psi}_{2,m}$ satisfies
\begin{equation} \label{DL:inf:mkl:25}
\begin{aligned}
 \big( - \Delta & - \lambda - \mkl(\Om) u^{2^*-2} \big)\tilde{\Psi}_{2,m}\\
& = \frac{1}{\alpha_m} \Bigg[ \mu_{k,t_m} - \mkl(\Om) + (2^*-2) \mkl(\Om) \Big( \int_{\Om} u^{2^*}h \, dx\Big) t_m \Bigg] u^{2^*-2} \vp_{k,m} \\
& - (2^*-2)^2  \mkl(\Om)\Big( \int_{\Om} u^{2^*}h \, dx\Big) \frac{t_m^2}{\alpha_m} u^{2^*-2} h \vp_{k,m} \\
& + (2^*-2) \mkl(\Om) \Big( \int_{\Om} u^{2^*}h \, dx\Big) \frac{t_m}{\alpha_m} u^{2^*-2} \big(- t_m \tilde{\Psi}_{1,m} - \ve_m \vp_k \big) \\
& + \frac{(2^*-2)(2^*-3)}{2}  \frac{t_m^2}{\alpha_m}u^{2^*-2} h^2 \vp_{k,m} \\
& + (2^*-2) \mkl(\Om)\frac{t_m}{\alpha_m} u^{2^*-2}h  \big(t_m \tilde{\Psi}_{1,m} + \ve_m \vp_k \big)   + \eta_m
\end{aligned} 
\end{equation}
in $\Om$, where again $\eta_m$ satisfies $\Vert \eta_m \Vert_\infty = o(1)$ as $m \to + \infty$. By definition $(\tilde{\Psi}_{2,m})_{m \ge1}$ is bounded in $H^1_0(\Om)$ and thus weakly converges to some $\tilde{\Psi}_2 \in E_{k,0}^{\perp_u}$ in $H^1_0(\Om)$. Passing \eqref{DL:inf:mkl:25} to the limit shows that $\tilde{\Psi}_2$ satisfies 
\begin{equation} \label{DL:inf:mkl:26}
\begin{aligned}
& \big( - \Delta  - \lambda - \mkl(\Om) u^{2^*-2} \big)\tilde{\Psi}_{2} = \Gamma_1 u^{2^*-2} \vp_k \\
& + (2^*-2) \mkl(\Om)  \Gamma_2\Bigg[ \frac{2^*-3}{2}  u^{2^*-2} h^2 \vp_k - (2^*-2) \Big( \int_{\Om} u^{2^*}h \, dx\Big) u^{2^*-2} h \vp_k  \Bigg] \\
& + (2^*-2) \mkl(\Om) \Gamma_3\Bigg[  u^{2^*-2} h \vp_k - \Big( \int_{\Om} u^{2^*}h \, dx\Big)u^{2^*-2} \vp_k \Bigg] \\
& + (2^*-2) \Gamma_2 \mkl(\Om) \Bigg[ u^{2^*-2} h \tilde{\Psi} - \Big( \int_{\Om} u^{2^*}h \, dx\Big)u^{2^*-2} \tilde{\Psi} \Bigg],
\end{aligned} 
\end{equation}
in $\Om$, where $\tilde{\Psi}$ is as in \eqref{DL:inf:mkl:22} and where we have let, up to a subsequence,
$$\begin{aligned}
\Gamma_1 & = \lim_{m \to + \infty} \frac{ \mu_{k,t_m} - \mkl(\Om) + (2^*-2) \mkl(\Om) \Big( \int_{\Om} u^{2^*}h \, dx\Big) t_m }{\alpha_m}, \\
\Gamma_2 & = \lim_{m \to + \infty} \frac{t_m^2}{\alpha_m} \quad \text{ and } \quad \Gamma_3  = \lim_{m \to + \infty} \frac{t_m \ve_m }{\alpha_m}. \\
\end{aligned} $$
Substracting \eqref{DL:inf:mkl:25} and \eqref{DL:inf:mkl:26} and using \eqref{DL:inf:mkl:210} and \eqref{DL:inf:mkl:23} we conclude as before that $\tilde{\Psi}_{2,m}$ strongly converges to $\tilde{\Psi}_2$ in $H^1_0(\Om)$. Integrating \eqref{DL:inf:mkl:26} against $\vp_k$ shows, since $u = |\vp_k|$, $\Vert u \Vert_{2^*} = 1$ and since $\vp_k$ and $\tilde{\Psi}$ are orthogonal in $L^2_u(\Om)$, that 
\begin{equation}  \label{DL:inf:mkl:27} 
\begin{aligned} 
\Gamma_1  = & - (2^*-2) \mkl(\Om) \Gamma_2\Bigg[   \frac{2^*-3}{2}  \int_{\Om} u^{2^*} h^2 \, dx - (2^*-2) \Big( \int_{\Om} u^{2^*}h \, dx\Big)^2 \Bigg]  \\
&- (2^*-2) \mkl(\Om) \Gamma_2 \int_{\Om} u^{2^*-2} \tilde{\Psi} \vp_k h \, dx . 
\end{aligned} 
\end{equation}
We first assume that $\Gamma_2 = 0$. By definition of $\Gamma_2$ this means that $t_m^2  = o(\alpha_m)$ as $m \to + \infty$, and thus implies by \eqref{DL:inf:mkl:24} that we also have $\Gamma_3 = 0$ and then, by \eqref{DL:inf:mkl:27}, that $\Gamma_1 = 0$. Coming back to the definition of $\alpha_m$ we thus have 
$$ \alpha_m = (1+o(1)) t_m \big \Vert \tilde{\Psi}_{1,m} - \tilde{\Psi} \big \Vert_{H^1_0}$$
as $m \to + \infty$, which implies that $\Vert \tilde{\Psi}_{2,m} \Vert_{H^1_0} = 1 + o(1)$. The strong convergence towards $\tilde{\Psi}_2$ then implies that $\tilde{\Psi}_2 \neq 0$. However, since $\Gamma_i = 0$ for $i=1,2,3$, \eqref{DL:inf:mkl:26} now shows that $\tilde{\Psi}_{2} \in E_{k,0}$ which is a contradiction. Therefore $\Gamma_2 \neq 0$ and by \eqref{DL:inf:mkl:27}
\begin{equation} \label{DL:inf:mkl:28}
 \begin{aligned}
\frac{\Gamma_1}{\Gamma_2}  & =  \lim_{m \to + \infty} \frac{\mu_{k,t_m} - \mkl(\Om) + (2^*-2) \mkl(\Om) \Big( \int_{\Om} u^{2^*}h \, dx\Big) t_m}{t_m^2}  \\
 & =  (2^*-2) \mkl(\Om)\Bigg[ (2^*-2) \Big( \int_{\Om} u^{2^*}h dx\Big)^2 - \frac{2^*-3}{2} \int_{\Om} u^{2^*} h^2 dx  \\
& - \int_{\Om} u^{2^*-2} \tilde{\Psi} \vp_k h \, dx\Bigg]  . 
\end{aligned} 
\end{equation}
Since $(t_m)_{m \ge 0}$ was any sequence of positive numbers converging to $0$ and since the right-hand side of \eqref{DL:inf:mkl:28} is independent of the choice of $(t_m)_{m \ge0}$ this shows that 
$$t \mapsto \frac{1}{t^2}\Bigg[ \frac{\mu_{k,t}}{\mkl(\Om)}  - 1 + (2^*-2) \Big(\int_{\Om} u^{2^*}hdx \Big) t \Bigg] $$
has a limit as $t \to 0$ which is given by the right-hand side of \eqref{DL:inf:mkl:28}. This concludes the proof of \eqref{DL:mkl:2}.
\end{proof}

As a consequence we obtain the following result: 

\begin{lemme}
Let $i \ge 0$, $\lambda \in [\Lambda_i, \Lambda_{i+1})$ and let $k = N(i)+1$ where $N$ is given by \eqref{defN}. Assume that $u \in L^{2^*}_{>}(\Om)$, with $\Vert u \Vert_{2^*} = 1$, attains $\mkl(\Om)$. Then, for any $h \in L^\infty(\Om)$, we have 
\begin{equation} \label{DL:lambda:ordre2:1}
\int_{\Om} u^{2^*} h^2 \, dx - \left( \int_{\Om} u^{2^*} h \, dx \right)^2 \ge \int_{\Om} u^{2^*-2} \tilde{\Psi} \vp_k h \, dx, 
\end{equation}
where $\tilde{\Psi}$ is as in \eqref{DL:inf:mkl:22}.
\end{lemme}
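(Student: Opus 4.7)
The idea is to exploit the scale-invariance of $v \mapsto \mu_k^\lambda(v)\Vert v \Vert_{2^*}^{2^*-2}$ to turn the minimality of $u$ into a stability inequality, and then combine the second-order expansion of $\mu_{k,t}$ supplied by Proposition \ref{prop:derseconde} with a direct Taylor expansion of $\Vert u_t \Vert_{2^*}^{2^*-2}$.

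First, for $h \in L^\infty(\Om)$ fixed and $t \geq 0$ small enough so that $1+th > 0$ a.e., $u_t := (1+th)u$ belongs to $L^{2^*}_{>}(\Om)$. By scale invariance of the quantity in the infimum \eqref{eq:minmuk} and since $\Vert u \Vert_{2^*} = 1$, the minimality of $u$ gives
\begin{equation*}
f(t) := \mu_{k,t}\, \Vert u_t \Vert_{2^*}^{2^*-2} \ge \mkl(\Om) = f(0)
\end{equation*}
for all $t \ge 0$ small. The main observation is that $f$ admits a second-order Taylor expansion at $0$ whose coefficient of $t^2$ can be computed explicitly.

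Next, denote $A = \int_\Om u^{2^*} h\, dx$ and $B = \int_\Om u^{2^*} h^2\, dx$. A routine expansion of $\int_\Om (1+th)^{2^*} u^{2^*}\,dx$ via the binomial formula, followed by raising to the power $(2^*-2)/2^*$, yields
\begin{equation*}
\Vert u_t \Vert_{2^*}^{2^*-2} = 1 + (2^*-2) A\, t + \Big[\tfrac{(2^*-2)(2^*-1)}{2} B - (2^*-2) A^2\Big]\, t^2 + O(t^3).
\end{equation*}
Combining this with the expansion \eqref{DL:mkl:2} of $\mu_{k,t}/\mkl(\Om)$ from Proposition \ref{prop:derseconde}, the linear terms in $t$ cancel (as they must, since $0$ is a minimum of $f$), and a short algebraic simplification gives
\begin{equation*}
f(t) = \mkl(\Om)\left(1 + (2^*-2)\Big[B - A^2 - \int_\Om u^{2^*-2}\tilde\Psi\, \vp_k\, h\,dx\Big] t^2 + o(t^2)\right).
\end{equation*}

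Finally, since $f(t) \ge f(0)$ for $t \ge 0$ small and since $\mkl(\Om) > 0$ and $2^*-2 > 0$, the coefficient of $t^2$ must be nonnegative, which is precisely \eqref{DL:lambda:ordre2:1}. The only potential obstacle is purely computational — keeping the second-order terms consistent when multiplying the two expansions — and no genuine analytical difficulty arises beyond what Proposition \ref{prop:derseconde} already provides.
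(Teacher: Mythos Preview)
Your proof is correct and follows essentially the same route as the paper: both expand $\mu_{k,t}\Vert u_t\Vert_{2^*}^{2^*-2}$ to second order in $t$ using Proposition~\ref{prop:derseconde} together with a direct Taylor expansion of $\Vert u_t\Vert_{2^*}^{2^*-2}$, note that the linear terms cancel by minimality, and read off \eqref{DL:lambda:ordre2:1} from the nonnegativity of the $t^2$ coefficient. The algebra you sketch for the quadratic coefficient, namely $(2^*-2)[B-A^2-C]$, is exactly what the paper obtains after the same simplification.
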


\begin{proof}
Let as before $u_t = u(1+th)$. Straightforward computations show that 
\begin{equation} \label{DL:lambda:ordre2:2}
 \begin{aligned}
\Vert u_t \Vert_{2^*}^{2^*-2} & = 1 + (2^*-2)  \int_{\Om} u^{2^*}h \, dx t \\
& + \Bigg[ \frac{(2^*-1)(2^*-2)}{2} \int_{\Om} u^{2^*} h^2 \, dx - (2^*-2) \left(\int_{\Om}u^{2^*} h \, dx \right)^2 \Bigg] t^2 + o(t^2) 
\end{aligned} 
\end{equation}
as $t \to 0$. Since $u$ attains $\mkl(\Om)$ we have 
$$ \mkl(\Om) = \mkl(u) \Vert u \Vert_{2^*}^{2^*-2} \le \mkl(u_t)\Vert u_t \Vert_{2^*}^{2^*-2}  $$
for any $t$ small enough. We may now expand the right-hand side of the latter inequality using \eqref{DL:mkl:2} and \eqref{DL:lambda:ordre2:2}: straightforward computations show that the $0$-th and first-order terms in $t$ vanish. Dividing by $t^2$ and letting $t \to 0$ yields \eqref{DL:lambda:ordre2:1}. 
\end{proof}

The following result establishes a stability inequality of sorts for the functional $u \in L^{2^*}_{>}(\Om) \mapsto \mkl(u) \Vert u \Vert_{2^*}^{2^*-2}$ at a minimiser:

\begin{prop} \label{prop:DL:lambda:ordre:2}
Let $i \ge 0$,  $\lambda \in [\Lambda_i, \Lambda_{i+1})$ and $k = N(i)+1$ where $N$ is given by \eqref{defN}. Assume that $u \in L^{2^*}_{>}(\Om)$, with $\Vert u \Vert_{2^*} = 1$, attains $\mkl(\Om)$. Then, for any $h \in C^\infty(\overline{\Om})$, we have 
\begin{equation} \label{DL:lambda:ordre2:3}
\begin{aligned}
\frac{1}{\mkl(\Om)} \int_{\Om} u^2 |\nabla h |^2 \, dx & + (2^*-2) \left( \int_{\Om} u^{2^*} h \, dx \right)^2 \\ 
& \ge (2^*-2) \int_{\Om} u^{2^*} h^2 \, dx.
\end{aligned} 
\end{equation}
\end{prop}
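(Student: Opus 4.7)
\emph{Plan.} I will derive \eqref{DL:lambda:ordre2:3} from \eqref{DL:lambda:ordre2:1} via a Cauchy--Schwarz argument based on a factorisation $\tilde{\Psi} = w\vp_k$ of the auxiliary function defined by \eqref{DL:inf:mkl:22}.

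\emph{Step 1 (Key identity).} I will first show that for every $f \in C^\infty(\overline{\Om})$, the quadratic form $B(v) := \int_\Om(|\nabla v|^2 - \lambda v^2 - \mkl(\Om) u^{2^*-2}v^2)\,dx$ satisfies
\[
B(f\vp_k) = \int_\Om u^2 |\nabla f|^2 \, dx.
\]
This follows by direct computation: expanding $-\Delta(f\vp_k) = -\vp_k \Delta f - 2 \nabla f \cdot \nabla \vp_k - f\Delta \vp_k$, integrating $(-\Delta(f\vp_k))f\vp_k$ by parts, and using the eigenvalue equation $(-\Delta-\lambda)\vp_k = \mkl(\Om) u^{2^*-2}\vp_k$ together with $u = |\vp_k|$ (so that $\vp_k^2 = u^2$ and $u^{2^*-2}\vp_k^2 = u^{2^*}$), all cross-terms cancel and only $\int_\Om u^2 |\nabla f|^2$ survives. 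By polarisation, $B(f\vp_k, g\vp_k) = \int_\Om u^2 \nabla f \cdot \nabla g\, dx$. Setting $c = \int_\Om u^{2^*}h\,dx$, $\Phi = (h-c)\vp_k$, and $A = \int_\Om u^{2^*}h^2\,dx - c^2 = \int_\Om u^{2^*-2}\Phi^2\,dx$, and noting $B(\vp_k) = B(\vp_k, \Phi) = 0$, the identity gives $B(\Phi) = B(h\vp_k) = \int_\Om u^2 |\nabla h|^2\,dx$. Inequality \eqref{DL:lambda:ordre2:3} is therefore equivalent to $B(\Phi) \ge (2^*-2)\mkl(\Om)\, A$.

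\emph{Step 2 (Factorisation of $\tilde{\Psi}$).} I will show that $\tilde{\Psi} = w \vp_k$, where $w$ is the unique (up to constants, fixed by $\int_\Om u^{2^*}w\,dx = 0$) weak solution of the degenerate elliptic problem
\[
-\operatorname{div}(u^2 \nabla w) = (2^*-2)\mkl(\Om)\, u^{2^*}(h - c) \quad \text{ in } \Om.
\]
Indeed, plugging $w\vp_k$ into $-\Delta - \lambda - \mkl(\Om) u^{2^*-2}$ and computing as in Step~1 gives $(-\Delta - \lambda - \mkl(\Om) u^{2^*-2})(w\vp_k) = -\vp_k^{-1}\operatorname{div}(\vp_k^2 \nabla w)$; matching this against the right-hand side of \eqref{DL:inf:mkl:22} produces exactly the equation above, and the compatibility condition $\int_\Om u^{2^*}(h-c)\,dx = 0$ holds by the definition of $c$. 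By the uniqueness statement for \eqref{DL:inf:mkl:22} in $\vp_k^{\perp_u}$, $\tilde{\Psi} = w\vp_k$. Applying Step~1 to $f = w$ then yields the crucial positivity
\[
B(\tilde{\Psi}) = \int_\Om u^2 |\nabla w|^2\, dx \ge 0.
\]

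\emph{Step 3 (Cauchy--Schwarz and conclusion).} Testing the weak form of the equation for $w$ against $h$ gives $\int_\Om u^2 \nabla w \cdot \nabla h\, dx = (2^*-2)\mkl(\Om) \int_\Om u^{2^*}(h-c) h\, dx = (2^*-2)\mkl(\Om)\, A$. The Cauchy--Schwarz inequality for the positive-semidefinite form $\int_\Om u^2 \nabla (\cdot) \cdot \nabla (\cdot)\, dx$ yields
\[
\bigl[(2^*-2)\mkl(\Om) A\bigr]^2 \le \int_\Om u^2 |\nabla w|^2\, dx \cdot \int_\Om u^2 |\nabla h|^2\, dx = B(\tilde{\Psi}) \cdot \int_\Om u^2|\nabla h|^2\,dx.
\]
On the other hand, testing \eqref{DL:inf:mkl:22} against $\tilde{\Psi}$ itself and using $\tilde{\Psi} \perp_u \vp_k$ gives $B(\tilde{\Psi}) = (2^*-2)\mkl(\Om) \int_\Om u^{2^*-2}\tilde{\Psi}\vp_k h\, dx$, so \eqref{DL:lambda:ordre2:1} rewrites as $B(\tilde{\Psi}) \le (2^*-2)\mkl(\Om)\, A$. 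Substituting, $\bigl[(2^*-2)\mkl(\Om) A\bigr]^2 \le (2^*-2)\mkl(\Om)\, A \cdot \int_\Om u^2|\nabla h|^2\,dx$. If $A > 0$, dividing yields \eqref{DL:lambda:ordre2:3}; if $A = 0$, then $h$ is $u^{2^*}\,dx$-a.e.\ constant and both sides of \eqref{DL:lambda:ordre2:3} vanish.

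\emph{Main obstacle.} The delicate point will be making the factorisation $\tilde{\Psi} = w\vp_k$ of Step~2 rigorous, since the coefficient $u^2 = \vp_k^2$ in the equation for $w$ vanishes on the nodal set $\{\vp_k = 0\}$ and on $\partial \Om$, making the problem degenerate elliptic. I plan to handle this by an approximation procedure, replacing $u^2$ by $u^2 + \ve^2$ (or equivalently $\vp_k$ by $\vp_k + \ve \eta$ for a suitable nowhere-vanishing $\eta$), solving the non-degenerate problem by Lax--Milgram, and then passing to the limit $\ve \to 0$ using the $C^{3,\alpha}(\overline{\Om})$ regularity of $\vp_k$ from Theorem~\ref{prop:minmkl1}(2) and standard estimates for the Sobolev space $H^1(\Om, u^2\,dx)$.
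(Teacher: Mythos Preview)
Your Step~1 identity $B(f\vp_k)=\int_\Om u^2|\nabla f|^2\,dx$ is correct and is in fact the same computation the paper carries out (in the disguise of integrating the eigenvalue equation against $h\vp_k$ and $h^2\vp_k$). The gap is in Step~2: the factorisation $\tilde\Psi=w\vp_k$ is \emph{false} in general when $k\ge 2$, and no approximation can repair it. Test \eqref{DL:inf:mkl:22} against $\psi_1:=\vp_1^\lambda(u)$; since $B(\tilde\Psi,\psi_1)=(\mu_1^\lambda(u)-\mkl(\Om))\,\langle\tilde\Psi,\psi_1\rangle_{L^2_u}$ and the right--hand side equals $(2^*-2)\mkl(\Om)\int_\Om u^{2^*-2}h\,\vp_k\psi_1\,dx$ (using $\psi_1\perp_u\vp_k$), the $\psi_1$--component of $\tilde\Psi$ is nonzero for generic $h$. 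But $\psi_1>0$ everywhere in $\Om$ by Proposition~\ref{prop:vp1}, whereas any function of the form $w\vp_k$ vanishes on the nodal set $\{\vp_k=0\}$; so $\tilde\Psi$ has nonzero trace on $\{\vp_k=0\}$ and cannot lie in the $H^1_0$--closure of $\{f\vp_k:f\in C^\infty(\overline\Om)\}$. Your proposed regularisation by $u^2+\ve^2$ will produce $w_\ve\vp_k$ which always vanishes on $\{\vp_k=0\}$, hence cannot converge in $H^1_0(\Om)$ to a function with nonzero trace there. Consequently the identity $B(\tilde\Psi)=\int_\Om u^2|\nabla w|^2\ge 0$ is unavailable, and since $B$ is genuinely indefinite on $H^1_0(\Om)$ (negative on $\mathrm{span}\{\vp_p^\lambda(u):p<k\}$), the Cauchy--Schwarz inequality $B(\tilde\Psi,\Phi)^2\le B(\tilde\Psi)B(\Phi)$ underpinning Step~3 has no justification.

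The paper circumvents this by \emph{not} trying to control $B(\tilde\Psi)$. Instead it uses the min--max definition of $\mu_{k,t}$: one plugs the explicit test vector $(1+th)\vp_k$ (together with Gram--Schmidt corrections of $\vp_1^\lambda(u),\dots,\vp_{k-1}^\lambda(u)$) into \eqref{muk} to obtain an \emph{upper} bound on $\mu_{k,t}$, whose second--order coefficient is exactly your Step~1 quantity $\int_\Om u^2|\nabla h|^2\,dx$. Comparing this with the \emph{exact} second--order expansion \eqref{DL:mkl:2} produces an inequality that, after a single application of \eqref{DL:lambda:ordre2:1}, gives \eqref{DL:lambda:ordre2:3}. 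The point is that the test function $(1+th)\vp_k$ \emph{is} of the form $f\vp_k$, so your Step~1 applies to it directly; no claim about the structure of $\tilde\Psi$ is needed.
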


Proposition \ref{prop:DL:lambda:ordre:2} is, to the best of our knowledge, the first instance of a stability inequality for second variations of generalised eigenvalue functionals $u \mapsto \mkl(u)$. 

\begin{proof}
Assume that $u \in L^{2^*}_{>}(\Om)$, $\Vert u \Vert_{2^*} = 1$, attains $\mkl(\Om)$. Let $h \in C^\infty(\overline{\Om})$. For $|t|< \Vert h \Vert_{\infty}^{-1}$ we let as before $u_t = u(1+ th)$. Then $u_t \in L^{2^*}_{>}(\Om)$,  $\mu_p^{\lambda}(u_t)$ is well-defined for any $p \ge 1$ and by Proposition \ref{prop:signvp} we have $\mkl(u_t)>0$. We again adopt the notation $\mu_{p,t} = \mpl(u_t)$ and $E_{p,t} = E_p^{\lambda}(u_t)$ for any $p \ge1$. For $1 \le p \le k-1$ we will denote by $\Pi_{p,t}$ the orthogonal projection over $E_{p,t}$ in $L^2_{u_t}(\Om)$, and we will denote by  $\Pi_t$ the orthogonal projection over $\oplus_{i=1}^{k-1} E_{i,t}$ in $L^2_{u_t}(\Om)$, so that $\Pi_t = \sum_{p=1}^{k-1} \Pi_{p,t}$. Using \eqref{eq:der3} we have 
\begin{equation*}
\Vert \Pi_t(\vp_k) \Vert_{L^2_{u_t}} = O(t) 
\end{equation*}
as  $t \to 0$. 
We let, for any $p \ge 1$,  $\vp_p = \vp_p^{\lambda}(u)$ be the $L^2_u(\Om)$-orthonormal family of generalised eigenfunctions associated to $u$ and given by Proposition \ref{prop:vp}. Recall that $\mpl(u) \le 0$ for $p \le k-1$. By Proposition \ref{prop:extremales}, $E_{k,0} = E_k^{\lambda}(u)$ is one-dimensional, is spanned by $\vp_k$ and $u = |\vp_k|$. In what follows we let, for $t$ small enough,
$$ \Psi_{k,t} = \frac{(1+th) \vp_k}{\Vert (1+th) \vp_k \Vert_{L^2_{u_t}}} .$$
We define inductively, for $2 \le i \le k$ and $t$ small enough,
$$ \Psi_{i-1,t} = \vp_{i-1} - \sum_{i-1 \le \ell \le k-1} \big( \vp_{\ell}, \Psi_{\ell+1,t} \big)_{L^2_{u_t}} \Psi_{\ell,t}. $$
The family $(\Psi_{1,t}, \dots, \Psi_{k,t})$ obtained in this way is therefore $L^2_{u_t}$-orthonormal and 
$$ V_t = \text{Span} \big( \Psi_{1,t}, \dots \Psi_{k,t} \big) $$
satisfies $\dim_{u_t} V_t = k$. By construction of the family $(\Psi_{i,t})_{1 \le i \le k}$ it is easily proven that, for all $1 \le i \le k$, we have $\Vert \Psi_{i,t} - \vp_i \Vert_{H^1_0} = O(t)$ as $t \to 0$. As a consequence
$$ Q_{u_t}^{\lambda}( \Psi_{i,t}) = Q_u^{\lambda}(\vp_i) + O(t) = \mu_i^{\lambda}(u) + O(t)\le O(t) \quad \text{ as } t \to 0$$
for $1 \le i \le k-1$, where $Q_{u_t}^{\lambda}$ is as in \eqref{defQu}, and 
$$ Q_{u_t}^{\lambda}( \Psi_{k,t}) = Q_u^{\lambda}(\vp_k) + O(t) = \mu_k^{\lambda}(\Om) + O(t) \quad \text{ as } t \to 0.$$
Since the family $(\Psi_{1,t}, \dots, \Psi_{k,t})$ is $L^2_{u_t}$-orthonormal we have 
$$ \max_{v \in V_t} Q_{u_t}^{\lambda}(v) = \max_{1 \le i \le k} Q_{u_t}^{\lambda}(\Psi_{i,t}) =Q_{u_t}^{\lambda}(\Psi_{k,t}),$$
and by the definition \eqref{muk} of $\mu_{k,t}$ and the expression of $\Psi_{k,t}$ we finally obtain
\begin{equation} \label{dersconde:001}
\mu_{k,t} \le \frac{ \int_{\Om} \big|\nabla \big( (1+th) \vp_k \big)  \big|^2 - \lambda (1+th)^2 \vp_k^2 \, dx}{\int_{\Om} (1+th)^{2^*} u^{2^*} \, dx},
\end{equation}
where we again used that $u = |\vp_k|$. We easily have
\begin{equation} \label{dersconde:002}
  \int_{\Om} (1+th)^{2^*} u^{2^*} \, dx = 1 + 2^* \int_{\Om} u^{2^*}h\, dx \cdot t   + \frac{2^*(2^*-1)}{2} \int_{\Om} u^{2^*} h^2 \, dx \cdot  t^2 + o(t^2)
  \end{equation}
 as $t \to 0$. Straightforward computations show that 
   \begin{equation} \label{dersconde:008}
\begin{aligned}
& \int_{\Om}  \big|\nabla \big( (1+th) \vp_k \big)  \big|^2 - \lambda (1+th)^2 \vp_k^2 \, dx \\
 & = \mkl(\Om) + \Bigg[ 2 \int_{\Om} h |\nabla \vp_k|^2 \, dx - 2 \lambda \int_{\Om} h \vp_k^2 \, dx + 2 \int_{\Om} \vp_k \langle \nabla h, \nabla \vp_k \rangle \, dx \Bigg] \cdot t \\
 & + \Bigg[ \int_{\Om} u^2 |\nabla h|^2 \, dx + 2 \int_\Om \vp_k h \langle \nabla h, \nabla \vp_k \rangle \, dx + \int_{\Om} h^2 |\nabla \vp_k|^2 \, dx \\
 & - \lambda \int_{\Om} h^2 \vp_k^2 \, dx \Bigg] \cdot t^2.
 \end{aligned} 
 \end{equation}
 Integrating the equation $- \Delta \vp_k - \lambda \vp_k = \mkl(\Om) u^{2^*-2} \vp_k$ against $h \vp_k$ and $h^2 \vp_k$ gives, respectively,
  $$ \begin{aligned}
 \mkl(\Om) \int_{\Om} u^{2^*} h \, dx = \int_{\Om} h |\nabla \vp_k|^2 \, dx - \lambda \int_{\Om} h \vp_k^2 \, dx + \int_{\Om} \vp_k \langle \nabla h, \nabla \vp_k \rangle \, dx
 \end{aligned} $$
 and 
  $$ \begin{aligned}
 \mkl(\Om) \int_{\Om} u^{2^*} h^2 \, dx = \int_{\Om} h^2 |\nabla \vp_k|^2 \, dx + 2 \int_\Om \vp_k h \langle \nabla h, \nabla \vp_k \rangle \, dx - \lambda \int_{\Om} h^2 \vp_k^2 \, dx . \end{aligned} $$
Plugging the latter in \eqref{dersconde:008} finally gives 
   \begin{equation} \label{dersconde:009}
\begin{aligned}
& \int_{\Om}  \big|\nabla \big( (1+th) \vp_k \big)  \big|^2 - \lambda (1+th)^2 \vp_k^2 \, dx = \mkl(\Om) \\
&+2 \mkl(\Om) \int_\Om u^{2^*} h \, dx \cdot t  + \Bigg[ \int_{\Om} u^2 |\nabla h|^2 \, dx + \mkl(\Om)  \int_{\Om} u^{2^*} h^2 \, dx  \Bigg] t^2.
 \end{aligned} 
 \end{equation}
We now combine \eqref{DL:mkl:2}, \eqref{dersconde:001}, \eqref{dersconde:002} and \eqref{dersconde:009}. It is easily seen that the $0$-th and first-order terms in $t$ in \eqref{dersconde:001} cancel. By dividing the result by $t^2$ and letting $t \to 0$ we find, after direct computations, that 
   \begin{equation} \label{dersconde:10}
\begin{aligned}
& \int_{\Om} u^2 |\nabla h|^2 \, dx + 2(2^*-2) \mkl(\Om) \left( \int_\Om u^{2^*} h \, dx \right)^2 \\
& \ge 2 (2^*-2) \mkl(\Om) \int_{\Om} u^{2^*} h^2 \, dx - (2^*-2) \mkl(\Om) \int_{\Om} u^{2^*-2} \tilde{\Psi} h \vp_k \, dx \\
 \end{aligned} 
 \end{equation}
holds true. Using \eqref{DL:lambda:ordre2:1} finally concludes the proof of \eqref{DL:lambda:ordre2:3}. 
 \end{proof}

As a consequence of Proposition \ref{prop:DL:lambda:ordre:2} we finally prove that $\mkl(\Om)$ cannot be simultaneously attained and equal to $K_n^{-2}$ or, in other words, that \eqref{eq:test1} is a necessary condition for the attainment of $\mkl(\Om)$:

\begin{prop} \label{prop:optimalite:Kn}
Let $i \ge 0$, $\lambda \in [\Lambda_i, \Lambda_{i+1})$ and let $k = N(i)+1$ where $N$ is given by \eqref{defN}. Then $\mkl(\Om)$ is attained if an only if $\mkl(\Om) < K_n^{-2}$. 
\end{prop}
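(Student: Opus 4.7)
The implication $\mkl(\Om) < K_n^{-2} \Rightarrow \mkl(\Om)$ is attained is Theorem \ref{prop:minmkl1}, and Proposition \ref{prop:ineq:large:00} gives $\mkl(\Om) \le K_n^{-2}$ in all cases, so the task reduces to showing that equality $\mkl(\Om) = K_n^{-2}$ cannot be attained. I argue by contradiction: suppose some $u \in L^{2^*}_>(\Om)$ with $\|u\|_{2^*}=1$ achieves $\mkl(\Om) = K_n^{-2}$. Proposition \ref{prop:extremales} then forces $E_k^\lambda(u) = \R\vp_k$ with $u = |\vp_k|$, $\int_\Om u^{2^*-2}\vp_k^2\,dx = 1$, and $\vp_k$ satisfying the critical equation $-\Delta\vp_k - \lambda\vp_k = K_n^{-2} u^{2^*-2}\vp_k$ in $\Om$. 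Specialising the stability inequality \eqref{DL:lambda:ordre2:3} at $\mkl(\Om) = K_n^{-2}$ yields, for all $h \in C^\infty(\overline\Om)$,
\begin{equation*}
K_n^2 \int_\Om u^2|\nabla h|^2\,dx + (2^*-2)\Big(\int_\Om u^{2^*}h\,dx\Big)^2 \ge (2^*-2)\int_\Om u^{2^*}h^2\,dx.
\end{equation*}

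The plan is to exploit the critical equation satisfied by $\vp_k$ to recast this inequality in a form directly comparable with the sharp Sobolev inequality. Specifically, setting $g = uh$ and integrating by parts using that $\Delta u = -\lambda u - K_n^{-2}u^{2^*-1}$ a.e.\ (and that the measure-zero nodal set $\{\vp_k=0\}$ contributes no Dirac term when $g$ vanishes there, since $u$ does) transforms the stability inequality into
\begin{equation*}
K_n^2 \int_\Om (|\nabla g|^2 - \lambda g^2)\,dx + (2^*-2)\Big(\int_\Om u^{2^*-1}g\,dx\Big)^2 \ge (2^*-1)\int_\Om u^{2^*-2}g^2\,dx
\end{equation*}
for admissible $g = uh$ with $h \in C^\infty(\overline\Om)$. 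I would then test this against a concentrating family $g_\epsilon = \eta B_\epsilon$ built from a standard Aubin--Talenti bubble $B_\epsilon$ (cf.\ \eqref{defU0}) centered at a point $x_0 \in \Om$ with $u(x_0) > 0$, cut off by $\eta \in C^\infty_c(\Om)$ equal to $1$ near $x_0$ and supported in a neighbourhood of $x_0$ disjoint from the nodal set and on which $u$ is bounded below (so that $h_\epsilon = g_\epsilon/u \in C^\infty(\overline\Om)$). Expanding both sides in $\epsilon$ in the spirit of Section \ref{fonctionstest}, the leading term reduces to the sharp Sobolev inequality for $g_\epsilon$, whose strictness on the bounded domain $\Om$ (where no bubble lies in $H^1_0(\Om)$) should, through a Green's-function-type correction of the linearised operator $-\Delta-\lambda-K_n^{-2}u^{2^*-2}$, contribute a strictly negative next-order remainder that forces the transformed inequality to fail as $\epsilon \to 0$, yielding the contradiction.

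The main obstacle is the precise identification and sign of this next-order remainder, which is the analog -- for the linearisation of the critical equation around $\vp_k$ -- of the classical mass-type asymptotics carried out in Section \ref{fonctionstest} for the Green's function of $-\Delta-\lambda$ itself; the computation is dimension-dependent and must carefully account for all interactions between the bubble profile and the weight $u$. A secondary but important technical point, relevant only when $i \ge 1$, is the Lipschitz singularity of $u = |\vp_k|$ at its nodal set; this is rendered harmless by the support condition imposed on $\eta$, which localises $g_\epsilon$ away from the singular locus and makes the integrations by parts underpinning the transformation rigorous.
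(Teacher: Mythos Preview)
Your reduction to the stability inequality \eqref{DL:lambda:ordre2:3} at $\mkl(\Om)=K_n^{-2}$ is exactly right, and the integration-by-parts identity that turns $\int u^2|\nabla h|^2$ into $\int|\nabla g|^2 - \lambda\int g^2 - K_n^{-2}\int u^{2^*-2}g^2$ for $g=uh$ supported away from the nodal set is correct (indeed $u^2|\nabla h|^2 = |\nabla g|^2 - \nabla u\cdot\nabla(g^2/u)$ pointwise). The gap is what comes next. You yourself flag the ``main obstacle'' as identifying the sign of the next-order remainder when testing with a concentrating bubble, but this is not a technicality that can be deferred: the relevant quadratic form is $g\mapsto K_n^2\int(|\nabla g|^2-\lambda g^2) - (2^*-1)\int u^{2^*-2}g^2 + (2^*-2)(\int u^{2^*-1}g)^2$, whose coefficients depend on the \emph{unknown} minimiser $u$. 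The leading-order balance for a bubble $g_\epsilon$ gives only the trivial inequality $K_n^{2-n}\ge 0$, and the subleading term involves the mass of the Green's function of $-\Delta-\lambda-(2^*-1)K_n^{-2}u^{2^*-2}$ (plus a rank-one correction), whose sign you have no control over without further information on $u$. There is no analogue here of the explicit mass computation in Section~\ref{fonctionstest}, because there the potential is absent, whereas here it is $u$-dependent.

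The paper avoids this obstacle entirely by a different choice of test functions, following Druet \cite{Druetdim3}. One takes the $n{+}1$ spherical-harmonic-type functions $Z^0_{t,y},\dots,Z^n_{t,y}$ (components of the stereographic lift), which satisfy $\sum_i (Z^i_{t,y})^2=1$ and $\sum_i|\nabla Z^i_{t,y}|^2 = 4nt^2/(1+t^2|x-y|^2)^2$. A topological/degree argument produces $(t,y)$ with $\int_\Om Z^i_{t,y}u^{2^*}\,dx=0$ for every $i$, so that the troublesome squared term in \eqref{DL:lambda:ordre2:3} vanishes for each $Z^i_{t,y}$. Summing the resulting inequalities over $i$ and applying H\"older then compares $1=\int_\Om u^{2^*}$ directly against $n(n-2)K_n^2(\int_\Om t^n(1+t^2|x-y|^2)^{-n})^{2/n}$, which is \emph{strictly} less than $1$ because $\Om$ is bounded (so the integral is strictly smaller than over $\R^n$, where it equals $(n(n-2))^{-n/2}K_n^{-n}$). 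This gives the contradiction with no expansion in $\epsilon$ and no dependence on $u$ beyond $\|u\|_{2^*}=1$.
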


\begin{proof}
The right implication is given by Theorem \ref{prop:minmkl1} and we only prove the left implication. Since $\mkl(\Om) \le K_n^{-2}$ by Proposition \ref{prop:ineq:large:00} we only have to prove that if $\mkl(\Om) = K_n^{-2}$ then $\mkl(\Om)$ is not attained. We proceed by contradiction and assume that there exists $u \in L^{2^*}_{>}(\Om)$, with $\Vert u \Vert_{2^*} = 1$, which attains $\mkl(\Om) = K_n^{-2}$. Proposition \ref{prop:DL:lambda:ordre:2} then applies and \eqref{DL:lambda:ordre2:3} shows that, for any $h \in C^\infty(\overline{\Om})$, we have 
\begin{equation}  \label{DL:lambda:ordre2:3bis}
\begin{aligned}
K_n^2 \int_{\Om} u^2 |\nabla h |^2 \, dx & + (2^*-2) \left( \int_{\Om} u^{2^*} h \, dx \right)^2 \ge (2^*-2) \int_{\Om} u^{2^*} h^2 \, dx.
\end{aligned} 
\end{equation}
To obtain a contradiction we adapt an argument of  \cite{Druetdim3}. For $(t,y) \in (0, + \infty) \times \R^n$ we define 
$$ Z^0_{t,y}(x) = \frac{1 -t^2|x-y|^2}{1+t^2|x-y|^2} \quad \text{ and } \quad Z^i_{t,y}(x)=  \frac{2t (x_i-y_i)}{1+t^2|x-y|^2}, i=1 \dots n.$$
It is easily seen that, for any $x \in \Om$, we have 
$$\sum_{i=0}^n (Z^i_{t,y}(x))^2 = 1 \quad \text{ and  }  \quad \sum_{i=0}^n \left| \nabla Z^i_{t,y}(x) \right|^2 = \frac{4nt^2}{(1+t^2|x-y|^2)^2}.$$
The arguments in \cite{Druetdim3} show that  there exist $(t,y) \in (0, + \infty) \times \R^n$ such that 
\begin{equation} \label{ineq.contra.druet}
\int_{\Om} Z^i_{t,y} u^{2^*} \, dx = 0 \quad \text{ for all } i=0 \dots n.   
\end{equation}
Applying \eqref{DL:lambda:ordre2:3bis} and \eqref{ineq.contra.druet} to each $Z^i_{t,y}$, $i=0 \dots n$, and summing over $i$ then yields 
$$  (2^*-2) \int_{\Om} u^{2^*} \, dx \le K_n^2 \int_{\Om} \frac{4nt^2}{(1+t^2|x-y|^2)^2} u^2 \, dx . $$
By H\"older's inequality and since $\Vert u \Vert_{2^*} = 1$ we obtain that 
\begin{equation} \label{eq:contra:finale:druet}
\begin{aligned} 
1 & \le n(n-2) K_n^2 \left(  \int_{\Om} \frac{t^n}{(1+t^2|x-y|^2)^{n}} \, dx \right)^{\frac{2}{n}} \\
& < n(n-2) K_n^2 \left(  \int_{\R^n} \frac{t^n}{(1+t^2|x-y|^2)^{n}} \, dx \right)^{\frac{2}{n}},
\end{aligned} 
 \end{equation}
 where the strict inequality follows since $\Om$ is bounded. A simple change of variables using \eqref{energieU0} finally shows that 
$$  \int_{\R^n} \frac{t^n}{(1+t^2|x-y|^2)^{n}} \, dx = \big( n(n-2) \big)^{-\frac{n}{2}} K_n^{-n}, $$
which contradicts \eqref{eq:contra:finale:druet}. 
\end{proof}

\section{Proof of the main results} \label{sec:proof:main:results}

We prove in this section Theorems  \ref{theoreme:n:3} and \ref{theoreme:n:4plus}. We start with the higher-dimensional case:

\begin{proof}[Proof of Theorem \ref{theoreme:n:4plus}]
Let $\lambda >0$ and $i \ge 0$ be such that $\lambda \in [\Lambda_i, \Lambda_{i+1})$. If $n=4$ we assume in addition that $\lambda \in (\Lambda_i, \Lambda_{i+1})$. Under these assumptions, Theorem \ref{prop:test:func} shows that $\mkl(\Om) < K_n^{-2}$ and, as a consequence, Theorem \ref{prop:minmkl1} shows that $\mkl(\Om)$ is attained at a least-energy sign-changing solution of \eqref{eq:critlambda} of energy $\mkl(\Om)^{\frac{n}{2}}$. Proposition \ref{prop:liens}  then implies that  $\mkl(\Om)^{\frac{n}{2}} = \mathcal{E}_{\lambda}(\Om)$. With the latter, Theorem  \ref{theoreme:n:4plus} simply follows from Proposition \ref{prop:C0:mkl}. 
\end{proof}

We now consider the three-dimensional case and prove Theorem \ref{theoreme:n:3}.

\begin{proof}[Proof of Theorem \ref{theoreme:n:3}] Let $i \ge 1$. Throughout the proof we will let $k = N(i) +1$ where $N$ is as in \eqref{defN}. We recall that $\lambda_{i,*}$ and $\bar{\lambda}_i$ are as in \eqref{deflambdaistar} and  \eqref{def:lambdai:min} and we recall that Proposition \ref{prop:massepositive} below ensures that $\lambda_{i,*} < \Lambda_{i+1}$. Let $\lambda \in (\lambda_{i,*}, \Lambda_{i+1})$. By definition of $\lambda_{i,*}$ we have $\max_{\Om} m_\lambda >0$: Theorem \ref{prop:test:func} shows that $\mu_k^{\lambda}(\Om) < K_3^{-2}$, and Theorem \ref{prop:minmkl1} then shows that $\mu_k^{\lambda}(\Om)$ is attained. Proposition \ref{prop:liens}  then implies that $\mkl(\Om)^{\frac32} = \mathcal{E}_{\lambda}(\Om)$ and that $\mathcal{E}_\lambda(\Om)$ is also attained, hence that $\bar{\lambda}_i \le \lambda$. Taking the infimum over $\lambda$ yields $\bar{\lambda}_i \le \lambda_{i,*}$, which proves $(1)$.

We assume that $\bar{\lambda}_i = \Lambda_i$. Let $\lambda \in (\Lambda_i, \Lambda_{i+1})$. By the definition of $\bar{\lambda}_i$ there is $\nu \in (\Lambda_i, \lambda)$ such that $- \infty < \mathcal{E}_\nu(\Om) < K_3^{-3}$. By definition of $\mathcal{E}_\nu(\Om)$ there is a non-zero solution $u$ of \eqref{eq:critlambda} (with $\lambda = \nu$) such that $\int_{\Om} u^6 \,dx < K_3^{-3}$, and by Proposition \ref{prop:liens} we then have $\mu_k^{\nu}(\Om)^{\frac32} =  \mathcal{E}_\nu(\Om)< K_3^{-2}$, and $\mu_k^{\nu}(\Om)$ (and hence $\mathcal{E}_\nu(\Om)$) is attained. By Proposition \ref{prop:C0:mkl} we then have $\mu_k^{\nu'}(\Om) < K_3^{-2}$ for every $\nu' \in [\nu, \Lambda_{i+1})$ and hence Theorem \ref{prop:minmkl1} and then Proposition \ref{prop:liens} again show that $\mathcal{E}_{\nu'}(\Om)$ is attained and satisfies $\mathcal{E}_{\nu'}(\Om) < K_3^{-3}$. In particular $\mathcal{E}_\lambda(\Om) <K_3^{-3}$ and it is attained, and this proves (2).

We now prove (3) and (4). We assume from now on and until the end of this proof that $\bar{\lambda}_i \in (\Lambda_i, \Lambda_{i+1})$. We first claim that the following holds: 
\ben \label{argument:final:00}
\bar{\lambda}_i = \lambda_{i,*}.
\een
In view of the previous paragraph, we only have to prove that $\bar{\lambda}_i \ge \lambda_{i,*}$. By Proposition \ref{prop:liens} and by definition of $\bar{\lambda}_i$, for any $\lambda \in (\bar{\lambda}_i, \Lambda_{i+1})$ we have $\mkl(\Om) < K_3^{-2}$ and  $\mu_k^{\lambda}(\Om)$ is attained  at a least-energy solution of \eqref{eq:critlambda}, denoted by $u_\lambda$, of energy $\int_{\Om} u_\lambda^6 \, dx  = \mkl(\Om)^{\frac{3}{2}} < K_3^{-3}$. By definition of $\bar{\lambda}_i$, and since we assumed $\bar{\lambda_i} > \Lambda_i$ we have, by Proposition \ref{prop:C0:mkl}, 
\ben \label{argument:final:0}
\int_{\Om} u_\lambda^6 \, dx = \mkl(\Om)^{\frac32} \to \mu_k^{\bar{\lambda}_i}(\Om)^{\frac32} =  K_3^{-3} \quad  \text{ from below as } \quad \lambda \underset{> }{\to} \bar{\lambda}_{i}.
\een 
 We first claim that 
\ben \label{argument:final:1}
 \Vert u_\lambda \Vert_{\infty} \to + \infty 
 \een
as $\lambda \underset{> }{\to} \bar{\lambda}_{i}$ from above. Indeed, if $u_\lambda$ were bounded in $L^\infty(\Om)$ \eqref{eq:critlambda} and standard elliptic theory would show that $u_\lambda$ converges in $C^2(\overline{\Om})$, up to a subsequence as $\lambda \to \bar{\lambda}_i$, to a solution $u_0$ of \eqref{eq:critlambda}. By \eqref{argument:final:0}, $u_0$ would attain $\mu_k^{\bar{\lambda}_i}(\Om)$ and we would have $\mu_k^{\bar{\lambda}_i}(\Om) =  K_3^{-2}$, which would contradict Proposition \ref{prop:optimalite:Kn}. Therefore \eqref{argument:final:1} is true, and with  \eqref{argument:final:0} and \eqref{argument:final:1} we may now apply Proposition \ref{analyse:asympto:onebubble} below, which shows the existence of $x_0 \in \Om$ such that $m_{\bar{\lambda}_i}(x_0) = 0$. Since $\lambda \in (\Lambda_i, \Lambda_{i+1}) \mapsto m_\lambda(x_0)$ is increasing by \eqref{der:mass:lambda} below we have $\max_{\Om} m_{\bar{\lambda}_i + \ve} >0$ for $\ve >0$ small enough. By the definition of $\lambda_{i,*}$ we have $\lambda_{i,*} \le \bar{\lambda}_i + \ve$ and thus $\lambda_{i,*} \le \bar{\lambda}_i$. This proves \eqref{argument:final:00}. Since $\lambda \in [\Lambda_i, \Lambda_{i+1}) \mapsto \mkl(\Om)$ is continuous by Proposition \ref{prop:C0:mkl}, the definition of $\bar{\lambda}_i$ shows that $\mkl(\Om) = K_3^{-2}$ for all $\lambda \in [\Lambda_i, \bar{\lambda}_i]$, and Proposition \ref{prop:optimalite:Kn} therefore shows that $\mkl(\Om)$ is not attained for $\lambda$ in this range. We have thus shown that  $\mkl(\Om)$ is attained if and only if $ \lambda \in (\lambda_{i,*}, \Lambda_{i+1})$. For these values of $\lambda$, Proposition \ref{prop:liens} implies that  $\mkl(\Om)^{\frac32} = \mathcal{E}_{\lambda}(\Om)$ and that $\mathcal{E}_\lambda(\Om)$ is attained at a least-energy sign-changing solution of \eqref{eq:critlambda}. The regularity of $\lambda \mapsto \mathcal{E}_\lambda(\Om)$ again follows from Proposition \ref{prop:C0:mkl}. This proves (3). 

To conclude the proof of Theorem \ref{theoreme:n:3} we finally assume that $\bar{\lambda}_i > \Lambda_i$ and assume that there is $\lambda \in [\Lambda_i, \lambda_{i,*}]$ for which \eqref{eq:critlambda} has a non-zero solution. We proved in point (3) of Theorem \ref{theoreme:n:3} that $\mkl(\Om)$ is not attained, hence $\mkl(\Om) = K_3^{-2}$ by Theorem \ref{prop:test:func}. As a consequence, Proposition \ref{prop:liens} implies that $\mathcal{E}_\lambda(\Om) \ge K_3^{-3}$. We assume that $\mathcal{E}_\lambda(\Om) = K_3^{-3}$ and we then claim that $\mathcal{E}_\lambda(\Om)$ is not attained. We proceed by contradiction and assume that $\mathcal{E}_\lambda(\Om)$ is attained at a solution $u$ of \eqref{eq:critlambda}. We thus have $\int_{\Om} u^6 \, dx = K_3^{-3}$ and Proposition \ref{prop:structure:vp} shows that $K_3^{-2} = \mkl(\Om) \le \mkl(\frac{u}{\Vert u \Vert_{2^*}}) \le \big( \int_{\Om} u^6 \, dx \big)^{\frac23} = K_3^{-2}$. This shows that  $u$ attains $\mkl(\Om) = K_3^{-2}$, which contradicts Proposition \ref{prop:optimalite:Kn}. Thus, $\mathcal{E}_\lambda(\Om) = K_3^{-3}$ is not attained. By definition of  $\mathcal{E}_\lambda(\Om)$ there is a sequence $(u_k)_{k \ge0}$ of solutions of \eqref{eq:critlambda} such that $\int_{\Om} u_k^6\, dx \to \mathcal{E}_\lambda(\Om)$. This sequence cannot have a converging subsequence in $H^1_0(\Om)$, otherwise its limit would attain $\mathcal{E}_\lambda(\Om)$. Thus $(u_k)_{k \ge0}$ blows-up and the set of solutions of \eqref{eq:critlambda} is not compact in $H^1_0(\Om)$. This concludes the proof of (4). 
\end{proof}

Let us conclude with a few remarks on Theorem \ref{theoreme:n:3} and on the values of $\lambda_{i,*}$ and $\bar{\lambda}_i$. Estimating them for $i \ge 1$ turns out to be rather difficult. For $\lambda_{i,*}$, this is due to the behavior of the mass function $m_\lambda$: in the non-coercive case $\lambda \in (\Lambda_i, \Lambda_{i+1})$ for $i \ge 1$,  $m_\lambda$ displays richer properties than in the coercive case. We prove in Proposition \ref{prop:comportement:masse} below, for instance, that the asymptotic behavior of $m_\lambda$ as $\lambda \to \Lambda_i$ from the right is related to the geometry of the nodal sets of eigenfunctions of $E_{\Lambda_i}(\Om)$ and to the mass of the unique Green's function of $- \Delta - \Lambda_i$, both of which are not easily understood. In particular, Proposition \ref{prop:comportement:masse} below hints at the possibility of having $\lambda_{i,*} = \Lambda_i$ for some $i \ge 1$ and some smooth bounded domain $\Om$, something that would be in striking contrast with the coercive case. Concerning $\bar{\lambda}_i$, the rearrangement argument used in \cite{BN} to show that $\bar{\lambda}_0 >0$ turns out to be ineffective when $i \ge 1$, and proving that $\bar{\lambda}_i > \Lambda_i$ would require new techniques to obtain non-existence results for sign-changing solutions of \eqref{eq:critlambda} when $\lambda \ge \Lambda_1$. In Proposition \ref{prop:massenegative} below we could only prove that in the specific case $i=1$ we have $\lambda_{1,*} \in (\Lambda_1, \Lambda_2)$ (see \eqref{lambda:star:1} below), but without being able to estimate $\bar{\lambda}_1$.

\medskip

We conclude this section by proving the following result which establishes that, as $\lambda \to \Sp(-\Delta)$ from the left, the sign-changing least-energy solutions of \eqref{eq:critlambda} bifurcate from $E_{\Lambda_{i+1}}(\Om)$:

\begin{prop} \label{prop:bifurcation}
Let $\Om$ be a smooth bounded domain of $\R^n$, $n \ge 3$. Let $i \ge 1$ be fixed and let $u_\lambda$, for $\lambda$ close enough to $\Lambda_{i+1}$, be a solution of \eqref{eq:critlambda} attaining $\mathcal{E}_{\lambda}(\Om)$. Then $\frac{u_\lambda }{\Vert u_\lambda \Vert_{\infty}}$ converges, up to a subsequence, to a non-zero element of $E_{\Lambda_{i+1}}(\Om)$ as $\lambda \to \Lambda_{i+1}$ from the left. 
\end{prop}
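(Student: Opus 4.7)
The plan is to combine Proposition \ref{prop:liens} with a standard blow-up argument and elliptic regularity up to the boundary. I would first show that $\|u_\lambda\|_{2^*} \to 0$ as $\lambda \to \Lambda_{i+1}^-$. Let $k = N(i)+1$. Since $\mu_k^\lambda(\Om) \to 0$ by Proposition \ref{prop:C0:mkl}, we have $\mu_k^\lambda(\Om) < K_n^{-2}$ for $\lambda$ in a left neighbourhood of $\Lambda_{i+1}$, and Theorem \ref{prop:minmkl1} then provides a solution $\psi_\lambda$ of \eqref{eq:critlambda} with $\int_\Om |\psi_\lambda|^{2^*}\, dx = \mu_k^\lambda(\Om)^{n/2}$. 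On the other hand, Proposition \ref{prop:structure:vp} applied to $\tilde u_\lambda := u_\lambda/\|u_\lambda\|_{2^*}$ yields $\|u_\lambda\|_{2^*}^{2^*-2} = \mu_p^\lambda(|\tilde u_\lambda|)$ for some $p \ge 1$, and Proposition \ref{prop:signvp} forces $p \ge k$; the least-energy property of $u_\lambda$ therefore gives $\|u_\lambda\|_{2^*}^{2^*-2} = \mu_k^\lambda(\Om) \to 0$, hence $\int_\Om |u_\lambda|^{2^*}\, dx \to 0$.

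Second, I would prove that $\|u_\lambda\|_\infty$ remains bounded by a contradiction. Assume $M_\lambda := \|u_\lambda\|_\infty = |u_\lambda(x_\lambda)| \to +\infty$ along a subsequence, let $\mu_\lambda = M_\lambda^{-2/(n-2)}$ and set $\tilde u_\lambda(y) := M_\lambda^{-1} u_\lambda(x_\lambda + \mu_\lambda y)$ on $\tilde \Om_\lambda := \mu_\lambda^{-1}(\Om - x_\lambda)$. Then $\|\tilde u_\lambda\|_\infty = 1 = |\tilde u_\lambda(0)|$ and
\[
-\Delta \tilde u_\lambda = \lambda \mu_\lambda^2 \tilde u_\lambda + |\tilde u_\lambda|^{2^*-2} \tilde u_\lambda \quad \text{in } \tilde \Om_\lambda, \qquad \tilde u_\lambda = 0 \text{ on } \partial \tilde \Om_\lambda.
\]
Uniform $C^{1,\alpha}$ estimates up to the boundary (whose curvature is $\mu_\lambda$ times that of $\partial \Om$) yield a uniform gradient bound, which combined with $|\tilde u_\lambda(0)|=1$ implies $d(0, \partial \tilde \Om_\lambda) \ge c_0 > 0$ uniformly. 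Extracting a further subsequence, $\tilde u_\lambda \to \tilde u_0$ in $C^2_{\loc}$ on the limiting domain (either $\R^n$ or a half-space), with $|\tilde u_0(0)| = 1$ and $\tilde u_0$ solving $-\Delta \tilde u_0 = |\tilde u_0|^{2^*-2}\tilde u_0$. However, a change of variable gives $\int_{\tilde \Om_\lambda} |\tilde u_\lambda|^{2^*}\, dy = \int_\Om |u_\lambda|^{2^*}\, dx \to 0$, and Fatou's lemma forces $\tilde u_0 \equiv 0$, contradicting $|\tilde u_0(0)| = 1$. Hence $\|u_\lambda\|_\infty \le C$ uniformly, and standard Schauder estimates then show $u_\lambda$ is bounded in $C^{2,\alpha}(\overline \Om)$; any $C^2$ limit $u_0$ satisfies $-\Delta u_0 - \Lambda_{i+1} u_0 = |u_0|^{2^*-2}u_0$ and $\int_\Om |u_0|^{2^*}\, dx = 0$, so $u_0 \equiv 0$ and $\|u_\lambda\|_\infty \to 0$.

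Finally, I would study $v_\lambda := u_\lambda/\|u_\lambda\|_\infty$. It satisfies $\|v_\lambda\|_\infty = 1$ and
\[
-\Delta v_\lambda - \lambda v_\lambda = \|u_\lambda\|_\infty^{2^*-2}|v_\lambda|^{2^*-2} v_\lambda,
\]
whose right-hand side converges to $0$ uniformly by the previous step. Uniform $C^{2,\alpha}(\overline \Om)$ Schauder bounds together with Arzel\`a-Ascoli yield, up to extraction, $v_\lambda \to v_0$ in $C^2(\overline \Om)$. The uniform convergence gives $\|v_0\|_\infty = 1 > 0$, and passing to the limit in the equation, $v_0$ satisfies $-\Delta v_0 - \Lambda_{i+1} v_0 = 0$ with zero Dirichlet boundary values, so $v_0 \in E_{\Lambda_{i+1}}(\Om) \setminus \{0\}$, as claimed. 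The main obstacle is the blow-up step: although $\|u_\lambda\|_{2^*} \to 0$ formally precludes any nontrivial concentration profile via Fatou, ruling out concentration near $\partial \Om$ requires boundary $C^{1,\alpha}$ estimates uniform in the rescaling, which is what yields the lower bound on the distance from the blow-up centre to the rescaled boundary.
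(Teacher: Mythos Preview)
Your argument is correct and follows the same three-step outline as the paper: first $\|u_\lambda\|_{2^*}\to 0$, then $\|u_\lambda\|_\infty\to 0$, then normalise and pass to the limit via elliptic regularity. The difference is entirely in the middle step. The paper dispatches it in one line by invoking Struwe's global $H^1_0$-compactness theorem: since $\int_\Om |u_\lambda|^{2^*}\,dx \to 0 < K_n^{-n}$ and each bubble in a Struwe decomposition carries energy exactly $K_n^{-n}$, no bubble can form, so $u_\lambda\to 0$ strongly in $H^1_0$ and hence in $L^\infty$ by standard regularity. Your direct blow-up argument is a self-contained replacement for that citation --- it is essentially the local mechanism underlying Struwe's result, specialised below the one-bubble threshold, with the Fatou step playing the role of the energy quantisation. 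The paper's route is shorter but black-box; yours is more explicit and avoids the full decomposition, at the price of having to handle boundary concentration by hand via the uniform boundary $C^{1,\alpha}$ estimates you flag. Both buy exactly the same conclusion; your first step is also just an unpacking of Proposition~\ref{prop:liens}, which you could cite directly.
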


\begin{proof}
By Theorems \ref{theoreme:n:3} and \ref{theoreme:n:4plus}, $\int_{\Om} |u_\lambda|^{2^*}dx \to 0$ as $\lambda \to \Lambda_{i+1}$ from the left. Since $u_\lambda$ solves \eqref{eq:critlambda}, we also obtain that $\Vert u_\lambda \Vert_{H^1_0} \to 0$ as $\lambda \to \Lambda_{i+1}$. The $H^1_0(\Om)$-compactness result of Struwe \cite{Struwe} then shows that $\Vert u_\lambda \Vert_{\infty} \to 0$ as $\lambda \to \Lambda_{i+1}$ from the left. The convergence of  $\frac{u_\lambda }{\Vert u_\lambda \Vert_{\infty}}$ towards a non-zero element of $E_{\Lambda_{i+1}}(\Om)$ then follows from standard elliptic theory.
\end{proof}
Proposition \ref{prop:bifurcation} thus hints that least-energy solutions of \eqref{eq:critlambda} belong to one of the bifurcating families in constructed in  Cerami-Fortunato-Struwe \cite{CeramiFortunatoStruwe}.

\appendix

\section{Minimisation of $\mu_1^{\lambda}(\Om)$ in the coercive case } \label{app:m1}

Let $\Om \subset \R^n, n \ge 3$ be an open bounded set. For $\lambda \in [0, \Lambda_1]$ we define 
\begin{equation} \label{defIl}
I_\lambda(\Om) = \inf_{v \in H^1_0(\Om)\backslash \{0\}} \frac{\int_{\Om} \big( |\nabla v|^2 - \lambda v^2 \big)dx}{\big(\int_{\Om}|v|^{2^*}dx \big)^{\frac{n-2}{n}}}.
\end{equation}
In the celebrated paper \cite{BN}, based on previous work by Aubin \cite{AubinYamabe}, Br\'ezis and Nirenberg investigated when $I_\lambda(\Om)$ is attained. We consider the following minimisation problem, which is exactly \eqref{eq:minmuk} in the coercive case $ \lambda \in [0, \Lambda_1)$, that is when $k=1$:
 \begin{equation} \label{eq:minmu11}
\mu_1^{\lambda}(\Om) = \inf_{u \in L^{2^*}_>(\Om)} \mu_1^{\lambda}(u) \Vert u \Vert_{2^*}^{2^*-2} 
\end{equation}
where $L^{2^*}_{>}(\Om)$ is as in \eqref{defLu} and $\mu_1^{\lambda}(u)$ is as in \eqref{muk}. Since $- \Delta -\lambda$ is coercive for $\lambda \in [0, \Lambda_1)$ we have $\mu_1^{\lambda}(\Om), I_\lambda(\Om) \ge 0$. We claim that \eqref{defIl} and \eqref{eq:minmu11}  are equivalent:

\begin{lemme} \label{lemme:muI}
Let $\Om$ be a smooth open bounded set of $\R^n, n \ge 3$. For any $\lambda \in [0, \Lambda_1)$
\begin{equation*}
\mu_1^{\lambda}(\Om) = I_\lambda(\Om)
\end{equation*}
holds. In addition $I_\lambda(\Om)$ is attained if and only if $\mu_1^{\lambda}(\Om)$ is. 
\end{lemme}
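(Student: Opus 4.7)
The plan is to prove both inequalities $\mu_1^\lambda(\Om) \le I_\lambda(\Om)$ and $\mu_1^\lambda(\Om) \ge I_\lambda(\Om)$ separately, and then derive the equivalence of attainment by combining the structure of minimisers provided by Proposition \ref{prop:extremales} with a simple positivity argument for minimisers of $I_\lambda(\Om)$.

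First I would prove the lower bound $\mu_1^\lambda(\Om) \ge I_\lambda(\Om)$ via a direct application of H\"older's inequality. For any $u \in L^{2^*}_>(\Om)$ and any test function $v \in H^1_0(\Om) \setminus \{0\}$, applying H\"older to the product $u^{2^*-2} \cdot v^2$ with conjugate exponents $\frac{n}{2}$ and $\frac{2^*}{2}$ yields $\int_\Om u^{2^*-2} v^2\,dx \le \Vert u \Vert_{2^*}^{2^*-2} \Vert v \Vert_{2^*}^2$. Since $\lambda < \Lambda_1$ the Rayleigh numerator $\int_\Om(|\nabla v|^2 - \lambda v^2)\,dx$ is non-negative, so dividing and using the equivalent characterisation \eqref{mu1eq} of $\mu_1^\lambda(u)$ gives $\mu_1^\lambda(u)\, \Vert u \Vert_{2^*}^{2^*-2} \ge I_\lambda(\Om)$. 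Taking the infimum over $u$ delivers the claim.

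For the reverse inequality I would argue by approximation. Given any $v \in H^1_0(\Om)\setminus\{0\}$, the natural candidate weight $|v|$ may vanish on a set of positive measure, so it need not belong to $L^{2^*}_>(\Om)$; I would therefore set $u_\ve = |v| + \ve$, which lies in $L^{2^*}_>(\Om)$ since $\Om$ is bounded. Using $v$ itself as a test function in \eqref{mu1eq} for $\mu_1^\lambda(u_\ve)$, dominated convergence gives $\int_\Om u_\ve^{2^*-2} v^2\,dx \to \int_\Om |v|^{2^*}\,dx$ and $\Vert u_\ve \Vert_{2^*} \to \Vert v \Vert_{2^*}$ as $\ve \to 0$. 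Passing to the limit in $\mu_1^\lambda(u_\ve)\,\Vert u_\ve \Vert_{2^*}^{2^*-2}$ and then taking the infimum over $v$ completes the proof of equality. For the equivalence of attainment, if $I_\lambda(\Om)$ is attained at some $v_0$, then $|v_0|$ is also a minimiser and satisfies the associated Euler--Lagrange equation weakly; since $-\Delta |v_0| = \lambda |v_0| + c|v_0|^{2^*-1} \ge 0$ and $|v_0| \ge 0$ is non-trivial, the strong maximum principle forces $|v_0| > 0$ in $\Om$. Then $u_0 = |v_0|/\Vert v_0\Vert_{2^*} \in L^{2^*}_>(\Om)$, and a direct computation shows $\mu_1^\lambda(u_0)\,\Vert u_0\Vert_{2^*}^{2^*-2} = I_\lambda(\Om)$, so $u_0$ attains $\mu_1^\lambda(\Om)$. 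Conversely, if $u_0 \in L^{2^*}_>(\Om)$ with $\Vert u_0\Vert_{2^*} = 1$ attains $\mu_1^\lambda(\Om)$, Proposition \ref{prop:extremales} furnishes a generalised eigenvector $\vp$ satisfying $u_0 = |\vp|$ and $\int_\Om u_0^{2^*-2} \vp^2\,dx = 1$; substituting these identities into \eqref{defIl} immediately shows that $\vp$ attains $I_\lambda(\Om)$.

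The main obstacle will be the approximation in the second step, since $L^{2^*}_>(\Om)$ is not closed under $L^{2^*}$ convergence and the equality $\int_\Om |v|^{2^*-2} v^2\,dx = \int_\Om |v|^{2^*}\,dx$ would otherwise lead one to take $u = |v|$ directly. This is however handled cleanly by the additive perturbation $u_\ve = |v| + \ve$ together with a standard dominated convergence argument, relying on the boundedness of $\Om$ to ensure integrability of the dominating function.
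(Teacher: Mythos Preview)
Your proof is correct and follows the same overall architecture as the paper's: H\"older's inequality for $\mu_1^\lambda(\Om) \ge I_\lambda(\Om)$, and the choice $u \approx |v|$ for the reverse inequality. There are two minor differences worth noting. First, for the upper bound the paper writes tersely ``choosing $v = u$ in \eqref{mu1eq}'' without addressing that an arbitrary $v \in H^1_0(\Om)$ may vanish on a set of positive measure; your approximation $u_\ve = |v| + \ve$ handles this point more carefully. Second, for the direction ``$\mu_1^\lambda(\Om)$ attained $\Rightarrow$ $I_\lambda(\Om)$ attained'', the paper gives a self-contained argument via the equality case in H\"older (which forces $u_0 = c\,\vp_1^\lambda(u_0)$ a.e.), whereas you invoke Proposition~\ref{prop:extremales}. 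Your route is shorter but imports heavier machinery; note in particular that Proposition~\ref{prop:dervp}, on which Proposition~\ref{prop:extremales} relies, is stated only for $i \ge 1$, so a word on why the $k=1$ case goes through (e.g.\ because $\mu_1^\lambda(u)$ is already simple by Proposition~\ref{prop:vp1}) would make the appeal airtight. The paper's H\"older-equality argument avoids this dependence entirely.
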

\begin{proof}
We assume that $\lambda \in [0, \Lambda_1)$. Let $u \in L^{2^*}_{>}(\Om)$, with $\Vert u \Vert_{2^*} = 1$. We use the equivalent caracterisation \eqref{mu1eq} for $\mu_1^{\lambda}(u)$. By H\"older's inequality, for any $v \in H^1_0(\Om)$, we have
$$ \int_{\Om} u^{2^*-2}v^2dx \le \Big(\int_{\Om} |v|^{2^*}dx \Big)^{\frac{2}{2^*}},$$
and thus $\mu_1^{\lambda}(u) \ge I_\lambda(\Om)$, which shows that $\mu_1^{\lambda}(\Om) \ge I_\lambda(\Om)$. Independently, by choosing $v = u$ in \eqref{mu1eq} we obtain $\mu_1^\lambda(u) \Vert u \Vert_{2^*}^{2^*-2} \le I_\lambda(\Om)$ and hence $\mu_1^{\lambda}(\Om) \le I_\lambda(\Om)$.

Assume first that $I_\lambda(\Om)$ is attained at some function $u \in H^1_0(\Om) \cap C^\infty(\overline{\Om})$, with $u >0$ in $\Om$ and $\Vert u \Vert_{2^*}=1$. Standard variational arguments show that 
$$ - \triangle u - \lambda u = I_\lambda(\Om) u^{2^*-1} \quad \text{ in } \Om. $$
Choosing $u$ and $v = u$ in \eqref{mu1eq} shows that $\mu_1^{\lambda}(u) \le I_\lambda(\Om)= \mu_1^{\lambda}(\Om)$, and hence $u$ attains $\mu_1^{\lambda}(\Om)$. Assume now that $\mu_1^\lambda(\Om)$ is attained at some function $u \in L^{2^*}_{>}(\Om)$ with $\Vert u\Vert_{2^*}=1$. We let $\vp_1^{\lambda}(u)$ be the first normalised eigenvector given by Proposition \ref{prop:vp1}. Choosing $u = v = \vp_1^{\lambda}(u)$ in the definition of $\mu_1^\lambda(\Om)$ gives
$$ \bal
\mu_1^{\lambda}(\Om) & \le \frac{\int_\Om |\nabla \vp_1^{\lambda}(u)|^2 - \lambda (\vp_1^{\lambda}(u))^2 dx }{\int_{\Om} ( \vp_1^{\lambda}(u))^{2^*}dx} \Vert \vp_1^{\lambda}(u) \Vert_{2^*}^{2^*-2} \\
& = \mu_1^{\lambda}(\Om) \int_{\Om} u^{2^*-2}  \vp_1^{\lambda}(u)^2 \, dx \le  \mu_1^{\lambda}(\Om)
\eal $$
where the last inequality follows from H\"older's inequality. The equality case in H\"older's inequality then shows that $u_0 = c \vp_1^{\lambda}(u_0)$ a. e. for some $c >0$. The normalisation of $\vp_1^{\lambda}(u_0)$ then implies $c=\pm1$, and hence $ \vp = \vp_{1}^\lambda(u_0) \neq 0$ solves
$$ - \triangle \vp - \lambda \vp  = \mu_1^{\lambda}(\Om) |\vp|^{2^*-2} \vp \quad \text{ in } \Om. $$
Since $I_\lambda(\Om) = \mu_1^{\lambda}(\Om)$, $\vp_1^{\lambda}(u_0)$ then attains $I_\lambda(\Om)$. 
\end{proof}

\section{Asymptotic analysis for sign-changing solutions of \eqref{eq:critlambda} at the least energy level} \label{app:m2}

In this section we prove an asymptotic analysis result that was used in the proof of Theorem \ref{theoreme:n:3}. Let $n \ge 3$. For $z \in \Om$ and $\mu >0$ we let, for $x \in \Om$, 
$$ B_{z,\mu}(x) = \frac{\mu^{\frac{n-2}{2}}}{\left( \mu^2 + \frac{|x-z|^2}{n(n-2)} \right)^{\frac{n-2}{2}}} = \mu^{- \frac{n-2}{2}} U_0 \Big( \frac{x-z}{\mu}\Big), $$
where $U_0$ is as in \eqref{defU0}. For $z \in \Om$ and $\mu >0$ we let $P B_{z,\mu}$ be the unique solution in $H^1_0(\Om)$ of 
\begin{equation} \label{projbulle:l}
\left \{ \begin{aligned}
-\Delta P B_{z,\mu}  & = B_{z,\mu}^{2^*-1} \quad \text{ in } \Om \\
P B_{z,\mu} & = 0 \quad \text{ on } \partial \Om.
\end{aligned} \right. 
\end{equation}
By the maximum principe we have 
\begin{equation} \label{projbulle:l:2}
0 < P B_{z,\mu} (x) \le B_{z,\mu}(x)
\end{equation} for every $x \in \Om$. We will let in the following 
\begin{equation} \label{kernels}
\begin{aligned}
  T_{z,\mu} & = \text{Span} \Big \{  P B_{z,\mu}, \frac{\partial}{\partial \mu} P B_{z,\mu},  \frac{\partial}{\partial z_i} P B_{z,\mu} , 1 = i \cdots n\Big \} 
   \end{aligned} 
 \end{equation}
and we will let $T_{z,\mu}^{\perp}$ be the orthogonal complement with respect to the $H^1_0(\Om)$ scalar product. We then have the following result:

\begin{prop} \label{analyse:asympto:onebubble}
Let $\Om \subset \R^3$ be smooth open bounded. Let $i \ge 0$, let $\bar{\lambda} \in (\Lambda_i, \Lambda_{i+1})$ and let $(u_\lambda)_{\lambda}$, for $\lambda$ close to $\bar{\lambda}$, be a family of solutions of \eqref{eq:critlambda} that satisfies $E(u_\lambda) = \int_{\Om} |u_\lambda|^{6}\, dx \le  K_3^{-3}$ for all $\lambda$ and $\Vert u_\lambda \Vert_{\infty} \to + \infty$ as $\lambda \to \bar{\lambda}$. Then, up to a subsequence as $\lambda \to \bar{\lambda}$, there exist families $(x_\lambda)_{\lambda}$ of points of $\Om$, $(\mu_\lambda)_{\lambda}$ of positive real numbers and $(\alpha_\lambda)_{\lambda}$ of real numbers such that $\mu_\lambda \to 0$, $\alpha_\lambda \to 1$, $u_\lambda = \alpha_\lambda PB_{x_\lambda, \mu_\lambda} + o(1)$ in $H^1_0(\Om)$ and such that $x_\lambda \to x_0 \in \Om$ with $m_{\bar{\lambda}}(x_0) = 0$.
\end{prop}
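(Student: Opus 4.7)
\medskip

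\textbf{Plan of proof.} The strategy is to apply Struwe's global compactness decomposition, use the energy threshold $K_3^{-3}$ to force a single bubble with zero weak limit, and then carry out the standard three-dimensional Br\'ezis-Nirenberg-Druet asymptotic analysis to identify the concentration point through the mass function $m_{\bar\lambda}$.

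\textbf{Step 1 (Struwe decomposition).} First I would observe that $(u_\lambda)_\lambda$ is uniformly bounded in $H^1_0(\Om)$: integrating \eqref{eq:critlambda} against $u_\lambda$, using H\"older's inequality and the energy bound $\int_\Om |u_\lambda|^6\,dx \le K_3^{-3}$ yields $\|u_\lambda\|_{H^1_0} \le C$ uniformly in $\lambda$ close to $\bar\lambda$. Since $\bar\lambda \not\in \Sp(-\Delta)$, Struwe's global compactness theorem then gives, up to a subsequence,
\begin{equation*}
u_\lambda = u_\infty + \sum_{j=1}^N \alpha_j PB_{x_j^\lambda,\mu_j^\lambda} + r_\lambda,
\end{equation*}
where $u_\infty \in H^1_0(\Om)$ solves $-\Delta u_\infty - \bar\lambda u_\infty = |u_\infty|^4 u_\infty$ in $\Om$, each $\alpha_j \in \{-1,+1\}$, $\mu_j^\lambda \to 0$ and $d(x_j^\lambda,\partial\Om)/\mu_j^\lambda \to \infty$, $\|r_\lambda\|_{H^1_0} \to 0$, and the energies decouple:
\begin{equation*}
\int_\Om |u_\lambda|^6\,dx = \int_\Om |u_\infty|^6\,dx + N K_3^{-3} + o(1).
\end{equation*}

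\textbf{Step 2 (single bubble, zero weak limit).} The assumption $\|u_\lambda\|_\infty \to \infty$ excludes the purely regular case and forces $N \ge 1$. Combined with $\int_\Om |u_\lambda|^6\,dx \le K_3^{-3}$, the energy identity yields $\int_\Om |u_\infty|^6\,dx + N K_3^{-3} \le K_3^{-3} + o(1)$, so $u_\infty \equiv 0$ and $N=1$. This produces $u_\lambda = \alpha_\lambda PB_{x_\lambda,\mu_\lambda} + r_\lambda$ with $\|r_\lambda\|_{H^1_0} \to 0$ and $\alpha_\lambda \to \pm 1$. Replacing $u_\lambda$ by $-u_\lambda$ if needed, I may assume $\alpha_\lambda \to 1$. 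Using a standard implicit function argument on the four-dimensional manifold parametrised by $(z,\mu,\alpha)$ I would furthermore arrange that $r_\lambda \in T_{x_\lambda,\mu_\lambda}^\perp$.

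\textbf{Step 3 (identification of the concentration point via $m_{\bar\lambda}$).} Passing to a subsequence $x_\lambda \to x_0 \in \overline\Om$. To prove $x_0 \in \Om$ and $m_{\bar\lambda}(x_0)=0$ I would run the standard dimension-three asymptotic analysis \`a la Druet. Testing \eqref{eq:critlambda} against $PB_{x_\lambda,\mu_\lambda}$ and $\mu_\lambda \partial_\mu PB_{x_\lambda,\mu_\lambda}$, using the Green-function expansion \eqref{eq:asymG} to describe $B_{x_\lambda,\mu_\lambda} - P_{\bar\lambda} B_{x_\lambda,\mu_\lambda}$ in terms of the regular part of $G_{\bar\lambda}(x_\lambda,\cdot)$, and absorbing the remainder $r_\lambda$ by $\|r_\lambda\|_{H^1_0} \to 0$ together with the orthogonality conditions, would produce an asymptotic identity of the form
\begin{equation*}
\int_\Om |u_\lambda|^6\,dx = K_3^{-3} - C_3\, m_{\bar\lambda}(x_\lambda)\,\mu_\lambda + o(\mu_\lambda)
\end{equation*}
for some explicit $C_3 > 0$, together with the balance relation forcing the leading-order coefficient of $m_{\bar\lambda}(x_\lambda)$ to vanish. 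Since $m_{\bar\lambda}(x) \to -\infty$ as $x \to \partial\Om$ by Proposition \ref{annexe:masse}, this precludes $x_0 \in \partial\Om$ and yields $m_{\bar\lambda}(x_0) = 0$, as claimed.

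\textbf{Main obstacle.} The delicate step is the precise asymptotic analysis in Step 3. The projection $PB$ in \eqref{projbulle:l} is defined through $-\Delta$ while the relevant Green function for the expansion is that of $-\Delta-\bar\lambda$; bridging these two regular parts, controlling the remainder $r_\lambda$ to the required order $o(\mu_\lambda)$, and extracting the exact form of the expansion isolating $m_{\bar\lambda}(x_\lambda)$ are the technical hard parts. Once the expansion is in place, the sign-and-size discussion together with the behaviour of $m_{\bar\lambda}$ near $\partial\Om$ closes the proof.
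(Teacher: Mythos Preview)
Your Steps 1 and 2 are correct and match the paper's setup. The gap is in Step 3, and you have essentially identified it yourself under ``Main obstacle'' without resolving it.

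The energy-expansion route you propose cannot close the argument with only $\|r_\lambda\|_{H^1_0}=o(1)$. Testing \eqref{eq:critlambda} against $PB_{x_\lambda,\mu_\lambda}$ or $\mu_\lambda\partial_\mu PB_{x_\lambda,\mu_\lambda}$ produces error terms of size $O(\mu_\lambda^{1/2}\|r_\lambda\|_{H^1_0})$ and $O(\|r_\lambda\|_{H^1_0}^2)$; to isolate the coefficient $m_{\bar\lambda}(x_\lambda)\mu_\lambda$ you would need $\|r_\lambda\|_{H^1_0}=o(\mu_\lambda^{1/2})$, which you have not established. Moreover, since the projection $PB$ is built from $-\Delta$, the leading correction in any such expansion naturally involves $m_0(x_\lambda)$, not $m_{\bar\lambda}(x_\lambda)$; the passage from one to the other requires precisely the sharp control you are missing. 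Finally, the inequality $E(u_\lambda)\le K_3^{-3}$ gives at best a one-sided constraint, not the equality $m_{\bar\lambda}(x_0)=0$.

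The paper follows a different and more robust route. In its Step~1 it performs an energy expansion involving $m_0$ (not $m_{\bar\lambda}$): combining the inequality $E(u_\lambda)\le K_3^{-3}$, the coercivity of the second variation on $T_{x_\lambda,\mu_\lambda}^\perp$, and the fact that $d(y,\partial\Om)\,m_0(y)$ is bounded away from zero yields simultaneously $\|s_\lambda\|_{H^1_0}=O(\mu_\lambda^{1/2})$ and $x_0\in\Om$. Its Step~2 then upgrades the $H^1_0$ information to the pointwise bound $|u_\lambda|\le C B_\lambda$ via a maximum-principle comparison with powers of $G_0(x_\lambda,\cdot)$, using crucially that $-\Delta-\bar\lambda$ is invertible. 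Only once this $C^0$ control is in hand does Step~3 apply a local Pohozaev identity on shrinking balls around $x_\lambda$, which isolates $m_{\bar\lambda}(x_0)$ exactly and forces it to vanish. The pointwise estimate is the missing ingredient in your sketch; without it the ``balance relation'' you invoke cannot be made rigorous.
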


This result is a variation, for sign-changing solutions, of similar and by now classical results that have been proven for positive solutions (see for instance \cite{Druetdim3, FrankKonigKovarik3, Rey}) or for almost-minimisers of \eqref{eq:critlambda} in the coercive case $\lambda \in (0, \Lambda_1)$ (see e.g. \cite{FrankKonigKovarik3autre}). A generalisation of Proposition \ref{analyse:asympto:onebubble}, for sign-changing solutions that may have energy higher than $K_3^{-3}$, was recently given in \cite{CheikhAliPremoselli}. 

\begin{proof}
We let $u_\lambda$ be as in the statement of Proposition \ref{analyse:asympto:onebubble}. Since $u_\lambda$ solves \eqref{eq:critlambda}, Struwe's celebrated $H^1_0(\Om)$-compactness result \cite[Chap. III, Theorem 3.1]{StruweVariationalMethods} shows that there exist families $(\tilde{x}_\lambda)_{\lambda}$ of points of $\Om$ and $(\tilde{\mu}_\lambda)_{\lambda}$ of positive real numbers converging to $0$ such that $\frac{d(\tilde{x}_\lambda, \partial \Om)}{\tilde{\mu}_\lambda} \to + \infty$ and 
\begin{equation} \label{assumption:l}
 u_\lambda = \tilde{B}_\lambda + o(1) \quad \text{  in } H^1_0(\Om)
 \end{equation}
 as $\lambda \to \bar{\lambda}$, where we have let $\tilde{B}_\lambda = B_{\tilde{x}_\lambda, \tilde{\mu}_\lambda}$. By \eqref{assumption:l} and by assumption we thus have $\lim_{\lambda \to \bar{\lambda}} E(u_\lambda) = K_3^{-3}$ from below. Throughout this proof all the limits are taken up to a subsequence a $\lambda \to \bar{\lambda}$. We let $x_0 = \lim_{\lambda \to \bar{\lambda}} \tilde{x}_\lambda \in \overline{\Om}$.  
 \medskip
 
 \textbf{Step $1$:} we first claim that the following holds:
\begin{equation} \label{x0:interieur}
x_0 \in \Om. 
\end{equation}
We adapt a simple argument in \cite[Proposition 4.1]{FrankKonigKovarik3autre}, initially written for positive solutions, that immediately extends to sign-changing solutions. We only sketch it here. First, a classical argument \`a la \cite[Proposition 7]{BahriCoron} shows, up to changing $u_\lambda$ in $-u_\lambda$, that  there exist families $(x_\lambda)_{\lambda}$ of points of $\Om$, $(\mu_\lambda)_{\lambda}$ of positive real numbers converging to $0$ and $(\alpha_\lambda)_{\lambda}$ of real numbers such that 
 \begin{equation} \label{bahricoron}
  \begin{aligned} 
 u_\lambda & = \alpha_\lambda \big( P B_{x_\lambda, \mu_\lambda} + s_\lambda \big) ,\quad \frac{d(x_\lambda, \partial \Om)}{\mu_\lambda}  \to + \infty, \\
\alpha _\lambda & \to 1, \quad s_\lambda  \in T_{x_\lambda, \mu_\lambda}^{\perp} \quad \text{ and } \quad  \Vert s_\lambda \Vert_{H_0^1} = o(1)
 \end{aligned} 
\end{equation}
as $\lambda \to \bar{\lambda}$, and where $T_{x_\lambda, \mu_\lambda}$ is as in \eqref{kernels}. As explained in \cite[Proposition 7]{BahriCoron} we have $|x_\lambda - \tilde{x}_\lambda| \to 0$ as $\lambda \to \bar{\lambda}$, so that we still have $\lim_{\lambda \to \bar{\lambda}} x_\lambda = x_0$. Using \eqref{bahricoron}, we obtain that 
$$ \begin{aligned} 
 \frac{1}{\alpha_\lambda^2} \int_{\Om}\big( |\nabla u_\lambda|^2 - \lambda u_\lambda^2 \, \big) dx =  K_3^{-3} + C_0 m_0(x_\lambda) \mu_\lambda +  \int_{\Om}\big( |\nabla s_\lambda|^2 - \lambda u_\lambda^2 \big)\, dx \\
 +  o(\mu_\lambda) + O (\mu_\lambda^{\frac12} \Vert s_\lambda \Vert_{H^1_0}) + o \Big( \frac{\mu_\lambda}{d(x_\lambda, \partial \Om)} \Big)+ o(\Vert s_\lambda \Vert_{H^1_0}^2 )
\end{aligned} $$ 
for some positive constant $C_0$ independent of $\lambda$, and that 
$$  \begin{aligned} 
 \frac{1}{\alpha_\lambda^6}  \int_{\Om} u_\lambda^6 \, dx =   K_3^{-3} + 2 C_0 m_0(x_\lambda) \mu_\lambda + 15 \int_{\Om} PB_{x_\lambda, \mu_\lambda}^4 s_\lambda^2 \, dx \\
 + o \Big( \frac{\mu_\lambda}{d(x_\lambda, \partial \Om)} \Big)+ o(\Vert s_\lambda \Vert_{H^1_0}^2 ),
\end{aligned} $$ 
where $m_0(x_\lambda)$ is defined as in \eqref{eq:asymG}. For the details, see \cite[Proposition 4.1]{FrankKonigKovarik3autre}. Since $u_\lambda$ solves \eqref{eq:critlambda}, we have
$$ \frac{\int_{\Om} \big( |\nabla u_\lambda|^2 - \lambda u_\lambda^2\big) \, dx}{\left( \int_{\Om} |u_\lambda|^{6}\, dx\right)^{\frac13}} = \left( \int_{\Om} |u_\lambda|^{6}\, dx\right)^{\frac23} \le K_3^{-2}.$$
Using the previous expansions to expand the left-hand side of the latter yields 
\begin{equation} \label{x0:interieur:2}
 \begin{aligned} 
 \int_{\Om} \Big[ |\nabla s_\lambda|^2 - \lambda u_\lambda^2 \,  - 15 PB_{x_\lambda, \mu_\lambda}^4 s_\lambda^2\Big] \, dx - C_0 m_0(x_\lambda) \mu_\lambda  \\
 \le o(\Vert s_\lambda \Vert_{H^1_0}^2 ) +  o(\mu_\lambda) + O (\mu_\lambda^{\frac12} \Vert s_\lambda \Vert_{H^1_0}) + o \Big( \frac{\mu_\lambda}{d(x_\lambda, \partial \Om)} \Big)
  \end{aligned}
\end{equation} 
as $\lambda \to \bar{\lambda}$. It is well-known that there exists $\eta >0$ such that for any $\lambda$ small enough,
$$  \int_{\Om} \Big[ |\nabla s|^2 - \lambda s^2 \,  - 15 PB_{x_\lambda, \mu_\lambda}^4 s^2\Big] \, dx \ge \eta \Vert s \Vert^2 \quad \text{ holds for all } s \in H^1_0(\Om) $$
(see e.g.  \cite[Equation (D.1)]{Rey}). It was also proven in \cite[Equation (2.8)]{Rey} that $\sup_{y \in \Om} d(y, \partial \Om) m_0(y)  <0$. Combining these observations in \eqref{x0:interieur:2} then shows that there exists $C>0$ independent of $\lambda$ such that 
$$ \Vert s_\lambda \Vert_{H^1_0}^2 + \frac{\mu_\lambda}{d(x_\lambda, \partial \Om)} \le C \mu_\lambda^{\frac12} \Vert s_\lambda \Vert_{H^1_0} + o(\mu_\lambda) \quad \text{ as } \lambda \to \bar{\lambda}.$$
The latter first implies that $\Vert s_\lambda \Vert_{H^1_0} = O(\mu_\lambda^{\frac12})$ and then that $d(x_\lambda, \partial \Om) \not \to 0$ as $\lambda \to \bar{\lambda}$, which proves \eqref{x0:interieur}.

\medskip

\textbf{Step $2$: improved pointwise estimates.} For simplicity we will let $B_\lambda = B_{x_\lambda, \mu_\lambda}$. We claim that there is a constant $C >0$ such that for every $\lambda$ we have
\begin{equation} \label{theorie:C0:l}
|u_\lambda(x) | \le C B_\lambda(x)\quad \text{ for every } x \in \Om.	
\end{equation}
Pointwise estimates like \eqref{theorie:C0:l}, originally developed for positive solutions of critical elliptic equations (see e.g. \cite{Druetdim3, DruetHebeyRobert}), have recently been generalised to sign-changing solutions that exhibit bubbling profiles that may not be non-degenerate: see e.g. \cite{Premoselli13, PremoselliRobert, PremoselliVetois4}. The proof of \eqref{theorie:C0:l} is essentially contained in the proof of \cite[Theorem 2.1]{CheikhAliPremoselli} (up to assuming $v_\infty = 0$, with the notations of \cite{CheikhAliPremoselli}). We simplify the proof of \cite[Theorem 2.1]{CheikhAliPremoselli} here using \eqref{x0:interieur}. First, an adaptation of the arguments in \cite[Proposition 2.2]{CheikhAliPremoselli} shows that
\begin{equation} \label{theorie:C0:l:1}
\max_{x \in \overline{\Om}} (\mu_\lambda + |x-x_\lambda|)^{\frac{n-2}{2}} | u_\lambda(x) - P B_\lambda(x) | \to 0 
\end{equation}
as $\lambda \to \bar{\lambda}$. For $\rho>0$ small enough, we define
\begin{equation*}
	\eta_{\lambda}(\rho):=\sup_{\Om\backslash B(x_0, \rho)} \big| u_\lambda(x)\big|.
\end{equation*} 
Thanks to \eqref{assumption:l} and standard elliptic theory we obtain that, for any fixed $\rho >0$,
\begin{equation}\label{theorie:C0:l:2}
	\lim_{\lambda \to \bar{\lambda}} \eta_{\lambda}(\rho)=0.
\end{equation} 
We now claim that for any $\theta \in (0,\frac{1}{2})$ there exists $R>0$, $\rho>0$, and $C>0$ such that 
\begin{equation}\label{theorie:C0:l:3}
\big|u_{\la}(x)\big|\leq C\left( \frac{\mu_\lambda^{\frac{n-2}{2}-\theta(n-2)}}{|x-\xl|^{(n-2)(1-\theta)}}+\frac{\eta_{\lambda}(\rho)}{|x-\xl|^{(n-2)\theta}}\right) 
\end{equation} 
holds for all $\lambda$ and for all $ x\in \Om \backslash B(\xl, R \mu_\lambda )$. We prove \eqref{theorie:C0:l:3}: we let $G_0$ be the Green's function of $-\Delta$ in $\Om$ with Dirichlet boundary conditions. Since $x_0 \in \Om$, it follows from \cite{RobDirichlet} that there exists $c,C>0$ such that 
\begin{equation} \label{prop:greeen:l:1}  
	c  |\xl - x|^{-1} \le G_0 (\xl, x) \leq C  |\xl - x|^{-1} \bb{ for all } x\in \overline{\Om} \backslash \{\xl \}
\end{equation}
and 
\begin{equation} \label{prop:greeen:l:2} 
\frac{|\nabla G_{0}(\xl, x)|}{G_{0}(\xl, x)}\geq c |\xl - x|^{-1},
\end{equation}
for all $x\in B(\xl, \rho) \backslash \{ \xl \}\subset \Om$. We define
 \begin{equation*}
	\psi_{\theta,\la}(x):=\mu_\lambda^{\frac{n-2}{2}-\theta(n-2)}G_{0}(\xl,x)^{1-\theta}+\eta_{\la}(\rho)G_{0}(\xl, x)^{\theta},
\end{equation*}
which is a smooth positive function in $\Om \backslash \{\xl\}$, and we let $L_{\lambda}=-\Delta - \lambda -u_{\la}^{4}$. Straightforward computations show that 
$$ \frac{L_\lambda \psi_{\theta,\la}}{\psi_{\theta,\la}}  = - \lambda - u_\lambda^4 + \theta(1 - \theta)  \frac{|\nabla G_{0}(\xl, x)|^2}{G_{0}(\xl, x)^2}$$
for any $x \in B(\xl, \rho) \backslash B(\xl, R \mu_\lambda )$. Using \eqref{projbulle:l:2} and \eqref{theorie:C0:l:1} we obtain that 
$$ \lim_{R \to + \infty} \limsup_{\lambda \to \bar{\lambda}} \sup_{\Om \backslash B(\xl , R \mu_\lambda )} |\xl - x|^2 u_\lambda(x)^4 = 0. $$
Combining the latter with \eqref{prop:greeen:l:2} now shows that for $R$ large enough and $\rho$ small enough (depending on $\theta$, but both fixed) we have, for any $\lambda$ close enough to $\bar{\lambda}$, $L_\lambda \psi_{\theta,\la} >0$ in $B(\xl, \rho) \backslash B(\xl, R \mu_\lambda )$. By \eqref{theorie:C0:l:1} and the definition of $\eta_\lambda(\rho)$ there is a constant $C >0$ such that $u_\lambda \le C \psi_{\theta, \lambda}$ in $\partial \big( B(\xl, \rho) \backslash B(\xl, R \mu_\lambda )\big)$. Since independently we have $L_\lambda u_\lambda = 0$ by \eqref{eq:critlambda} the maximum principle shows that $u_\lambda \le C \psi_{\lambda, \theta}$ in $B(\xl, \rho) \backslash B(\xl, R \mu_\lambda )$. Together with the definition of $\eta_\lambda(\rho)$ this proves \eqref{theorie:C0:l:3}. 

\medskip

We now prove \eqref{theorie:C0:l}. First, arguing as in the proof of \cite[Proposition 2.4]{CheikhAliPremoselli}, a representation formula with \eqref{theorie:C0:l:3} shows that there exists $\rho >0$ such that 
\begin{equation} \label{theorie:C0:l:4}
 |u_\lambda(x)| \le C \big( B_\lambda(x) + \eta_\lambda(\rho) \big) 
 \end{equation}
for every $x \in \Om$ and $\lambda$ close to $\bar{\lambda}$, for some $C >0$. We proceed by contradiction and assume that, up to a subsequence as $\lambda \to \bar{\lambda}$, we have  $\eta_\lambda(\rho) >> \mu_\lambda^{\frac12}$. We let  $U_\lambda = \frac{u_\lambda}{\eta_\lambda(\rho)}$. For any $\lambda$ we let $z_\lambda \in \Om\backslash B(\xl , \rho)$ be such that $|u_\lambda(z_\lambda)| = \eta_\lambda(\rho)$. By \eqref{theorie:C0:l:4} we see that for any $\delta >0$ fixed we have $|U_\lambda(z_\lambda)|=1$ and 
\begin{equation*}
|U_\lambda(x)| \le C + o(1) \quad \text{ for } x \in \Om \backslash B(\xl, \delta). 
\end{equation*}
in $\Om$. Since $\eta_\lambda(\rho) \to 0$ by  \eqref{theorie:C0:l:2}, standard elliptic theory shows that $U_\lambda \to U_\infty$ in $C^2_{loc}(\overline{\Om} \backslash \{x_0\}$ as $\lambda \to \bar{\lambda}$, where $U_\infty$ satisfies $|U_\infty(x)| \le C$ for any $x \neq x_0$, $(- \Delta - \bar{\lambda})U_\infty = 0$ and $ \Om \backslash \{x_0\}$.  In particular, the singularity of $U_\infty$ at $x_0$ is removable and $U_\infty$ satisfies weakly $-\Delta U_\infty - \bar{\lambda} U_\infty  =0$ in $\Om$. Since $- \Delta - \bar{\lambda}$ has no kernel by the assumption $\bar{\lambda} \in (\Lambda_i, \Lambda_{i+1})$ this shows that $U_\infty \equiv 0$. Independently, if we let $z_\infty = \lim_{\lambda \to \bar{\lambda}} z_\lambda$, the $C^2_{loc}$ convergence shows that $|U_\infty(z_\infty)| = 1$, hence $U_\infty \not \equiv 0$. This is a contradiction, and thus $\eta_\lambda(\rho) = O( \mu_\lambda^{\frac12})$ as $\lambda \to \bar{\lambda}$. Plugging the latter in \eqref{theorie:C0:l:4} then shows that $ |u_\lambda(x)| \le C B_\lambda(x) $ in $\Om$ and concludes the proof of \eqref{theorie:C0:l}.

\medskip

\textbf{Step $3$: conclusion.} Once \eqref{theorie:C0:l} is proven the fact that $m_{\bar{\lambda}}(x_0) = 0$ follows from a simple Pohozaev identity. We refer for instance to \cite[Proposition 3.1]{CheikhAliPremoselli} for a proof (the statement of \cite[Proposition 3.1]{CheikhAliPremoselli} assumes coercivity of $-\Delta - \lambda$ but this is not used in the proof).
\end{proof}

The analysis of blowing-up solutions of \eqref{eq:critlambda} has originated a vast literature in the last decades. As $\lambda \to 0$ (when $n \ge 4$) or $\lambda \to \lambda_{*,0}$ (when $n=3$) from the right, for instance, minimal solutions constructed in \cite{BN, Druetdim3} may blow-up and their behavior has been investigated e.g. in \cite{Druetdim3, HanBN, Rey}. A priori descriptions of blowing-up solutions of \eqref{eq:critlambda} were obtained in  \cite{AGGP, BAEMP1, BAEMP2, FrankKonigKovarik3, HebeyBN, IacopettiPacella, IacopettiVaira2, LaurainKonig1, LaurainKonig2, Rey}. Constructive results for blowing-up solutions of \eqref{eq:critlambda} were obtained in \cite{LiVairaWeiWu, MussoPistoia2, MussoSalazar, PistoiaRagoVaira, PistoiaRocci, PistoiaVaira2, Premoselli12, SunWeiYang, SunWeiYang2} and the references therein.

\section{Properties of the mass functions} \label{annexe:masse}

In this last section we assume that $n=3$. We let $\lambda \not \in \Sp(- \Delta)$ and $G_\lambda$ be defined as in  \eqref{green:coercif}. For any $x \in \Om$ and $y \neq x$ we let
$$ H_\lambda(x,y) = G_\lambda(x,y) - \frac{1}{4\pi |x-y|}. $$
By  \eqref{green:coercif} it is easily see that $H_\lambda(x,\cdot)$ satisfies 
\begin{equation} \label{eq:Hlambda}
\left \{ \begin{aligned}
- \Delta H_\lambda(x, \cdot) - \lambda H_\lambda(x, \cdot) &=  \frac{\lambda}{4\pi |x-y|} \quad \text{ in } \Om, \\
  H_\lambda(x, \cdot) & = - \frac{1}{4\pi |x-y|} \quad \text{ in } \partial \Om.
\end{aligned} \right. 
\end{equation}
Standard elliptic theory and \eqref{eq:Hlambda} then show that, for any $x \in \Om$ fixed, $H_\lambda(x, \cdot) \in C^{0,1}(\overline{\Om})$. Using the expansion \eqref{eq:asymG} it is then easily seen that 
$$ m_\lambda(x) = H_\lambda(x,x). $$
Since $G_\lambda$ is symmetric in $x,y$ then so is $H_\lambda$. In this section we investigate several properties of the mass function. This function has been thoroughly studied in the coercive case $\lambda \in (0, \Lambda_1)$ (see e.g.\cite{DelPinoDolbeaultMusso}) but little is known in the non-coercive case $\lambda \ge \Lambda_1$. We prove several properties of $m_\lambda$ and relate its asymptotic behavior as $\lambda$ approaches $\Sp(-\Delta)$ to the nodal sets of eigenfunctions of $- \Delta$ in $\Om$. We first prove that the mass function is increasing as $\lambda$ varies between two consecutive eigenvalues: 

\begin{lemme}
Let $i \ge 0$ and let $\lambda \in (\Lambda_i, \Lambda_{i+1})$. Let $x \in \Om$ be fixed. Then 
\begin{equation} \label{der:mass:lambda}
\frac{d}{d \lambda} m_{\lambda}(x) = \int_{\Om} G_\lambda(x, y)^2 \, dy >0.
\end{equation}
\end{lemme}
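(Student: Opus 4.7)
The strategy is to differentiate $\lambda \mapsto G_\lambda(x, \cdot)$ via the resolvent identity and then take the diagonal limit $y \to x$. Fix $\lambda_0 \in (\Lambda_i, \Lambda_{i+1})$ and let $\lambda$ vary in a small interval around $\lambda_0$ contained in $(\Lambda_i, \Lambda_{i+1})$. Subtracting the defining equations \eqref{green:coercif} for $G_\lambda(x,\cdot)$ and $G_{\lambda_0}(x,\cdot)$ gives
\begin{equation*}
(-\Delta - \lambda)\bigl(G_\lambda(x,\cdot) - G_{\lambda_0}(x,\cdot)\bigr) = (\lambda - \lambda_0) G_{\lambda_0}(x,\cdot) \quad \text{in } \Om,
\end{equation*}
with zero Dirichlet boundary conditions. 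Since $\lambda \notin \Sp(-\Delta)$, the operator $-\Delta - \lambda$ is invertible, so inverting it and using the symmetry of $G_\lambda$ yields, for every $y \neq x$,
\begin{equation*}
G_\lambda(x,y) - G_{\lambda_0}(x,y) = (\lambda - \lambda_0) \int_\Om G_\lambda(y,z) G_{\lambda_0}(x,z)\, dz.
\end{equation*}

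The next step is to pass to the diagonal $y \to x$. The left-hand side equals $H_\lambda(x,y) - H_{\lambda_0}(x,y)$ since the Newtonian singularities cancel, and $H_\lambda(x,\cdot), H_{\lambda_0}(x,\cdot)$ are continuous on $\overline{\Om}$ by the discussion after \eqref{eq:Hlambda}; hence this side converges to $m_\lambda(x) - m_{\lambda_0}(x)$ as $y \to x$. For the right-hand side, the bound \eqref{eq:estimateGp} gives $|G_\lambda(y,z) G_{\lambda_0}(x,z)| \le C |y-z|^{-1} |x-z|^{-1}$ uniformly for $\lambda$ in a compact neighborhood of $\lambda_0$, and since $|y-z|^{-1}|x-z|^{-1}$ is locally integrable in $\R^3$ a dominated convergence argument shows that $y \mapsto \int_\Om G_\lambda(y,z) G_{\lambda_0}(x,z)\, dz$ is continuous at $y = x$. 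Letting $y \to x$ therefore produces the identity
\begin{equation*}
m_\lambda(x) - m_{\lambda_0}(x) = (\lambda - \lambda_0) \int_\Om G_\lambda(x,z) G_{\lambda_0}(x,z)\, dz.
\end{equation*}

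Dividing by $\lambda - \lambda_0$ and letting $\lambda \to \lambda_0$, one more application of dominated convergence with the uniform bound $|G_\lambda(x,z)| \le C |x-z|^{-1}$ gives
\begin{equation*}
\frac{d}{d\lambda}_{|\lambda = \lambda_0} m_\lambda(x) = \int_\Om G_{\lambda_0}(x,z)^2\, dz,
\end{equation*}
which is \eqref{der:mass:lambda}. The positivity is immediate because $G_{\lambda_0}(x,\cdot)$ is not identically zero (e.g.\ it blows up at $z = x$).

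The only delicate step is the passage $y \to x$ in the identity displayed above: this requires both the cancellation of the Newton kernel singularities on the left and the local $L^1$ control of the product $|y-z|^{-1}|x-z|^{-1}$ on the right, both of which rely on the dimension $n=3$ and the pointwise bound \eqref{eq:estimateGp}.
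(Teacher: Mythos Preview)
Your proof is correct and follows essentially the same resolvent-identity approach as the paper: both derive $G_\lambda(x,y)-G_{\lambda_0}(x,y)=(\lambda-\lambda_0)\int_\Om G_\lambda(y,z)G_{\lambda_0}(x,z)\,dz$ and evaluate at $y=x$. The only cosmetic difference is the order in which the two limits are handled: the paper first obtains an $L^\infty$ bound $\|G_{\lambda+\ve}(x,\cdot)-G_\lambda(x,\cdot)\|_\infty\le C\ve$ from elliptic theory, uses it to replace $G_{\lambda+\ve}$ by $G_\lambda$ in the integral (with an $O(\ve^2)$ error), and then sets $y=x$; you instead pass $y\to x$ first and then $\lambda\to\lambda_0$ via dominated convergence with the uniform bound \eqref{eq:estimateGp}.
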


\begin{proof}
Let $\lambda \in (\Lambda_i, \Lambda_{i+1})$ and $x \in \Om$ be fixed and let $\ve >0$ be small enough. By definition of $G_\lambda$ we have 
$$  - \Delta \big(  G_{\lambda+\ve}(x, \cdot) - G_\lambda(x, \cdot) \big) - (\lambda + \ve)(G_{\lambda+\ve}(x, \cdot)- G_\lambda(x, \cdot)) = \ve G_\lambda(x,\cdot). $$
Since $G_{\lambda+\ve}(x, \cdot) - G_\lambda(x, \cdot)$ vanishes in $\partial \Om$ and by \eqref{eq:estimateGp}, standard elliptic theory shows that there exists $C >0$ independent of $x$ such that  
\begin{equation} \label{der:mass:lambda:1}
\Vert  G_{\lambda+\ve}(x, \cdot) - G_\lambda(x, \cdot) \Vert_{L^\infty(\Om)} \le C\ve.
\end{equation}
A representation formula also gives, for $y \in \Om$,  
$$ H_{\lambda+\ve}(x,y) - H_\lambda(x,y) = G_{\lambda+\ve}(x,y) - G_\lambda(x, y) = \ve \int_{\Om}G_{\lambda+\ve}(y, z)G_\lambda(x,z) \, dz .$$
Expanding the integral in the right-hand side with \eqref{der:mass:lambda:1} then yields 
$$ H_{\lambda+\ve}(x,y) - H_\lambda(x,y) = \ve \int_{\Om}G_{\lambda}(y, z)G_\lambda(x,z) \, dz + O(\ve^2), $$
and applying the latter at $y=x$ yields \eqref{der:mass:lambda}.
\end{proof}
Note that in the coercive case $\lambda \in (0, \Lambda_1)$ the monotony of $\lambda \mapsto m_\lambda(x)$ easily follows from the maximum principle (see e.g. \cite{DelPinoDolbeaultMusso}). We now investigate the behavior of the mass function as $\lambda$ approaches $\Sp(-\Delta)$. We first prove that interior points of positive mass always exist provided $\lambda$ approaches $\Sp(-\Delta)$ from the left:

\begin{prop} \label{prop:massepositive}
Let $i \ge 0$ and let $\lambda \in (\Lambda_i, \Lambda_{i+1})$. Let $K \subset \subset \Om$ be a compact set of positive measure. Then  
$$ \max_{K} m_\lambda \to + \infty \quad \text{ as } \lambda \underset{< }{\to} \Lambda_{i+1} . $$
\end{prop}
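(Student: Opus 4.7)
The plan is to isolate the singular contribution of the eigenspace $E_{\Lambda_{i+1}}(\Om)$ to $G_\lambda$ as $\lambda \uparrow \Lambda_{i+1}$. Pick an $L^2(\Om)$-orthonormal basis $(\phi_j)_{j \ge 1}$ of $H^1_0(\Om)$ made of eigenfunctions of $-\Delta$, with eigenvalues $(\lambda_j)_{j \ge 1}$, denote by $\Pi$ the $L^2$-orthogonal projection onto $E_{\Lambda_{i+1}}(\Om)$, and let $P(x,y) = \sum_{j: \lambda_j = \Lambda_{i+1}} \phi_j(x) \phi_j(y)$ be its reproducing kernel. First I would establish the decomposition
\begin{equation} \label{plan:decomp}
G_\lambda(x,y) = \frac{P(x,y)}{\Lambda_{i+1}-\lambda} + R_\lambda(x,y), \qquad \lambda \in (\Lambda_i, \Lambda_{i+1}),
\end{equation}
where $R_\lambda(x,\cdot) \in (I - \Pi)H^1_0(\Om)$ is the unique element of this space weakly solving $(-\Delta - \lambda) R_\lambda(x,\cdot) = \delta_x - P(x,\cdot)$ in $\Om$ with Dirichlet boundary conditions. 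This identity can be verified by testing both sides against each $\phi_j$ and using the defining equation \eqref{green:coercif} for $G_\lambda$.

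The key analytic step would be to prove that the regular part of $R_\lambda$,
\begin{equation*}
\tilde R_\lambda(x) := \lim_{y \to x} \Big( R_\lambda(x,y) - \frac{1}{4\pi|x-y|} \Big),
\end{equation*}
is well-defined and stays uniformly bounded for $x \in K$ and for $\lambda$ in a left neighborhood of $\Lambda_{i+1}$. That the limit exists follows from the smoothness of $P(x,\cdot)$, which forces $R_\lambda(x,\cdot)$ to have the same Newton-kernel singularity as $G_\lambda(x,\cdot)$; the difference $R_\lambda(x,\cdot) - \frac{1}{4\pi|x-\cdot|}$ then solves an elliptic equation in $y$ with $L^p$ source (for $p < 3$) and smooth boundary data, both depending smoothly on $x \in K$. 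Uniformity in $\lambda$ is the nontrivial point: it would come from the fact that $R_\lambda(x,\cdot)$ lies in $(I - \Pi) H^1_0(\Om)$, on which $-\Delta - \lambda$ has spectrum uniformly bounded away from zero as $\lambda$ varies in a compact subinterval of $(\Lambda_i, \Lambda_{i+1}]$, combined with elliptic $L^p$ and Brezis-Kato type bootstrap. Plugging this into \eqref{eq:asymG} together with \eqref{plan:decomp} would yield
\begin{equation*}
m_\lambda(x) = \frac{P(x,x)}{\Lambda_{i+1}-\lambda} + \tilde R_\lambda(x) = \frac{P(x,x)}{\Lambda_{i+1}-\lambda} + O_K(1)
\end{equation*}
uniformly in $x \in K$, as $\lambda \underset{<}{\to} \Lambda_{i+1}$.

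To conclude, it would suffice to exhibit $x_\star \in K$ with $P(x_\star, x_\star) > 0$. Now $P(x,x) = \sum_{j: \lambda_j = \Lambda_{i+1}} \phi_j(x)^2$ is continuous and nonnegative, and its zero set coincides with the common zero set of a basis of $E_{\Lambda_{i+1}}(\Om)$. By Aronszajn's unique continuation theorem \cite{Aronszajn}, each $\phi_j$ vanishes on a set of Lebesgue measure zero, and so does $P(\cdot,\cdot)$; since $K$ has positive measure, such an $x_\star$ exists and then $\max_K m_\lambda \to + \infty$. The main technical obstacle will be the uniform pointwise estimate on $\tilde R_\lambda$: a plain $L^2$-resolvent bound on $(I - \Pi) L^2(\Om)$ is not sufficient, and one must carefully propagate the uniform spectral gap through elliptic regularity to obtain pointwise bounds up to the diagonal, keeping all constants uniform as $\lambda \uparrow \Lambda_{i+1}$.
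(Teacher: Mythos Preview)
Your proposal is correct and follows essentially the same route as the paper: both isolate the $E_{\Lambda_{i+1}}$-component of the Green's function to produce the singular term $\frac{1}{\Lambda_{i+1}-\lambda}\sum_j \vp_j(x)^2$, bound the orthogonal remainder uniformly using the spectral gap on $E_{\Lambda_{i+1}}^\perp$, and finish with unique continuation to find $x_\star \in K$ where the coefficient is positive. The only cosmetic differences are that the paper decomposes $H_\lambda = G_\lambda - \tfrac{1}{4\pi|x-\cdot|}$ rather than $G_\lambda$ itself and is terser about the uniform $C^{0,1}$ bound on the remainder (which you rightly flag as the main analytic point); a small imprecision in your write-up is that $R_\lambda(x,\cdot)$ lies in $(I-\Pi)W^{1,p}_0(\Om)$ for $p<\tfrac{3}{2}$ rather than in $H^1_0(\Om)$, but this does not affect the argument since the uniform $L^2$ bound from the spectral gap is all you need before bootstrapping with $-\Delta$.
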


\begin{proof}
Let $E_{\Lambda_{i+1}}(\Om)$ denote the eigenspace associated to $\Lambda_{i+1}$ and let $d = \dim E_{\Lambda_{i+1}}(\Om)$, with the convention that $d=0$ if $i=0$. We let $(\vp_1, \cdots, \vp_d)$ be a $L^2(\Om)$-orthonormal basis of $E_{\Lambda_{i+1}}(\Om)$. Let $K$ be a compact subspace of $\Om$ of positive measure and let $x \in K$. We decompose $H_\lambda(x, \cdot)$ as 
$$ H_{\lambda}(x, y) = S_\lambda(x,y) + \sum_{j=1}^d a_j^{\lambda}(x) \vp_j(y), $$
where $S_\lambda(x,\cdot)$ satisfies $\int_{\Om} S_\lambda(x,\cdot) \vp_j \, dy = 0$ for all $1 \le j \le d$ and where $a_j^{\lambda}(x) = \int_{\Om} H_\lambda(x, \cdot) \vp_j dy$. First, a representation formula for $\vp_j$ at $x$ shows that 
\begin{equation} \label{eq:ajl:2}
 \begin{aligned}
\vp_j(x) & =  (\Lambda_{i+1} - \lambda)  \int_{\Om}  \frac{1}{4\pi |x-y|} \vp_j(y) dy +  (\Lambda_{i+1} - \lambda) a_j^{\lambda}(x).
\end{aligned} 
\end{equation}
Using \eqref{eq:Hlambda} it is easily seen that $S_\lambda(x, \cdot)$ satisfies 
\begin{equation*} 
\left \{\begin{aligned}
- \Delta S_\lambda(x, \cdot) - \lambda S_\lambda(x, \cdot) &=  \frac{\lambda}{4\pi |x-y|} - (\Lambda_{i+1} - \lambda) \sum_{j=1}^d a_j^{\lambda}(x) \vp_j(y)  \quad \text{ in } \Om, \\
  S_\lambda(x, \cdot) & = - \frac{1}{4\pi |x-y|} \text{ in } \partial \Om.
\end{aligned} \right.
\end{equation*}
Since $x \in K$, standard elliptic theory then shows that $\Vert S_\lambda(x, \cdot) \Vert_{C^{0,1}(\overline{\Om})} = O(1)$ as $\lambda \underset{< }{\to} \Lambda_{i+1}$. Thus, using \eqref{eq:ajl:2}, we have
\begin{equation} \label{masse:coeur}
\begin{aligned}
 m_\lambda(x)  = S_\lambda(x,x)  +  \sum_{j=1}^d a_j^{\lambda}(x) \vp_j(x) 
  =  \frac{1}{\Lambda_{i+1} - \lambda}  \sum_{j=1}^d \vp_j(x)^2 + O(1) 
  \end{aligned} 
  \end{equation} 
  as $\lambda \underset{< }{\to} \Lambda_{i+1}$. By the unique continuation results in \cite{HardtSimon}, each $\vp_j$ vanishes on a set of measure zero in $\Om$. Since $|K| >0$, we may thus assume that $x \in K$ is chosen so that $ \sum_{j=1}^d \vp_j(x)^2 >0$, and \eqref{masse:coeur} shows that $m_\lambda(x) \to + \infty$ as $\lambda \underset{< }{\to} \Lambda_{i+1}$. 
  \end{proof}

We now investigate the behavior of $m_{\lambda}$ as $\lambda$ approaches $\Sp(-\Delta)$ from the right. The situation in this case turns out to be surprisingly more involved and the nodal sets of the eigenfunctions associated to $\Lambda_i$ arise in the expansion of the mass. We first prove that if the nodal sets of the eigenfunctions of $\Lambda_i$ have no common point in $\Om$ the mass goes to $-\infty$ everywhere in $\Om$ as $\lambda$ approaches $\Sp(-\Delta)$ from the right. As before we let $E_{\Lambda_{i}}(\Om)$ be the eigenspace associated to $\Lambda_{i}$, we let $d_i = \dim E_{\Lambda_{i}}(\Om)$ and we let $(\vp_1, \cdots, \vp_{d_i})$ be a $L^2(\Om)$-orthonormal basis of $E_{\Lambda_{i}}(\Om)$. 
  
  \begin{prop} \label{prop:massenegative}
Let $i \ge 1$. We assume that $\cap_{j=1}^{d_i} \{\vp_j = 0 \} = \emptyset$. Then
$$ \max_{\Om} m_\lambda \to - \infty \quad \text{ as } \lambda \underset{> }{\to} \Lambda_{i} . $$
\end{prop}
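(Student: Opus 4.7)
The plan is to extract the singular contribution to the mass coming from the eigenspace $E_{\Lambda_i}(\Om)$, compare the remainder to the mass $m_0$ of the standard Dirichlet Green's function, and then deal with the near-boundary region separately from a compact interior region.

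First, I will derive the uniform expansion
\begin{equation*}
m_\lambda(x) = m_0(x) + \frac{\lambda}{\Lambda_i (\Lambda_i - \lambda)} \sum_{j=1}^{d_i} \vp_j(x)^2 + r_\lambda(x), \qquad x \in \Om,
\end{equation*}
where $r_\lambda$ is bounded uniformly in $x \in \Om$ and in $\lambda$ close to $\Lambda_i$. The resolvent identity $(-\Delta-\lambda)(G_\lambda(x,\cdot) - G_0(x,\cdot)) = \lambda G_0(x,\cdot)$ in $\Om$ with zero boundary data (which follows directly from \eqref{green:coercif}), combined with the fact that the $\frac{1}{4\pi|x-y|}$ singularities cancel in $G_\lambda - G_0$, yields $m_\lambda(x) - m_0(x) = \lambda \int_\Om G_\lambda(x,z) G_0(x,z)\, dz$. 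Plugging in the spectral decomposition $G_\lambda(x,z) = (\Lambda_i - \lambda)^{-1} \sum_j \vp_j(x) \vp_j(z) + \tilde G_\lambda(x,z)$ and using the identity $\int_\Om \vp_j(z) G_0(x,z)\, dz = \vp_j(x)/\Lambda_i$ (which follows from $-\Delta \vp_j = \Lambda_i \vp_j$) produces the formula with $r_\lambda(x) = \lambda \int_\Om \tilde G_\lambda(x,z) G_0(x,z)\, dz$.

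The key estimate will be that $r_\lambda$ is uniformly bounded. Here the point is that $\tilde G_\lambda$ is the Green's function of $-\Delta - \lambda$ restricted to $E_{\Lambda_i}(\Om)^{\perp}$, an operator which remains uniformly invertible as $\lambda \to \Lambda_i$; consequently $\tilde G_\lambda$ inherits the same $O(1/|x-z|)$ singular behavior as $G_\lambda$ with constants stable as $\lambda \to \Lambda_i$. Combined with an analogous bound on $G_0$, the product $\tilde G_\lambda \cdot G_0$ is pointwise controlled by $C/|x-z|^2$, which is integrable in dimension three, giving $|r_\lambda(x)| \le C$ uniformly on $\Om$.

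With the expansion in hand, the conclusion will follow by splitting $\Om$ into a compact interior and a near-boundary region. Fix $M > 0$. On $\{d(x, \partial \Om) < \delta\}$ for $\delta > 0$ small, the classical Robin's function estimate $m_0(x) \le -c_0/d(x, \partial \Om)$ for some $c_0 > 0$ (see \cite[Equation (2.8)]{Rey} and \cite{DelPinoDolbeaultMusso}) together with the non-positivity of the singular term for $\lambda > \Lambda_i$ yields $m_\lambda(x) \le -c_0/\delta + C$, which is $\le -M$ uniformly in $\lambda$ provided $\delta$ is chosen small. On the complementary compact set $\Om_\delta = \{d(x, \partial \Om) \ge \delta\}$, the hypothesis $\cap_j \{\vp_j = 0\} = \emptyset$ combined with continuity gives $\sum_j \vp_j(x)^2 \ge c_\delta > 0$ on $\overline{\Om_\delta}$; the singular term then equals $-\lambda c_\delta/(\Lambda_i(\lambda - \Lambda_i))$ or less and drives $m_\lambda$ to $-\infty$, so $m_\lambda(x) \le -M$ on $\Om_\delta$ for $\lambda - \Lambda_i$ small enough. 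Combining both regions yields $\max_\Om m_\lambda \le -M$ for $\lambda$ sufficiently close to $\Lambda_i$ from the right, proving the claim.

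The main obstacle will be the uniform control of the remainder $r_\lambda$, in particular establishing that $\tilde G_\lambda$ retains the standard Green-function singularity with constants independent of $\lambda$ near $\Lambda_i$, uniformly up to the boundary where the eigenfunctions vanish and the near-boundary estimate for $m_0$ must take over from the singular term.
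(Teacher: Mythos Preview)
Your proposal is correct and follows the same architecture as the paper's proof: isolate the $E_{\Lambda_i}$ contribution to $m_\lambda$, control the remainder uniformly by comparison with $m_0$, then split $\Om$ into a boundary collar (where $m_0 \to -\infty$ takes over) and a compact interior (where the hypothesis gives $\sum_j \vp_j^2 \ge c_\delta > 0$ and the singular term dominates). The only real difference lies in how the comparison with $m_0$ is obtained. You use the resolvent identity to write $m_\lambda(x)-m_0(x)=\lambda\int_\Om G_\lambda(x,z)G_0(x,z)\,dz$ and then spectrally decompose $G_\lambda$; the paper instead decomposes $G_\lambda=\bar G_\lambda+\sum_j a_j^\lambda\vp_j$ first, bounds the regular part $\bar H_\lambda(x,\cdot)=\bar G_\lambda(x,\cdot)-\tfrac{1}{4\pi|x-\cdot|}$, and recovers $m_0(x)$ via a boundary representation formula. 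Both routes yield the same working inequality $m_\lambda(x)\le m_0(x)+C-\tfrac{c}{\lambda-\Lambda_i}\sum_j\vp_j(x)^2$, and the endgame is identical. Your route is arguably a bit more direct.

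One remark on the remainder. The uniform pointwise bound $|\tilde G_\lambda(x,z)|\le C|x-z|^{-1}$ is true but requires some care to justify. You do not actually need it: since $n=3$, $G_0(x,\cdot)\in L^2(\Om)$ uniformly in $x$ (because $|G_0(x,z)|\le C|x-z|^{-1}$), so Cauchy--Schwarz gives
\[
|r_\lambda(x)|=\Big|\lambda\int_\Om \tilde G_\lambda(x,z)G_0(x,z)\,dz\Big|\le \lambda\,\|\tilde G_\lambda(x,\cdot)\|_{L^2(\Om)}\,\|G_0(x,\cdot)\|_{L^2(\Om)}.
\]
Uniform boundedness of $\|\tilde G_\lambda(x,\cdot)\|_{L^2}$ follows from a standard $W^{1,p}_0$ estimate (any $\tfrac65\le p<\tfrac32$) on solutions of $(-\Delta-\lambda)u=\delta_x-\sum_j\vp_j(x)\vp_j$ in $E_{\Lambda_i}(\Om)^\perp$, combined with Sobolev; this is exactly the uniform $L^2$ bound the paper establishes for $\bar G_\lambda$. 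This avoids the full pointwise Green's function machinery you flagged as the main obstacle.
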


\begin{proof}
Let $\lambda \in (\Lambda_i, \Lambda_{i+1})$. For any $x \in \Om$ fixed we decompose $G_\lambda(x, \cdot)$ as 
$$ G_\lambda(x, \cdot) = \bar{G}_\lambda(x, \cdot) + \sum_{j=1}^{d_i} a_j^{\lambda}(x) \vp_j(y), $$
where $\bar{G}_\lambda(x,\cdot)$ satisfies $\int_{\Om} \bar{G}_\lambda(x,\cdot) \vp_j \, dy = 0$ for all $1 \le j \le d_i$. Since, for all $1 \le j \le d_i$, we have $\vp_j = \frac{1}{\Lambda_i - \lambda} (- \Delta - \lambda)\vp_j$, a representation formula for $\vp_j$ yields
\begin{equation} \label{eq:ajl:0}
 \begin{aligned}
a_j^{\lambda}(x) = -\frac{1}{\lambda - \Lambda_i} \vp_j(x). \end{aligned} 
\end{equation}
By definition of $G_\lambda(x, \cdot)$ and with \eqref{eq:ajl:0} it is easily seen that $\bar{G}_\lambda(x, \cdot)$ satisfies 
\begin{equation} \label{eq:barGlambda}
\left \{\begin{aligned}
- \Delta \bar{G}_\lambda(x, \cdot) - \lambda \bar{G}_\lambda(x, \cdot) &= \delta_x -  \sum_{j=1}^{d_i} \vp_j(x) \vp_j(y)  \quad \text{ in } \Om, \\
  \bar{G}_\lambda(x, \cdot) & = 0 \quad \text{ in } \partial \Om,
\end{aligned} \right.
\end{equation}
for any $\lambda \in (\Lambda_i, \Lambda_{i+1})$. We observe that $\delta_x = \text{div} F_x$ in the distributional sense, where $F_x(y) = \frac{x-y}{|x-y|^3}$ for $y \in \Om$. For a fixed $1 \le p < \frac32$, $F_x \in L^p(\Om)$. Since, for every $\lambda \in (\Lambda_i, \Lambda_{i+1})$ and $x\in \Om$, $\bar{G}_\lambda(x, \cdot)$ is orthogonal to $E_{\Lambda_i}(\Om)$ for the $L^2(\Om)$-scalar product, classical arguments from standard elliptic theory (see for instance \cite[Theorem 7.1, Theorem 7.4]{GiaquintaMartinazzi}) show that for any $1 \le p < \frac32$ fixed there exists $C = C(\Om,p) >0$ such that, for every $\lambda \in (\Lambda_i, \Lambda_{i+1})$ and $x\in \Om$, we have 
$$ \Vert \bar{G}_\lambda(x,\cdot) \Vert_{W^{1,p}_0(\Om)} \le C . $$
If we choose $p \ge \frac65$, Sobolev's embeddings then show that there exists $C >0$ such that, for every $\lambda \in (\Lambda_i, \Lambda_{i+1})$ and $x\in \Om$
\begin{equation} \label{estL2uniforme}
\Vert \bar{G}_\lambda(x,\cdot) \Vert_{L^2(\Om)} \le C. 
\end{equation}
We now let, for $x \neq y$ in $\Om$, 
$$\bar{H}_\lambda(x,y) = \bar{G}_\lambda(x,y) - \frac{1}{4 \pi |x-y|}.$$ The definition of $m_\lambda$ and \eqref{eq:ajl:0} then yield, for any $x \in \Om$ and $\lambda \in (\Lambda_i, \Lambda_{i+1})$,
\begin{equation} \label{masse:explose}
\begin{aligned}
 m_\lambda(x) & = \bar{H}_\lambda(x,x)  - \frac{1}{\lambda-\Lambda_i}  \sum_{j=1}^{d_i} \vp_j(x)^2. 
  \end{aligned} 
  \end{equation}
Independently, using \eqref{eq:barGlambda} it is easily seen that $\bar{H}_\lambda(x, \cdot)$ satisfies 
\begin{equation} \label{eq:Slambda}
\left \{\begin{aligned}
- \Delta \bar{H}_\lambda(x, \cdot) - \lambda \bar{H}_\lambda(x, \cdot) &=  \frac{\lambda}{4\pi |x-y|} -  \sum_{j=1}^{d_i} \vp_j(x) \vp_j(y)  \quad \text{ in } \Om, \\
  \bar{H}_\lambda(x, \cdot) & = - \frac{1}{4\pi |x-y|} \quad \text{ in } \partial \Om,
\end{aligned} \right.
\end{equation}
and standard elliptic theory shows that, for any $x \in \Om$ fixed,  $\bar{H}_\lambda(x, \cdot) \in C^{0,1}(\overline{\Om})$. Let now $G_0$ denote the Green's function of $- \Delta$ in $\Om$ with Dirichlet boundary conditions, which solves  \eqref{green:coercif} with $\lambda = 0$. Standard estimates (see again \cite{RobDirichlet}) show that $0 \le G_0(x,y) \le C|x-y|^{-1}$ for $x\neq y$ in $\Om$. Using \eqref{estL2uniforme} we have $\int_{\Om} G_0(x,z) \bar{H}_\lambda(x,z) dx = O(1)$ as $\lambda \to \bar{\lambda}$. With the latter, a representation formula for $\bar{H}_\lambda(x, \cdot)$ shows that there exists $C >0$ such that, for every $\lambda \in (\Lambda_i, \Lambda_{i+1})$ and $x\in \Om$, we have 
\begin{equation} \label{eq:Slambda:2}
 \bar{H}_\lambda(x, x) \le C + \int_{\partial \Om} \partial_\nu G_0(x,y) \frac{1}{4 \pi |x-y|} d \sigma(y). 
\end{equation}
We claim that 
\begin{equation} \label{masse:moins:infini}
\limsup_{d(x, \partial \Om) \to 0} \int_{\partial \Om} \partial_\nu G_0(x,y) \frac{1}{4 \pi |x-y|} d \sigma(y)= - \infty.
\end{equation}
Assume for the moment that \eqref{masse:moins:infini} holds true. For $\delta >0$ we let in the following $\Omega_\delta = \{ x \in \Om, d(x, \partial \Om) < \delta \}$. For any $L > 0$ fixed, by \eqref{masse:explose}, \eqref{eq:Slambda:2} and \eqref{masse:moins:infini} we may choose $\delta$ small enough such that for every $\lambda \in (\Lambda_i, \Lambda_{i+1})$,
\begin{equation} \label{masse:moins:infini:1}
\max_{\Om_\delta} m_\lambda(x) \le - L .
\end{equation}
Independently, since every $\vp_j$, $1 \le j \le d_i$ is smooth in $\overline{\Om}$, the assumption $\cap_{j=1}^{d_i} \{\vp_j = 0 \} = \emptyset$ implies that $\min_{x \in \Om \backslash \Om_\delta} \sum_{j=1}^{d_i} \vp_j(x)^2 >0$. Using \eqref{masse:explose} we thus have 
\begin{equation} \label{masse:moins:infini:2}
\lim_{ \lambda \underset{> }{\to} \Lambda_{i}} \max_{\Om \backslash \Om_\delta} m_\lambda(x)  = - \infty. 
\end{equation}
Combining \eqref{masse:moins:infini:1} and \eqref{masse:moins:infini:2} finally shows that for every $L >0$ fixed,
$$\limsup_{\lambda \underset{> }{\to} \Lambda_{i} } \max_{\Om} m_{\lambda} \le - L$$
holds. Letting $L \to + \infty$ concludes the proof of Proposition \ref{prop:massenegative}. We thus only have to prove \eqref{masse:moins:infini}. Let, for $z \in \Om$, $u(z) = \int_{\partial \Om} \partial_\nu G_0(z,y) \frac{1}{4 \pi |x-y|} d \sigma(y)$. It is easily seen that $u$ is the unique solution of $-\Delta u =0$ in $\Om$ with $u = - \frac{1}{4\pi |x-\cdot|}$ in $\partial \Om$. As a consequence, and by \eqref{eq:Hlambda}, we thus have $u(z) = H_0(x,z)$ and thus
$$ \int_{\partial \Om} \partial_\nu G_0(x,y) \frac{1}{4 \pi |x-y|} d \sigma(y) = H_0(x,x) = m_0(x). $$
It is well-known that $m_0(x) \sim \frac{-1}{8\pi d(x, \partial \Om)} \to - \infty$ as $d(x, \partial \Om) \to 0$ (see e.g. \cite[Equation (2.8)]{Rey}), and this concludes the proof of \eqref{masse:moins:infini}.
\end{proof}

For $i \ge 0$ we recall the definition of $\lambda_{i,*}$ introduced in \eqref{deflambdastar}:
$$ \lambda_{i,*}= \inf \big \{ \lambda \in (\Lambda_i, \Lambda_{i+1}), \, \,  \max_{\Om} m_\lambda >0 \big \}, $$
where we used the convention $\Lambda_0 = 0$. Proposition \ref{prop:massepositive} shows that $\lambda_{i,*} < \Lambda_{i+1}$. It is well-known that $\lambda_{0,*} \in (0, \Lambda_1)$ (see \cite[Equation (1.51)]{BN}). When $i=1$, since $E_{\Lambda_1}(\Om)$ is spanned by a positive eigenvector $\vp_1$, Proposition \ref{prop:massenegative} shows that we similarly have 
\begin{equation} \label{lambda:star:1} \lambda_{1,*} \in (\Lambda_1, \Lambda_2). \end{equation}
When $i \ge 2$ and on a general smooth bounded domain $\Om$, however, we only have $\lambda_{i,*} \ge \Lambda_i$. By Proposition \ref{prop:massenegative} the strict inequality holds when the elements in $E_{\Lambda_i}(\Om)$ have no common zero, but this condition is complicated to verify. For a general $i \ge 2$, suprisingly, the asymptotic behavior of the mass function $x \mapsto m_\lambda(x)$ as $\lambda \underset{> }{\to} \Lambda_{i}$ is related to the behavior of the nodal lines of the eigenvectors in $E_{\Lambda_i}(\Om)$. To illustrate this fact we conclude this section by proving a generalisation of Proposition \ref{prop:massenegative}. If $i \ge 2$ we let $\bar{G}_{\Lambda_i}$ be the Green's function of $- \Delta - \Lambda_i$, that is the unique function $\bar{G}_{\Lambda_i} \in L^1(\Om \times \Om)$ such that for all $x \in \Om$, $\bar{G}_{\Lambda_i} (x, \cdot)$ is $L^2(\Om)$-orthogonal to $E_{\Lambda_i}(\Om)$ and satisfies
\begin{equation} \label{eq:Glambda:;noyau}
\left \{\begin{aligned}
- \Delta \bar{G}_{\Lambda_i}(x, \cdot) - \Lambda_i \bar{G}_{\Lambda_i}(x, \cdot) &= \delta_x -  \sum_{j=1}^{d_i} \vp_j(x) \vp_j(y)  \quad \text{ in } \Om, \\
  \bar{G}_{\Lambda_i}(x, \cdot) & = 0 \quad \text{ in } \partial \Om.
\end{aligned} \right.
\end{equation}
Again by standard elliptic theory, for any fixed $x \in \Om$ we may expand $\bar{G}_{\Lambda_i}(x, \cdot)$ as 
$$ \bar{G}_{\Lambda_i}(x, y) = \frac{1}{4 \pi |x-y|} + \bar{m}_{\Lambda_i}(x) + O(|x-y|) \quad \text{ as } y \to x. $$
We then have the following result:

  \begin{prop} \label{prop:comportement:masse}
Let $i \ge 2$ and $x \in \Om$ be fixed. 

\begin{itemize}
\item Assume that $ x \not \in \cap_{j=1}^{d_i} \{\vp_j = 0 \} $. Then
$$ m_\lambda(x) \to - \infty \quad \text{ as } \lambda \underset{> }{\to} \Lambda_{i} . $$
\item Assume that $ x \in \cap_{j=1}^{d_i} \{\vp_j = 0 \} $. Then 
$$ m_{\lambda}(x) \to  \bar{m}_{\Lambda_i}(x) \quad \text{ as } \lambda \underset{> }{\to} \Lambda_{i}. $$
\end{itemize}
\end{prop}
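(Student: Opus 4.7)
The cornerstone of the argument is the decomposition formula already derived in the proof of Proposition~\ref{prop:massenegative}, namely equation \eqref{masse:explose}: for every $\lambda \in (\Lambda_i, \Lambda_{i+1})$ and every $x \in \Om$,
\begin{equation*}
m_\lambda(x) = \bar{H}_\lambda(x, x) - \frac{1}{\lambda - \Lambda_i} \sum_{j=1}^{d_i} \vp_j(x)^2,
\end{equation*}
where $\bar{H}_\lambda(x, y) := \bar{G}_\lambda(x, y) - \frac{1}{4\pi|x-y|}$ is the regular part of the projected Green's function. Both bullets of Proposition~\ref{prop:comportement:masse} will follow immediately from the single convergence
\begin{equation*}
\bar{H}_\lambda(x, x) \longrightarrow \bar{m}_{\Lambda_i}(x) = \bar{H}_{\Lambda_i}(x, x) \quad \text{as } \lambda \underset{>}{\to} \Lambda_i,
\end{equation*}
since in the first case $\sum_{j=1}^{d_i} \vp_j(x)^2 > 0$ and the explicit $(\lambda - \Lambda_i)^{-1}$ term then drives $m_\lambda(x) \to -\infty$, whereas in the second case the sum vanishes identically and $m_\lambda(x)$ reduces to $\bar{H}_\lambda(x, x)$.

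To establish this convergence, I would introduce the difference $v_\lambda(y) := \bar{G}_\lambda(x, y) - \bar{G}_{\Lambda_i}(x, y)$. By construction it coincides with $\bar{H}_\lambda(x, y) - \bar{H}_{\Lambda_i}(x, y)$ on $\Om \setminus \{x\}$ and extends continuously across $y = x$ since the Newtonian singularities of the two Green's functions cancel exactly. Subtracting \eqref{eq:Glambda:;noyau} from \eqref{eq:barGlambda}, the Dirac masses and the $\sum_j \vp_j(x)\vp_j(\cdot)$ contributions cancel as well, so that $v_\lambda$ is a genuine weak solution of
\begin{equation*}
(-\Delta - \lambda) v_\lambda = (\lambda - \Lambda_i)\,\bar{G}_{\Lambda_i}(x, \cdot) \text{ in } \Om, \qquad v_\lambda = 0 \text{ on } \partial \Om,
\end{equation*}
with no distributional source, together with the orthogonality condition $v_\lambda \in E_{\Lambda_i}(\Om)^{\perp}$ in $L^2(\Om)$, inherited from the two Green's functions.

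The final step is a uniform elliptic estimate. Since $\bar{G}_{\Lambda_i}(x, \cdot) \in L^p(\Om)$ for every $p < 3$ (the $|x-y|^{-1}$ singularity being the only obstruction) and since the eigenvalues of $-\Delta - \lambda$ acting on $E_{\Lambda_i}(\Om)^{\perp}$ are precisely $\{\lambda_k - \lambda : \lambda_k \neq \Lambda_i\}$, which stay bounded away from $0$ uniformly as $\lambda \underset{>}{\to} \Lambda_i$, the operator $-\Delta - \lambda$ is uniformly invertible on $E_{\Lambda_i}(\Om)^{\perp}$ in $L^p$-based norms. Picking any $p \in (3/2, 3)$ and applying $L^p$ elliptic regularity followed by the Sobolev embedding $W^{2,p}(\Om) \hookrightarrow C^0(\overline{\Om})$, valid in dimension three as soon as $p > 3/2$, yields
\begin{equation*}
\|v_\lambda\|_{L^\infty(\Om)} \le C\,|\lambda - \Lambda_i|
\end{equation*}
with a constant $C$ independent of $\lambda$ close to $\Lambda_i$. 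Evaluating at $y = x$ gives $v_\lambda(x) = \bar{H}_\lambda(x, x) - \bar{m}_{\Lambda_i}(x) \to 0$, which proves the desired convergence. The main obstacle is precisely this uniform estimate on the orthogonal complement: one must thread $p$ through the narrow interval $(3/2, 3)$ so that the right-hand side is integrable enough while the Sobolev embedding still delivers pointwise convergence at the singular point $x$ itself, and one must carefully verify that $v_\lambda$ remains in $E_{\Lambda_i}(\Om)^{\perp}$ so that the spectral gap can be used.
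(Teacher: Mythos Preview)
Your proof is correct and follows the same overall strategy as the paper: both arguments invoke the decomposition \eqref{masse:explose} and reduce everything to showing that $\bar H_\lambda(x,x)\to \bar m_{\Lambda_i}(x)$ as $\lambda\underset{>}{\to}\Lambda_i$. The only difference is tactical. The paper obtains this convergence by a compactness argument: it uses the uniform $L^2$ bound \eqref{estL2uniforme} on $\bar G_\lambda(x,\cdot)$ (proved in Proposition~\ref{prop:massenegative}) together with the equation \eqref{eq:Slambda} for $\bar H_\lambda$ to get uniform $C^{0,\alpha}$ bounds on $\bar H_\lambda(x,\cdot)$, extracts a convergent subsequence, and identifies the limit as $\bar H_{\Lambda_i}(x,\cdot)$ via the weak $W^{1,p}_0$ convergence of $\bar G_\lambda$ to $\bar G_{\Lambda_i}$. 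You instead work directly with the difference $v_\lambda=\bar G_\lambda(x,\cdot)-\bar G_{\Lambda_i}(x,\cdot)$, derive the clean equation $(-\Delta-\lambda)v_\lambda=(\lambda-\Lambda_i)\bar G_{\Lambda_i}(x,\cdot)$ on $E_{\Lambda_i}(\Om)^\perp$, and use the uniform spectral gap there to get the quantitative bound $\|v_\lambda\|_{L^\infty}\le C|\lambda-\Lambda_i|$. Your route is slightly more direct and yields an explicit rate of convergence that the paper's compactness argument does not; the paper's route, on the other hand, recycles the estimates already established in the preceding proposition with no new computation.
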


\begin{proof}
We use the notations in the proof of Proposition~\ref{prop:massenegative}. Estimate \eqref{estL2uniforme} together with \eqref{eq:barGlambda} shows that, for every fixed $x \in \Om$, $\bar{G}_\lambda(x, \cdot)$ weakly converges to $\bar{G}_{\Lambda_i}(x, \cdot)$ in $W^{1,p}_0(\Om)$ for some $ \frac{6}{5} \le p < \frac32$,  up to a subsequence as $\lambda\underset{> }{\to} \Lambda_{i}$. Again \eqref{estL2uniforme} together with \eqref{eq:Slambda} and standard elliptic theory shows that there is $0 < \alpha < 1$ such that $\bar{H}_\lambda(x, \cdot)$ strongly converges in $C^{0,\alpha}(\overline{\Om})$ to $\bar{G}_{\Lambda_i}(x, \cdot) - \frac{1}{4 \pi |x - \cdot|}$. As a consequence, $\bar{H}_\lambda(x,x) \to \bar{m}_{\Lambda_i}(x)$ as $\lambda\underset{> }{\to} \Lambda_{i}$. If $x \in \cap_{j=1}^{d_i} \{\vp_j = 0 \}$, \eqref{masse:explose} shows that $ \bar{H}_\lambda(x,x) =m_\lambda(x) $ and the conclusion follows. If $x \not \in \cap_{j=1}^{d_i} \{\vp_j = 0 \}$ then $\sum_{j=1}^{d_i} \vp_j(x)^2 >0$ and 
again by \eqref{masse:explose} we have $m_\lambda(x) \to - \infty $ as $\lambda\underset{> }{\to} \Lambda_{i}$. 
\end{proof}
Proposition \ref{prop:comportement:masse} thus shows that determining whether the strict inequality $\lambda_{i,*} > \Lambda_i$ holds reduces to investigating the sign of $x\in \Om \mapsto \bar{m}_{\Lambda_i}(x)$. This seems to be a complicated question in general and it may happen that $\lambda_{i,*} = \Lambda_i$ for some $i \ge 2$, although no examples of domains $\Om$ where this holds are known.

\bibliographystyle{amsplain}
\bibliography{biblio}

\end{document}